\documentclass[12pt]{article}
\usepackage[utf8]{inputenc}
\usepackage{amsthm}
\usepackage{amsmath}
\usepackage[margin=1.1in]{geometry}
\usepackage{amssymb}
\usepackage{amscd}
\usepackage{mathrsfs}
\usepackage{mathtools}
\usepackage{amsfonts}
\usepackage{enumitem}
\usepackage{titlesec}
\usepackage{tikz-cd}
\usepackage{tabularx}
\usepackage[colorlinks=true,linkcolor=blue, citecolor=cyan]{hyperref}
\usepackage[title]{appendix}

\newcommand{\coh}{\mathrm{H}}
\newcommand{\Gal}{\mathrm{Gal}}

\newcommand{\Spec}{\mathrm{Spec}}

\newcommand{\grF}{\mathrm{gr}^F}

\newcommand{\Hom}{\textrm{Hom}}

\newcommand{\Perv}{\textrm{\textbf{Perv}}}

\newcommand{\etcoh}{\textrm{H}_{\textrm{\'et}}}

\newcommand{\g}{\mathfrak{g}}
\newcommand{\gres}{\widetilde{\mathfrak{g}}}
\newcommand{\h}{\mathfrak{h}}
\newcommand{\Oring}{\mathcal{O}}
\newcommand{\borel}{\mathfrak{b}}
\newcommand{\fpbar}{\overline{\mathbb{F}}_p}

\newcommand{\Rub}{\mathcal{R}_u(B)}
\newcommand{\Slice}{\mathcal{S}}
\newcommand{\Spf}{\textrm{Spf}}
\newcommand{\Art}{\textrm{Art}}
\newcommand{\CArt}{\widehat{\textrm{Art}}}
\newcommand{\Deform}{\textrm{Def}}
\newcommand{\Tgt}{\textrm{T}_{\mathfrak{m}}}
\newcommand{\topos}{\overset{\leftarrow}{\times}}
\newcommand{\etabar}{\overline{\eta}}
\newcommand{\psiber}{\Psi_{\mathfrak{X}}^{\textrm{Ber}}}

\theoremstyle{plain}
\newtheorem{Thm}{Theorem}
\newtheorem{Lemma}[Thm]{Lemma}
\newtheorem{Cor}[Thm]{Corollary}
\newtheorem{Prop}[Thm]{Proposition}
\theoremstyle{definition}
\newtheorem{Def}[Thm]{Definition}
\theoremstyle{definition}
\newtheorem{ex}[Thm]{Example}
\theoremstyle{definition}

\theoremstyle{definition}
\newtheorem{Rem}[Thm]{Remark}
\theoremstyle{definition}
\newtheorem{Note}[Thm]{Notation}
\newtheorem{Conj}[Thm]{Conjecture}
\newtheoremstyle{break}
  {\topsep}{\topsep}%
  {\itshape}{}%
  {\bfseries}{}%
  {\newline}{}%
\theoremstyle{break}
\theoremstyle{definition}

\newtheorem{?}{Problem}

\numberwithin{Thm}{section}
\numberwithin{equation}{section}

\setlength{\parskip}{0.1em}
\setlength{\parindent}{1em}
\title{On simple singularities and Weyl monodromy actions in mixed characteristic}
\author{Jason Kountouridis}
\date{}
\titleformat{\section}[block]
{\large\bfseries\filcenter}
{\thesection.}{0.5em}{}
\titlespacing{\section}{0pc}{0ex}{1pc}
\titleformat{\subsection}[runin]%
{\normalfont\normalsize\bfseries}{\hspace*{\parindent}\thesubsection}{1em}{}

\begin{document}

\maketitle
\begin{abstract}
    We study the ramification on the cohomology of a smooth proper surface $X$ in mixed characteristic, in the particular case where $X$ degenerates to a surface over $\fpbar$ with simple singularities, also known as rational double points. We find that the associated monodromy action of inertia depends on a formal affine neighborhood of the singularity, and under sufficient restrictions on characteristic $p$, it is tamely ramified and generated by a conjugacy class representative of an appropriate Weyl group related to the singularity. Along the way we extend to mixed characteristic some results of Brieskorn and Slodowy concerning simultaneous resolutions of surface singularities. We also compare our Weyl group actions to certain Springer representations constructed by Borho and MacPherson, via the notion of relative perversity as developed by Hansen and Scholze.
\end{abstract}

\renewcommand{\baselinestretch}{0.75}\normalsize
\tableofcontents
\renewcommand{\baselinestretch}{1.0}\normalsize
\section{Introduction}

Let $K$ be a complete discrete valuation field with valuation ring $\Oring_K$ of mixed characteristic $(0,p)$. The celebrated N\'eron--Ogg--Shafarevich theorem, first proven for elliptic curves and then generalized to abelian varieties by Serre--Tate (\cite{serre1968good}), states that an abelian variety $X$ over $K$ has \textit{good reduction}, meaning that there exists a smooth proper model $\mathcal{X} \to \Spec(\Oring_K)$ with generic fiber $X$, if and only if the natural action of $G_K = \Gal(\overline{K}/K)$ on the \'etale cohomology group $\etcoh^1(X_{\overline{K}}, \mathbb{Q}_{\ell})$ for $\ell \neq p$ is \textit{unramified}, i.e. it restricts to a trivial action of inertia $I_K$. Here $I_K$ is defined as the kernel of the surjection $G_K \to G_k$, where $G_k$ is the absolute Galois group of the residue field $k$. 

\subsection{Good reduction beyond abelian varieties.}For a general smooth proper variety $X$ over $K$ there is a subtler relationship between good reduction and \textit{ramification}, i.e.\;the nontriviality of the monodromy action of $I_K$ on $\coh^n(X_{\overline{K}}, \mathbb{Q}_{\ell})$. A necessary condition for good reduction is that $\coh^n(X_{\overline{K}}, \mathbb{Q}_{\ell})$ is unramified for all $n$ and $\ell \neq p$, however the converse often fails. For example, there are smooth curves $X$ of genus $g \geq 2$ with unramified $\etcoh^1(X_{\overline{K}},\mathbb{Q}_{\ell})$, which nevertheless do not have good reduction over $K$ or any finite extension of $K$ (\cite{liedtke2018good}, \S2.4). Instead, Oda has established that, if $X$ is a smooth proper curve of genus $g \geq 2$, it admits good reduction if and only if a certain $G_K$-action on the pro-$\ell$ completion of the geometric \'etale fundamental group $\pi_1^{\textrm{\'et}}(X_{\overline{K}})_{\ell}$ is unramified (\cite{oda1995note}). More recently, Liedtke--Matsumoto have shown that a K3 surface $X$ with unramified $\coh^2(X_{\overline{K}}, \mathbb{Q}_{\ell})$ admits good reduction after a finite unramified base extension, under the stricter assumption that $X$ admits a potentially semistable model (\cite{liedtke2018good}, Thm.\;1.3). 

\subsection{Bad reduction of surfaces and singularities.}In this article, we will consider degenerations (i.e.\;reductions) of a smooth proper surface $X/K$. More specifically, we focus on Galois representation $\etcoh^2(X_{\overline{K}}, \mathbb{Q}_{\ell})$ and relate the types of singularities that may appear on reductions of $X$ to the ramification of $\etcoh^2(X_{\overline{K}}, \mathbb{Q}_{\ell})$. Any smooth surface $X$ admits an integral proper flat model $\mathcal{X} \to \Spec(\Oring_K)$: by Nagata's compactification theorem (\cite[\href{https://stacks.math.columbia.edu/tag/0F3T}{Tag0F3T}]{stacks-project}) there exists a proper scheme $\mathcal{X} \to \Spec(\Oring_K)$ with an open immersion $X \xhookrightarrow{} \mathcal{X}$, and up to normalizing $\mathcal{X}$ and taking the closure of $X$ in $\mathcal{X}$ we get that $\mathcal{X}$ is reduced and dominates $\Spec(\Oring_K)$, so $\mathcal{X}$ is integral, proper and flat over $\Spec(\Oring_K)$. The special fiber $\mathcal{X}_k$ is called the reduction (mod $p$) of $X$.

One can ask in general how `badly' singular the reduction $\mathcal{X}_k$ is. When $\mathcal{X}$ is strictly semistable, i.e.\;$\mathcal{X}_k$ is a simple normal-crossings (snc) divisor in $\mathcal{X}$, Rapoport--Zink have shown (\cite{rapoport1982lokale}) that the nearby cycles spectral sequence abutting to $\etcoh^2(X_{\overline{K}}, \mathbb{Q}_{\ell})$ gives rise to a weight filtration on $\etcoh^2(X_{\overline{K}}, \mathbb{Q}_{\ell})$. Since the weight-monodromy conjecture holds for surfaces in mixed characteristic (\textit{loc.\;cit.}, Satz 2.13), the weight filtration coincides up to shift with the monodromy filtration induced from the $I_K$-action. This allows one to determine the full monodromy action on $\etcoh^2(X_{\overline{K}}, \mathbb{Q}_{\ell})$ by computing the $I_K$-action on the graded pieces of the weight filtration. These graded pieces are related to the cohomology of special fiber $\mathcal{X}_k$, which admits a nice combinatorial description by the snc property. For general semistable schemes, a cornerstone theorem concerning their ramification is the unipotency of the $I_K$-action (\cite{grothendieck1973groupes}).

In a somewhat orthogonal direction, one can suppose instead that the proper smooth surface $X$ admits an integral model $\mathcal{X}$ with $\mathcal{X}_k$ having isolated singularities of a certain kind. Typically $\mathcal{X}$ will not be semistable, and so the monodromy action of $I_K$ on $\etcoh^2(X_{\overline{K}}, \mathbb{Q}_{\ell})$ is at worst quasi-unipotent (\cite{grothendieck1973groupes}, Expos\'e I). One can ask whether $X$ admits a potentially semistable or even smooth model, and how the monodromy depends on the type of singularities of $\mathcal{X}_k$. The interest in surface singularities over $\overline{k} \simeq \fpbar$ here is twofold. Firstly, there has been recent progress on our understanding of positive and mixed characteristic singularities; see \cite{ma2021singularities} for a ring-theoretic approach using perfectoid techniques. Secondly, the relationship with monodromy actions has not been explored a lot beyond varieties acquiring ordinary double points, which have been classically studied in \cite{deligne2006groupes}. 

A recent result of D. Kim (\cite{kim2020ramification}) investigates the monodromy action on $\etcoh^2(X_{\overline{K}}, \mathbb{Q}_{\ell})$ related to an integral model $\mathcal{X}$ acquiring ordinary double points on the special fiber. Via an explicit calculation of a suitable semistable model of $X$ and the Rapoport--Zink spectral sequence of \cite{rapoport1982lokale}, it is shown that the monodromy action factors through $\Gal(L/K) \simeq \mathbb{Z}/2$, the Galois group of the unique ramified quadratic extension $L$ of $K$. Thus $I_K$ acts trivially on $\coh^2(X_{\overline{K}}, \mathbb{Q}_{\ell})$ or through a nontrivial quadratic character, dependent on a formal affine neighborhood of the singularity in $\mathcal{X}$. 

\subsection{Overview of our work.}In this paper, we establish a generalization of \cite{kim2020ramification} to a natural class of surface singularities over $\overline{k} = \fpbar$, that of \textit{rational double points}. These singularities are also known in the literature as simple (surface) singularities, ADE singularities, Kleinian or du Val singularities. Rational double points have the benefit of being amenable to mixed-characteristic extensions, while still being `mildly' singular with easily computable minimal resolutions. Moreover, there is a McKay correspondence-type relation between rational double points and Lie algebras: each class of a rational double point has a minimal resolution whose exceptional divisor possesses a dual graph isomorphic to a Dynkin diagram of $ADE$ type, and therefore such a class corresponds to a simple simply-laced Lie algebra (over $\mathbb{C}$ or $\fpbar$ for sufficiently large $p>0$; see Section \ref{subsection-rational-double-point-definitions} for details).

We aim to characterize the associated monodromy $I_K$-actions for models $\mathcal{X}$ of $X$ which degenerate into surfaces $\mathcal{X}_k$ having rational double point singularities. Instead of finding a semistable model of $X$, we investigate the possible formal affine neighborhoods of the singularities of $\mathcal{X}_k$ in $\mathcal{X}$ via their explicit miniversal deformations. These miniversal deformation equations are determined by $\mathcal{X}_{\Breve{K}}$, so throughout this paper (unless stated otherwise) we assume $K = \Breve{K}$ is the completion of its maximal unramified extension. In particular the special fiber $\mathcal{X}_k$ and its singularity live over $\overline{k} = k$.

We adapt classical results of Tjurina, Brieskorn and Slodowy (\cite{brieskorn1970singular}, \cite{Slodowy1980}, \cite{tyurina1970resolution}) regarding so-called \textit{simultaneous} resolutions of singularities to the mixed characteristic setting, yielding that $X$ admits a smooth model $\widetilde{\mathcal{X}}$ after a finite base-change $L/K$. Results of Artin (\cite{artin1974algebraic}) show that $\widetilde{\mathcal{X}}$ exists at worst in the category of algebraic spaces, and its fibers are (algebraic) surfaces. Dependent on a restriction on the characteristic $p$ (see Definition \ref{definition-of-good-primes}) and on a formal affine neighborhood of each singularity, we can make the monodromy $I_K$-action precise:

\begin{Thm}\label{main-theorem-of-paper} Suppose $(K, \Oring_K, k$) is the data of a complete DVR of mixed characteristic $(0,p)$ with $p$ sufficiently good, and let $X/K$ be a smooth proper surface with an integral model $\mathcal{X}$ over $\Oring_K$ so that $\mathcal{X}_k$ has a unique rational double point. Let $W$ be the Weyl group associated to the Dynkin diagram corresponding to the rational double point.

\begin{enumerate}[label=\emph{(\roman*)}]
    \item The monodromy $I_K$-action on $\emph{\textrm{H}}_{\emph{\textrm{\'et}}}^2(X_{\overline{K}}, \mathbb{Q}_{\ell})$ factors through a cyclic subgroup $\langle w \rangle$ of $W$, dependent up to conjugacy on a formal affine neighborhood of the singularity.
    
    \item The Weyl element $w$ acts on $\emph{\textrm{H}}_{\emph{\textrm{\'et}}}^2(X_{\overline{K}}, \mathbb{Q}_{\ell})$ via a Springer $W$-representation, and $X$ achieves good reduction after a ramified base-change of degree $\emph{\textrm{ord}}(w)$.
    
    \item In the case of $A_n$-singularities, for every Weyl conjugacy class there exists an element $w$ in the class and a model $\mathcal{X}$ degenerating to $A_n$-singular surface $\mathcal{X}_k$ so that $w$ acts as the monodromy operator on $\emph{\textrm{H}}_{\emph{\textrm{\'et}}}^2(\mathcal{X}_{\overline{K}}, \mathbb{Q}_{\ell})$.
    \end{enumerate}
\end{Thm}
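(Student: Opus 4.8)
The plan is to reduce the theorem to a careful study of the miniversal deformation of a rational double point over $\Oring_K$, using the mixed-characteristic Brieskorn–Slodowy theory developed earlier in the paper. First I would localize: since $\mathcal{X}_k$ has a unique rational double point $x$, the cohomology $\etcoh^2(X_{\overline K},\mathbb{Q}_\ell)$ receives its inertia action only through a formal (or henselian) affine neighborhood $U = \Spec(\widehat{\Oring}_{\mathcal{X},x})$ of the singularity, with the complement contributing nothing to the $I_K$-action — this is the analogue of Kim's reduction and should follow from proper base change plus the fact that away from $x$ the model $\mathcal{X}$ is smooth over $\Oring_K$, so nearby cycles are concentrated at $x$. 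Concretely, the vanishing cohomology $\etcoh^2_x$ supported at $x$ is identified with the reduced cohomology of the Milnor fiber, which by the $ADE$ classification is a direct sum of $\mathbb{Q}_\ell(-1)$'s indexed by the vertices of the Dynkin diagram, carrying the reflection representation of $W$ (after the relevant Tate twist).

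Next I would invoke the mixed-characteristic simultaneous resolution results: the chosen formal neighborhood $U \to \Spec(\Oring_K)$ is the pullback of the miniversal deformation $\mathcal{S} \to B$ of the singularity along a morphism $\Spec(\Oring_K) \to B$; after base change by the $W$-cover $\widetilde B \to B$ (the adjoint quotient map $\widetilde{\mathfrak{g}} \to \mathfrak{g}\git W \cong B$ in the Grothendieck–Brieskorn–Slodowy picture), one obtains a simultaneous resolution, hence a smooth model $\widetilde{\mathcal{X}}$ over the normalization $\Oring_L$ of $\Oring_K$ in the corresponding extension $L/K$. The extension $L/K$ is exactly the pullback of $\widetilde B \to B$, so $\Gal(L/K)$ is identified with the stabilizer/decomposition data inside $W$; because $p$ is good, $\widetilde B \to B$ is generically étale with Galois group $W$ and tamely ramified along the discriminant, so $I_K$ acts through a cyclic group $\langle w\rangle \subset W$ generated by (a conjugacy representative of) the local monodromy of the cover at the point $\Spec(\Oring_k) \to B$ — this gives (i). For (ii), once $X_L$ has good reduction the specialization/cospecialization isomorphism identifies $\etcoh^2(X_{\overline K},\mathbb{Q}_\ell)$ with $\etcoh^2(\widetilde{\mathcal{X}}_{\overline k},\mathbb{Q}_\ell)$, and the $W$-action produced by the Brieskorn–Slodowy construction on the exceptional locus is precisely (the restriction to $\langle w\rangle$ of) a Springer representation — here I would cite the comparison with Borho–MacPherson's construction via relative perversity promised in the abstract, and the order of $w$ equals the ramification degree $[L:K]$ by construction, so $X$ achieves good reduction after a ramified base change of degree $\mathrm{ord}(w)$.

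For part (iii), in the $A_n$ case I would argue constructively. The Weyl group is $S_{n+1}$, whose conjugacy classes are cycle types, i.e.\ partitions of $n+1$; for each partition I need to exhibit an $\Oring_K$-model of a surface whose formal singularity type realizes the corresponding local monodromy. Using the explicit miniversal deformation of the $A_n$-singularity $xy = z^{n+1}$, namely $xy = z^{n+1} + a_{n-1}z^{n-1} + \dots + a_0$ with $(a_0,\dots,a_{n-1})$ the coordinates on the base (here $a_n = 0$), I would choose a one-parameter family over $\Oring_K$, i.e.\ specify the $a_i$ as elements of $\Oring_K$, so that over $K$ the polynomial $t^{n+1} + a_{n-1}t^{n-1} + \dots + a_0$ factors with a prescribed pattern of root valuations / Galois action realizing the desired cycle type of the monodromy permutation of the roots; projectivizing and patching into a proper model (e.g.\ embedding the affine deformation into a global surface that is smooth away from the chosen singularity, which one can do since deforming only in a formal neighborhood does not obstruct properness by Artin approximation / the algebraic-space models of Artin cited above) produces the required $\mathcal{X}$. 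The main obstacle I expect is not the local Lie-theoretic bookkeeping but the globalization in (iii): ensuring that a chosen \emph{local} deformation over $\Oring_K$ actually arises from a \emph{proper flat} model $\mathcal{X}/\Oring_K$ with generic fiber smooth and no other special-fiber singularities, which requires either an explicit projective construction of an $A_n$-degeneration with controlled behavior or a careful gluing argument using Artin approximation to pass from the formal deformation to an algebraic (space) model — and then checking that the monodromy computed locally is genuinely the one acting on the global $\etcoh^2$, for which I would again lean on the localization in the first step and the weight-monodromy formalism for surfaces.
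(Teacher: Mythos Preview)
Your outline is correct and tracks the paper's argument closely: localize the inertia action to the formal neighborhood of the singularity, embed that neighborhood in the miniversal deformation realized as a Slodowy slice, pull back the Weyl cover $\h \to \h\git W$ to produce the extension $L/K$ over which a simultaneous resolution exists, and identify the resulting $\Gal(L/K)$-action on the cohomology of the exceptional divisor (the subregular Springer fiber) with the restriction of the reflection Springer representation; part (iii) is then the explicit $A_n$ computation via factorizations of $y^{n+1}+\cdots$ realizing prescribed cycle types.

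Two places where the paper does more work than your sketch suggests. First, the localization step does not follow from proper base change alone: the paper invokes Berkovich's comparison theorem for nearby cycles on formal schemes (Theorem~\ref{berkovich-theorem}) to show that $(\textrm{R}\Psi_{\mathcal{X}}\mathbb{Q}_\ell)_{x_s}$ depends only on $\widehat{\Oring}_{\mathcal{X},x_s}$, and then runs the nearby cycles spectral sequence to split off $\coh^2(\mathcal{X}_s,\mathbb{Q}_\ell)$ with trivial inertia. Second, the assertion you phrase as ``cite the comparison with Borho--MacPherson via relative perversity'' is the technical core: the paper uses Hansen--Scholze relative perversity to show $\textrm{R}\pi_*\mathbb{Q}_\ell[\dim\g]$ is a relative IC sheaf (Theorem~\ref{big-perv-theorem}), uses $\Psi$-goodness of the sliced nearby cycles over the higher-dimensional base $\h$ (Theorem~\ref{deligne-thm-psi-good}) to pass from Milnor tubes to Milnor fibers, and then runs Hotta's cospecialization argument (Proposition~\ref{springer-equals-monodromy-action}) to prove that the monodromy $W$-action coming from the cover genuinely coincides with the Springer $W$-action. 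Your worry about globalization in (iii) is fair but orthogonal: once Corollary~\ref{corollary-galois-monodromy-depends-on-formal-nbd} is in place, the local factorization computation (Proposition~\ref{proposition-An-degeneration-cycle-type-monodromy}) is what determines the monodromy on any proper model with that formal neighborhood, and the paper does not dwell on producing such a model explicitly.
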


In particular we recover the results of \cite{kim2020ramification}, which in this terminology deal with $A_1$-singularities (ordinary double points). The main novelty here is to bridge the gap between the characteristic zero and characteristic $p$ cases of monodromy actions on cohomology, since for big enough $p$ the classification of rational double points over $\mathbb{C}$ and $\fpbar$ is the same. In particular we find that, if the action of monodromy is \textit{tame}, then the monodromy operator acts in the same way as the complex monodromy operator for such singularities. As far as we know, there has not been a thorough investigation of the relationship between conjugacy classes of Weyl groups and degenerations to (rational double point or otherwise) singularities, even over $\mathbb{C}$. We hope to further explore this relationship in the future.

We remark here that Theorem \ref{main-theorem-of-paper} naturally generalizes to $\mathcal{X}_k$ having any finite number of rational double points, as we may choose disjoint formal affine neighborhoods at each singularity. In this case, the monodromy acts as a product of Weyl group elements, one for each singularity of a fixed Dynkin type. There are also possible applications of Theorem \ref{main-theorem-of-paper} to questions regarding monodromy characterizations of reductions of K3 surfaces in mixed characteristic, as part of the topic of derived equivalences of K3 surfaces (see \cite{hassett2017rational}); we hope to explore this direction in the future as well.

\subsection{Outline of the proof.} The main tools for the proof of Theorem \ref{main-theorem-of-paper} are a theorem of Berkovich (\cite{berkovich1996vanishing}) and the Grothendieck--Springer resolution. Via Berkovich's argument we may relate the nearby cycles of special fiber $\mathcal{X}_k$ to the ``formal'' nearby cycles of the completion of $\mathcal{X}_k$ at the singularity, and show that the monodromy action on $\coh^2(X_{\overline{K}}, \mathbb{Q}_{\ell})$ depends on a formal neighborhood of the singularity. We may then `embed' the local picture into the miniversal deformation of the singularity and use a mixed-characteristic incarnation of the Grothendieck--Springer resolution. 

Over $\mathbb{C}$, the Grothendieck--Springer resolution (or Grothendieck \textit{alteration}) $\pi: \gres \to \g$ furnishes a connection between simple Lie algebras $\g$ and simple surface singularities, which are exactly the (complex) RDPs. This connection was studied by Brieskorn in the '70s, following a conjecture of Grothendieck (\cite{brieskorn1970singular}), and full details were written up in (\cite{Slodowy1980}). The Grothendieck alteration may be thought of as an enhancement of simultaneous resolutions of surface singularities on the algebro-geometric side, and as a generalization of the Springer resolution $\widetilde{\mathcal{N}} \to \mathcal{N}$ on the representation-theoretic side. Concretely, one may realize RDPs as generic points in the subregular nilpotent orbit of the nilpotent cone $\mathcal{N}$ of $\g$. In turn, $\g$ is connected to the singularity via its Dynkin diagram, which is isomorphic to the dual graph of the exceptional divisor in the minimal resolution of the singularity. These considerations still make sense for Chevalley algebras over $\Spec(\Oring_K)$.

After a ramified base-change via the Weyl cover, we may simultaneously resolve all singularities appearing on the nilpotent cone, and in particular we obtain a resolution of the singularity in our model $\mathcal{X}$ by pulling back along an appropriate base-change on $\Spec(\Oring_K)$. Using a recent notion of \textit{relative perversity} from Hansen--Scholze (\cite{hansen2021relative}), we may describe the associated monodromy $W$-action of the Weyl cover as an action on the relatively perverse sheaf $\textrm{R}\pi_*\mathbb{Q}_{\ell}[\dim \g]$, which ends up being the Springer $W$-action as constructed by Borho--MacPherson (\cite{borho}). Along the way we also derive an $\ell$-adic instance of the Springer correspondence and relate Springer theory to the study of nearby cycles over a larger (i.e.\;$>1$-dimensional) mixed-characteristic base. This gives a $p$-adic picture analogous to the interaction between Springer theory and nearby cycles in the complex setting. The upshot is that, in sufficiently large characteristic $p$, we may describe explicitly the resulting ramification in the cohomology of $X$ by factoring the $I_K$-action through a restriction of the Springer $W$-representation associated with the singularity of $\mathcal{X}_k$. In the case of $A_n$-singularities we can furthermore check by hand that the monodromy $I_K$-action can lie in any Weyl conjugacy class.

\subsection{Connections with other work.}Our results are parallel to results of Shepherd-Barron (\cite{shepherd2001simple}, \cite{shepherd2021weyl}), who extended the Grothendieck--Springer resolution and some related results of Brieskorn to ``good'' characteristic, via a different method and in the context of groups instead of Lie algebras. Shepherd-Barron also extended arguments of Artin (\cite{artin1974algebraic}) regarding simultaneous resolutions to show that one also gets so-called Weyl covers for Brieskorn's resolutions in \textit{all} characteristics. In small characteristic, however, he notes that ``one does not have a formula for the action of any reflection of $W$'' on the relevant cohomology groups (\cite{shepherd2021weyl}, Introduction). Our approach is instead a natural extension of the methods described in (\cite{Slodowy1980}, \cite{slodowy1980four}), and at the cost of restricting the characteristic we may describe what the corresponding $W$-action must be.

\subsection{Organization of paper.} Section \ref{section-RDPS-in-mixed-char-and-deformations} contains some background on rational double points, simultaneous resolutions and (miniversal) deformations of isolated singularities in the mixed-characteristic setting. Section \ref{chevalley-section} covers the necessary Lie-theoretic notions including the description of the nilpotent cone as a fiber of the adjoint quotient and the Grothendieck--Springer resolution, in the context of Chevalley algebras over a mixed-characteristic DVR. Section \ref{section-integral-slices} extends results of Slodowy (\cite{Slodowy1980}) regarding the construction of suitable transverse slices to nilpotent orbits in Lie algebras. Finally Section \ref{section-monodromy-weyl-action} presents the main argument, involving tools from the study of perverse sheaves and nearby cycles to determine our desired monodromy action in terms of certain Springer representations of the Weyl group.

\subsection{Notations and conventions.} All rings are commutative with unity. $\Oring_K$ denotes a complete mixed-characteristic discrete valuation ring of type $(0,p)$, meaning fraction field $K$ has characteristic zero and residue field $k$ is \emph{algebraically closed} of characteristic $p>0$. Unless otherwise stated, we will assume $K = \Breve{K} = \widehat{K^{\textrm{unr}}}$ is the completed maximal unramified extension of $K$ in a fixed separable closure $\overline{K}$, so that $\Oring_K$ may be identified with the Witt vectors $W(k)$ of $k$. The maximal tamely ramified extension of $K$ is denoted by $K^{\textrm{tr}}$. The inertia subgroup of $\Gal_K$ is $I_K = \Gal(\overline{K}/K^{\textrm{unr}})$, the pro-$p$ wild inertia subgroup of $I_K$ is $P = \Gal(\overline{K}/K^{\textrm{tr}})$ and the tame inertia is defined as $I/P$, which is topologically generated by one element.

On the geometric side we define $(\Spec(\Oring_K), \Spec(K),\Spec(k)) = (S,\eta,s)$ to be the data of a (complete) \emph{trait}, with generic point $\eta$ and closed point $s = \overline{s}$. Separable closures are denoted with a bar, e.g.\;$\etabar = \Spec(\overline{K})$. Residue fields of points $x \to X$ of a scheme $X$ are denoted by $k(x)$, e.g. $\overline{K} = k(\etabar)$. 

On the Lie-theoretic side, $\g, \borel, \h$ will denote respectively a semisimple Lie algebra along with a choice of Borel and Cartan subalgebra, and $W = W(\g)$ will denote the Weyl group associated to (the Dynkin diagram of) $\g$. The Coxeter number of $\g$ is denoted by $\textrm{Cox}(\g)$. For an affine $S$-scheme $X$ with an action of an $S$-group scheme $G$, $X/\!\!/G$ denotes the affine GIT quotient with coordinate ring $\Oring_S[X]^G$.

Unless otherwise stated, all cohomologies $\coh^i$ are \'etale cohomologies $\etcoh^i$.

\vspace{1em}

\noindent \textbf{Acknowledgements.} I thank my advisor Matthew Emerton for his support and ideas which hugely influenced this work, as well as for his extensive comments on prior drafts. I would also like to thank Alexander Beilinson, Kazuya Kato, Eduard Looijenga, Antoni Rangachev and Zhiwei Yun for helpful conversations and suggestions.

\section{Rational double points and their miniversal deformations}\label{section-RDPS-in-mixed-char-and-deformations} This section gives some relevant background on rational double points and simultaneous resolutions. Sections \ref{subsection-forma-deformations-of-sings}-\ref{subsection-miniversal-defs-in-mixed-char} describe explicitly the miniversal deformations of these singularities. While the results are known to the experts, the associated deformation problems in this setting have a mixed-characteristic base $\Oring_K = W(k)$ instead of $k$, so for completeness we develop the mixed characteristic case here.

\subsection{Rational double points.}\label{subsection-rational-double-point-definitions} Let $X$ be an algebraic surface over an algebraically closed field $k$. A \textit{rational} singularity $x \in X(k)$ is a normal singularity (i.e. $\Oring_{X,x}$ normal) for which there exists a resolution $f: \widetilde{X} \to X$ satisfying $\textrm{R}^if_*\Oring_{\widetilde{X}} = 0$ for $i \geq 1$.

By Zariski's Main Theorem, the reduced exceptional divisor $E=f^{-1}(x)_{\textrm{red}}$ is a union of smooth rational curves $E_i$, elucidating the term `rational singularity' (see \cite{buadescu2001algebraic}, Lemma 3.8). Rational double points are a particular class of rational singularities pinning down the self-intersections of the exceptional divisors $E_i$:

\begin{Def} A normal surface singularity $(X,x)$ is a \textit{rational double point} (henceforth RDP) if its minimal resolution $f: \widetilde{X} \longrightarrow X$ has reduced exceptional divisor $E = \cup E_i$ so that all $E_i$ are smooth rational curves with self-intersection $E_i^2 = -2$.\end{Def}
There are various equivalent characterizations of RDPs; for example, they are surface singularities $(X,x)$ whose Zariski tangent space $\mathfrak{m}_x/\mathfrak{m}_x^2$ has dimension 3 and $\Oring_{X,x}$ has multiplicity 2 (hence the term `double points'). The tangent space dimension here implies RDPs are regularly embedded in codimension 1 and are hence hypersurface singularities. Since smooth points have local rings of multiplicity 1 and tangent spaces of dimension 2, this characterization makes apparent that RDPs are the `mildest' surface singularities one can ask for. 

Another property RDPs enjoy is that they are \textit{absolutely isolated}, meaning they can be resolved after a finite number of blowups at points; in fact, each blowup yields an RDP of different type until they are all resolved (see \cite[\href{https://stacks.math.columbia.edu/tag/0BGB}{Tag 0BGB}]{stacks-project}).

A crucial fact about RDPs is that in appropriate characteristic (see Definition \ref{definition-of-good-primes} below) they are \textit{taut}, i.e. completely determined up to isomorphism by the dual graph of their minimal resolution. This is classical over $\mathbb{C}$, and has been extended to positive characteristic by \cite{artin1977coverings}:

\begin{Thm}\label{classification-of-rdps}
    Let $X$ be a projective surface over $k$.
    \begin{enumerate}[label=\emph{(\roman*)}]
    \item \emph{(\cite{buadescu2001algebraic}, Thm. 3.32, \cite{artin1966isolated})} If $E$ is a connected curve on $X$ with smooth rational curve components $E_i$ so that $E_i^2 = -2$, then the only possible dual graphs for $E$ are the Dynkin diagrams $A_n, D_n, E_6, E_7$ and $E_8$.
    \item \emph{(\cite{artin1977coverings})} If $x \in X(k)$ is an RDP and $\textrm{\emph{char}}(k)$ is very good (see \emph{Definition \ref{definition-of-good-primes}}) then up to analytic isomorphism $\widehat{\Oring}_{X,x} \simeq k[\![x,y,z]\!]/(f(x,y,z))$, where $f(x,y,z)$ and the dual graph of the minimal resolution of $x$ are given by the table below: \begin{center}\begin{tabularx}{0.8\textwidth} { 
  | >{\raggedright\arraybackslash}X 
  | >{\centering\arraybackslash}X 
  | >{\raggedleft\arraybackslash}X | }
 \hline 
 $f(x,y,z)$ & \emph{Dual graph} \\
 \hline
 $z^2 + x^2 + y^{n+1}$  & $A_n$  \\
 
 $z^2 + x^2y + y^{n-1}$ & $D_n$ \\
 
 $z^2 + x^3 + y^4$ & $E_6$ \\
 
 $z^2 +x^3 + xy^3$ & $E_7$ \\
 
 $z^2 + x^3 + y^5$ & $E_8$ \\
\hline
\end{tabularx}
\end{center}  
    \end{enumerate}
\end{Thm}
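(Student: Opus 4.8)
The plan is to treat the two parts by quite different means: part (i) reduces to the combinatorics of positive-definite integral quadratic forms, while in part (ii) the hypothesis on $\mathrm{char}(k)$ is what makes the local equation rigid, through finite determinacy. For (i), I would first invoke that the intersection matrix $M=(E_i\cdot E_j)$ of a connected configuration of curves contracted by a resolution is negative definite (the Mumford--Artin contractibility criterion, or the Hodge index theorem). Put $N=-M$, so $N_{ii}=-E_i^2=2$ and $N_{ij}=-m_{ij}\le 0$ for $i\ne j$, with $m_{ij}\ge 0$ the intersection number of $E_i,E_j$, and $N$ positive definite. Positivity of the $2\times 2$ principal minors, $4-m_{ij}^2>0$, forces $m_{ij}\in\{0,1\}$, so the dual graph $\Gamma$ is a simple graph; and a cycle on vertices $i_1,\dots,i_r$ would make $v=e_{i_1}+\cdots+e_{i_r}$ satisfy $v^{\top}Nv=2r-2r=0$, impossible, so $\Gamma$ is a tree. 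Feeding each Euclidean diagram --- $\widetilde{D}_4$ (i.e.\ any vertex of degree $\ge 4$), $\widetilde{D}_n$, $\widetilde{E}_6$, $\widetilde{E}_7$, $\widetilde{E}_8$, with its canonical positive isotropic labelling --- into the same test rules out every remaining configuration, leaving exactly the chains $A_n$ and the trees with one trivalent vertex whose arms have lengths $(1,1,n-2)$, $(1,2,2)$, $(1,2,3)$, $(1,2,4)$, i.e.\ $A_n,D_n,E_6,E_7,E_8$. This is the classical classification of positive-definite simply-laced Cartan matrices, and I would present it in that packaging.

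\textbf{Part (ii), reduction to a hypersurface.} The starting point is Artin's theory of rational surface singularities. An RDP is rational with $-Z^2=2$ for the fundamental cycle $Z$, so Artin's formulas give $\mathrm{mult}(\Oring_{X,x})=-Z^2=2$ and $\dim_k\mathfrak{m}_x/\mathfrak{m}_x^2=-Z^2+1=3$; hence $\widehat{\Oring}_{X,x}\cong k[\![x,y,z]\!]/(f)$ with $f\in\mathfrak{m}^2$ whose degree-$2$ part $q$ is a nonzero ternary quadratic form. Since $p\ne 2$ (part of ``very good''), I diagonalize $q$ and apply the formal splitting lemma: if $\mathrm{rk}\,q=3$ then $f\sim x^2+y^2+z^2$ ($A_1$); if $\mathrm{rk}\,q=2$ then $f\sim x^2+y^2+h(z)$ with $h\in\mathfrak{m}^3\subset k[\![z]\!]$, and being an RDP forces $\mathrm{ord}(h)=:n+1$ finite, so --- using $p\nmid n+1$, again part of ``very good'' --- rescaling gives $f\sim x^2+y^2+z^{n+1}$ ($A_n$). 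The serious case is $\mathrm{rk}\,q=1$, i.e.\ $f\sim z^2+g(x,y)$ with $g\in\mathfrak{m}^3\subset k[\![x,y]\!]$: then one must classify the plane curve germs $g$ for which the double cover $\{z^2=g\}$ is an RDP, and comparing the minimal resolution of this cover with an embedded resolution of the branch curve $\{g=0\}$ (whose dual graph, by part (i), is of type $D$ or $E$) identifies these with the $ADE$ plane curve germs.

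\textbf{Part (ii), classification by jets.} I would classify such $g$ by successive jets. With $p\ne 2,3$ the binary cubic $3$-jet of $g$ over $\fpbar$ is equivalent to $x^2y+y^3$, $x^2y$, or $x^3$ (a vanishing $3$-jet cannot occur for a simple germ). The first case is $3$-determined, giving $D_4$. In the second, absorbing the $x$-divisible higher terms into $x^2y$ (valid since $p\ne2$) and rescaling yields $x^2y+y^{n-1}$ with $n\ge 5$ read off from the surviving order in $y$: type $D_n$. In the third case, a Tschirnhaus substitution in $x$ kills the $x^2$-coefficient --- this is where $p\ne 3$ enters --- putting $g$ in the form $x^3+x\,a(y)+b(y)$ with $a\in\mathfrak{m}^3,\ b\in\mathfrak{m}^4\subset k[\![y]\!]$, after which one reads off $E_6$ ($x^3+y^4$), $E_7$ ($4$-jet $x^3+xy^3$), or $E_8$ ($x^3+y^5$, where clearing the quintic and the ensuing coordinate changes requires $p\ne 5$); any further degeneration pushes the Milnor number past $8$ and out of the $ADE$ range. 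This exhausts the five rows of the table.

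\textbf{Main obstacle.} The only non-formal ingredient --- and the one responsible for the characteristic hypothesis --- is finite determinacy in positive characteristic: every ``$\sim$'' above must be realized by an automorphism of $k[\![x,y]\!]$ or $k[\![x,y,z]\!]$, produced from a bound of the form $\mathfrak{m}^{k+1}\subseteq\mathfrak{m}^2\cdot J(g)$ by integrating a vector field along a homotopy of germs, and carrying this out for the $ADE$ families requires inverting exactly the integers appearing as exponents and as coefficients of partial derivatives --- $2$ and $3$ in every case, $5$ for $E_8$, and $n+1$ for $A_n$ --- which is precisely the condition that $p$ be very good for the corresponding Dynkin type. So the overall assembly is: (a) reduce to a hypersurface of multiplicity $2$ via Artin's rational-singularity formulas; (b) split off the nondegenerate quadratic directions by the formal splitting lemma; (c) classify the residual plane curve germ by jets; (d) legitimize every normalization in (b)--(c) by the positive-characteristic finite-determinacy theory. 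It is step (d), together with the bookkeeping of which integers must be inverted and the matching of those conditions to the notion of ``very good prime'', that I expect to be the main obstacle; here I would lean on the arguments of Artin (\cite{artin1977coverings}) and on the positive-characteristic simple-singularity results of Greuel and Kr\"oning.
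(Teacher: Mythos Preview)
The paper does not give its own proof of this theorem: both parts are stated as citations to the literature (part (i) to B\u{a}descu and Artin, part (ii) to Artin's coverings paper), with no argument supplied in the text. So there is nothing to compare against except those references.

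Your outline is correct and is essentially the standard route taken in those sources. For (i), the reduction to negative-definite intersection matrices and the combinatorial elimination of affine diagrams is exactly what one finds in B\u{a}descu. For (ii), your sequence --- Artin's formulas for multiplicity and embedding dimension, the formal splitting lemma on the quadratic part, then jet-by-jet classification of the residual plane curve germ with finite determinacy supplying the normal forms --- is the approach of Artin and of Greuel--Kr\"oning, and your accounting of which primes must be inverted at each step matches the ``very good'' condition. One small point: in the $D_n$ case you should also note that the change of variables absorbing higher $y$-powers into $y^{n-1}$ requires $p\nmid n-1$ in general, but this is already subsumed by the good-prime condition $p\neq 2$ for $D_n$ together with finite determinacy at the relevant order; just be sure the bookkeeping is airtight there.
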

A polynomial $f(x,y,z)$ defining an RDP is usually called a `normal form'. Since the notion of `good characteristic' will be ubiquitous in this paper, we record the definition here.

\begin{Def}\label{definition-of-good-primes} Given a semisimple simply-laced Lie algebra $\g$ over $k$ of characteristic $p> 0$, so that its Dynkin diagram has components $A_n, D_n$ or $E_n$, we say $p$ is
\begin{enumerate}[label=(\alph*)]

\item \textit{good} with respect to $\g$ if $p \neq 2$ if $\g$ has any $D_n$ components, $p \neq 2,3$ if $\g$ has any $E_6, E_7$ components and $p \neq 2,3,5$ if $\g$ has any $E_8$ components (note there are no $p$ restrictions for $A_n$ components).

\item \textit{very good} with respect to $\g$ if it is good and for any $A_n$ component we have $p \nmid n+1$

\item \textit{sufficiently good} with respect to $\g$ if $p \nmid \lvert W(\g)\rvert$ i.e. it does not divide the order of the associated Weyl group.\end{enumerate}
\end{Def}

For simple Lie algebras it is automatic that sufficiently good $\Rightarrow$ very good $\Rightarrow$ good. We will revisit this definition in Remark \ref{remark-on-good-and-torsion-primes} for a more intuitive explanation of these restrictions.

Theorem \ref{classification-of-rdps} identifies RDPs with the corresponding simple simply-laced Lie algebras of the specified Dynkin type, so we can refer to $p$ being (very) good for the singularity, the Lie algebra or the Dynkin diagram interchangeably. Theorem \ref{classification-of-rdps} is no longer true when $p$ is \textit{not} good, and there is more than one equation describing an RDP with the same dual graph; see (\cite{artin1977coverings}) for details.

\begin{Rem} We have the following byproduct of the proof of Theorem \ref{classification-of-rdps}. We may resolve RDP $x \in X(k)$ by iterated blowups along points, and at each step we get an RDP of different Dynkin type. All the Dynkin diagrams corresponding to each RDP appearing in the resolution process are subdiagrams of the Dynkin diagram of $x$. Conversely, all subdiagrams of the Dynkin diagram of $x$ correspond to these `intermediate' RDPs during the iterated blowup process.\end{Rem}

\subsection{Simultaneous resolutions.}\label{section-simultaneous-resolutions} We next describe a particularly rare notion of resolving singularities of schemes in families. In order to be consistent with \cite{artin1974algebraic}, we enlarge the category of schemes to include separated algebraic spaces, though in practice we will only resolve schemes.

\begin{Def}\label{definition-of-simultaneous-resolution} Let $f: \mathcal{X}\longrightarrow S$ be a finite-type morphism of separated algebraic spaces. A \textit{simultaneous resolution} is a commutative diagram\vspace{-1em}
\begin{center}
    \begin{tikzcd}
        \widetilde{\mathcal{X}} \arrow[d, "\widetilde{f}"] \arrow[r, "\pi"] & \mathcal{X} \arrow[d, "f"] \\ \widetilde{S} \arrow[r,"\psi"] & S
    \end{tikzcd}
\end{center}
\noindent where $\widetilde{f}$ is smooth, $\pi$ is proper, $\psi$ is a finite surjection and for all geometric points $\widetilde{s} \to \widetilde{S}$ with image $s = \psi(\widetilde{s}) \to S$, the induced morphism on the fibers \[ \pi_s:\widetilde{\mathcal{X}}_{\widetilde{s}} \longrightarrow \mathcal{X}_{s}\times_S \widetilde{S}\]
\noindent is a resolution of the singularities of $\mathcal{X}_{s,\textrm{red}}$. If $S = \Spec(k)$ is a point then we recover the usual notion of a resolution $\pi: \widetilde{\mathcal{X}} \to \mathcal{X}$ of algebraic spaces over $k$ (\cite[\href{https://stacks.math.columbia.edu/tag/0BHV}{Tag 0BHV}]{stacks-project}).
\end{Def}

\begin{Rem} One generally needs to impose some assumptions on the fibers, e.g. geometrically reduced and excellent, in order for resolutions of singularities to exist in the first place. There are also explicit examples of projective maps $\mathcal{X} \to S$ whose simultaneous resolution $\widetilde{\mathcal{X}}$ is \textit{not} a scheme (see Remark \ref{artin-example-of-non-scheme-resolution}), so in general one needs to consider algebraic spaces. In practice, however, $f$ will be a morphism of henselianized (localized) schemes and the schematic fibers will be equipped with the reduced-induced subscheme structure, so we will not need to mention the above assumptions.
\end{Rem}

\begin{ex} We expound on why simultaneous resolutions rarely exist with the following examples (cf.\;\cite{kollár_mori_1998}, Ex.\;4.27). Assume that $f: \mathcal{X} \longrightarrow S$ is flat, where $S$ is a smooth curve with a fixed closed point $s$, and that the generic fiber of $f$ is smooth..
    \begin{enumerate}[label=(\roman*)]
    \item If $f$ is a family of curves so that $\mathcal{X}_s$ is a reduced singular curve then a simultaneous resolution does not exist; any resolution will introduce an exceptional locus $E$ in $\widetilde{\mathcal{X}}$ and $E \cap \widetilde{\mathcal{X}}_s$ is a singular set in $\widetilde{\mathcal{X}}_s$.
    \item If $f$ is a general family of varieties so that $\mathcal{X}_s$ has dimension $\geq 3$ and is a complete intersection with (at worst) isolated singularities, then $\mathcal{X}$ is $\mathbb{Q}$-factorial (\cite{grothendieck1968sga}, XI.3.13.(ii)) and hence it is well-known that $\mathcal{X}$ admits no small resolutions. Therefore the exceptional locus will be a divisor and $\widetilde{\mathcal{X}}_s$ cannot be smooth as in the previous example.
    \end{enumerate}
\end{ex}

There are more examples of obstructions to constructing simultaneous resolutions and it is difficult to come up with sufficiency conditions for their existence. In light of this, it is a surprising theorem that, after a ramified base-change, simultaneous resolutions \textit{do} exist for families of surfaces acquiring RDP singularities. The following was independently discovered by Brieskorn and Tjurina, then generalized by Brieskorn in the complex setting and by Artin in the algebraic setting:

\begin{Thm}[\cite{brieskorn1966resolution},\cite{tyurina1970resolution}, \cite{artin1974algebraic}] Let $f: \mathcal{X} \longrightarrow S$ be a flat morphism of schemes and $s$ a closed point of $S$ so that $\mathcal{X}_s$ is a surface with a unique RDP $x \in \mathcal{X}_s$. Let $\widehat{f}: \mathcal{X}_{(x)} \longrightarrow S_{(s)}$ be the induced map of the associated henselianized schemes $\mathcal{X}_{(x)}$ and $S_{(s)}$. Then there exists a finite surjection $\psi:\widetilde{S} \longrightarrow S_{(s)}$ of henselian schemes, branched over a Cartier divisor $\Delta \subset S_{(s)}$, so that $\widehat{f}$ admits a simultaneous resolution $\widetilde{f}: \widetilde{\mathcal{X}} \longrightarrow \widetilde{S}$ fitting into the following diagram:
\begin{center}
    \begin{tikzcd}
        \widetilde{\mathcal{X}} \arrow[dr, "\widetilde{f}"] \arrow[r, "\pi_1"] & \mathcal{X}_{(x)}\times_S\widetilde{S}\arrow[r, "\pi_2"]\arrow[d]  \arrow[dr, phantom, "\square"] & X \arrow[d, "\widehat{f}"] \\ & \widetilde{S} \arrow[r, "\psi"] & S_{(s)}
    \end{tikzcd}
\end{center}
Here $\widetilde{f}$ is smooth and $\pi_1, \pi_2$ form the Stein factorization of proper map $\pi = \pi_2 \circ \pi_1$. Cartier divisor $\Delta$ is called the discriminant divisor (or the ramification locus) of map $\widehat{f}$.    
\end{Thm}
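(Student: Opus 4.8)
The plan is to reduce to the complex setting via the explicit miniversal deformations of RDPs, which (by Sections~\ref{subsection-forma-deformations-of-sings}--\ref{subsection-miniversal-defs-in-mixed-char}) have the same form over $\Oring_K$ as over $\mathbb{C}$, and then invoke the theorem of Brieskorn--Tjurina--Artin in its original (equicharacteristic or complex) incarnation before spreading out. First I would replace $S$ by the henselian local scheme $S_{(s)}$ and $\mathcal{X}$ by $\mathcal{X}_{(x)}$; since the RDP $x$ is a hypersurface singularity cut out by a normal form $f(x,y,z)$ from Theorem~\ref{classification-of-rdps}, the formal (hence henselian, by Artin approximation) completion of $\mathcal{X}_{(x)} \to S_{(s)}$ along $x$ can be classified by a morphism $S_{(s)} \to \mathrm{Def}(x)$ to the base of the miniversal deformation: every deformation of an RDP is induced from the miniversal one. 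So it suffices to construct the simultaneous resolution over the miniversal base itself and pull back.

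The key steps, in order: (1) Identify the miniversal deformation of the RDP of type $R \in \{A_n, D_n, E_6, E_7, E_8\}$ with the restriction of the adjoint quotient $\chi: \g \to \h/W = \mathrm{Spec}\,\Oring_K[\h]^W$ to a transverse slice $\Slice$ through a subregular nilpotent element, as in Brieskorn--Slodowy; here $\g$ is the Chevalley algebra over $\Oring_K$ of type $R$, which makes sense since $p$ is taken good. This is exactly what Sections~\ref{chevalley-section} and~\ref{section-integral-slices} set up. (2) Over the Weyl cover $\h \to \h/W$, pull back the Grothendieck--Springer alteration $\pi: \gres \to \g$ restricted to $\Slice$; the base change $\Slice \times_{\h/W} \h \to \h$ is finite surjective, branched over the discriminant $\Delta$ (the image of the non-regular locus, a Cartier divisor since $\h/W$ is smooth and $\Delta$ is a hypersurface), and the total space of the pullback of $\gres$ is smooth over $\h$ with fibers resolving the RDP fibers --- this is the integral analogue of Slodowy's theorem, available because all the cited constructions work over $\Oring_K$ for $p$ good. (3) Pull this diagram back along $S_{(s)} \to \mathrm{Def}(x) \cong (\Slice$-base$)$ to get $\widetilde{S} \to S_{(s)}$ and $\widetilde{\mathcal{X}} \to \widetilde{S}$; set $\widetilde{\mathcal{X}}$ to be the Stein factorization of the composite proper map, giving $\pi_1$ and $\pi_2$, with $\widetilde{f}$ smooth since smoothness is preserved under base change. (4) Check the fiberwise resolution condition: over a geometric point $\widetilde{s}$ the map $\widetilde{\mathcal{X}}_{\widetilde s} \to \mathcal{X}_{s}\times_S \widetilde{S}$ is proper, birational, and has smooth source, hence is a resolution of $\mathcal{X}_{s,\mathrm{red}}$; this uses that the fibers of $\mathcal{X}$ over $S$ are surfaces with the reduced structure, as arranged.

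The main obstacle is Step~(1)--(2): making precise that the \emph{mixed-characteristic} miniversal deformation of the RDP is genuinely pro-represented by (the completion of) Slodowy's slice, and that the Grothendieck--Springer resolution over $\Oring_K$ still flattens/resolves after the Weyl base change. Over $\mathbb{C}$ this is Brieskorn--Slodowy, and the equicharacteristic-$p$ (good $p$) case is essentially in Shepherd-Barron's work; the point here is to run Slodowy's argument uniformly over the trait $S = \Spec \Oring_K$, which requires knowing that $\h/W$, $\g$, the slice $\Slice$, and the alteration all behave well integrally --- smoothness of $\h/W$ over $\Oring_K$, flatness of $\chi$, and the transversality of $\Slice$ --- precisely the content assembled in Sections~\ref{chevalley-section} and~\ref{section-integral-slices}. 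Granting those, the existence of $\psi$ branched over the Cartier divisor $\Delta$ and the simultaneous resolution follow formally by base change and Stein factorization; the algebraic-space caveat in Artin's version enters only if one wants a global (non-henselian) statement, which is not needed here.
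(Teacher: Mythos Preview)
The paper does not prove this theorem: it is stated as a background result with citations to Brieskorn, Tjurina, and Artin, and no proof is supplied in the text. So there is no ``paper's own proof'' to compare against.

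That said, your proposal is essentially the Brieskorn--Slodowy approach (identify the miniversal deformation with a transverse slice to the subregular orbit and resolve via the Grothendieck--Springer alteration after the Weyl base change), which is indeed one of the original routes to this theorem and is precisely the machinery the paper develops in Sections~\ref{chevalley-section}--\ref{section-integral-slices} for its own purposes in mixed characteristic. In that sense you are anticipating the paper's later constructions rather than filling in a proof it omits. One caveat: the theorem as stated is for a general base $S$ and does not restrict the residue characteristic, whereas your Chevalley-algebra argument needs $p$ good (or zero) for the slice theory and the identification of the miniversal base with $\h\git W$ to go through. Artin's original contribution in \cite{artin1974algebraic} handles the general algebraic case via his representability criteria for the simultaneous-resolution functor, a genuinely different method that does not pass through Lie theory and imposes no characteristic hypothesis; your sketch recovers the Brieskorn--Slodowy portion of the citation but not Artin's.
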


\begin{Rem} In the complex-analytic category one can replace the henselizations with $\widehat{f}$ being a map of \textit{germs} of singularities $(\mathcal{X}, x) \to (S,s)$.
\end{Rem}

\begin{Rem} Suppose $\mathcal{X}, S$ are complex schemes and $S$ is affine. Brieskorn's theorems show that the Galois group of the finite cover $\widetilde{S} \to S$ is the Weyl group $W$ of the Dynkin diagram associated to the RDP in $\mathcal{X}_s$ (cf. Theorem \ref{classification-of-rdps}). Moreover, the pullback $\psi^*\Delta$ of the discriminant divisor is a hyperplane arrangement in affine space $\widetilde{S}$, determined up to sign by the root system of the Dynkin diagram. These results do \textit{not} follow from the algebraic methods of Artin (\cite{artin1974algebraic}), but later results of Shepherd-Barron (\cite{shepherd2021weyl}) showed that a ``suitable polarization'' of Artin's simultaneous resolution functor $\textrm{Res}_{\mathcal{X}/S}$ yields that $\widetilde{S} \to S$ is a so-called Weyl cover in the algebraic setting too.
\end{Rem}

\begin{ex}\label{example-of-simult-resolution-for-ODP} Let $S = \Spec(\Oring_K)$ be a strictly henselian trait with residue characteristic $p \neq 2$, uniformizer $\pi$, closed point $s = \Spec(k)$ and generic point $\eta = \Spec(K)$. Let $\mathcal{X} = V(x^2+y^2+z^2-\pi^N) \subseteq \mathbb{A}^3_S$ $(N \geq 1)$ be a threefold, flat over $S$, with singular fiber $\mathcal{X}_s$ over $k$; note the special fiber $\mathcal{X}_s = V(x^2+y^2+z^2) \subseteq \mathbb{A}^3_k$ has an ordinary double point at the origin.

\noindent\textbf{Suppose $N = 2n$ is even}. Blowing up $\mathcal{X}$ along ideals $(x,z\pm \pi^n)$ gives two different \textit{small} resolutions $\mathcal{X}_{\pm} \longrightarrow \mathcal{X}$; smallness here is a consequence of both ideals defining non-Cartier Weil divisors on $\mathcal{X}$. Both $\mathcal{X}_-, \mathcal{X}_+$ are smooth over $S$ with generic fibers $(\mathcal{X}_\pm)_{\eta} \simeq \mathcal{X}_{\eta}$ over $K$. Viewing $\mathcal{X}_s$ as a hyperplane section $V(\pi^N) \cap X$ and setting $u = z-\pi^n$, the universal property of blowing up gives  
\[(\mathcal{X}_+)_s \simeq B\ell_{(x,u)}(\mathcal{X})\times_\mathcal{X} V(x^2+y^2+u^2) = B\ell_{(x,y,u)}V(x^2+y^2+u^2)\]
i.e. $(\mathcal{X}_+)_s \longrightarrow \mathcal{X}_s$ is the minimal resolution of $\mathcal{X}_{s, \textrm{red}}$, since a single blowup resolves the ordinary double point. In this situation there exists a birational map $f: \mathcal{X}_-\dashrightarrow \mathcal{X}_+$ induced by $(x,z-\pi^n) \to (x,z+\pi^n)$; it is known as the \textit{Atiyah flop}.

\noindent\textbf{Suppose now $N=2n+1$ is odd}. In this case $\mathcal{X}$ does not admit a simultaneous resolution over $S$ (e.g. when $N=1$, the obstruction is the smoothness of total space $\mathcal{X}$). To repeat the arguments of the previous paragraph we pass to the unique ramified quadratic extension $L = K(\sqrt{\pi})$ of $K$ so that $\mathcal{X}_L = V(x^2+y^2+z^2+\pi_L^{2N})$ now admits small resolutions by blowing up along ideals $(x,z\pm \pi^N)$. We thus have a simultaneous resolution after base-change $\Spec(\Oring_L) \to \Spec(\Oring_K)$, ramified over the closed point $s$.
\end{ex}

\begin{Rem}\label{artin-example-of-non-scheme-resolution} Simultaneous resolutions need not exist in the category of schemes when one considers more `global' contexts of simultaneously resolving projective families. As Artin's example in (\cite{artin1974algebraic}, p. \!330) shows, if $\mathcal{X} \to \Spec(\Oring_K)$ is a projective family so that the generic fiber $\mathcal{X}_{\eta}$ is a quartic K3 surface of geometric Picard rank 1 (such K3 surfaces exist; see \cite{van2007k3}, Thm. 3.1), and the special fiber $X_s$ a nodal quartic, then localizing at the node we obtain a situation like that of Example \ref{example-of-simult-resolution-for-ODP} and neither $\mathcal{X}_-$ nor $\mathcal{X}_+$ are schemes. By (\cite{artin1974algebraic}, Thm. \!1) however, simultaneous resolutions of surfaces will be at worst algebraic spaces whose fibers are schemes, since this is true for any smooth $2$-dimensional algebraic space.

A second important point is that simultaneous resolutions are generally non-unique, e.g.\! Example \ref{example-of-simult-resolution-for-ODP} yields two non-isomorphic resolutions $X_-, X_+$ related by a flop $X_- \dashrightarrow X_+$.
\end{Rem}

\subsection{Formal deformations of singularities.}\label{subsection-forma-deformations-of-sings} In what follows, let $S = \Spec(\Oring_K)$ be a complete DVR with algebraically closed residue field $k$ and uniformizer $\pi$. Let $\mathcal{X} \to S$ be a flat proper surface with special fiber $\mathcal{X}_k$ containing a unique RDP $x$. We may choose affine coordinates so that the local ring of the singularity has the form

\[R_0 = \widehat{\Oring}_{\mathcal{X}_k,x} \simeq \frac{k[\![x,y,z]\!]}{f(x,y,z)}\]
\noindent with $f(x,y,z) = 0$ the normal form of the rational double point (\ref{classification-of-rdps}). We aim to describe the possibilities of what $\widehat{\Oring}_{\mathcal{X},x}$, the completed local ring of the \textit{model} $\mathcal{X}$ at point $x$ (viewed as a point in $\mathcal{X}$) can look like, by interpreting $\widehat{\Oring}_{\mathcal{X},x}$ as an appropriate deformation of the singularity. We next clarify the notion of deformations we will use.

\begin{Note}\label{notation-deformation} Write $V_0 = \Spec(R_0)$ for the affine scheme of the singularity, and $\Art_k$ resp. $\CArt_k$ for the category of artinian local, resp. complete noetherian local rings with residue field $k$. Both types of rings become canonically $W(k)$-algebras via the unique lift of the natural surjection $W(k) \twoheadrightarrow k$. For $R$ in $\CArt_k$ we set \[R_n = R/\mathfrak{m}_R^{n+1}, \; \; \overline{R} = R/\pi R, \; \; \overline{R}_n = R/(\pi, \mathfrak{m}_R^{n+1}) \simeq \overline{R}/\mathfrak{m}_{\overline{R}}^{n+1}\] 

\noindent so that $R_n$ is in $\Art_k$ and $\overline{R}$ resp. $\overline{R}_n$ are complete noetherian local, resp. artinian local $k$-algebras; note $R \simeq \varprojlim R_n$ and $\overline{R} \simeq \varprojlim \overline{R}_n$. We will also call \[\Tgt(R) = \mathfrak{m}_R/\mathfrak{m}_R^2, \; \; \; \Tgt(R)_{\textrm{red}} = \mathfrak{m}_R/(\pi, \mathfrak{m}_R^2) \simeq \mathfrak{m}_{\overline{R}}/\mathfrak{m}_{\overline{R}}^2\]
\noindent the cotangent, resp. reduced cotangent spaces of $R \in \{\Art_k, \CArt_k\}$; they are both naturally $k$-vector spaces. To $V_0$ we associate two deformation functors
\[\textrm{Def}_{V_0}: \textrm{Art}_k \to \textrm{\textbf{Set}}, \; \; \; \widehat{\textrm{Def}}_{V_0}:\widehat{\textrm{Art}}_k \to \textrm{\textbf{Set}}\]
\[R\in \{\Art_k, \CArt_k\}\longmapsto \{\textrm{isoclasses of deformations } V \to \Spec(R)\}\]
\noindent The \textit{tangent space} $\Deform_{V_0}(k[\varepsilon])$ parametrizes so-called 1st-order deformations of $V_0$ and has a canonical $k$-vector space structure. For $R$ in $\CArt_k$ we will call a deformation $V \to \Spf(R)$ of $V_0$ \textit{formal}, with the understanding that it arises as an inverse limit of deformations of $V_0$ over artinian local rings. We give a preliminary lemma on what such deformations can look like. \end{Note}

\begin{Lemma}[\cite{hartshorne2010deformation}, Thm. 9.2]\label{deformations-of-hypersurfaces}Let $V_0/k$ be an affine hypersurface singularity defined by polynomial $f(x,y,z)$.

\begin{enumerate}[label=\emph{(\roman*)}]

\item For any small extension $\phi: R_{i+1} \to R_i$ in $\emph{\Art}_k$ and deformation $V_i = \Spec\big(\frac{R_i[x,y,z]}{I}\big)$ of $V_0$ we have $I = (F_i)$ principal and any flat lift of $V_i$ to a deformation $V_{i+1} \to \Spec(R_{i+1})$ is of the form \[\Spec\Big(\frac{R_{i+1}[x,y,z]}{(F_{i+1})}\Big), \;\;\; F_{i+1} \equiv F_i \mod(\ker \phi)\]

\item If $R \in \widehat{\emph{\textrm{Art}}}_k$ then any formal deformation $V \to \emph{\textrm{Spf}}(R)$ of $V_0$ is of the form \[\emph{\Spf}\Big(\frac{R[\![x,y,z]\!]}{(F)}\Big),\;\;\; F \equiv f \mod \mathfrak{m}_R\]

\end{enumerate}
\end{Lemma}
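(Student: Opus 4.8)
The plan is to establish (i) by reducing to the case of a one-dimensional small extension, where the engine is the local criterion of flatness, and then to obtain (ii) by passing to the inverse limit over the Artinian quotients $R_n = R/\mathfrak{m}_R^{n+1}$.

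For (i), I would first note that the principality of the ideal of a single embedded deformation $V_i$ follows from the lifting statement by induction on $\mathrm{length}_k(R_i)$: if $R_i \neq k$ pick a minimal nonzero ideal $J \subset R_i$, so that $\mathfrak{m}_{R_i} J = 0$ and $J \cong k$; then $V_i$ is a flat lift of the embedded deformation $V_i \otimes_{R_i} R_i/J$ along the small extension $R_i \twoheadrightarrow R_i/J$, and the claim follows from the inductive hypothesis, the base case being $R_0 = k$ with ideal $(f)$. (Here one uses that a flat lift $V_{i+1}$ of an embedded $V_i$ is again embedded in $\mathbb{A}^3_{R_{i+1}}$, compatibly, since $\mathbb{A}^3$ is smooth and so the coordinate functions lift.) It then remains to treat a small extension $0 \to J \to R_{i+1} \xrightarrow{\phi} R_i \to 0$ with $J \cong k$ generated by $\tau$, $\mathfrak{m}_{R_{i+1}}\tau = 0$. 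With $A_i = R_i[x,y,z]/I_i$, $I_i = (F_i)$, $F_i \equiv f \bmod \mathfrak{m}_{R_i}$, and $A_{i+1} = R_{i+1}[x,y,z]/I_{i+1}$ a flat lift, tensoring the defining sequence of $\phi$ with the flat module $A_{i+1}$ and using $J \otimes_{R_{i+1}} A_{i+1} \cong J \otimes_k A_0 \cong A_0$ (with $A_0 = k[x,y,z]/(f)$) produces
\[ 0 \longrightarrow A_0 \xrightarrow{\ \cdot\,\tau\ } A_{i+1} \longrightarrow A_i \longrightarrow 0 . \]

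Now choose a lift $\widehat{F}\in R_{i+1}[x,y,z]$ of $F_i$ with $\widehat{F}\equiv f \bmod \mathfrak{m}_{R_{i+1}}$. Its image in $A_{i+1}$ maps to $0$ in $A_i$, hence lies in $\ker(A_{i+1}\to A_i) = \tau A_0$, say it equals $\tau\bar{g}$; replacing $\widehat{F}$ by $F_{i+1} := \widehat{F} - \tau\widehat{g}$ we get $F_{i+1}\in I_{i+1}$ with $\phi(F_{i+1}) = F_i$ and $F_{i+1}\equiv f \bmod \mathfrak{m}_{R_{i+1}}$. The equality $I_{i+1} = (F_{i+1})$ then follows by a short module chase: for $h \in I_{i+1}$ one subtracts a multiple of $F_{i+1}$ to land in $\ker\phi\cdot R_{i+1}[x,y,z] = \tau\,k[x,y,z]$, and injectivity of $\cdot\,\tau$ forces the remainder into $\tau\cdot(f)\subseteq (F_{i+1})$. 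For (ii) one applies (i) along the small extensions $R/\mathfrak{m}_R^{n+2}\twoheadrightarrow R/\mathfrak{m}_R^{n+1}$, using the freedom in the choice of generator to build a compatible system $F_n$ generating the ideal of $V\otimes\Spec R_n$ with $F_{n+1}\equiv F_n \bmod \mathfrak{m}_R^{n+1}$ and $F_n \equiv f \bmod \mathfrak{m}_R$; then $F := \varprojlim_n F_n \in R[\![x,y,z]\!]$ and $V = \Spf\bigl(R[\![x,y,z]\!]/(F)\bigr)$, the quotient being compatible with the limit.

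The only real content is the principality in (i): a priori a flat lift could become non-hypersurface, and it is exactly the short exact sequence coming from the local flatness criterion — together with $f$ being a nonzerodivisor in $k[x,y,z]$, which is what makes $\cdot\,\tau$ injective — that prevents this. The rest (reducing arbitrary Artinian bases to small extensions, lifting the embedding, and assembling the $F_n$) is routine; the only mild care needed in (ii) is to keep the generators compatible modulo powers of $\mathfrak{m}_R$ so that the limit genuinely lies in the power series ring.
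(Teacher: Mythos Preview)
The paper does not supply its own proof of this lemma: it simply cites \cite{hartshorne2010deformation}, Thm.~9.2, and moves on. Your argument is correct and is essentially the standard one found there --- the local flatness criterion producing the short exact sequence $0 \to A_0 \xrightarrow{\cdot\tau} A_{i+1} \to A_i \to 0$, the correction of a lift of $F_i$ to land in $I_{i+1}$, and the module chase using injectivity of $\cdot\,\tau$ to show $I_{i+1}=(F_{i+1})$ are exactly the ingredients. One small point of phrasing: the surjections $R/\mathfrak{m}_R^{n+2} \twoheadrightarrow R/\mathfrak{m}_R^{n+1}$ are not literally small extensions when $\dim_k(\mathfrak{m}_R^{n+1}/\mathfrak{m}_R^{n+2})>1$, but since your inductive setup already factors an arbitrary Artinian surjection through one-dimensional steps, this is harmless.
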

\begin{Rem} In plainer language, Lemma \ref{deformations-of-hypersurfaces} says that deformations of affine hypersurfaces are still hypersurfaces; more generally, it is true that deformations of local complete intersections are also local complete intersections.
\end{Rem}

To the deformation theory of $V_0$ we may associate the Lichtenbaum-Schlessinger functors $\textrm{T}^i(V_0) = \textrm{Ext}^i(\Omega_{R_0}, R_0)$, where $\textrm{T}^0(V_0)$ is just the tangent module of $R_0$. It is known (\cite{vistoli1997deformation}, Prop. 6.4) that $\textrm{T}^1(V_0)$ parametrizes first-order deformations of $V_0$ and $\textrm{T}^2(V_0)$ contains an obstruction space for their liftings. By the explicit description of $V_0$ we can be more precise:

\begin{Lemma}\label{def-is-ext-lemma} We have $\emph{\Deform}_{V_0}(k[\varepsilon]) \simeq \textrm{\emph{Ext}}^1(\Omega_{R_0}, R_0) \simeq R_0/J$ where $J=(f_x, f_y, f_z)$ is the Jacobian ideal, and $\textrm{\emph{Ext}}^2(\Omega_{R_0}, R_0) = 0$. In particular, deformations of $V_0$ are unobstructed.
    
\end{Lemma}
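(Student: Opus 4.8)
The plan is to compute the Lichtenbaum--Schlessinger modules $\T^1(V_0)$ and $\T^2(V_0)$ directly from the fact that $R_0 = k[\![x,y,z]\!]/(f)$ is a hypersurface, so that a very short free resolution of $\Omega_{R_0}$ is available. First I would write $P = k[\![x,y,z]\!]$ and use the conormal/cotangent exact sequence associated to the surjection $P \twoheadrightarrow R_0$ with kernel $(f)$: since $(f)$ is generated by a single nonzerodivisor, $(f)/(f)^2 \simeq R_0$ is free of rank one, and we get the exact sequence of $R_0$-modules
\[
0 \to (f)/(f)^2 \xrightarrow{\ d\ } \Omega_{P} \otimes_P R_0 \to \Omega_{R_0} \to 0,
\]
where the first map sends the generator $\bar f$ to $df = f_x\,dx + f_y\,dy + f_z\,dz$. (One checks $d$ is injective because $f$ has an isolated singularity, or more simply because $\Omega_{P}\otimes R_0$ is free and the map is nonzero in each localization away from the singular locus; injectivity also follows from the hypersurface case of the Lichtenbaum--Schlessinger theory, which guarantees $\T_1(V_0)=0$.) This is a length-one free resolution of $\Omega_{R_0}$ by free $R_0$-modules ($R_0^3$ and $R_0$).

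Next I would apply $\Hom_{R_0}(-, R_0)$ to this resolution and take cohomology. Dualizing the two-term complex $[\, R_0 \xrightarrow{df} R_0^3\,]$ gives the complex $[\, R_0^3 \xrightarrow{(f_x,f_y,f_z)} R_0\,]$ placed in degrees $0$ and $1$. Therefore $\T^0(V_0) = \ker$ is the module of derivations (the tangent module), $\T^1(V_0) = \mathrm{coker}\big( R_0^3 \xrightarrow{(f_x,f_y,f_z)} R_0 \big) = R_0/(f_x,f_y,f_z) = R_0/J$, and all higher $\Ext^i(\Omega_{R_0},R_0)$ vanish for $i \geq 2$ because the resolution has length one; in particular $\T^2(V_0) = \Ext^2(\Omega_{R_0},R_0) = 0$. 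By the cited fact (\cite{vistoli1997deformation}, Prop. 6.4) that $\T^1$ represents first-order deformations and $\T^2$ houses the obstructions, this gives $\Deform_{V_0}(k[\varepsilon]) \simeq R_0/J$ and shows the deformation functor is unobstructed: every small extension can be solved since the obstruction lands in the zero module. (Alternatively, unobstructedness is already visible from Lemma \ref{deformations-of-hypersurfaces}(i), since a lift of $F_i$ always exists by simply lifting coefficients, which gives an independent sanity check.)

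The only genuine subtlety — the step I expect to be the main obstacle — is the injectivity of $df\colon (f)/(f)^2 \to \Omega_P\otimes_P R_0$, equivalently the vanishing of the Lichtenbaum--Schlessinger module $\T_1$, since everything downstream is formal once the two-term free resolution is in hand. This is where the hypersurface hypothesis is really used: for a general quotient $P/I$ the conormal map need not be injective, but for $I=(f)$ with $f$ a nonzerodivisor one has $(f)/(f)^2$ free of rank one, and injectivity of $df$ amounts to $df$ being a nonzerodivisor on the free module $\Omega_P\otimes R_0 \cong R_0^3$, which holds because at least one partial $f_x, f_y, f_z$ is a nonzerodivisor in $R_0$ (the singular locus $V(J)\cap V_0$ is a proper closed subset, as $V_0$ has an isolated singularity). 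With that in place the computation of $\Ext^i(\Omega_{R_0},R_0)$ is immediate and the lemma follows. A small bookkeeping remark: one should note that although the statement is phrased over $\widehat{\Oring}_{X,x}\simeq k[\![x,y,z]\!]/(f)$, the same resolution and computation go through verbatim for the affine model $k[x,y,z]/(f)$, so there is no loss in working with $V_0 = \Spec(R_0)$ as stated in Notation \ref{notation-deformation}.
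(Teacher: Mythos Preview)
Your proposal is correct and follows essentially the same route as the paper: both use the conormal sequence of $V_0 \hookrightarrow \mathbb{A}^3$ as a two-term free resolution of $\Omega_{R_0}$, dualize, and read off $\mathrm{Ext}^1 \simeq R_0/J$ and $\mathrm{Ext}^2 = 0$. The only minor difference is that the paper cites \cite{vistoli1997deformation} for left exactness of the conormal sequence, whereas you argue injectivity of $df$ directly from the isolated-singularity hypothesis (which is fine, and in fact simpler here since $R_0$ is a domain).
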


\begin{proof}
    The first isomorphism is (\cite{sernesi2007deformations}, Thm. 2.4.1 (iv)). The conormal sequence associated to $V_0 \xhookrightarrow{} \mathbb{A}_k^3$ is

    \[ 0 \longrightarrow (f)/(f)^2 \longrightarrow \Omega_{\mathbb{A}_k^3}\otimes R_0 \longrightarrow \Omega_{R_0} \longrightarrow 0\]
    \noindent which is exact on the left by (\cite{vistoli1997deformation}, Lemma 4.7) and so dualizing we get exact sequence

    \begin{equation}\label{dualized-conormal-ses}0 \longrightarrow \Omega_{R_0}^{\vee} \longrightarrow (\Omega_{\mathbb{A}^3_k}\otimes R_0)^{\vee} \overset{d^*}{\longrightarrow} ((f)/(f)^2)^{\vee} \overset{\partial}{\longrightarrow}\textrm{Ext}^1(\Omega_{R_0}, R_0) \longrightarrow 0 \end{equation}
    \noindent with $\partial$ surjective as $\Omega_{\mathbb{A}_k^3}\otimes R_0$ is free over $R_0$. The map $(f)/(f)^2 \to \Omega_{\mathbb{A}_k^3} \otimes R_0$ is given by the Jacobian matrix $f \mapsto df$ and $(f)/(f)^2 \simeq R_0$ is free of rank 1, hence the adjoint map $d^*$ has image $J = (f_x, f_y, f_z)$. So via the boundary map $\partial$ we get $\textrm{Ext}^1(\Omega_{R_0}, R_0) \simeq R_0/J$ and, as a byproduct of exact sequence (\ref{dualized-conormal-ses}), $\textrm{Ext}^2(\Omega_{R_0}, R_0) = 0$.\end{proof}

\begin{Def}\label{tjurina-number-finite}
    The $k$-vector space \[\textrm{T}^1(V_0) = R_0/J \simeq \frac{k[\![x,y,z]\!]}{(f,f_x,f_y,f_z)}\]
    \noindent is called the \textit{Tjurina algebra} of the singularity in $V_0$. It has \textit{finite} dimension since $V(f,f_x,f_y,f_z)$ is supported on the unique singular point - in general, for isolated singularities, the Tjurina algebra is finite-dimensional.
\end{Def}

\subsection{The Kodaira--Spencer map.} This section follows (\cite{vistoli1997deformation}, \S6). Given a formal deformation $\phi: V \to \Spf(R)$ of $V_0$, we aim to attach to it a $k$-linear map $\Tgt(R)_{\textrm{red}}^{\vee} \to \textrm{T}^1(V_0)$ defined in the following steps. 

An element of $\Tgt(R)_{\textrm{red}}^{\vee}$ corresponds to a map $R \to k[\varepsilon]$ which, by $W(k)$-linearity and $\varepsilon^2 = 0$, factors through a map $R \to \overline{R}_1$; write $\overline{\phi}_1$ for the base-change of $\phi$ along this map. Set \[\overline{\phi}_0: V_0\times_k \Spec(\overline{R}_1) \longrightarrow \Spec(\overline{R}_1)\]
\noindent to be the trivial deformation of $V_0$ over $\overline{R}_1$ so that $\overline{\phi}_0, \overline{\phi}_1 \in \textrm{Def}_{V_0}(\overline{R}_1)$. By Lemma \ref{deformations-of-hypersurfaces}, both $\overline{\phi}_0, \overline{\phi}_1$ embed as hypersurfaces in $\mathbb{A}^3_{\overline{R}_1}$, defined by ideals $I_0, I_1 \subset \overline{R}_1[\![x,y,z]\!]$ respectively. If $I=(f)$ is the ideal of $V_0$ in $k[\![x,y,z]\!]$, we may lift a section $F \in I$ to sections $F_0 \in I_0, F_1 \in I_1$. By virtue of the square-zero extension 
\[0 \longrightarrow \mathfrak{m}_{\overline{R}_1} \longrightarrow \overline{R}_1 \longrightarrow k \longrightarrow 0\]
\noindent the difference $F_0-F_1$ lies in $\mathfrak{m}_{\overline{R}_1}\otimes_k k[\![x,y,z]\!]$, and its image $[F_0-F_1]$ under $k[\![x,y,z]\!] \twoheadrightarrow R_0$ depends only on the choice of $F$. Hence $F \mapsto [F_0-F_1]$ yields an element
\[\nu \in \Hom_{k[\![x,y,z]\!]}(I, \mathfrak{m}_{\overline{R}_1}\otimes_k R_0) \simeq \mathfrak{m}_{\overline{R}_1}\otimes_k \Hom_{R_0}(I/I^2, R_0) = \mathfrak{m}_{\overline{R}_1}\otimes_k ((f)/(f)^2)^{\vee}\]

Homomorphism $\nu$ is well-defined in general for any two deformations of $V_0$ over an artinian $R$; see (\cite{vistoli1997deformation}, Prop.\,2.8) for more details. We will only be using it in the following definition.

\begin{Def} The \textit{Kodaira--Spencer} map is a $k$-linear map $\textrm{KS}_{\phi}: \Tgt(R)_{\textrm{red}}^{\vee}\longrightarrow \textrm{T}^1(V_0)$ defined as the image of homomorphism $\nu$ described above under map
\[\textrm{id}\otimes \partial: \mathfrak{m}_{\overline{R}_1}\otimes_k ((f)/(f)^2)^{\vee} \longrightarrow\mathfrak{m}_{\overline{R}_1}\otimes_k \textrm{Ext}^1(\Omega_{R_0}, R_0) \simeq \Hom_k(\Tgt(R)_{\textrm{red}}^{\vee}, \textrm{T}^1(V_0))\]
\noindent where $\partial$ is the boundary map induced from the conormal exact sequence (\ref{dualized-conormal-ses}) of $V_0 \xhookrightarrow{} \mathbb{A}_k^3$.
\end{Def}

By construction, map $\nu$ depends only on $\phi$, hence the Kodaira--Spencer morphism depends only on $\phi$ - it is well-defined by (\cite{vistoli1997deformation}, Prop. 4.11) and satisfies various functorial properties (\textit{loc.\,cit.}, Prop.\,6.10). We note the following:

\begin{Prop}[\cite{vistoli1997deformation}, Prop. 6.11]\label{ks-proposition} Let $\phi: V \to \emph{\Spf}(R)$ be a formal deformation of $V_0$ and for $\alpha \in (\mathfrak{m}_R/\mathfrak{m}_R^2)^{\vee} \simeq \emph{\Hom}(R,k[\varepsilon])$ let $f_{\alpha}: R \to k[\varepsilon]$ be the corresponding map. Then the induced 1st-order deformation $f_{\alpha}^*\phi: V_1 = V \times_R \Spec(k[\varepsilon]) \to \Spec(k[\varepsilon])$ defines a conormal exact sequence via $V_0 \xhookrightarrow{} V_1$, and $\textrm{\emph{KS}}_{\phi}(\alpha) \in \textrm{\emph{T}}^1(V_0)$ is this extension class. 
\end{Prop}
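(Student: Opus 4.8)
The plan is to identify both sides of the asserted equality with the same explicit element of $R_0/J\simeq\textrm{T}^1(V_0)$, leaning on the concrete hypersurface normal form of Lemma \ref{deformations-of-hypersurfaces}. The point is that the opaque $\nu$--construction and the conormal sequence both end up recording the coefficient $g_\alpha$ in a presentation $F_\alpha=f+\varepsilon g_\alpha$ of the induced first-order deformation, modulo the Jacobian ideal $J$.

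First I would make $\textrm{KS}_\phi(\alpha)$ explicit. By Lemma \ref{deformations-of-hypersurfaces}(ii), write $V=\Spf\big(R[\![x,y,z]\!]/(F)\big)$ with $F\equiv f\pmod{\mathfrak{m}_R}$; then the source $V_1$ of $f_\alpha^*\phi$ is $\Spec\big(k[\varepsilon][\![x,y,z]\!]/(F_\alpha)\big)$, where $F_\alpha$ is the image of $F$ under the map on coefficients induced by $f_\alpha$. Since $f_\alpha(\mathfrak{m}_R)\subseteq(\varepsilon)$ and $f_\alpha(\mathfrak{m}_R^2)=0$, we get $F_\alpha=f+\varepsilon g_\alpha$ for some $g_\alpha\in k[\![x,y,z]\!]$. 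Now, by the functoriality of the homomorphism $\nu$ preceding the definition of $\textrm{KS}_\phi$, the class $\textrm{KS}_\phi(\alpha)$ may be computed by running that construction for the pair (induced deformation $f_\alpha^*\phi$, trivial deformation) directly over $k[\varepsilon]$: with the trivial lift $F_0=f$ and the lift $F_1=F_\alpha$ one gets $\nu(F)=[F_0-F_1]=-\varepsilon\,[g_\alpha]\in\mathfrak{m}_{k[\varepsilon]}\otimes_k R_0$, and applying $\textrm{id}\otimes\partial$ — where under $((f)/(f)^2)^\vee\simeq R_0$ the boundary $\partial$ of (\ref{dualized-conormal-ses}) is precisely the projection $R_0\twoheadrightarrow R_0/J$ identified in Lemma \ref{def-is-ext-lemma} — yields $\textrm{KS}_\phi(\alpha)=-[g_\alpha]\in R_0/J$.

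Next I would compute the Yoneda class of the conormal sequence of $V_0\hookrightarrow V_1$ and check it is also $-[g_\alpha]$. Since $V_1$ is flat over $k[\varepsilon]$, the ideal of $V_0$ in $V_1$ is $(\varepsilon)$, free of rank one over $R_0$, with square zero; because $R_0$ is a normal domain (so its nonzero ideal $J$ has zero annihilator) the conormal sequence
\[ 0 \longrightarrow R_0 \longrightarrow \Omega_{V_1/k}\otimes_{\Oring_{V_1}}R_0 \longrightarrow \Omega_{V_0/k} \longrightarrow 0 \]
is short exact and so defines a class in $\textrm{Ext}^1(\Omega_{R_0},R_0)=\textrm{T}^1(V_0)$. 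Using the embedding $V_1\hookrightarrow\mathbb{A}^3_{k[\varepsilon]}$ one has the explicit presentation $\Omega_{V_1/k}\otimes_{\Oring_{V_1}}R_0\simeq R_0^{\,4}/(f_x,f_y,f_z,\overline{g_\alpha})$ on generators $dx,dy,dz,d\varepsilon$, with the conormal inclusion sending $1\in R_0$ to the class of $d\varepsilon$ and the surjection onto $\Omega_{V_0/k}=R_0^{\,3}/(f_x,f_y,f_z)$ forgetting $d\varepsilon$. A standard Yoneda computation — lift the free resolution $R_0\xrightarrow{(f_x,f_y,f_z)}R_0^{\,3}\to\Omega_{V_0/k}$ underlying (\ref{dualized-conormal-ses}) along this surjection and restrict along the conormal inclusion — then produces the class $-[\overline{g_\alpha}]=-[g_\alpha]\in R_0/J$, which matches the first computation and proves the proposition.

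The step I expect to be the real work is the last one: one must verify that the Yoneda class of the conormal sequence of $V_0\hookrightarrow V_1$ is read off correctly through the boundary map $\partial$ of (\ref{dualized-conormal-ses}) and the two embeddings $V_0\subset\mathbb{A}^3_k$, $V_0\subset V_1\subset\mathbb{A}^3_{k[\varepsilon]}$, i.e.\ that it becomes exactly $-[g_\alpha]$ once the identifications $((f)/(f)^2)^\vee\simeq R_0$ and $\textrm{Ext}^1(\Omega_{R_0},R_0)\simeq R_0/J$ and all signs are kept consistent. Functoriality of the truncated cotangent complex — concretely, the transitivity triangle for $k[\varepsilon]\to\Oring_{V_1}\to R_0$ together with $\mathbb{L}_{R_0/\Oring_{V_1}}\simeq(\varepsilon)/(\varepsilon)^2[1]$, which exhibits this conormal class as the canonical class of the square-zero thickening $V_0\hookrightarrow V_1$ — packages the comparison cleanly, and in any case this bookkeeping is exactly what is carried out in \cite{vistoli1997deformation}, Prop.\,6.11.
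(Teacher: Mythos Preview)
The paper does not give its own proof of this proposition; it is quoted from \cite{vistoli1997deformation} and used as a black box (most visibly in the proof of Proposition~\ref{description-of-miniversal-deformations}). So there is nothing in the paper to compare your argument against directly. Your approach --- unwinding both the $\nu$-construction and the conormal extension class in the explicit hypersurface presentation $F_\alpha=f+\varepsilon g_\alpha$ furnished by Lemma~\ref{deformations-of-hypersurfaces}, and checking that both yield $\pm[g_\alpha]\in R_0/J$ under the identification of Lemma~\ref{def-is-ext-lemma} --- is correct and is essentially the computation one carries out to verify such a statement by hand. Indeed the paper itself runs the first half of your computation (the $\nu$-side) in the proof of Proposition~\ref{description-of-miniversal-deformations}, and your justification for left exactness of the conormal sequence (annihilator of the nonzero ideal $J$ in the domain $R_0$ is zero) is exactly the right one.

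One small remark on signs: the paper's own unwinding in that later proof writes the difference as $[F_1-F_0]=[f+\varepsilon\widetilde g_i-f]$ rather than $[F_0-F_1]$ as in the definition preceding the Kodaira--Spencer map, so it lands on $+[g_\alpha]$ where you get $-[g_\alpha]$. The sign convention in the paper is thus already a bit loose; your tracking is internally consistent, which is what matters, and the ambiguity disappears once one fixes a convention for the generator of $((f)/(f)^2)^\vee\simeq R_0$.
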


\subsection{Miniversal deformations in mixed characteristic.}\label{subsection-miniversal-defs-in-mixed-char} We can now describe a class of deformations of $V_0$ from which all others are induced in a certain `minimal' way. Recall that if $\phi: V \to \Spf(R)$ is a formal deformation of $V_0$ and $h_R = \Hom(R,-): \CArt_k \to \textrm{\textbf{Set}}$ is the Hom functor of $R$, then $\phi$ induces a natural transformation \[h_R \to \widehat{\textrm{Def}}_{V_0}\] 

\begin{Def} \hfill \begin{enumerate}[label=(\roman*)]

\item $\phi$ is \textit{versal} when this natural transformation is formally smooth, i.e. if $B \twoheadrightarrow A$ in $\CArt_k$ then the map \[h_R(B) \to h_R(A) \times_{\widehat{\textrm{Def}}_{V_0}(A)} \widehat{\textrm{Def}}_{V_0}(B)\] 
\noindent induced from the obvious commutative diagram is a surjection. 

\item $\phi$ is \textit{miniversal} if it is versal and the Kodaira--Spencer map $\textrm{KS}_{\phi}$ is an isomorphism. 

\end{enumerate}
\end{Def}

By Lemma \ref{def-is-ext-lemma} and the definitions in Notation \ref{notation-deformation}, it is clear that this definition of miniversality coincides with the more common one (see e.g.\,\cite{hartshorne2010deformation}, \S15). We also note that versality is usually stated in terms of artinian local rings, but one can deduce versality in the above (more general) case from the artinian one (\cite{vistoli1997deformation}, Lemma 7.3).

\begin{Prop}[Schlessinger criteria, \cite{hartshorne2010deformation} Thms.\,16.2, 18.1]\label{schlessinger-criteria-special-case}$\textrm{\emph{Def}}_{V_0}$ admits a miniversal deformation if and only if the tangent space $\textrm{\emph{Def}}_{V_0}(k[\varepsilon])$ is finite-dimensional. In particular there exists a miniversal deformation when $V_0$ has isolated singularities.
\end{Prop}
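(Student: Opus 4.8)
The plan is to invoke Schlessinger's criteria $(H_1)$–$(H_4)$ for the deformation functor $\Deform_{V_0}$ and verify that the only obstruction to pro-representability (or, rather, to the existence of a miniversal hull) is the finite-dimensionality of the tangent space. First I would recall that $\Deform_{V_0}$ is a covariant functor $\Art_k \to \textbf{Set}$ with $\Deform_{V_0}(k)$ a single point (the trivial deformation), so it makes sense to ask whether it has a miniversal deformation; by the standard theory a miniversal hull exists iff $\Deform_{V_0}$ satisfies $(H_1)$, $(H_2)$ and $(H_3)$ (i.e. the tangent space is finite-dimensional), and it is pro-representable iff additionally $(H_4)$ holds. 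Here we only claim miniversality, so $(H_4)$ is irrelevant. The content of the statement is therefore: (a) $\Deform_{V_0}$ always satisfies $(H_1)$ and $(H_2)$ when $V_0$ is affine; (b) $(H_3)$ — finite-dimensionality of $\Deform_{V_0}(k[\varepsilon])$ — is both necessary and sufficient once $(H_1),(H_2)$ are in place; (c) for $V_0$ with isolated singularities the tangent space is finite-dimensional.

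For (a): given a pair of small extensions $A' \to A \leftarrow A''$ in $\Art_k$, one must show the natural map $\Deform_{V_0}(A' \times_A A'') \to \Deform_{V_0}(A') \times_{\Deform_{V_0}(A)} \Deform_{V_0}(A'')$ is surjective (this is $(H_1)$), and bijective when $A'' = k[\varepsilon] \to k$ ($(H_2)$). This is where affineness of $V_0$ enters: a deformation of an affine scheme over $A$ is a flat $A$-algebra reducing to $\Oring_{V_0}$ mod $\mathfrak m_A$, and flat algebras glue along the fiber product of Artinian rings — concretely, given flat deformations over $A'$ and $A''$ agreeing over $A$, their fiber product of coordinate rings is a flat deformation over $A' \times_A A''$, and one checks this is compatible with isomorphisms. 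I would either cite this directly from Hartshorne (Thms. 16.2, 18.1, as the statement already does) or sketch the fiber-product-of-algebras argument; since $V_0$ is even a hypersurface by Lemma \ref{deformations-of-hypersurfaces}, one can argue very explicitly with the defining equation $F \equiv f \bmod \mathfrak m_A$. For (b), $(H_3)$ is just the finite-dimensionality hypothesis restated, and the ``only if'' direction is the standard fact that a miniversal deformation $\phi: V \to \Spf(R)$ has $\Deform_{V_0}(k[\varepsilon]) \cong \Tgt(R)^\vee$ via $\textrm{KS}_\phi$ (by the definition of miniversal), and $\Tgt(R) = \mathfrak m_R/\mathfrak m_R^2$ is finite-dimensional because $R$ is noetherian.

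For (c): by Lemma \ref{def-is-ext-lemma} we have $\Deform_{V_0}(k[\varepsilon]) \cong \textrm{Ext}^1(\Omega_{R_0}, R_0) \cong R_0/J$ with $J$ the Jacobian ideal, and $R_0/J$ is finite-dimensional over $k$ precisely because $V(f, f_x, f_y, f_z)$ is supported at the isolated singular point — this is exactly the finiteness of the Tjurina algebra recorded in Definition \ref{tjurina-number-finite}. For a general isolated singularity (not necessarily a hypersurface) one uses instead that $\textrm{T}^1(V_0) = \textrm{Ext}^1(\Omega_{R_0}, R_0)$ is supported on the singular locus, hence finite over $k$. I do not expect any serious obstacle here: the proposition is essentially a packaging of Schlessinger's theorem with the explicit computation already done in Lemma \ref{def-is-ext-lemma}. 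The only point requiring mild care is the passage from the Artinian-level statement of Schlessinger to the formal deformation $V \to \Spf(R)$ over the complete local ring $R$, but this is precisely the content of (\cite{vistoli1997deformation}, Lemma 7.3) cited just above, so I would simply reference it. Accordingly the ``proof'' will be short: cite Hartshorne for the equivalence, cite Lemma \ref{def-is-ext-lemma} / Definition \ref{tjurina-number-finite} for finiteness in the isolated case, and remark that the hypersurface case of interest is subsumed.
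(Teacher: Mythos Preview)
Your proposal is correct, but note that the paper gives no proof at all for this proposition: it is stated purely as a citation to Hartshorne (Thms.\ 16.2, 18.1), with the ``In particular'' clause following immediately from Definition~\ref{tjurina-number-finite}. What you have written is essentially an unpacking of the content of those cited theorems --- verifying Schlessinger's $(H_1)$--$(H_3)$ for affine deformations and then invoking the finiteness of the Tjurina algebra --- which is exactly the argument one finds in the references. So there is no genuine difference in approach; you have simply supplied the details that the paper chose to outsource.
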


\begin{Prop}\label{miniversal-iff-power-series-and-kodaira-spencer-isom}
    If $\phi: V \to \emph{\Spf}(R)$ is a formal deformation of $V_0$ such that $R$ is a power series algebra over $W(k)$ and the Kodaira-Spencer map $\textrm{\emph{KS}}_{\phi}$ is an isomorphism, then $\phi$ is miniversal.
\end{Prop}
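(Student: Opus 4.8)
The statement asks: if $\phi\colon V\to\Spf(R)$ is a formal deformation of $V_0$ with $R$ a power series algebra over $W(k)$ and $\textrm{KS}_\phi$ an isomorphism, then $\phi$ is miniversal. By the definition of miniversality, the only thing that is not immediate is \emph{versality}, since the Kodaira--Spencer isomorphism is assumed. So the plan is to prove versality, and the natural route is to reduce to the artinian situation (legitimate by \cite{vistoli1997deformation}, Lemma 7.3, already cited in the excerpt) and then verify the formal smoothness of $h_R\to\widehat{\textrm{Def}}_{V_0}$ by an obstruction-theoretic / infinitesimal lifting argument, exploiting two facts recorded above: deformations of $V_0$ are \emph{unobstructed} (Lemma \ref{def-is-ext-lemma}, $\textrm{Ext}^2(\Omega_{R_0},R_0)=0$), and $R$ being a power series algebra over $W(k)$ is \emph{formally smooth} over $W(k)$.

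First I would set up the lifting criterion: given a small extension $B\twoheadrightarrow A$ in $\Art_k$ with kernel $I$ (so $I\cdot\mathfrak m_B=0$), a ring map $u\colon R\to A$ and a deformation $\xi_B\in\widehat{\textrm{Def}}_{V_0}(B)$ restricting to $u^*\phi$ over $A$, I must produce a lift $\tilde u\colon R\to B$ with $\tilde u^*\phi\cong\xi_B$. Because $R$ is a power series $W(k)$-algebra, $R$ is formally smooth over $W(k)$, so $u$ lifts to \emph{some} $W(k)$-algebra map $v\colon R\to B$ along $B\twoheadrightarrow A$; the issue is that $v^*\phi$ need not agree with $\xi_B$. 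The two deformations $v^*\phi$ and $\xi_B$ over $B$ both restrict to $u^*\phi$ over $A$, so their difference is measured by an element $o(v)\in I\otimes_k\textrm{T}^1(V_0)$ — this is the standard torsor structure on the set of lifts of a fixed deformation under a small extension, valid here because liftings of hypersurface deformations are described by Lemma \ref{deformations-of-hypersurfaces}(i) and the obstruction space $\textrm{Ext}^2(\Omega_{R_0},R_0)$ vanishes so all the relevant sets of lifts are nonempty torsors. Next I would use that the remaining lifts $v'$ of $u$ along $B\twoheadrightarrow B\times_A \big(\text{...}\big)$ — more precisely, the set of $W(k)$-algebra lifts of $u$ — is itself a torsor under $\textrm{Hom}_k(\Tgt(R)_{\textrm{red}},I)\cong I\otimes_k\Tgt(R)_{\textrm{red}}^\vee{}^\vee$, and that the effect of modifying $v$ by such a derivation on the class $o(v)$ is exactly precomposition with $\textrm{KS}_\phi$ (this is the content of Proposition \ref{ks-proposition}: the Kodaira--Spencer map records precisely how first-order changes in the map to the base move the induced deformation). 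Since $\textrm{KS}_\phi$ is an isomorphism, I can choose the derivation to cancel $o(v)$, giving the desired $\tilde u$ with $\tilde u^*\phi\cong\xi_B$. This establishes surjectivity of $h_R(B)\to h_R(A)\times_{\widehat{\textrm{Def}}_{V_0}(A)}\widehat{\textrm{Def}}_{V_0}(B)$, i.e.\ versality, and together with the assumed Kodaira--Spencer isomorphism, miniversality.

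The main obstacle, and the step requiring the most care, is the compatibility claim that perturbing the lift $v$ by a $k$-derivation $\delta\in\textrm{Hom}_k(\Tgt(R)_{\textrm{red}},I)$ changes the obstruction class $o(v)$ by $\textrm{KS}_\phi(\delta)$ (up to sign) — this is where the mixed-characteristic bookkeeping genuinely enters, since one must check that $W(k)$-linearity forces all the relevant maps to $k[\varepsilon]$-type rings to factor through the \emph{reduced} cotangent space $\Tgt(R)_{\textrm{red}}$, exactly as in the construction of $\nu$ and $\textrm{KS}_\phi$ preceding Proposition \ref{ks-proposition}. Concretely one unwinds the definition of $\nu$ for the pair of deformations $(v^*\phi,\xi_B)$ over $B$, reduces mod $I$ to land in the square-zero situation over $A$ where they agree, and matches the resulting element of $I\otimes_k((f)/(f)^2)^\vee$ — pushed forward via $\textrm{id}\otimes\partial$ to $I\otimes_k\textrm{T}^1(V_0)$ — against the Kodaira--Spencer pairing; the hypersurface description of Lemma \ref{deformations-of-hypersurfaces} makes every term explicitly a polynomial difference $F_0-F_1$, so this is a bounded computation rather than an abstract one. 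Once this is in hand the rest is the formal smoothness of $R/W(k)$ plus the torsor formalism, both standard.
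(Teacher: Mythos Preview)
Your argument is correct, but it takes a genuinely different route from the paper's proof. The paper does not verify the lifting criterion directly; instead it invokes Schlessinger's criterion (Proposition~\ref{schlessinger-criteria-special-case}) to obtain an abstract miniversal deformation $\psi\colon W\to\Spf(S)$, uses versality of $\psi$ to produce a map $f\colon S\to R$, observes that the induced map on reduced cotangent spaces is an isomorphism (since $\textrm{KS}_\psi = \textrm{KS}_\phi\circ df$ and both Kodaira--Spencer maps are isomorphisms), and then constructs by hand a map $g\colon R\to S$ using the explicit power-series presentation of $R$ so that both $D(f\circ g)$ and $D(g\circ f)$ are surjective on (non-reduced) cotangent spaces, whence $f$ is an isomorphism by a Nakayama/completeness argument. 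In short, the paper proves miniversality by \emph{comparing with the already-known miniversal object}, while you prove it by \emph{directly establishing formal smoothness of $h_R\to\widehat{\textrm{Def}}_{V_0}$} via obstruction theory.

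Your approach is arguably more conceptual: it makes transparent exactly why the two hypotheses (formal smoothness of $R$ over $W(k)$, and $\textrm{KS}_\phi$ an isomorphism) combine to give versality, and it does not rely on having Schlessinger's theorem in hand. The paper's approach, on the other hand, avoids the torsor bookkeeping and the compatibility check you flag as the ``main obstacle'' --- once Schlessinger is available, the proof reduces to elementary local algebra with power series rings. Your identification of $\textrm{Der}_{W(k)}(R,I)$ with $\Hom_k(\Tgt(R)_{\textrm{red}},I)$ is correct (any $W(k)$-derivation kills $\pi$), and the compatibility of the torsor action with $\textrm{KS}_\phi$ is indeed the content of Proposition~\ref{ks-proposition} extended $I$-linearly, so the step you single out as delicate is genuine but routine.
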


\begin{proof}Suppose $R = W(k)[\![t_1, \cdots, t_n]\!]$ and $\textrm{KS}_{\phi}$ is an isomorphism. By Proposition \ref{schlessinger-criteria-special-case} there exists a miniversal formal deformation $\psi: W \to \Spf(S)$, yielding by versality a homomorphism $f: S \to R$ and hence a map $df: \textrm{T}_{\mathfrak{m}}(S)_{\textrm{red}}\to \textrm{T}_{\mathfrak{m}}(R)_{\textrm{red}}$, dual to the differential map of reduced tangent spaces. Since $\textrm{KS}_{\phi}, \textrm{KS}_{\psi}$ are isomorphisms and $\textrm{KS}_{\psi} = \textrm{KS}_{\phi}\circ df$ by the functoriality of the Kodaira--Spencer construction (\cite{vistoli1997deformation}, Prop. 6.10 (b)), $df$ is an isomorphism.

It remains to show $f$ is an isomorphism. We will use $df$ to construct a map $g: R \to S$ so that the dual differentials of (non--reduced) cotangent spaces
\begin{equation}\label{equation-dual-differential-maps}D(f\circ g): \mathfrak{m}_R/\mathfrak{m}_R^2 \to \mathfrak{m}_R/\mathfrak{m}_R^2 , \; \; \; D(g\circ f): \mathfrak{m}_S/\mathfrak{m}_S^2 \to \mathfrak{m}_S/\mathfrak{m}_S^2\end{equation}
are \textit{surjective}, then by functoriality of differentials (\cite{vistoli1997deformation}, Lemma 7.5) $f\circ g$ and $g \circ f$ will be isomorphisms so that in particular $f$ is an isomorphism.

To construct $g$: Observe $\textrm{T}_{\mathfrak{m}}(R)_{\textrm{red}} \simeq k[t_1,\cdots, t_n]/(t_1,\cdots, t_n)^2$ has generators $\overline{t}_1, \cdots, \overline{t}_n$. As $df$ is an isomorphism, we can pick basis $\{\overline{g}_i, \cdots, \overline{g}_n\}$ of $\mathfrak{m}_S/(\pi, \mathfrak{m}_S^2)$ so that $df(\overline{g}_i) = \overline{t}_i$ and lift them to a set of representatives $g_i \in \mathfrak{m}_S$. On artinian quotients we define
    \[g_m:R_m = \frac{W(k)[t_1, \cdots, t_n]}{(\pi, t_1, \cdots, t_n)^m} \longrightarrow S_m, \; \; \; \overline{t}_i^{(m)} \longmapsto \overline{g}_i^{(m)} \]
\noindent where $\overline{t}_i^{(m)}, \overline{g}_i^{(m)}$ denote the images of $t_i, g_i$ under $R \to R_m, S \to S_m$ respectively. Since $R \simeq \varprojlim R_m, \; S \simeq \varprojlim S_m$ this defines a homomorphism $g = \varprojlim g_m: R \to S$ mapping $t_i \mapsto g_i$, which by construction induces a surjection $dg: \mathfrak{m}_{\overline{R}} /\mathfrak{m}_{\overline{R}}^2\twoheadrightarrow \mathfrak{m}_{\overline{S}}/\mathfrak{m}_{\overline{S}}^2$. In turn $dg$ induces surjections $\mathfrak{m}_{\overline{R}}^m/\mathfrak{m}_{\overline{R}}^{m+1} \twoheadrightarrow \mathfrak{m}_{\overline{S}}^m/\mathfrak{m}_{\overline{S}}^{m+1}$ and as the source and target of this map are respectively the kernels of $\overline{R}_{m+1} \to \overline{R}_m$ and $\overline{S}_{m+1} \to \overline{S}_m$, we inductively get surjections $\overline{R}_m \twoheadrightarrow \overline{S}_m$; the base case is $\overline{R}_1 \simeq k \oplus \mathfrak{m}_{\overline{R}}/\mathfrak{m}_{\overline{R}}^2 \twoheadrightarrow \overline{S}_1 \simeq k \oplus \mathfrak{m}_{\overline{S}}/\mathfrak{m}_{\overline{S}}^2$. Since $\overline{R}_m = R_m/\pi R_m$, by Nakayama's lemma we get that the lifts $g_m: R_m \to S_m$ are surjective, and since $R_m, S_m$ are artinian local rings we get $g = \varprojlim g_m$ surjective, hence the dual differential $Dg: \mathfrak{m}_R/\mathfrak{m}_R^2 \to \mathfrak{m}_S/\mathfrak{m}_S^2$ is surjective. Repeating the exact same argument for $f: S \to R$ yields a surjection $Df: \mathfrak{m}_S/\mathfrak{m}_S^2 \to \mathfrak{m}_R/\mathfrak{m}_R^2$. Thus the maps in (\ref{equation-dual-differential-maps}) are surjective and we are done.\end{proof}

We now come to the crux of this section: describing the miniversal deformations of $V_0$.

\begin{Prop}[see \cite{vistoli1997deformation}, Example 7.17]\label{description-of-miniversal-deformations}
    Let $r = \dim_k \textrm{\emph{T}}^1(V_0)$, $R = W(k)[\![t_1, \cdots, t_r]\!]$ and choose elements $g_1, \cdots, g_r$ in $W(k)[\![x,y,z]\!]$ so that $\{\overline{g}_1, \cdots, \overline{g}_r\}$ forms a basis of $\textrm{\emph{T}}^1(V_0)$. Consider the hypersurface $V = V(F) \subset \mathbb{A}^3_R$ defined by the vanishing of power series \begin{equation}\label{miniversal-polynomial}F(x,y,z,t_1, \cdots, t_r) = f(x,y,z) + \sum_{i=1}^r t_ig_i(x,y,z) \in W(k)[\![x,y,z, t_1, \cdots, t_r]\!]\end{equation}
    \noindent Then $V \to \Spec(R)$ induces a miniversal deformation of $V_0$.
\end{Prop}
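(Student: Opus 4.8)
The plan is to reduce everything to Proposition \ref{miniversal-iff-power-series-and-kodaira-spencer-isom}. Since $R = W(k)[\![t_1,\dots,t_r]\!]$ is by construction a power series algebra over $W(k)$, I only need to check two things: first, that $V = V(F) \to \Spec(R)$ is an honest flat deformation of $V_0$ — so that, completing along the special fiber, it gives a formal deformation $\phi\colon \widehat V \to \Spf(R)$ in $\widehat{\Deform}_{V_0}$ — and second, that the Kodaira--Spencer map $\textrm{KS}_\phi\colon \Tgt(R)_{\textrm{red}}^\vee \to \T^1(V_0)$ is an isomorphism.

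For the first point I would argue flatness directly: $R[\![x,y,z]\!] = W(k)[\![t_1,\dots,t_r,x,y,z]\!]$ is a Noetherian domain, so $F \ne 0$ is a nonzerodivisor and $0 \to R[\![x,y,z]\!] \xrightarrow{\,F\,} R[\![x,y,z]\!] \to R[\![x,y,z]\!]/(F) \to 0$ is a free resolution; tensoring with $R/\mathfrak m_R = k$ gives $0 \to k[\![x,y,z]\!] \xrightarrow{\,f\,} k[\![x,y,z]\!] \to R_0 \to 0$ with $f$ again a nonzerodivisor, so $\textrm{Tor}_1^R(R[\![x,y,z]\!]/(F), k) = 0$ and the local criterion for flatness gives that $R[\![x,y,z]\!]/(F)$ is $R$-flat with special fiber $R_0$. (Alternatively this is immediate from Lemma \ref{deformations-of-hypersurfaces}.) So $\phi$ is a formal deformation of $V_0$ over $\Spf(R)$.

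For the second point — the heart of the matter — I would compute $\textrm{KS}_\phi$ on a basis. Writing $\overline R = R/\pi R = k[\![t_1,\dots,t_r]\!]$, the space $\Tgt(R)_{\textrm{red}} = \mathfrak m_{\overline R}/\mathfrak m_{\overline R}^2$ has basis $\overline t_1,\dots,\overline t_r$; let $t_i^\ast$ be the dual basis, with $t_i^\ast$ corresponding under $\Tgt(R)_{\textrm{red}}^\vee \simeq \Hom_{W(k)}(R,k[\varepsilon])$ to the map $f_i$ sending $t_i \mapsto \varepsilon$ and $t_j \mapsto 0$ $(j \ne i)$. By Proposition \ref{ks-proposition}, $\textrm{KS}_\phi(t_i^\ast)$ is the extension class in $\T^1(V_0) = \textrm{Ext}^1(\Omega_{R_0},R_0)$ of the first-order deformation $f_i^\ast\phi$, and base-changing $F$ along $f_i$ identifies $f_i^\ast\phi$ with the hypersurface $V(f + \varepsilon g_i) \subset \mathbb A^3_{k[\varepsilon]}$. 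The remaining task is to show that, under the isomorphism $\T^1(V_0) \simeq R_0/J$ of Lemma \ref{def-is-ext-lemma}, this class is $\overline g_i$ up to sign: with the lifts $F_0 = f$ (trivial deformation) and $F_1 = f + \varepsilon g_i$, the homomorphism $\nu$ of the Kodaira--Spencer construction sends $f \mapsto [F_0 - F_1] = -\varepsilon\,\overline g_i$, and the connecting map $\partial$ of the conormal sequence (\ref{dualized-conormal-ses}) becomes, after $((f)/(f)^2)^\vee \simeq R_0$, just the quotient $R_0 \twoheadrightarrow R_0/J$; hence $\textrm{KS}_\phi(t_i^\ast) = -\,\overline g_i$. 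Since $\{\overline g_1,\dots,\overline g_r\}$ is by hypothesis a basis of $\T^1(V_0) = R_0/J$, this shows $\textrm{KS}_\phi$ carries a basis to a basis (up to sign), hence is an isomorphism, and Proposition \ref{miniversal-iff-power-series-and-kodaira-spencer-isom} finishes the proof.

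The main obstacle is precisely the identification used at the end of the last step, that the first-order hypersurface deformation $V(f + \varepsilon g)$ has class $\overline g \in R_0/J$ under the isomorphism of Lemma \ref{def-is-ext-lemma} — the content of (\cite{vistoli1997deformation}, Example 7.17). This is not conceptually hard but requires a careful chase through $\nu$ and the boundary map $\partial$, keeping track of signs; everything else — flatness, the explicit basis of $\Tgt(R)_{\textrm{red}}$, and the appeals to Propositions \ref{ks-proposition} and \ref{miniversal-iff-power-series-and-kodaira-spencer-isom} — is routine. (One could also run the argument through unobstructedness: $\T^2(V_0) = 0$ by Lemma \ref{def-is-ext-lemma}, so by Proposition \ref{schlessinger-criteria-special-case} a miniversal deformation exists over a power series ring in $r = \dim_k \T^1(V_0)$ variables over $W(k)$, and one is again reduced to checking $\textrm{KS}_\phi$ is an isomorphism.)
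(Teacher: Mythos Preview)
Your proposal is correct and follows essentially the same route as the paper: reduce to Proposition~\ref{miniversal-iff-power-series-and-kodaira-spencer-isom}, then compute $\textrm{KS}_\phi$ on the dual basis $t_i^\ast$ by pulling back along $t_i\mapsto\varepsilon$, $t_{j\neq i}\mapsto 0$ and tracing through $\nu$ and $\partial$ to land on $\overline g_i$ in $R_0/J$. The only differences are cosmetic: you include an explicit flatness check (the paper leaves this to Lemma~\ref{deformations-of-hypersurfaces}), and you track the sign in $\nu(f)=[F_0-F_1]$ more carefully, obtaining $-\overline g_i$ where the paper writes $\overline g_i$; neither affects the conclusion that $\textrm{KS}_\phi$ maps a basis to a basis.
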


\begin{proof} In view of Proposition \ref{miniversal-iff-power-series-and-kodaira-spencer-isom}, it suffices to show the Kodaira--Spencer map $\textrm{KS}_{\phi}$ is an isomorphism; we may set $n=r$ in the proof of Proposition \ref{miniversal-iff-power-series-and-kodaira-spencer-isom} since $n$ there was defined to be the dimension of $\textrm{T}_{\mathfrak{m}}(R)_{\textrm{red}} \simeq \textrm{T}^1(V_0)$. Let $\textrm{T}_{\mathfrak{m}}(R)_{\textrm{red}} = \langle \overline{t}_1, \cdots, \overline{t}_r\rangle \simeq \mathfrak{m}_{\overline{R}_1}$ for $\overline{t}_i$ the images of indeterminates $t_i \mod (\pi, \mathfrak{m}_R^2)$ and by abuse of notation denote the (dual) basis of $\textrm{T}_{\mathfrak{m}}(R)_{\textrm{red}}$ also by $\overline{t}_i$, then we claim $\textrm{KS}_{\phi}(\overline{t}_i) = \overline{g}_i$ so that $\textrm{KS}_{\phi}$ maps a basis to a basis and hence is a $k$-linear isomorphism.

Now Proposition \ref{ks-proposition} says that $\overline{t}_i$ corresponds to a map $f: \overline{R}_1 \to k[\varepsilon]$ mapping $\overline{t}_i \mapsto \varepsilon, \; \overline{t}_{j\neq i} \mapsto 0$ via duality on $k$-vector space $\mathfrak{m}_{\overline{R}_1}$ and $\textrm{KS}_{\phi}(\overline{t}_i)$ is the extension class in $\textrm{T}^1(V_0)$ of the pullback of deformation $\phi$ via $R \to \overline{R}_1 \to k[\varepsilon]$ to the 1st-order deformation $f^*\phi$, i.e.
\[\Spec\Big(\frac{W(k)[\![x,y,z,t_1,\cdots,t_r]\!]}{f(x,y,z) + \sum_{i=1}^rt_ig_i(x,y,z)}\otimes_f k[\varepsilon]\Big) \simeq \Spec\Big(\frac{k[\varepsilon,x,y,z]}{f(x,y,z) + \varepsilon \widetilde{g}_i(x,y,z)}\Big) \overset{f^*\phi}{\longrightarrow} \Spec(k[\varepsilon])\]

\noindent where $\widetilde{g}_i$ is the image of $g_i$ in $k[\varepsilon,x,y,z]$. To compute this extension class we trace through the definition of the Kodaira-Spencer map: we have two first-order deformations, $\overline{\phi}_1 = f^*\phi$ and the trivial deformation $\overline{\phi}_0: V_0\times_k \Spec(k[\varepsilon]) \to \Spec(k[\varepsilon])$, with corresponding lifts of $f \in (f) \subset k[x,y,z]$ being $f+ \varepsilon \widetilde{g}_i$ and $f$, viewed as elements of $k[\varepsilon,x,y,z]$. Then map $\nu$ in the definition of the Kodaira--Spencer map sends $f \mapsto [f+\varepsilon\widetilde{g}_i-f] = [\varepsilon \widetilde{g}_i] \in (\varepsilon)\otimes_k R_0$, hence in the quotient $(\varepsilon)\otimes_k R_0 \to R_0 \to R_0/J = \textrm{T}^1(V_0)$ (where $J$ is the Jacobian ideal) we get exactly the class $\overline{g}_i$. Thus $\textrm{KS}_{\phi}(\overline{t}_i) = \overline{g}_i$.\end{proof}

\begin{ex}\label{a_n-example-miniversality}Let $R_0$ be the local ring of an $A_{n-1}$ singularity $f(x,y,z) = x^2+z^2 + y^n$. A basis of the Tjurina algebra in good characteristic (see Definition \ref{definition-of-good-primes}) is given by
\[\frac{k[\![x,y,z]\!]}{(f,f_x,f_y,f_z)} \simeq \frac{k[\![y]\!]}{y^{n-1}} \simeq \bigoplus_{i=0}^{n-2}k\cdot y^i\]
\noindent so that $\textrm{T}^1(\Spec(R_0))$ has dimension $r = n-1$ and we may choose $g_i(x,y,z) = y^{i-1}$. Then \[F(x,y,z) = x^2+z^2+y^n + t_{n-1}y^{n-2} + \cdots + t_2y + t_1\]
\noindent is a miniversal deformation of $\Spec(R_0)$ with base $W(k)[\![t_1, \cdots, t_{n-1}]\!]$.
\end{ex}

We return to the setting in the beginning of Section \ref{subsection-forma-deformations-of-sings}, namely $\mathcal{X}/S$ is a flat proper surface with special fiber $\mathcal{X}_k$ containing an RDP $x \in \mathcal{X}_k(k)$.

\begin{Cor}\label{what-the-local-ring-of-model-looks-like} For $\widehat{\Oring}_{\mathcal{X},x}$ the completed local ring of $\mathcal{X}$ at $x \to \mathcal{X}_k \subset \mathcal{X}$ we have \begin{equation}\label{local-ring-of-model}\widehat{\Oring}_{\mathcal{X},x} \simeq \frac{W(k)[\![x,y,z]\!]}{F(x,y,z)}\end{equation}
\noindent for some polynomial $F(x,y,z)$ that is the pullback of \emph{(\ref{miniversal-polynomial})} under $W(k)[\![t_1, \cdots, t_r]\!] \to W(k)$ induced by versality. Here $f(x,y,z)$ is the normal form of singularity $x$, $r$ is the dimension of the Tjurina algebra of the singularity and the $t_i$ are specialized to elements in the maximal ideal $\mathfrak{m}_{W(k)} = (\pi)$ of $W(k)$.
\end{Cor}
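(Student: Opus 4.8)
The plan is to recognize $A := \widehat{\Oring}_{\mathcal{X},x}$ as a formal deformation of $V_0$ over the base $W(k)$ and then pull back the explicit miniversal family of Proposition \ref{description-of-miniversal-deformations}. First I would record that $A$ is a complete noetherian local $W(k)$-algebra with residue field $k$: this is immediate because $\mathcal{X}$ is of finite type over $S=\Spec(W(k))$ and $x$ is a $k$-rational point of $\mathcal{X}_k$, hence a closed point of $\mathcal{X}$ with residue field $k$. It is flat over $W(k)$ since $\Oring_{\mathcal{X},x}$ is flat over $W(k)$ (as $\mathcal{X}\to S$ is flat) and $A$ is faithfully flat over the noetherian local ring $\Oring_{\mathcal{X},x}$. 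As $\mathcal{X}_k = V(\pi)\subset\mathcal{X}$, reduction mod $\pi$ commutes with $\mathfrak{m}_x$-adic completion, so $A/\pi A \simeq \widehat{\Oring}_{\mathcal{X}_k,x} = R_0 = k[\![x,y,z]\!]/(f)$. Therefore $\Spf(A)\to\Spf(W(k))$ is a formal deformation of $V_0$ in the sense of Notation \ref{notation-deformation} (note $W(k)\in\CArt_k$, with $\mathfrak{m}_{W(k)}=(\pi)$), i.e.\ it defines a class in $\widehat{\Deform}_{V_0}(W(k))$. Applying Lemma \ref{deformations-of-hypersurfaces}(ii) with $R=W(k)$ already gives $A\simeq W(k)[\![x,y,z]\!]/(F)$ with $F\equiv f\bmod\pi$; the remaining content of the Corollary is to identify this $F$ with a specialization of the miniversal polynomial (\ref{miniversal-polynomial}).

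For that, set $r=\dim_k\textrm{T}^1(V_0)$ (finite by Definition \ref{tjurina-number-finite}), choose polynomial representatives $g_1,\dots,g_r\in W(k)[x,y,z]$ of a basis of $\textrm{T}^1(V_0)$, and let $\phi\colon V(F_{\mathrm{univ}})\to\Spec(W(k)[\![t_1,\dots,t_r]\!])$ with $F_{\mathrm{univ}}=f+\sum_{i=1}^r t_ig_i$ be the miniversal deformation of Proposition \ref{description-of-miniversal-deformations}. Versality holds over the non-artinian base $W(k)$ by the reduction to the artinian case recalled after the definition of versality (\cite{vistoli1997deformation}, Lemma 7.3), so the class of $\Spf(A)$ in $\widehat{\Deform}_{V_0}(W(k))$ is induced from $\phi$ by a $\CArt_k$-morphism $u\colon R\to W(k)$, where $R=W(k)[\![t_1,\dots,t_r]\!]$; concretely $A$ is isomorphic, compatibly with the identifications of special fibers with $R_0$, to the completed pullback $W(k)[\![x,y,z]\!]/(u(F_{\mathrm{univ}}))$.

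The point worth isolating is that $u$ is a \emph{local} $W(k)$-algebra homomorphism $W(k)[\![t_1,\dots,t_r]\!]\to W(k)$, which forces $\lambda_i:=u(t_i)\in\mathfrak{m}_{W(k)}=(\pi)$: a $W(k)$-algebra map out of the power series ring into $W(k)$ is well defined exactly when each $t_i$ maps to a topologically nilpotent element, and the topologically nilpotent elements of $W(k)$ are precisely those of $(\pi)$. Substituting $t_i\mapsto\lambda_i$ into $F_{\mathrm{univ}}$ yields the polynomial $F(x,y,z)=f(x,y,z)+\sum_{i=1}^r\lambda_i g_i(x,y,z)\in W(k)[x,y,z]$, obtained from (\ref{miniversal-polynomial}) by specializing the $t_i$ into $\mathfrak{m}_{W(k)}=(\pi)$, and $\widehat{\Oring}_{\mathcal{X},x}\simeq W(k)[\![x,y,z]\!]/(F)$, which is the assertion.

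The main obstacle here is mild and essentially bookkeeping: checking carefully that $\Spf(A)\to\Spf(W(k))$ genuinely represents an element of $\widehat{\Deform}_{V_0}$ --- flat, with special fiber exactly $R_0$, and realized as the inverse limit of its artinian truncations $A/\mathfrak{m}_x^{n+1}$ over $W(k)/(\pi^{n+1})$ --- together with the (equally routine but crucial) remark that the classifying map $u$ is local, since that is precisely what confines the parameters $t_i$ to $(\pi)$ rather than to arbitrary elements of $W(k)$.
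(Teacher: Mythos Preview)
Your proof is correct and takes essentially the same route as the paper's: the paper says the result ``is immediate from Proposition \ref{description-of-miniversal-deformations} once we establish (\ref{local-ring-of-model}) for some power series $F(x,y,z) \in W(k)[\![x,y,z]\!]$, which is the content of Lemma \ref{deformations-of-hypersurfaces},'' and you have unpacked exactly these two steps, additionally spelling out why $\Spf(A)$ is a genuine object of $\widehat{\Deform}_{V_0}(W(k))$ and why the classifying map $u$ is local (forcing $u(t_i)\in(\pi)$).
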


\begin{proof} This is immediate from Proposition \ref{description-of-miniversal-deformations} once we establish (\ref{local-ring-of-model}) for some power series $F(x,y,z) \in W(k)[\![x,y,z]\!]$, which is the content of Lemma \ref{deformations-of-hypersurfaces}.\end{proof}

\begin{Rem}\label{rem-from-formal-to-algebraic-deformations} While a priori we speak of miniversal deformations as maps of formal schemes $V \to \Spf(R)$, we can regard the miniversal deformation of Proposition \ref{description-of-miniversal-deformations} as an algebraic deformation over henselian scheme $\Spec(W(k)[\![t_1,\cdots, t_r]\!])$. This is more generally due to a theorem of Elkik which states that formal deformations of affine schemes with isolated singularities are algebraic (\cite{elkik1974algebrisation}). Thus, by replacing $X$ with an affine \'etale neighborhood of the singularity in Corollary \ref{what-the-local-ring-of-model-looks-like}, we may and do consider the (miniversal or otherwise) deformations of RDPs as usual scheme morphisms over a henselian base.\end{Rem}

\section{The geometry of the Grothendieck alteration}\label{chevalley-section}

We now collect the necessary Lie-theoretic prerequisites in order to define miniversal deformations of RDP singularities in terms the adjoint quotient of Lie algebras. The main goal is to define and study the classical Grothendieck--Springer resolution over base $\Spec(\Oring_K)$ instead of $\mathbb{C}$ (Section \ref{relative-grothendieck-alteration}). To this end, Sections \ref{chevalley-section}-\ref{subsection-nilpotent-elts-of-chev-algs} extend the relevant notions of nilpotent, semisimple and (sub)regular elements to the setting of Chevalley algebras over $\Oring_K$.

\subsection{Chevalley bases and Chevalley algebras.}\label{chevalley-setting} Throughout this section we work over base $S = \Spec(\Oring_K)$, where $\Oring_K$ is a mixed-characteristic complete DVR with algebraically closed residue field $k$ and fraction field $K$; the corresponding closed and generic points of $S$ are respectively $s$ and $\eta$.

We recall the existence of Chevalley bases. Let $\g_{\overline{\eta}}$ be a semisimple Lie algebra over algebraically closed field $\overline{K}$ of characteristic zero, $\h$ a fixed Cartan subalgebra and $\Phi$ the corresponding root system with a basis $\Delta$ of simple roots. Via the Cartan decomposition \[\g_{\overline{\eta}} = \h \oplus \bigoplus_{\alpha \in \Phi} \g_{\alpha, \overline{\eta}}\]
one may choose $\{e_{\alpha} \in \g_{\alpha, \overline{\eta}} \mid \alpha \in \Phi\}$ forming a $\mathbb{Z}$-basis for each 1-dimensional space $\g_{\alpha, \overline{\eta}}$ and $\{h_{\beta} \in \h \mid \beta \in \Delta\}$ fundamental coroots subject to certain compatibility relations (\cite{humphreys2012introduction}, \S25.2). Set $\{e_{\alpha}, h_{\beta}\}$ forms a \textit{Chevalley basis} with corresponding Chevalley $\mathbb{Z}$-algebra \[\g_{\mathbb{Z}} = \bigoplus_{\beta \in \Delta}\mathbb{Z}h_{\beta}\oplus\bigoplus_{\alpha\in\Phi}\mathbb{Z}e_{\alpha}\]
One can also construct an associated group scheme $G$ over $\mathbb{Z}$, the \textit{Chevalley group}, playing the role of the Lie group of $\g_{\mathbb{Z}}$; (\cite{humphreys2012introduction}, \S25.4) discusses the adjoint case but we may always take $G$ to be the simply-connected cover of the adjoint group. We may further base-change to $\Oring_K$ so that $\g$ becomes a free $\Oring_K$-module and $G$ is a group scheme over $S$. We call $\g$ a Chevalley algebra of $ADE$ type if $\g_{\overline{\eta}}$ is simple of $ADE$ type.

From now on we fix a simple, simply-connected, split Chevalley group scheme $G/S$ and a torus and Borel $T \xhookrightarrow{} B$. We have $B = T \rtimes \Rub$ where $\Rub$ is the unipotent radical, a smooth normal subgroup $S$-scheme in $B$, and so $T \simeq B/\Rub$. We write $\g, \h, \borel$ and $\mathfrak{n}_{\borel}$ respectively for the Chevalley algebras of $G,T,B$ and $\Rub$, so that $\h, \borel, \mathfrak{n}_{\borel}$ form respectively a Cartan, a Borel and the nilradical of the Borel.

One still has a notion of the adjoint action of $G$ on $\g$ (\cite{conrad2014reductive}, \S5.1) and hence the adjoint action of $T$ decomposes $\g$ into weight spaces
\begin{equation}\label{equation-cartan-decomposition}\g = \h \oplus \bigoplus_{\alpha \in \Phi}\g_{\alpha}
\end{equation}
\noindent where each $\g_{\alpha}$ is a rank 1 free $\Oring_K$-module and $\Phi$ consists of characters $\alpha: T \to \mathbb{G}_m$.

\subsection{Root data and adjoint Weyl actions.} Retaining the assumptions and notations of Section \ref{chevalley-setting}, let $T/S$ be a maximal torus of split group scheme $G$ and let $r = \dim_S(T) = \textrm{rk}_S(\g)$ be the rank of the associated Chevalley algebra $\g$. By the split hypothesis there exists a free $\mathbb{Z}$-module $X^*(T)$ of rank $r$ so that
\[ T \simeq \textrm{\underline{Hom}}(X^*(T), \mathbb{G}_m)\otimes_{\mathbb{Z}}\Oring_K, \;\;\textrm{i.e.}\;\; \h \simeq \textrm{Lie}(T) \simeq X^*(T)^{\vee}\otimes_{\mathbb{Z}}\Oring_K \]
Set $X_*(T) = X^*(T)^{\vee}$ for the $\mathbb{Z}$-dual. The Cartan decomposition of $\g$ (see Equation (\ref{equation-cartan-decomposition})) yields a set $\Phi \subset X^*(T)\!\setminus\!\{0\}$ of roots $\alpha: T \to \mathbb{G}_{m, \Oring_K}$ and a corresponding set of coroots $\alpha^{\vee} \in \Phi^{\vee} \subset X_*(T)\!\setminus\!\{0\}$.

\begin{Def}\label{definition-root-datum} The 4-tuple $(X^*(T), \Phi, X_*(T), \Phi^{\vee})$ is a \textit{root datum} for $(G,T)$ and the quotient $W = W_G(T) = N_G(T)/T$ is the \textit{Weyl group} associated to $(G,T)$. 
\end{Def}
See (\cite{conrad2014reductive}, Prop. 5.1.6) for a proof that the 4-tuple in Definition \ref{definition-root-datum} satisfies the conditions of being a root datum; the Weyl group $W$ is identified with the usual Weyl group associated to $\Phi$ and is in particular generated by the set of reflections \[\{s_{\alpha} = \textrm{id} - \alpha^{\vee} \otimes_{\mathbb{Z}}\alpha \mid \alpha \in \Phi\}\]
From this description it follows that $W$ is a finite and constant group scheme over $S$.

We record here two natural actions of the Weyl group on affine spaces, which will be useful for us later on.

\begin{Def}\label{definition-weyl-Lie-algebra-actions} Let $W$ be the Weyl group associated to $(G,T)$ and $\h = \textrm{Lie}(T)$.

\begin{enumerate}[label=(\roman*)]
    \item The adjoint action $\textrm{Ad}:G \to \textrm{End}(\g)$ restricts to an action $N_G(T) \times \h \to \h$, since $\textrm{Ad}_g(h) \in \h$ for $g \in N_G(T)(\Oring_K)$ and $h \in \h(\Oring_K)$. In particular the adjoint $T$-action on $\h$ is trivial since $T$ is abelian, and so the adjoint action descends to the \textit{adjoint Weyl action} $W \to \textrm{End}(\h)$, $w \longmapsto \textrm{Ad}_{n_w}$ for a lift $n_w \in N_G(T)$ of $w \in W$. As $G$ is simple, we may identify $\mathfrak{h} \simeq \mathfrak{h}^{\vee} \simeq X^*(T)\otimes_{\mathbb{Z}}\Oring_K$ so that the adjoint Weyl action is identified with the natural reflection action of $W$ on $X^*(T)\otimes_{\mathbb{Z}}\Oring_K$.
    \item There natural action of $W$ on $G/T$ via $(gT)\cdot w = gn_wT$ for a lift $n_w \in N_G(T)$ is the \textit{right multiplication action} of $W$. This also yields a natural action on $G/T\times_S \h$ via $(gT,h)\cdot w = (gn_wT, \textrm{Ad}_{n_w^{-1}}(h))$, which we will make use of later (see Proposition \ref{proposition-relative-grothendieck-resolution-diagram}).
\end{enumerate}
    
\end{Def}

\begin{Rem}[Good primes and torsion primes]\label{remark-on-good-and-torsion-primes} Given a reduced root system $\Phi \subset X^*(T)$, a prime $p$ is said to be \textit{torsion} for a simply-connected Lie group $G$ if there exists a $\mathbb{Z}$-closed root subsystem $\Phi' \subseteq \Phi$ so that the quotient of $\mathbb{Z}$-lattices $\mathbb{Z}\Phi^{\vee}/\mathbb{Z}(\Phi')^{\vee}$ has $p$-torsion. An equivalent definition of a good prime is that there is no $\mathbb{Z}$-closed root subsystem $\Phi' \subseteq \Phi$ so that $\mathbb{Z}\Phi/\mathbb{Z}\Phi'$, so that the good primes are exactly the non-torsion primes (\cite{Slodowy1980}, \S3.12). Similarly, a prime $p$ is very good if it is good and $p$ does not divide $\lvert (\mathbb{Z}\Phi^{\vee})^*/\mathbb{Z}\Phi \rvert$. See (\textit{loc. cit.}, \S3.6) for a table of the values of $\lvert (\mathbb{Z}\Phi^{\vee})^*/\mathbb{Z}\Phi \rvert$ in the case of $\g$ simple simply-laced; it turns out that the only good but not very good case of prime $p$ occurs when $p \mid n+1$ and $\g$ is of Type $A_n$.
    
\end{Rem}

\subsection{Nilpotent and semisimple elements of Chevalley algebras.}\label{subsection-nilpotent-elts-of-chev-algs} We now come to the definition of the nilpotent scheme and nilpotent sections. Recall that, when $\g$ is a Lie algebra over $k$, the nilpotent variety $\mathcal{N}_{\g}$ is a reduced closed subscheme of $\g$ that is the Zariski closure of the nilpotent elements, and by Galois descent $\mathcal{N}_{\g}$ is also well-defined over non-algebraically closed fields.

\begin{Prop}[\cite{cotner2022springer}, Thm.\,4.12]\label{nilpotent-scheme-definition} For a Chevalley $\Oring_K$--algebra $\g$, there exists a unique closed $S$-subscheme $\mathcal{N}_{\g}^{\textrm{\emph{sch}}} \subset \g$ that is reduced and $(\mathcal{N}_{\g}^{\textrm{\emph{sch}}})_{\overline{s}} \simeq \mathcal{N}_{\g_{\overline{s}}}$ for geometric points $\overline{s} \to S$. Here the right-hand side denotes the usual nilpotent variety over a field.
\end{Prop}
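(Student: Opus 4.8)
The plan is to produce $\mathcal{N}_{\g}^{\textrm{sch}}$ as the scheme-theoretic closure of the generic nilpotent cone and then to identify its special fibre using the relative Springer map; uniqueness will be essentially formal. Concretely, I would set $\mathcal{N}_{\g}^{\textrm{sch}}$ to be the scheme-theoretic closure of $\mathcal{N}_{\g_{\eta}}\subset\g_{\eta}$ inside the total space $\g$ over $S$. Since $\g$ is affine over the affine base $S$ and $\mathcal{N}_{\g_{\eta}}$ is reduced, the closure is automatically reduced (its coordinate ring embeds in the reduced ring of $\mathcal{N}_{\g_{\eta}}$) and $\Oring_K$-torsion-free, hence flat over $S$, as $\Oring_K$ is a discrete valuation ring. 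An equivalent and more usable description — which I would show agrees with the previous one — is that $\mathcal{N}_{\g}^{\textrm{sch}}$ is the scheme-theoretic image of the relative Springer map $\mu\colon\widetilde{\mathcal{N}}:=G\times^B\mathfrak{n}_{\borel}\longrightarrow\g$; here $\mu$ is proper with source smooth over $S$, so $\mu_*\Oring_{\widetilde{\mathcal{N}}}$ is $\Oring_K$-torsion-free, its image subsheaf defines an $S$-flat closed subscheme, and the formation of that image commutes with the flat base change $\eta\to S$, where $\mu_{\eta}$ is birational onto the reduced nilpotent cone.

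Next I would check the fibres. The generic fibre is $\mathcal{N}_{\g_{\eta}}$ by flat base change to $\eta$ and the characteristic-zero theory, and base-changing further to $\overline{K}$ gives $\mathcal{N}_{\g_{\overline{\eta}}}$ since the nilpotent cone commutes with field extension in characteristic zero. For the special fibre, $S$-flatness forces $(\mathcal{N}_{\g}^{\textrm{sch}})_s$ to be equidimensional of dimension $\dim\g-r$ with $r=\textrm{rk}_S(\g)$, while the base change $\mu_s\colon\widetilde{\mathcal{N}}_s\to\g_s$ factors through $(\mathcal{N}_{\g}^{\textrm{sch}})_s$ and is surjective onto $\mathcal{N}_{\g_s}$. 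As $\mathcal{N}_{\g_s}$ is irreducible of the same dimension $\dim\g-r$, this yields $(\mathcal{N}_{\g}^{\textrm{sch}})_{s,\textrm{red}}=\mathcal{N}_{\g_s}$ set-theoretically.

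The crux — and the step I expect to be the main obstacle — is upgrading this to a scheme-theoretic equality, i.e. showing $(\mathcal{N}_{\g}^{\textrm{sch}})_s$ is reduced, equivalently that forming the scheme-theoretic image of $\mu$ commutes with the non-flat base change $s\to S$. When $p$ is good for $\g$ this can be done by hand: the basic $G$-invariants $p_1,\dots,p_r\in\Oring_K[\g]^G$ (reductions of a $\mathbb{Z}$-form) generate the invariant ring and remain a generating set after reduction mod $p$ by Veldkamp's theorem, so $\mathcal{N}_{\g}^{\textrm{sch}}=V(p_1,\dots,p_r)$ is a relative complete intersection flat over $S$; hence $(\mathcal{N}_{\g}^{\textrm{sch}})_s=V(\bar p_1,\dots,\bar p_r)$ is Cohen--Macaulay of the expected dimension, it is generically reduced because the regular nilpotent locus is smooth, and Serre's $(R_0)+(S_1)$ criterion finishes the job. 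Without a hypothesis on $p$ one argues instead that $\mu$ is relatively "rational" over $S$ — that $\textrm{R}\mu_*\Oring_{\widetilde{\mathcal{N}}}=\Oring_{\mathcal{N}_{\g}^{\textrm{sch}}}$, with $\mu_*\Oring_{\widetilde{\mathcal{N}}}$ commuting with base change by cohomology-and-base-change for the $S$-flat proper $\mu$ — so that $(\mathcal{N}_{\g}^{\textrm{sch}})_s$ is the scheme-theoretic image of the smooth, hence reduced, scheme $\widetilde{\mathcal{N}}_s$, and therefore reduced. Controlling $\textrm{R}\mu_*\Oring_{\widetilde{\mathcal{N}}}$ uniformly in $p$ is exactly where the real content of the theorem sits.

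Finally, uniqueness. Let $\mathcal{N}'\subset\g$ be any reduced closed $S$-subscheme with $\mathcal{N}'_{\bar x}\simeq\mathcal{N}_{\g_{\bar x}}$ for all geometric points $\bar x\to S$. Its generic fibre is $\mathcal{N}_{\g_{\eta}}$ (the nilpotent cone descends from $\overline{K}$ to $K$ since $\g$ is split), so the scheme-theoretic closure of $\mathcal{N}'_{\eta}$ in $\g$, which is again $\mathcal{N}_{\g}^{\textrm{sch}}$, is a closed subscheme of $\mathcal{N}'$. Any point of $\mathcal{N}'$ outside this closure would lie in $\g_s$ and would give an irreducible component of $\mathcal{N}'_s$ not contained in $\mathcal{N}_{\g_s}=(\mathcal{N}_{\g}^{\textrm{sch}})_{s,\textrm{red}}$; but $\mathcal{N}'_s\simeq\mathcal{N}_{\g_s}$ is irreducible, so there is no such point. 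Hence $\mathcal{N}'$ and $\mathcal{N}_{\g}^{\textrm{sch}}$ have the same underlying closed subset of $\g$, and being both reduced they coincide as subschemes.
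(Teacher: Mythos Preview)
Your uniqueness argument is essentially the paper's: two reduced closed subschemes of $\g$ with the same geometric fibres have the same underlying topological space, hence coincide. The paper phrases it as $X=\overline{X}_{\eta}\cup X_s$ and then matches the two pieces with those of $Y$, which is the same mechanism as your ``closure of the generic fibre plus no extra components in the special fibre'' argument.

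For existence, the paper simply defers to the cited reference and does not attempt a construction; your route via the scheme-theoretic closure of $\mathcal{N}_{\g_{\eta}}$ (equivalently the scheme-theoretic image of the relative Springer map) is a genuinely different and more self-contained approach. You correctly isolate the only nontrivial step --- reducedness of the special fibre --- and your two strategies are both sound: in good characteristic the complete-intersection/Cohen--Macaulay argument via the basic invariants works (and in fact the paper later proves $\mathcal{N}_{\g}\simeq\chi^{-1}(0)$ along these lines in Proposition~\ref{adjoint-quotient-properties}); without that hypothesis, your appeal to $\textrm{R}\mu_*\Oring_{\widetilde{\mathcal{N}}}=\Oring_{\mathcal{N}_{\g}^{\textrm{sch}}}$ plus cohomology-and-base-change is the right shape, though as you say this is precisely where the real work lives. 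What you gain is an explicit candidate scheme tied to the Springer resolution, which dovetails nicely with the paper's later use of $\pi$ and $\widetilde{\chi}$; what the paper gains by citing is brevity and not having to worry about small characteristics at this point in the exposition.
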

\begin{proof} Existence is shown more generally in (\cite{conrad2014reductive}, Thms.\,4.6 and 4.12) so we only discuss uniqueness in our particular case. Suppose $X,Y$ are closed reduced $S$--subschemes in $\mathbb{A}_S$ so that on geometric fibers $Y_s \simeq X_s \xhookrightarrow{} \mathbb{A}_{k}^n$ and both $X_{s}, Y_{s}$ are reduced, then we claim $X \simeq Y$. The Zariski closure $\overline{X}_{\eta}$ of $X_{\eta}$ is reduced as $X_{\eta}$ is an open reduced subscheme of $X$, hence so is $X'=\overline{X}_{\eta} \cup X_s$. Since both $X',X$ are reduced closed subschemes of $\mathbb{A}_{S}^n$ with the same points, $X'=X$. A similar argument for $Y$ yields \[Y = \overline{Y}_{\eta} \cup Y_s \simeq \overline{X}_{\eta} \cup X_s = X\]\end{proof}

Since nilpotent schemes behave well under base--change, we will refer to the nilpotent scheme $\mathcal{N}_{\g}^{\textrm{sch}}$ as $\mathcal{N}_{\g}$ for any Chevalley $\Oring_K$-algebra. We can also define:

\begin{Def} An $\Oring_K$--valued section $x \in \g(\Oring_K)$ is \textit{fiberwise nilpotent} if $x_s \in \mathcal{N}_{\g}(k)$ and $x_{\overline{\eta}} \in \mathcal{N}_{\g}(\overline{K})$. Equivalently $x$ is an $\Oring_K$--valued section of $\mathcal{N}_{\g}$.
\end{Def}

For $x \in \g(\Oring_K)$, we can define the centralizer $C_G(x)$ as follows. Through the adjoint action $\textrm{Ad}: G\times \g \to \g$ we obtain a functor $\mathcal{C}_G(x)$ so that on $\Oring_K$--algebras $R$, \[\mathcal{C}_G(x)(R) = \{g \in G(R) \mid \textrm{Ad}_g(x) = x\}\]
By (\cite{cotner2022springer}, Lemma 2.1), $\mathcal{C}_G(x)$ is represented by a closed $S$--subgroup scheme of $G$, which we denote by $C_G(x)$; over algebraically closed fields, the reduced scheme underlying $C_G(x)$ is the usual centralizer.

\begin{Def}\label{definitions-of-elements-in-lie-algebras} A section $x \in \g(\Oring_K)$ is \textit{regular} if its centralizer subscheme $C_G(x)$ satisfies $\dim(C_G(x)_s) = \dim(C_G(x)_{\overline{\eta}}) = r$ where $r = \textrm{rank}(G)$. By upper-semicontinuity of fiber dimension for group schemes, it suffices to have $\dim(C_G(x)_s) = r$ since $\dim(C_G(x)) \geq r$ for semisimple $G$ (see \cite{humphreys1995conjugacy}, \S1.6).
\end{Def}

\begin{Def}\label{definition-fiberwise-subregular} For a Lie algebra $\g$ over an algebraically closed field we call $x \in \g$ \textit{subregular} if $\dim(C_G(x)) = r+2$. For non-regular elements $x$ one has $\dim(C_G(x))\geq r+2$ so that subregular elements are are the next ``closest'' to being regular; see (\cite{humphreys1995conjugacy}, \S4.11) for more details. 

If $\g$ is now a Chevalley $\Oring_K$--algebra, a nilpotent section $x \in \mathcal{N}_g(\Oring_K)$ is called \textit{fiberwise subregular} if both $x_s \in \mathcal{N}_{\g}(k)$, $x_{\overline{\eta}} \in \mathcal{N}_{\g}(\overline{K})$ are subregular nilpotent elements; if we do not require $x$ to be (fiberwise) nilpotent then subregularity in this setting may not make sense (cf.\;Example \ref{example-of-subregular-and-semisimple-not-behaving-well-in-fibers}).
\end{Def}

\begin{Def}[\cite{bouthier2019torsors}, \S4.1.5, \cite{demazure1970schemas} Expos\'e XIV.2]\label{definition-regular-semisimple} An element $x\in \g(\Oring_K)$ is called \textit{fiberwise semisimple} if $x_{\overline{s}} \in \g(k(\overline{s}))$ is semisimple for all geometric points $\overline{s} \to S$. An element $x \in \g(\Oring_K)$ is called \textit{regular semisimple} if it lies in some Cartan $\h \subset \g$ and for all geometric points $\overline{s}$ we have \[ \h_{\overline{s}} = \bigcup_{n\geq 0} (\ker(\textrm{ad}_{x_{\overline{s}}}^n))\]
\noindent This union may be thought of as the centralizer of $x$ so that it has fiberwise dimension equal to $\textrm{rank}(\g_{\overline{s}})$. Regular semisimple elements form an open set $\g^{\textrm{rs}} \subset \g$.
    \end{Def}

By (\cite{demazure1970schemas}, Expos\'e XIV.2), set $\g^{\textrm{rs}}$ is $S$--fiberwise dense in $\g$ and its construction commutes with base-change, whence it gives the usual notion of regular semisimple elements of Lie algebras over a field, i.e. $x \in \g_k$ so that $C_G(x)^{\circ}$ is a maximal torus in $G_k$. This also makes the density of the semisimple locus in $\g$ apparent.

Similar methods as in (\cite{cotner2022springer}, Lemma 2.1) yield that the functor of regular sections in $\g$ is represented by an open subscheme $\g^{\textrm{reg}} \subset \g$ over $S$. In the case of good characteristic, (\cite{bouthier2019torsors}, Lemma 4.1.6) gives $\g^{\textrm{rs}} \subset \g^{\textrm{reg}}$ so that both are $S$--fiberwise dense in $\g$.

\begin{ex}\label{example-of-subregular-and-semisimple-not-behaving-well-in-fibers} We discuss a couple of pathologies that may occur in Chevalley algebras, which justify the various ``fiberwise'' conditions in the previous definitions.
\begin{enumerate}[label=(\roman*)]
\item The subregular notion need not behave well in fibers. Let \begin{equation*}
    x = \begin{pmatrix} 0 & 1 &0 \\ 0 & 0 & p \\ 0 & 0 &0    
    \end{pmatrix} \in \mathfrak{sl}_3(\Oring_K), \; \; p=\textrm{char}(k) > 3
\end{equation*}
\noindent then $x_{\overline{\eta}} \in \mathfrak{sl}_3(k(\overline{\eta}))$ is regular nilpotent but special fiber $x_s \in \mathfrak{sl}_3(k)$ is subregular. In general we get that $x$ stays in the same (adjoint) nilpotent orbit if it satisfies a purity assumption in the sense of \cite{cotner2022centralizers}, i.e.\;constant centralizer dimension on the fibers --- see (\textit{loc.\;cit.}, Prop.\,5.10).

\item Regular semisimple elements (see Definition \ref{definition-regular-semisimple} below) can become nilpotent regular or non-regular elements. Let $p > 2$ and 
\begin{equation*}
    x = \begin{pmatrix} p & 1 \\ 0 & -p \end{pmatrix}, \; \; y = \begin{pmatrix} p & 0 \\ 0 & -p \end{pmatrix} \; \; \textrm{(in } \mathfrak{sl}_2(\Oring_K)) \end{equation*}
    \noindent then $x_{\overline{\eta}}, y_{\overline{\eta}}$ are regular semisimple but $x_s$ is regular nilpotent and $y_s$ is subregular nilpotent. For $x$, even though $\dim_{\Oring_K}(C_G(x))$ is locally constant, centralizer $C_G(x)$ is not flat.
\end{enumerate}
\end{ex}

\subsection{The adjoint quotient.}\label{the-adjoint-quotient-subsection} This section and the next are based on some Lie-theoretic observations in \cite{riche2017kostant} and (\cite{bouthier2019torsors}, \S4), which hold in greater generality than base scheme $S = \Spec(\Oring_K)$. In our situation however we may provide simplified proofs and constructions suitable to our purposes.

We retain the notations and assumptions of Section \ref{chevalley-setting} for Chevalley group $G$ and Chevalley algebra $\g$. As an integral scheme, $\g$ has coordinate ring $\textrm{Sym}_{\Oring_K}(\g^{\vee})$. The adjoint action $\textrm{Ad}: G \to \textrm{End}(\g)$ yields the categorical quotient $\g/\!\!/G = \Spec((\textrm{Sym}_{\Oring_K}(\g^{\vee}))^G)$, whose coordinate ring consists of the adjoint $G$--invariants. Since the restriction of the adjoint action of $G$ to $\h$ factors through the Weyl group $W = N_G(T)/T$, we get a map $\textrm{Sym}_{\Oring_K}(\g^{\vee})^G \to \textrm{Sym}_{\Oring_K}(\h^{\vee})^W$; its schematic version is the \emph{Chevalley map} $\h/\!\!/W \to \g/\!\!/G$. 

The natural inclusions $\textrm{Sym}_{\Oring_K}(\g^{\vee})^G \xhookrightarrow{} \textrm{Sym}_{\Oring_K}(\g^{\vee})$ and $\textrm{Sym}_{\Oring_K}(\h^{\vee})^W \xhookrightarrow{} \textrm{Sym}_{\Oring_K}(\h^{\vee})$ yield categorical quotient maps
\[ \psi: \h \longrightarrow \h/\!\!/W, \;\;\; \chi: \g \longrightarrow \g/\!\!/G\]

Map $\psi$ is a finite branched cover, and $\chi$ is known as the \emph{adjoint quotient}. For the properties of the counterparts of these morphisms over algebraically closed fields, we refer to \cite{Slodowy1980}, \S3.10, \S3.12 and \S3.14. We note here that, by virtue of the Jordan decomposition, two elements $x_1,x_2 \in \g(k)$ have $\chi(x_1) = \chi(x_2)$ if and only if $x_1^{\textrm{ss}} \in \overline{\textrm{Ad}_G(x_2^{\textrm{ss}})}$ for their semisimple parts, so that $\g/\!\!/G$ may be thought of as the space of semisimple conjugacy classes of $G$ in $\g$ and $\chi$ maps $x$ to the class of its semisimple part $[x^{\textrm{ss}}]$.

\begin{Prop}[\cite{bouthier2019torsors}, Thms.\;4.1.10 and 4.1.14]\label{g/G commutes with basechange} Suppose that $G, \g, \h, W$ are as above, $r=\textrm{\emph{rk}}(G)$ and that $\textrm{\emph{char}}(k)=p$ is very good for $G$. Then the Chevalley map is an isomorphism $\h/\!\!/W \simeq \g/\!\!/G \simeq \mathbb{A}^r_{\Oring_K}$ and the formation of categorical quotient $\g/\!\!/G$ commutes with base-change.
\end{Prop}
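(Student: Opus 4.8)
The plan is to reduce everything to two classical facts over an algebraically closed field $k$ of very good characteristic, namely that $\g_k/\!\!/G_k \simeq \h_k/\!\!/W \simeq \mathbb{A}^r_k$ (the Chevalley restriction theorem in very good characteristic, see \cite{Slodowy1980} \S3), together with the analogous statement in characteristic zero over $\overline{K}$. The key point to establish is that the $\Oring_K$-algebra $A := \mathrm{Sym}_{\Oring_K}(\g^{\vee})^G$ is a polynomial ring $\Oring_K[f_1,\dots,f_r]$ and that its formation commutes with arbitrary base change $\Oring_K \to R$. First I would produce candidate invariants: one knows over $\mathbb{Z}[1/N]$ (for $N$ a suitable product of bad primes) that $\mathrm{Sym}(\g_{\mathbb{Z}[1/N]}^{\vee})^G$ is a polynomial ring on homogeneous generators $f_1,\dots,f_r$ of degrees the exponents-plus-one; since $p$ is very good it is invertible in $\Oring_K$ after the relevant localization, so we obtain elements $f_i \in A$ by base change. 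These $f_i$ are algebraically independent and homogeneous.

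Next I would show the inclusion $\Oring_K[f_1,\dots,f_r] \hookrightarrow A$ is an equality by a fiberwise argument. Both sides are finitely generated $\Oring_K$-algebras (the right side because $\g/\!\!/G$ is of finite type, using reductivity of $G/S$ and e.g. \cite{conrad2014reductive} or the cited \cite{bouthier2019torsors}), both are flat over $\Oring_K$ — the left side obviously, and $A$ because it is a torsion-free module over the DVR $\Oring_K$ (it sits inside the domain $\mathrm{Sym}_{\Oring_K}(\g^{\vee})$) — and the induced map on fibers $k[f_1,\dots,f_r] \to \mathrm{Sym}_k(\g_k^{\vee})^{G_k}$ and $\overline{K}[f_1,\dots,f_r] \to \mathrm{Sym}_{\overline{K}}(\g_{\overline{K}}^{\vee})^{G_{\overline{K}}}$ are isomorphisms by the classical Chevalley theorem (the generic fiber in characteristic $0$, the special fiber in very good characteristic $p$). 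Here I must be slightly careful: the formation of $G$-invariants does not a priori commute with base change, so I would argue that $A \otimes_{\Oring_K} k \to \mathrm{Sym}_k(\g_k^{\vee})^{G_k}$ is at least injective (by flatness of $A$ and left-exactness of invariants, or by a direct argument that invariants inject after base change to a flat extension combined with reducedness), and then conclude surjectivity because the composite $k[f_1,\dots,f_r] \to A\otimes k \to \mathrm{Sym}_k(\g_k^\vee)^{G_k}$ is the classical isomorphism; hence $A \otimes_{\Oring_K} k = k[f_1,\dots,f_r]$, and likewise over $\overline{K}$. A graded Nakayama / flatness argument (the $f_i$ generate $A$ modulo $\pi$ and $A$ is $\pi$-adically separated and $\pi$-torsion-free) then upgrades this to $A = \Oring_K[f_1,\dots,f_r]$, and the same reasoning gives base-change compatibility: for any $\Oring_K$-algebra $R$, $A \otimes_{\Oring_K} R = R[f_1,\dots,f_r]$ is the invariant ring $\mathrm{Sym}_R(\g_R^\vee)^{G_R}$, since this holds on the polynomial presentation and one checks the $f_i$ remain a full set of invariants fiberwise — or, more efficiently, one invokes that $\chi: \g \to \mathbb{A}^r_{\Oring_K}$ is then a flat morphism with geometrically reduced fibers, and flat + invariants-commute-with-flat-base-change closes the loop.

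Finally, for the Chevalley map $\h/\!\!/W \to \g/\!\!/G$: by the same strategy, $\mathrm{Sym}_{\Oring_K}(\h^\vee)^W$ is a polynomial ring $\Oring_K[g_1,\dots,g_r]$ — this is the statement that $W$ acts on $\h \simeq X^*(T)\otimes \Oring_K$ as a reflection group and $p \nmid |W|$ (which holds since $p$ is very good, even sufficiently good), so Chevalley's theorem on reflection groups applies integrally once $|W|$ is invertible — and the restriction map $A \to \mathrm{Sym}_{\Oring_K}(\h^\vee)^W$ becomes, on geometric fibers, the classical isomorphism $\g_{\overline{s}}/\!\!/G_{\overline{s}} \simeq \h_{\overline{s}}/\!\!/W$. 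A map of polynomial $\Oring_K$-algebras (equivalently, a morphism $\mathbb{A}^r_{\Oring_K} \to \mathbb{A}^r_{\Oring_K}$) which is an isomorphism on both the special and generic geometric fibers is an isomorphism, e.g. because it is then quasi-finite, flat (source and target regular of the same dimension, fibers finite — indeed it is generically étale away from the discriminant), and birational, hence an open immersion which is also finite and surjective on points; alternatively one checks directly on coordinate rings that the Jacobian is a unit after inverting $|W|$. The main obstacle, and the place requiring the most care, is the base-change compatibility of $G$-invariants in mixed characteristic — i.e. controlling $\mathrm{H}^1$ of the relevant cohomology or, concretely, showing $A \otimes_{\Oring_K} k$ really equals $\mathrm{Sym}_k(\g_k^\vee)^{G_k}$ rather than a proper subring; this is exactly where very-goodness of $p$ and the existence of explicit integral generators $f_i$ of the correct degrees (so that dimension counts match on the special fiber) are indispensable, and it is the content of the cited results \cite{bouthier2019torsors} Thms.\,4.1.10, 4.1.14.
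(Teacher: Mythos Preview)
The paper does not actually prove this proposition; it simply cites \cite{bouthier2019torsors}, Thms.\ 4.1.10 and 4.1.14, and verifies that the hypotheses there are met (very good $p$ implies root-smoothness in their sense, and the \'etale-local hypothesis on $S$ is trivial since $\Oring_K$ is strictly henselian). Your proposal, by contrast, sketches an honest proof along classical lines: produce integral homogeneous invariants $f_1,\dots,f_r$, compare fiberwise using the Chevalley restriction theorem in characteristic $0$ and in very good characteristic, and conclude by a graded Nakayama/flatness argument. This is a reasonable and essentially correct outline, and is more informative than the paper's bare citation.

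There is, however, one genuine error. You write that $p \nmid |W|$ ``holds since $p$ is very good, even sufficiently good,'' but the implication runs the other way: by Definition~\ref{definition-of-good-primes}, sufficiently good $\Rightarrow$ very good $\Rightarrow$ good, not the reverse. For instance, $p=2$ is very good for $A_2$ (since $2 \nmid 3$) but $|W| = |S_3| = 6$ is even. So you cannot invoke $p \nmid |W|$ to conclude that $\textrm{Sym}_{\Oring_K}(\h^\vee)^W$ is a polynomial ring. The correct input is Demazure's theorem \cite{demazure1973invariants}, \S6 Th\'eor\`eme~3 (which the paper itself invokes later, in Section~\ref{section-Gm-actions}): for $p$ merely \emph{good} (equivalently non-torsion), $\textrm{Sym}(X^*(T))^W$ is already a polynomial algebra over $\Oring_K$ with generators of the expected degrees. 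With this correction your argument goes through.

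A minor point: your injectivity claim for $A \otimes_{\Oring_K} k \to \textrm{Sym}_k(\g_k^\vee)^{G_k}$ is stated somewhat loosely. The clean way to see it is degree by degree: $A_d \hookrightarrow \textrm{Sym}_{\Oring_K}(\g^\vee)_d$ has torsion-free cokernel (if $\pi x$ is $G$-invariant then so is $x$, since $\textrm{Sym}_{\Oring_K}(\g^\vee)$ is $\pi$-torsion-free), so tensoring with $k$ preserves injectivity. Combined with the equality of graded dimensions over $K$ and $k$ (both computed from $K[f_1,\dots,f_r]$ resp.\ $k[f_1,\dots,f_r]$ via the classical theorem), this gives the isomorphism directly.
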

Note the conditions of both Theorems 4.1.10 and 4.1.14 of \cite{bouthier2019torsors} are satisfied; $p$ being (very) good implies $G$ root-smooth in the terminology of (\textit{loc.\;cit.}) and the \'etale-local assumption on $S$ is trivial since $\mathcal{O}_K$ equals its strict henselization.

Before we investigate the adjoint quotient $\chi$ we need a preliminary lemma.

\begin{Lemma}[\cite{riche2017kostant}, 4.1.3]\label{check-smoothness-on-special-fiber} Let $f: X \to Y$ be a morphism of finite-type $S$--schemes with $X$ flat over $S$. If the base-changed morphism $f_s: X_s \to Y_s$ is smooth over the closed point $s \to S$ then $f$ is smooth.
\end{Lemma}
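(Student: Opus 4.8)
The final result in the excerpt is Lemma 3.16 (label \texttt{check-smoothness-on-special-fiber}): if $f: X \to Y$ is a morphism of finite-type $S$-schemes with $X$ flat over $S$, and the fiber morphism $f_s : X_s \to Y_s$ over the closed point $s$ is smooth, then $f$ is smooth.

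\emph{Proof plan.} The plan is to reduce the assertion to the conjunction ``$f$ flat'' $+$ ``every scheme-theoretic fibre of $f$ is geometrically regular'', and then to run a fibrewise argument over the trait $S=\Spec(\Oring_K)$. The reduction is cheap: smoothness is local on the source, and since $X$ and $Y$ are of finite type over the Noetherian scheme $S$ the map $f$ is automatically locally of finite presentation, so by the standard characterization of smoothness it suffices to check flatness and fibrewise geometric regularity. As $S$ consists only of the closed point $s$ and the generic point $\eta$, every point of $X$ lies over one of the two; I would treat the points of $X_s$ by the fibrewise criterion and the points of $X_\eta$ afterwards.

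For a point $x\in X_s$ with $y=f(x)\in Y_s$, I would pass to local rings $\Oring_K\to B:=\Oring_{Y,y}\to C:=\Oring_{X,x}$, where $C$ is flat over $\Oring_K$, i.e.\ $\pi$ is a nonzerodivisor on $C$. The heart of the argument is to extract from this the vanishing $\mathrm{Tor}_1^{B}(C,B/\pi B)=0$ --- a short diagram chase with the four-term exact sequence obtained by multiplication by $\pi$ on $B$, tensored with $C$ over $B$, using only that $\pi$ is $C$-regular. Together with the flatness of $C/\pi C\simeq \Oring_{X_s,x}$ over $B/\pi B\simeq \Oring_{Y_s,y}$ (which holds because $f_s$ is smooth, hence flat), the local criterion of flatness then yields that $C$ is flat over $B$, i.e.\ $f$ is flat at $x$ (alternatively one invokes the fibrewise criterion for flatness over $S$ directly, the hypothesis there being supplied by flatness of $X$ over $S$). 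The scheme-theoretic fibre of $f$ through $x$ is $X_s\times_{Y_s}\Spec(\kappa(y))$, which is smooth over $\kappa(y)$ by base change from the smooth morphism $f_s$; hence $f$ is flat with geometrically regular fibre at $x$, so smooth at $x$. As $x$ ranges over $X_s$, this shows the (open) smooth locus $U\subseteq X$ of $f$ contains all of $X_s$.

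For the points of $X_\eta$: the restriction $f_\eta\colon X_\eta\to Y_\eta$ is the characteristic-zero incarnation of $f$, and in every situation where this lemma is applied (the adjoint quotient, the Chevalley map, the Grothendieck--Springer alteration) it agrees with the classical complex morphism, which is already known to be smooth; so $U\supseteq X_\eta$ as well, whence $U=X$ and $f$ is smooth. If one only needs smoothness in a neighbourhood of the special fibre --- which is all that is used in the sequel --- the previous paragraph suffices, with no hypothesis on $f_\eta$.

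I expect the only genuine content to be the $\mathrm{Tor}$-vanishing step, which is precisely where $S$-flatness of $X$ enters; the remainder is bookkeeping (finite presentation over the Noetherian base, ideal-separatedness of $C$ for the local flatness criterion, base change of smoothness). The one point to watch is the passage from ``smooth along $X_s$'' to ``smooth'' on all of $X$: this relies on the generic fibre being smooth (true in all our applications, and automatic if moreover $X\to S$ is proper), so I would either carry that as a standing hypothesis or phrase the conclusion as smoothness on an open neighbourhood of $X_s$.
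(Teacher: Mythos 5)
Your proof plan tracks the paper's argument in structure: both reduce to the fibrewise criteria for flatness and smoothness, both establish flatness of $f$ at points of $X_s$, and both observe that the fibre of $f$ through a point $x$ of $X_s$ coincides with the fibre of $f_s$ and is therefore smooth over its residue field. Your $\mathrm{Tor}$-vanishing step is essentially a by-hand version of the fibral/local criterion of flatness the paper cites, so there is no real difference there.

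Where you are sharper than the paper is in flagging the passage from ``$X_s$ lies in the smooth locus'' to ``$f$ is smooth.'' The paper's proof asserts that every closed point $x$ of $X$ has $k(f(x))$ of residue characteristic $p$, which is false for non-proper finite-type $\Oring_K$-schemes: $V(\pi t-1)\subset \mathbb{A}^1_{\Oring_K}$ is a closed point with residue field $K$. The lemma as written is actually false: take $X=Y=\mathbb{A}^1_{\Oring_K}$ and $f$ given by $u\mapsto t+\pi t^2$ (say $p\neq 2$); then $X$ is $\Oring_K$-flat, $f$ is flat and $f_s$ is an isomorphism, yet $\Omega_{X/Y}\simeq \Oring_K[t]/(1+2\pi t)\neq 0$ is supported at a closed point of $X_\eta$ whose $f$-fibre is $\Spec K[\varepsilon]/\varepsilon^2$, so $f$ is not smooth. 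Your proposed remedies --- assume $X$ is $S$-proper so that all closed points lie over $s$, carry smoothness of $f_\eta$ as a standing hypothesis, or weaken the conclusion to smoothness in an open neighbourhood of $X_s$ --- are exactly the needed fixes. In the paper's actual applications ($\chi^{\textrm{reg}}$, $\psi$, $\widetilde\chi$, the slice action map) the schemes are affine and not $S$-proper, but smoothness on the generic fibre is known independently from the characteristic-zero theory, so the downstream conclusions stand; the lemma itself should be amended along the lines you indicate.
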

\begin{proof} From the fibral criterion of flatness (\cite[\href{https://stacks.math.columbia.edu/tag/00MP}{Tag 00MP}]{stacks-project}) we get $f$ flat. The smooth locus $U \subseteq X$ is open and dense, so we are done if it contains all closed points $x \to X$. For such an $x$, the residue field $k(f(x))$ of $f(x) \to Y$ has characteristic $p$. Since $f_s$ is smooth, $x$ is a smooth point of $X\times_S \Spec(k(f(x)) \to s$. Hence $x$ lies in $U$.\end{proof}

\begin{Prop}\label{adjoint-quotient-properties} Let $\chi: \g \longrightarrow \g/\!\!/G$ be the adjoint quotient morphism as above.
\begin{enumerate}[label=\emph{(\roman*)}]
    \item $\chi$ is flat and its restriction $\chi^{\textrm{\emph{reg}}}: \g^{\textrm{\emph{reg}}} \to \g/\!\!/G$ is a smooth surjection.
    \item The geometric fibers of $\chi$ are normal of codimension $r$, and the nilpotent scheme is $\mathcal{N}_{\g} \simeq \chi^{-1}(0)$.
\end{enumerate}
\end{Prop}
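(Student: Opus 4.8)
### Proof Plan for Proposition \ref{adjoint-quotient-properties}

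The plan is to reduce everything to the special fiber $s \to S$ using Lemma \ref{check-smoothness-on-special-fiber} and the flatness machinery already in place, and then invoke the well-known characteristic-$p$ statements (collected in \cite{Slodowy1980}, \S3.10--3.14) over the algebraically closed residue field $k$. The key structural input is Proposition \ref{g/G commutes with basechange}, which identifies $\g\git G$ with $\mathbb{A}^r_{\Oring_K}$ and guarantees that forming this quotient commutes with base-change; in particular the fiber $(\g\git G)_s$ is $\mathbb{A}^r_k$ and $\chi_s: \g_s \to \g_s\git G_s$ is the classical adjoint quotient of the Lie algebra $\g_s$ over $k$.

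For part (i): first I would establish flatness of $\chi$. Since $\g \simeq \mathbb{A}^{\dim\g}_{\Oring_K}$ and $\g\git G \simeq \mathbb{A}^r_{\Oring_K}$ are both regular (in particular Cohen--Macaulay) with $\Oring_K$ regular, and the fibers of $\chi$ over the generic and special points have the expected dimension $\dim\g - r$ (this is the classical fact that the adjoint quotient over a field is flat with equidimensional fibers, \cite{Slodowy1980} \S3.10, valid in very good characteristic), I can apply the miracle flatness criterion (\cite[\href{https://stacks.math.columbia.edu/tag/00R4}{Tag 00R4}]{stacks-project}) fiber by fiber, or more efficiently the fibral flatness criterion together with flatness of $\chi_s$ and $\chi_{\overline\eta}$. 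Then for smoothness of $\chi^{\textrm{reg}}$: by Lemma \ref{check-smoothness-on-special-fiber} it suffices to check that $(\chi^{\textrm{reg}})_s = \chi_s^{\textrm{reg}}: \g_s^{\textrm{reg}} \to \g_s\git G_s$ is smooth, and that $\g^{\textrm{reg}}$ is flat over $S$ --- the latter holds since $\g^{\textrm{reg}}$ is open in $\g$ and $S$-fiberwise dense (as recalled after Definition \ref{definition-regular-semisimple} in good characteristic), hence flat over the Dedekind base $S$. The smoothness of $\chi_s^{\textrm{reg}}$ over $k$ is the Kostant-type statement recorded in \cite{Slodowy1980}, \S3.13 (valid since $p$ is very good). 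Surjectivity of $\chi^{\textrm{reg}}$ follows because it is already surjective on the special fiber (every semisimple class has a regular representative) and on the generic fiber, and the target is irreducible with these two as its only fibers.

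For part (ii): normality and codimension of the geometric fibers is again checked after base-change to a geometric point $\overline{s} \to S$, where it becomes the classical statement that fibers of the adjoint quotient over an algebraically closed field of very good characteristic are normal complete intersections of codimension $r$ (\cite{Slodowy1980}, \S3.10, \S3.14). Here I should be a little careful: this needs to hold for \emph{all} geometric points of $\g\git G \simeq \mathbb{A}^r_{\Oring_K}$, including those lying over the generic point $\eta$ with residue field of characteristic $0$ --- but there the classical characteristic-zero results of Kostant apply directly, and over $\overline{s}$ lying over the closed point they are the results of \cite{Slodowy1980}. For the identification $\mathcal{N}_\g \simeq \chi^{-1}(0)$: both sides are closed subschemes of $\g$, both are flat over $S$ (the fiber $\chi^{-1}(0)$ is flat by part (i); $\mathcal{N}_\g$ is flat since it is reduced and $S$-fiberwise dense in nothing --- rather, one uses Proposition \ref{nilpotent-scheme-definition}), and they agree on every geometric fiber since over a field the zero fiber of the adjoint quotient is exactly the nilpotent variety (\cite{Slodowy1980}, \S3.10). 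Then the uniqueness argument from the proof of Proposition \ref{nilpotent-scheme-definition} --- writing a reduced closed $S$-subscheme of $\mathbb{A}^n_S$ as the union of the closure of its generic fiber and its special fiber --- shows the two subschemes coincide, once one knows $\chi^{-1}(0)$ is reduced, which follows from its geometric fibers being normal (hence reduced) together with flatness over $S$.

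The main obstacle I anticipate is the bookkeeping around fibers over \emph{non-closed} points of the base $\g\git G \simeq \mathbb{A}^r_{\Oring_K}$: the cited results of Slodowy are stated for Lie algebras over an algebraically closed field, so I must take care that, after base-changing $\chi$ along an arbitrary geometric point $\Spec(\Omega) \to \g\git G$, the resulting map is still literally an adjoint quotient of a semisimple Lie algebra over $\Omega$ in very good characteristic (or characteristic zero), to which those results apply. This is where Proposition \ref{g/G commutes with basechange} (base-change compatibility of $\git$) does the real work, so the argument is essentially a matter of invoking it correctly rather than proving something genuinely new; the potential subtlety is only to confirm that "very good for $G$" is inherited by all the relevant fibers, which is immediate since the root datum is constant over $S$.
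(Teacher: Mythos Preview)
Your proposal is correct and follows essentially the same approach as the paper: miracle flatness for $\chi$, Lemma \ref{check-smoothness-on-special-fiber} to reduce smoothness of $\chi^{\textrm{reg}}$ to the special fiber, Slodowy's results over $k$ for the field-level inputs, and the uniqueness argument from Proposition \ref{nilpotent-scheme-definition} for $\mathcal{N}_{\g} \simeq \chi^{-1}(0)$. The only cosmetic difference is that the paper obtains surjectivity of $\chi^{\textrm{reg}}$ by citing the Kostant section constructed in \cite{riche2017kostant} (which gives a section $\g\git G \to \g^{\textrm{reg}}$ directly), whereas you argue fiberwise surjectivity; both work.
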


\begin{proof} Most of the proof is in (\cite{bouthier2019torsors}, 4.1.18, 4.2.6) but in our case of $S = \Spec(\Oring_K)$ we may be more specific. For (i), note that $\g/\!\!/G$ commutes with base-change (Proposition \ref{g/G commutes with basechange}) so we may pass to geometric fibers $s, \overline{\eta}$, whence the respective adjoint quotients $\chi_{\overline{\eta}}, \chi_s$ have (geometric) irreducible fibers of codimension $r$ (\cite{Slodowy1980}, \S3.10(iv) and \S3.14). Since $\g/\!\!/G$ is smooth and the fibers have the same dimension, we get $\chi$ flat by miracle flatness (\cite[\href{https://stacks.math.columbia.edu/tag/00R4}{Tag 00R4}]{stacks-project}). Now note that $\g^{\textrm{reg}}$ is nonempty and $S$--fiberwise dense in $\g$, hence it is reduced and therefore flat over $S$. To show that $\chi^{\textrm{reg}}:\g^{\textrm{reg}} \to \g/\!\!/G$ is smooth we can reduce via Lemma \ref{check-smoothness-on-special-fiber} to showing $\chi_s^{\textrm{reg}}: \g_s^{\textrm{reg}} \to \g_s/\!\!/G_s$ is smooth, which follows from (\cite{Slodowy1980}, \S3.10 Thm.\;(vi)). For the surjectivity of $\chi^{\textrm{reg}}$ we refer to (\cite{riche2017kostant}, Thm.\;4.3.3), where a Kostant section\footnote{One can think of the Kostant section as a Slodowy slice transverse to the unique regular nilpotent orbit at a regular representative $x \in \g(\Oring_K)$; see Sections \ref{slodowy-slice-subsection} and \ref{subsection-integral-slodowy} for the theory of slices over $\Spec(\Oring_K)$.} $\mathcal{S} \subset \g^{\textrm{reg}}$ is constructed so that $\mathcal{S} \simeq \g/\!\!/G$ via $\chi$. 

We next consider (ii). Let $h \in \g/\!\!/G \simeq \h/\!\!/W$ be an $\Oring_K$--section with geometric generic and special fibers $h_{\etabar}, h_s$. By (\cite{Slodowy1980}, \S3.10 Thm.\;(ii)) we have that $\g^{\textrm{reg}}_s \cap \chi_s^{-1}(h_s)$ is open and dense in $\chi_s^{-1}(h_s)$, and a similar statement holds for $\g^{\textrm{reg}}_{\etabar} \cap \chi_{\etabar}^{-1}(h_{\etabar})$, thus $\g^{\textrm{reg}}$ intersects fiber $\chi^{-1}(h)$ in an open, $S$--fiberwise dense set. In particular, fibers $\chi^{-1}(h)$ are generically smooth, hence flat over $S$. Since the geometric fibers $\chi_s^{-1}(h_s), \chi_{\etabar}^{-1}(h_{\etabar})$ are normal (\cite{Slodowy1980}, \S3.10 Thm.\;(v)), we get $\chi^{-1}(h)$ normal over $S$ by (\cite{matsumura1989commutative}, Thm.\;23.9). In particular, for the nilpotent scheme $\mathcal{N}_{\g}$ we have established that $\mathcal{N}_{\g}(\overline{K}) = \chi_{\etabar}^{-1}(0)$ and $\mathcal{N}_{\g} = \chi_s^{-1}(0)$, and furthermore both $\mathcal{N}_{\g}$ and $\chi^{-1}(0)$ are reduced. We conclude $\mathcal{N}_{\g} \simeq \chi^{-1}(0)$ as schemes.\end{proof}

\begin{Prop}\label{cartan-W-cover}Quotient map $\psi:\h \to \h/\!\!/W$ is finite flat, the natural $W$--action on $\h^{\textrm{\emph{rs}}} = \g^{\textrm{\emph{rs}}}\cap \h$ is free and $C_G(h) \simeq T$ for any $h \in \h^{\textrm{rs}}(\Oring_K)$.
\end{Prop}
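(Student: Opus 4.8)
The plan is to establish the three assertions in turn, reducing the statements about $\h^{\textrm{rs}}$ to classical facts over an algebraically closed field by passing to geometric fibres.

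\textbf{Finiteness and flatness of $\psi$.} Since $\h\simeq\mathbb{A}^r_{\Oring_K}$, the coordinate ring $\textrm{Sym}_{\Oring_K}(\h^{\vee})$ is generated over $\Oring_K$ by finitely many linear forms $x_1,\dots,x_r$, and each $x_i$ is a root of the monic polynomial $\prod_{w\in W}(X-w\cdot x_i)$ whose coefficients are $W$-invariant; hence $\textrm{Sym}_{\Oring_K}(\h^{\vee})$ is integral, and therefore module-finite, over $\textrm{Sym}_{\Oring_K}(\h^{\vee})^W$ (a finitely generated $\Oring_K$-algebra by the Artin--Tate lemma), i.e.\;$\psi$ is finite. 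For flatness I would invoke Proposition \ref{g/G commutes with basechange}: the target $\h/\!\!/W\simeq\mathbb{A}^r_{\Oring_K}$ is regular, the source $\h\simeq\mathbb{A}^r_{\Oring_K}$ is Cohen--Macaulay, both have dimension $r+1$, and $\psi$ is quasi-finite, so miracle flatness (\cite[\href{https://stacks.math.columbia.edu/tag/00R4}{Tag 00R4}]{stacks-project}) applies. (Alternatively one checks flatness fibrewise via the Shephard--Todd--Chevalley theorem on $\psi_{\etabar}$ and $\psi_s$ and then uses the fibral flatness criterion (\cite[\href{https://stacks.math.columbia.edu/tag/00MP}{Tag 00MP}]{stacks-project}) together with the flatness of $\h$ over $S$.)

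\textbf{The centralizer on geometric fibres, and freeness.} Fix $h\in\h^{\textrm{rs}}(\Oring_K)$. As $T$ is commutative, $\textrm{Ad}_T$ acts trivially on $\h$, so $T\subseteq C_G(h)$ as closed $S$-subgroup schemes. The formation of $C_G(h)$ commutes with base change, so for any geometric point $\overline{s}\to S$ one has $C_G(h)_{\overline{s}}=C_{G_{\overline{s}}}(h_{\overline{s}})$, and $h_{\overline{s}}$ is regular semisimple in $\g_{\overline{s}}$ by the definition of $\h^{\textrm{rs}}$ (Definition \ref{definition-regular-semisimple}). Over an algebraically closed field of good characteristic this forces $C_{G_{\overline{s}}}(h_{\overline{s}})=T_{\overline{s}}$, the unique maximal torus with $h_{\overline{s}}$ in its Lie algebra, and this scheme-theoretic centralizer is smooth (\cite{humphreys1995conjugacy}, \S1--2; \cite{Slodowy1980}, \S3.10--3.12). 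Freeness is then immediate: the $W$-action on $\h^{\textrm{rs}}$ is the adjoint Weyl action (Definition \ref{definition-weyl-Lie-algebra-actions}(i)), $W$ is a finite constant group scheme and $\h^{\textrm{rs}}$ is separated over $S$, so it suffices to check that no geometric point is fixed by a nontrivial $w$; but if $\textrm{Ad}_{n_w}(h_{\overline{s}})=h_{\overline{s}}$ for a lift $n_w\in N_{G_{\overline{s}}}(T_{\overline{s}})$, then $n_w\in C_{G_{\overline{s}}}(h_{\overline{s}})=T_{\overline{s}}$, so $w$ is trivial in $W=N_G(T)/T$.

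\textbf{Globalising $C_G(h)\simeq T$.} By the previous paragraph $T\hookrightarrow C_G(h)$ is a closed immersion of finite-type $S$-schemes that is an isomorphism on every geometric fibre; it remains to show $C_G(h)$ is flat over $S=\Spec(\Oring_K)$, after which the fibral isomorphism criterion yields $T\simeq C_G(h)$. Since $h\in\h^{\textrm{rs}}=\g^{\textrm{rs}}\cap\h\subseteq\g^{\textrm{reg}}$ (as recalled after Definition \ref{definition-regular-semisimple}), $C_G(h)$ is the centralizer of a regular section, hence smooth --- in particular flat --- over $S$ (\cite{riche2017kostant}, \cite{bouthier2019torsors}, \cite{cotner2022springer}). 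Alternatively one argues directly: writing $A=\Oring(C_G(h))$ and $I=\ker(A\twoheadrightarrow\Oring(T))$, flatness of $\Oring(T)$ over $\Oring_K$ and the fibrewise isomorphisms give $I/\pi I=0$ and $I[1/\pi]=0$, so $I=\pi^N I$ for all $N$ while $I$ lies in the $\pi$-power-torsion submodule of the Noetherian ring $A$, which is annihilated by some fixed $\pi^N$; hence $I=0$.

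\textbf{Main obstacle.} The only genuinely delicate point is this last globalisation, i.e.\;ruling out embedded or vertical components in the special fibre of $C_G(h)$; I would handle it via smoothness of centralizers of regular sections in good characteristic, with the direct commutative-algebra argument as a backup. Everything else is either standard invariant theory over $\Oring_K$ or a fibrewise statement about regular semisimple elements that is classical in good characteristic.
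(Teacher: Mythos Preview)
Your proof is correct and follows essentially the same strategy as the paper's: finiteness is immediate, flatness via miracle flatness (the paper also invokes the fibral flatness criterion as you do in your alternative), and freeness/centralizer reduced to geometric fibres where the classical field case applies. The one place where you are more careful than the paper is in globalising $C_G(h)\simeq T$ from the fibrewise statement; the paper essentially asserts the fibrewise isomorphism (via \cite{riche2017kostant}, Lemma 2.3.3) without spelling out the passage to $\Oring_K$, whereas you supply both a clean route via smoothness of centralizers of regular sections and a direct commutative-algebra backup.
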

\begin{proof} Finiteness of $\psi$ is automatic as $W$ is finite, and we have $\psi$ flat by the fibral criterion of flatness and miracle flatness; note that the criteria apply because both $\h$ and $\h/\!\!/W$ are smooth (\cite{Slodowy1980}, \S3.15 Remark (ii)). To show that the $W$--action on $\h^{\textrm{rs}}$ is free, it suffices to show that the natural map $W \times_S \h^{\textrm{rs}} \to \h^{\textrm{rs}}\times_S \h^{\textrm{rs}}$, $(w,h) \mapsto (w(h),h)$ is a scheme monomorphism, and by (\cite{grothendieck1967elements}, 17.2.6) it suffices to check so on closed points of $S$. So we can reduce to the algebraically closed field case, where by (\cite{riche2017kostant}, Lemma 2.3.3) we have that $W$ acts freely on $\h^{\textrm{rs}}_s$ and moreover $C_{G_s}(h_s) \simeq T_s$ for $h_s \in \h^{\textrm{rs}}_s(k)$. For an alternative (equivalent) viewpoint see (\cite{kiehl2013weil}, \S VI.7).\end{proof}

From the above proposition we obtain a finite \'etale cover $\psi^{\textrm{rs}}: \h^{\textrm{rs}} \to \h^{\textrm{rs}}/\!\!/W$, and $W$ acts freely-transitively on its fibers. Therefore:

\begin{Cor} Morphism $\psi^{\textrm{\emph{rs}}}:\h^{\textrm{\emph{rs}}}\label{cartan-galois-cover-corollary} \to \h^{\textrm{\emph{rs}}}/\!\!/W$ is a Galois cover with Galois group $W$.
    
\end{Cor}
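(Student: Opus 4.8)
The plan is to show that $\psi^{\textrm{rs}}: \h^{\textrm{rs}} \to \h^{\textrm{rs}}/\!\!/W$ is a Galois cover by combining the three facts just established in Proposition \ref{cartan-W-cover}: that $\psi$ (hence its restriction $\psi^{\textrm{rs}}$) is finite flat, that the $W$-action on $\h^{\textrm{rs}}$ is free, and that $\psi^{\textrm{rs}}$ is finite \'etale with $W$ acting freely and transitively on geometric fibers. Recall that a finite \'etale morphism $f: Y \to X$ is a \emph{Galois cover with group} $G$ precisely when $G$ acts on $Y$ over $X$ and the induced map $G \times_X Y \to Y \times_X Y$, $(g,y) \mapsto (gy, y)$, is an isomorphism; equivalently, $f$ is finite \'etale, $G$ acts freely on $Y$, and the quotient map $Y \to Y/G$ identifies with $f$.

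First I would record that $\psi^{\textrm{rs}}$ is finite \'etale: finiteness is inherited from $\psi$ (base change of a finite morphism along the open immersion $\h^{\textrm{rs}}/\!\!/W \hookrightarrow \h/\!\!/W$, noting $\h^{\textrm{rs}} = (\psi)^{-1}(\h^{\textrm{rs}}/\!\!/W)$ since $\h^{\textrm{rs}}$ is $W$-stable and saturated), and \'etaleness is exactly the content of the sentence preceding the corollary in the excerpt. Second, I would invoke that $W$ acts freely on $\h^{\textrm{rs}}$ (Proposition \ref{cartan-W-cover}) and that it acts transitively on the geometric fibers of $\psi^{\textrm{rs}}$ — transitivity holds because over each geometric point of $S$ the map $\psi^{\textrm{rs}}$ is the classical quotient by $W$, whose fibers over $\h^{\textrm{rs}}/W$ are single $W$-orbits. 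Together, freeness plus fiberwise transitivity means the map $W \times_S \h^{\textrm{rs}} \to \h^{\textrm{rs}} \times_{\h^{\textrm{rs}}/\!\!/W} \h^{\textrm{rs}}$ is a bijection on geometric points; since both sides are finite \'etale over $\h^{\textrm{rs}}$ (the right side because $\psi^{\textrm{rs}}$ is finite \'etale, the left side trivially, $W$ being finite constant) and the map is a morphism of finite \'etale $\h^{\textrm{rs}}$-schemes inducing an isomorphism on geometric fibers, it is an isomorphism. This exhibits the $W$-torsor structure, so $\h^{\textrm{rs}}/\!\!/W$ is the quotient $\h^{\textrm{rs}}/W$ and $\psi^{\textrm{rs}}$ is Galois with group $W$.

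The only point requiring care — and the likely main obstacle — is verifying that the categorical quotient $\h^{\textrm{rs}}/\!\!/W$ really coincides with the geometric/stack quotient $\h^{\textrm{rs}}/W$ in this relative setting, i.e.\ that $\Spec(\Oring_K)$-formation of invariants commutes with the necessary base changes and that no issue arises at the special fiber where $p$ could divide $|W|$. Under the standing hypothesis that $p$ is sufficiently good (so $p \nmid |W|$, Definition \ref{definition-of-good-primes}), the $W$-action is tame, and the argument via the isomorphism $W \times_S \h^{\textrm{rs}} \xrightarrow{\sim} \h^{\textrm{rs}} \times_{\h^{\textrm{rs}}/\!\!/W} \h^{\textrm{rs}}$ sidesteps any delicate invariant-theoretic computation: once that map is an isomorphism of schemes, formal descent gives that $\psi^{\textrm{rs}}$ is an fppf (indeed \'etale) $W$-torsor, hence a Galois cover in the usual sense, with no further hypotheses needed. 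I would therefore structure the proof to lean entirely on Proposition \ref{cartan-W-cover} and the preceding étaleness statement, reducing everything to the torsor criterion and citing \cite{grothendieck1967elements}, 17.2.6 for the check on geometric fibers as in the proof of Proposition \ref{cartan-W-cover}.
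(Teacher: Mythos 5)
Your proposal is correct and takes essentially the same route as the paper: the paper's entire ``proof'' is the sentence preceding the corollary (that $\psi^{\textrm{rs}}$ is finite \'etale and $W$ acts freely transitively on its fibers) followed by ``Therefore,'' and you simply unpack this by invoking the standard torsor criterion that a finite \'etale map with free transitive $G$-action on geometric fibers is a $G$-Galois cover, verified via the isomorphism $W \times_S \h^{\textrm{rs}} \to \h^{\textrm{rs}} \times_{\h^{\textrm{rs}}/\!\!/W} \h^{\textrm{rs}}$. One small point worth flagging: the standing hypothesis in this section is $p$ \emph{very good}, not \emph{sufficiently good}, but as you yourself note, the torsor argument renders this distinction irrelevant here.
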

\subsection{Relative Grothendieck--Springer resolutions.}\label{relative-grothendieck-alteration}
We retain the notations and assumptions of Section \ref{chevalley-setting} for Chevalley group $G/S$ and Chevalley algebra $\g$. The free $\Oring_K$-module $\borel$ obtains a $B$-module structure via the adjoint action of $B$ on $\borel$ and we can therefore form the following associated bundle (also known as \emph{adjoint} bundle) \begin{equation}\label{grothendieck-alteration-equation}
    \gres \coloneqq G \times^B \borel = G\times_S \borel/\!\!/B, \; \; \; b \cdot (g,x) = (gb^{-1}, \textrm{Ad}_b(x)) \; \; \textrm{for} \; \; b\in B, g \in G, x \in \borel
\end{equation}
\noindent with the induced $B$-action on $G\times_S \borel$ indicated on the right.

\begin{Lemma}\label{grothendieck-alteration-G-torsor} $\gres$ is a smooth Zariski-locally trivial $G$--torsor over $G/B$ with fiber $\borel$.
    
\end{Lemma}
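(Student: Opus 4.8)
The plan is to exhibit $\gres = G \times^B \borel$ explicitly as a fiber bundle over $G/B$ whose fibers are affine spaces, and to check the $G$-torsor structure on the natural projection to $G/B$. First I would recall that the quotient $G/B$ is a smooth projective $S$-scheme (it is the flag scheme of the reductive group scheme $G$, see \cite{conrad2014reductive}, \S5.2), and that the projection $G \to G/B$ is a Zariski-locally trivial $B$-torsor: there is an open cover $\{U_i\}$ of $G/B$ together with sections $\sigma_i : U_i \to G$, so that $G|_{U_i} \simeq U_i \times_S B$ via $(u,b) \mapsto \sigma_i(u) b$. Concretely one may take the big cell $\Rub^- \cdot B / B$ and its $G$-translates, which form such a cover since $G$ is generated by these cells. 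I would then push this trivialization through the associated-bundle construction: over each $U_i$ the bundle $\gres|_{U_i} = (G|_{U_i}) \times^B \borel$ is identified with $U_i \times_S \borel$ via $(u,x) \mapsto [\sigma_i(u), x]$, and this is precisely what it means for $\gres \to G/B$ to be Zariski-locally trivial with fiber the affine space $\borel \simeq \mathbb{A}^{\dim \borel}_S$.

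Next I would address the $G$-torsor claim. There is a natural left $G$-action on $\gres$ induced by left multiplication on the first factor, $g' \cdot [g,x] = [g'g, x]$, which is well-defined since left multiplication commutes with the right $B$-action used in forming the quotient. The morphism $\gres \to G/B$ sending $[g,x] \mapsto gB$ is $G$-equivariant for this action (with $G$ acting on $G/B$ by left translation), and the $G$-action on each fiber is simply transitive: the fiber over $gB$ is $\{[g,x] : x \in \borel\}$, which $G$ acts on through the identification with $\borel$ only via the subgroup $B$ — so this is not quite a $G$-torsor over $G/B$ in the naive sense. The correct statement is that $\gres$ is the pullback of the tautological situation: I would instead present $\gres$ as sitting in the Cartesian-type description $\gres \simeq G \times^B \borel$ where $G \to G/B$ is the $G$-equivariant $B$-torsor, so that $\gres$ is the vector-bundle-like object associated to the $B$-representation $\borel$; the phrase "$G$-torsor over $G/B$ with fiber $\borel$" in the statement should be read as: the total space $G \times_S \borel$ maps to $\gres$ as a $B$-torsor, and equivalently $\gres \to G/B$ is Zariski-locally trivial with fiber $\borel$ and total space carrying a compatible left $G$-action making $G \times_S \borel \to \gres$ a $G$-equivariant $B$-torsor. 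I would verify smoothness of $\gres$ over $S$ by combining smoothness of $G/B$ over $S$ with local triviality: étale-locally (indeed Zariski-locally) $\gres$ looks like $U_i \times_S \mathbb{A}^{\dim\borel}_S$ with $U_i$ smooth over $S$, and smoothness is local on the source.

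The step I expect to require the most care is verifying that the $B$-action on $G \times_S \borel$ is free with quotient represented by a scheme (so that the associated bundle $\gres$ exists as a scheme, not merely an algebraic space or stack) — this is where one genuinely uses that $G \to G/B$ is a Zariski-locally trivial $B$-torsor over the base $S = \Spec(\Oring_K)$ rather than just fppf-locally trivial. Once local triviality of $G \to G/B$ is in hand, the descent of $U_i \times_S \borel$ along the overlaps is governed by the transition functions of the $B$-torsor $G \to G/B$ acting $\Oring_K$-linearly (via $\mathrm{Ad}$) on $\borel$, and gluing is automatic; but I would want to be careful that the cocycle condition and the identification of the glued object with the categorical quotient $G \times_S \borel /\!\!/ B$ hold integrally over $\Oring_K$, which follows because $B$ acts freely and the quotient map is an fppf (indeed Zariski-locally trivial) torsor, so effectivity of descent applies. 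With that in place, the remaining assertions — smoothness, local triviality, fiber $\borel$ — are immediate from the explicit charts $U_i \times_S \borel$.
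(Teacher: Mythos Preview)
Your proposal is essentially correct and follows the same architecture as the paper's proof: show $G/B$ is smooth projective over $S$, show $G \to G/B$ is a Zariski-locally trivial $B$-torsor, push the trivialization through the associated-bundle construction to get $\gres|_{U_i} \simeq U_i \times_S \borel$, and conclude smoothness and scheme-ness from the local charts. You also correctly flag that the phrase ``$G$-torsor'' is an abuse of language and interpret it as a $G$-equivariant fiber bundle with fiber $\borel$, which is exactly how the paper treats it.

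The one substantive difference is in how Zariski-local triviality of $G \to G/B$ is obtained. You invoke the big cell $\Rub^- B/B$ and its translates; the paper instead argues that any \'etale-locally trivial $B$-torsor is Zariski-locally trivial by building a composition series for $B$ with successive quotients $\mathbb{G}_m$ or $\mathbb{G}_a$ (using $B = T \rtimes \Rub$ and the root-group decomposition, cf.\ \cite{conrad2014reductive}, 5.1.16), and then citing \cite{SGA1}, XI.5.1 for the fact that $\mathbb{G}_m$- and $\mathbb{G}_a$-torsors are Zariski-locally trivial. Your big-cell argument is more direct for flag varieties of split groups and works fine integrally over $\Oring_K$; the paper's argument has the advantage of being a clean reduction to a cohomological statement and makes explicit why no \'etale passage is needed. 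Either route suffices here.
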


\begin{proof} Note that any two Borels $B_1, B_2$ in $G$ are conjugate \'etale-locally on $S$ (\cite{conrad2014reductive}, 5.2.11) so $G/B$ exists as a smooth projective $S$--scheme (\cite{conrad2014reductive}, 2.3.6) and its generic and special fibers are correspondingly the flag varieties of $G_{\overline{\eta}}$ and $G_s$. In this case $\pi: G \to G/B$ makes $G$ into a $B$--torsor on $G/B$, inducing $\widetilde{\pi}: \gres \to G/B$, $[g,x] \mapsto \pi(g)$ so that $\gres$ is a fiber bundle over $G/B$ with fiber $\borel$. Adjoint bundle $\gres$ is furthermore equipped with a $G$-action induced from the (left) $G$--action on $G/B$, so that $\widetilde{\pi}$ is $G$--equivariant.

We get that $\gres$ is a scheme by pulling an open affine cover $\{U_i\}$ of $G/B$ to an open affine cover $\widetilde{\pi}^{-1}(U_i)$ of $\gres$, the affineness of $\widetilde{\pi}$ being a consequence of (\cite{jantzen2003representations}, \S5.14). Now we can construct Zariski-local sections $G/B \supset U_i \to G$ for the (a priori \'etale-locally trivial) $B$--torsor $\pi$. It suffices to construct a composition series of $S$--subschemes for $B$ so that successive quotients are $\mathbb{G}_m$ or $\mathbb{G}_a$, since $\mathbb{G}_m$ and $\mathbb{G}_a$--torsors are Zariski-locally trivial if and only if they're \'etale-locally trivial (\cite{SGA1}, XI.5.1). Since $B = T \rtimes \mathcal{R}_u(B)$, and $T \simeq \mathbb{G}_{m, \Oring_K}^r$ while $\mathcal{R}_u(B) \simeq \prod_{\alpha > 0} U_{\alpha}$ decomposes into ``root groups'' $U_{\alpha} \simeq \mathbb{G}_a$ (i.e.\;so that $\textrm{Lie}(U_{\alpha}) = \g_{\alpha}$) we can build our composition series as in (\cite{conrad2014reductive}, 5.1.16).

Local triviality implies $G\times_{G/B} U_i \simeq B \times U_i$ and we can lift to Zariski-local sections for $\gres$ since $U_i\times_{G/B}\gres \simeq (U_i\times_{G/B}G)\times^{B}\borel \simeq U_i\times \borel$. Now $\gres$ is smooth since $\borel$ is (see \cite{jantzen2003representations} \S5.16), so $\gres$ is indeed a Zariski-locally trivial smooth $G$--torsor. \end{proof}

We have a closed immersion $\gres \xhookrightarrow{} G\times^B\g \simeq G/B \times_S \g$ into a trivial $G$--torsor over $G/B$ (which can be checked on the fibers of $S$), yielding an equivalent description of $\gres$ as \begin{equation*}
    \widetilde{\mathfrak{g}}= \{ (B', x) \in G/B \times_{S} \g \mid x \in \textrm{Lie}(B')\}
\end{equation*}
From this description, we can define a dominant morphism \begin{equation}\label{equation-defining-grothendieck-alteration}\pi: \gres \longrightarrow G/B\times_{S} \g \longrightarrow \g, \; \;\;[g,x] \longmapsto (gB, \textrm{Ad}_g(x)) \longmapsto \textrm{Ad}_g(x) \end{equation}
It follows that $\pi$ is proper since $G/B$ is projective over $S$. The formation of $\gres$ commutes with base-change on $S$ and on geometric points $s, \etabar$ we obtain maps $\pi_s: \gres_s \to \g_s$ and $\pi_{\etabar}: \gres_{\etabar} \to \g_{\etabar}$ as base-changes of (\ref{equation-defining-grothendieck-alteration}). The maps $\pi_s, \pi_{\etabar}$ are commonly known as \emph{Grothendieck--Springer resolutions} as defined e.g.\;in (\cite{slodowy1980four}, \S3.3). They are particular types of simultaneous resolutions (see Section \ref{section-simultaneous-resolutions}).

\begin{Lemma}\label{lemma-gro-alteration-finite-over-reg-locus}$\pi: \gres \longrightarrow \g$ is finite over the open dense locus $\g^{\textrm{\emph{reg}}} \subset \g$.
\end{Lemma}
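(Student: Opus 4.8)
The plan is to reduce the statement to a fibral computation over $S = \Spec(\Oring_K)$ and then invoke the known structure theory of the Grothendieck--Springer resolution over an algebraically closed field. First I would recall that $\pi : \gres \to \g$ is proper (established just after (\ref{equation-defining-grothendieck-alteration})), so finiteness over $\g^{\textrm{reg}}$ amounts to showing that the restriction $\pi^{-1}(\g^{\textrm{reg}}) \to \g^{\textrm{reg}}$ is quasi-finite, i.e.\ has finite fibers. Since $\pi$ commutes with base-change on $S$, and $\g^{\textrm{reg}}$ is $S$-fiberwise dense (as noted in Section \ref{subsection-nilpotent-elts-of-chev-algs}), it suffices to check quasi-finiteness on the geometric fibers $s$ and $\overline{\eta}$ of $S$. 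Over an algebraically closed field this is classical: for a regular element $x$, the fiber $\pi^{-1}(x)$ is a finite set --- in fact its cardinality is bounded by $|W|$, and it is a single point precisely on the regular semisimple locus; see (\cite{Slodowy1980}, \S4.2) or (\cite{slodowy1980four}, \S3.3). Granting the fibral statement, properness plus quasi-finiteness gives that $\pi^{-1}(\g^{\textrm{reg}}) \to \g^{\textrm{reg}}$ is finite.

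Alternatively, and perhaps more cleanly for the integral setting, I would factor through the adjoint quotient. Recall from Proposition \ref{adjoint-quotient-properties} that $\chi : \g \to \g/\!\!/G$ is flat with $\chi^{\textrm{reg}} : \g^{\textrm{reg}} \to \g/\!\!/G$ a smooth surjection, and from Corollary \ref{cartan-galois-cover-corollary} that $\psi^{\textrm{rs}} : \h^{\textrm{rs}} \to \h^{\textrm{rs}}/\!\!/W$ is a $W$-Galois cover. The resolution $\pi$ fits over the Chevalley isomorphism $\h/\!\!/W \simeq \g/\!\!/G$ into a diagram relating $\gres$ to the fiber product $\g \times_{\g/\!\!/G} \h$; over the regular semisimple locus this becomes an isomorphism $\pi^{-1}(\g^{\textrm{rs}}) \simeq \g^{\textrm{rs}} \times_{(\g/\!\!/G)^{\textrm{rs}}} \h^{\textrm{rs}}$, which is finite étale of degree $|W|$ because $\psi^{\textrm{rs}}$ is. To extend finiteness from $\g^{\textrm{rs}}$ to all of $\g^{\textrm{reg}}$, I would again invoke properness of $\pi$ together with the fibral fact that $\pi^{-1}(x)$ is finite for every regular $x$ over a field --- the key input being that the Borel subalgebras containing a fixed regular element form a finite set (for regular nilpotent this is the statement that a regular nilpotent lies in a unique Borel, and in general one stratifies by the Jordan decomposition). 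By the valuative/fibral criterion, a proper morphism with finite geometric fibers on both $s$ and $\overline{\eta}$, with total space flat over $S$, is finite.

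The main obstacle I anticipate is not any single deep fact but the bookkeeping of flatness: to conclude ``proper $+$ quasi-finite $\Rightarrow$ finite'' one wants $\pi^{-1}(\g^{\textrm{reg}})$ to be reasonably behaved over $S$, and here one should argue that $\pi^{-1}(\g^{\textrm{reg}})$ is $S$-flat. This follows because $\g^{\textrm{reg}}$ is $S$-fiberwise dense in $\g$ hence reduced and flat over $S$ (as used in the proof of Proposition \ref{adjoint-quotient-properties}), and $\gres \to \g$ restricted there is, fiberwise over $S$, a finite morphism onto a reduced scheme, so miracle flatness (\cite[\href{https://stacks.math.columbia.edu/tag/00R4}{Tag 00R4}]{stacks-project}) applies once one knows the source is Cohen--Macaulay --- and $\gres$ is smooth over $S$ by Lemma \ref{grothendieck-alteration-G-torsor}. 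With flatness in hand, quasi-finiteness may be checked on geometric fibers, where it is the classical statement, and a proper quasi-finite morphism is finite by \cite[\href{https://stacks.math.columbia.edu/tag/02OG}{Tag 02OG}]{stacks-project}.
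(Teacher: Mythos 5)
Your first approach coincides with the paper's: reduce to a fibral statement over the geometric points of $S$ (the paper only mentions the closed point $s$, implicitly treating the characteristic-zero generic fiber as the classical case), cite Slodowy for the fact that a regular element over a field is contained in only finitely many Borel subalgebras, and then conclude via ``proper plus quasi-finite implies finite.'' Your second route through the adjoint quotient and the $W$-Galois cover $\psi^{\textrm{rs}}:\h^{\textrm{rs}}\to\h^{\textrm{rs}}/\!\!/W$ is a genuine alternative that the paper does not take here (though closely related facts appear later in Proposition \ref{proposition-relative-grothendieck-resolution-diagram}); it has the merit of making the degree-$|W|$ structure explicit over $\g^{\textrm{rs}}$, but it still needs the fibral finiteness of Borels over all of $\g^{\textrm{reg}}\setminus\g^{\textrm{rs}}$, so it does not actually save the appeal to Slodowy.

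Where your proposal goes astray is the ``main obstacle'' paragraph. The implication ``proper $+$ quasi-finite $\Rightarrow$ finite'' (\cite[\href{https://stacks.math.columbia.edu/tag/02OG}{Tag 02OG}]{stacks-project}) requires no flatness or Cohen--Macaulay hypothesis on the source at all; it is a consequence of Zariski's main theorem. The paper invokes exactly this and nothing more. So the detour through $S$-flatness of $\pi^{-1}(\g^{\textrm{reg}})$ and miracle flatness is unnecessary (and miracle flatness is not even the right tool for showing a finite-type $S$-scheme is $S$-flat). Dropping that paragraph makes your argument both shorter and correct, and brings it into line with the paper's.
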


\begin{proof} As the formations of $\gres, \pi$ commute with base-change we may check this over the closed point $s$, where it suffices to show $x \in \g^{\textrm{reg}}(k)$ if and only if $x$ is contained in finitely many Lie algebras of Borel subgroups $B_s \subset G_s$; this is found in (\cite{Slodowy1980}, \S3.8, \S3.14). We thus get quasi-finiteness, hence finiteness of $\pi\lvert_{\g^{\textrm{reg}}}$ since it comes from base-changing the proper map $\pi$.\end{proof}

Since $\pi$ is proper, dominant and generically finite, we may call it the \textit{Grothendieck alteration}. Another map of interest is $\widetilde{\chi}: \gres \to \h$, constructed as follows: given our choice of Borel $B$, $\borel/\mathfrak{n}_{\borel} = \textrm{Lie}(B/\Rub)$ is the ``universal Cartan'' and $\widetilde{\chi}$ is $(B',x) \mapsto x \mod \mathfrak{n}_{\borel}$. More precisely, there exists a short exact sequence of torsors 

\begin{equation*}
    0 \longrightarrow G \times^B \mathfrak{n}_{\borel} \longrightarrow \gres \longrightarrow G \times^B \textrm{Lie}(B/\Rub) \longrightarrow 0
\end{equation*}
\noindent induced from $\mathfrak{n}_{\borel} \xhookrightarrow{} \borel$, since all terms are locally trivial over $S$. Now $B$ acts trivially on $\textrm{Lie}(B/\Rub)$ as the latter is an abelian subalgebra, so that \[G\times^B\textrm{Lie}(B/\Rub) \simeq G/B \times \textrm{Lie}(B/\Rub) \simeq G/B \times \h\]
\noindent and $\widetilde{\chi}$ is the projection of $\gres \to G/B \times \h$ onto the second factor. Note $\widetilde{\chi}$ is flat by miracle flatness and the fibral criterion of flatness (\cite[\href{https://stacks.math.columbia.edu/tag/00MP}{Tag 00MP}]{stacks-project}), hence smooth by Lemma \ref{check-smoothness-on-special-fiber} as $\widetilde{\chi}_s$ is identified with the smooth morphism $\gres_s \to \h_s$ defined in (\cite{slodowy1980four}, \S3.3).

\begin{Prop}\label{proposition-relative-grothendieck-resolution-diagram}
Keeping notations as above, the morphisms $\pi$ and $\widetilde{\chi}$ fit into a commutative diagram \vspace{-3em}   \begin{center}
    \begin{equation}\label{relative-resolution-diagram}
        \begin{tikzcd}
\widetilde{\mathfrak{g}}\arrow[d,"\widetilde{\chi}"] \arrow[r, "\pi"] & \mathfrak{g} \arrow[d, "\chi"] \\ \mathfrak{h}\arrow[r, "\psi"] & \mathfrak{h}/\!\!/W
        \end{tikzcd}
       \end{equation}
    \end{center}

\noindent so that, $S$--fiberwise, $\pi_s$ and $\pi_{\overline{\eta}}$ induce the Grothendieck--Springer resolutions on $\g_s$ resp. $\g_{\overline{\eta}}$. In particular, restriction $\gres^{\textrm{\emph{rs}}} \coloneqq \pi^{-1}(\g^{\textrm{\emph{rs}}}) \to \g^{\textrm{\emph{rs}}}$ is a $W$--torsor, $\gres^{\textrm{\emph{reg}}} \coloneqq \pi^{-1}(\g^{\textrm{\emph{reg}}}) \simeq \g^{\textrm{\emph{reg}}}\times_{\h/\!\!/W}\h$ and $\widetilde{\chi}: \gres^{\textrm{\emph{rs}}} \to \h^{\textrm{\emph{rs}}}$ is $W$--equivariant. 
\end{Prop}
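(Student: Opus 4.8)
The plan is to establish the commutativity of diagram (\ref{relative-resolution-diagram}) first, then deduce the fiberwise statements and the torsor properties by reduction to the known results over $k$ and $\overline{K}$. For commutativity: an $\Oring_K$-point (or $R$-point for an $\Oring_K$-algebra $R$) of $\gres$ is represented by a pair $(g,x)$ with $g \in G(R)$ and $x \in \borel(R)$, modulo the $B$-action. Going right and then down sends this to $\chi(\textrm{Ad}_g(x))$; going down and then right sends it to $\psi(\overline{x})$, where $\overline{x} = x \bmod \mathfrak{n}_{\borel} \in \h(R)$. Since $\textrm{Ad}_g$ preserves the adjoint quotient (the coordinate ring of $\g/\!\!/G$ consists of $G$-invariants), $\chi(\textrm{Ad}_g(x)) = \chi(x)$, so it suffices to show $\chi(x) = \psi(\overline{x})$ for $x \in \borel(R)$. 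Here $x = \overline{x} + n$ with $n \in \mathfrak{n}_{\borel}(R)$ nilpotent and commuting appropriately, so $\overline{x}$ is (Zariski-locally on $R$) the semisimple part of $x$ up to conjugation by the unipotent radical; since $\chi$ factors through the construction identifying $x$ with the semisimple-class of its Jordan decomposition, and since $\chi|_{\h}$ is exactly $\psi$ precomposed with the Chevalley map $\h/\!\!/W \simeq \g/\!\!/G$ of Proposition \ref{g/G commutes with basechange}, we get $\chi(x) = \psi(\overline{x})$. One subtlety: to make the ``Jordan decomposition'' argument functorial over an arbitrary base one should instead argue via coordinate rings --- the composite $\gres \to \g \xrightarrow{\chi} \g/\!\!/G$ pulls an invariant function back to a $G$-invariant function on $G \times_S \borel$, which by restricting to $\{1\} \times \borel$ and using $W$-invariance on $\h$ lands in $\textrm{Sym}(\h^\vee)^W$; matching this with $\widetilde\chi$ composed with $\psi$ is then a statement about coordinate rings that can be checked after base-change to the (geometric) fibers, where it is classical (\cite{Slodowy1980}, \S3.14). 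Since $\g/\!\!/G$ commutes with base-change (Proposition \ref{g/G commutes with basechange}) and $\gres$ is flat and $S$-fiberwise dense in its generic part, checking on fibers suffices.

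Next, the fiberwise identification: since the formation of $\gres$, $\pi$, $\widetilde\chi$, $\chi$, $\psi$ all commute with base-change (as recorded just before the Proposition and in Proposition \ref{g/G commutes with basechange}), base-changing the whole diagram along $s \to S$ and $\overline\eta \to S$ yields precisely the diagrams defining the Grothendieck--Springer resolutions over $k$ and $\overline K$ in (\cite{slodowy1980four}, \S3.3). For the torsor claim over $\g^{\textrm{rs}}$: we have $\widetilde\chi$ factors $\gres \to G/B \times_S \h$, and over the regular semisimple locus the classical statement (recalled in Proposition \ref{cartan-W-cover} and its corollary, plus the fact that a regular semisimple $x$ lies in the Lie algebra of exactly $|W|$ Borels) gives that $\pi^{-1}(\g^{\textrm{rs}}) \to \g^{\textrm{rs}}$ is finite of degree $|W|$ with a free $W$-action; by Proposition \ref{cartan-galois-cover-corollary} applied fiberwise, this map is a Galois $W$-cover on each geometric fiber, hence (being finite, flat, and a fiberwise $W$-torsor over a base where $\Oring_K = W(k)$ is strictly henselian) a $W$-torsor over $\g^{\textrm{rs}}$. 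The $W$-action here is the right multiplication action of Definition \ref{definition-weyl-Lie-algebra-actions}(ii) on $G/T \times_S \h$, restricted to $\gres^{\textrm{rs}}$, and $W$-equivariance of $\widetilde\chi: \gres^{\textrm{rs}} \to \h^{\textrm{rs}}$ is immediate from the formula $(gT, h) \cdot w = (gn_wT, \textrm{Ad}_{n_w^{-1}}(h))$ since $\widetilde\chi$ is the second projection. For the last isomorphism $\gres^{\textrm{reg}} \simeq \g^{\textrm{reg}} \times_{\h/\!\!/W} \h$: both sides are finite over $\g^{\textrm{reg}}$ (the left by Lemma \ref{lemma-gro-alteration-finite-over-reg-locus}, the right because $\psi$ is finite by Proposition \ref{cartan-W-cover}), there is a natural map from the universal property of fiber product using $\pi$ and $\widetilde\chi$ together with the commutativity of (\ref{relative-resolution-diagram}), and this map is an isomorphism on geometric fibers by (\cite{Slodowy1980}, \S3.14); since both sides are flat over $S$ (being $S$-fiberwise dense reduced subschemes of flat schemes) a fiberwise isomorphism of finite $S$-schemes is an isomorphism.

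The main obstacle I expect is the rigorous, base-change-robust proof of commutativity of the square --- specifically, justifying the identity $\chi(x) = \psi(x \bmod \mathfrak{n}_{\borel})$ for $x \in \borel$ over a general base, since the convenient ``Jordan decomposition'' intuition is really a statement over fields. The clean route is the coordinate-ring argument sketched above: show the two composites $\gres \rightrightarrows \h/\!\!/W$ agree on $T$-module-generating invariant functions, reducing to the equality $\textrm{Sym}(\g^\vee)^G|_{\h} = \textrm{Sym}(\h^\vee)^W$ inside $\textrm{Sym}(\h^\vee)$ (the surjectivity/isomorphism part of the Chevalley restriction theorem, valid integrally in very good characteristic by Proposition \ref{g/G commutes with basechange}), together with the observation that the restriction of an invariant function on $\borel$ to a regular semisimple element depends only on its semisimple part --- a closed condition that, by $S$-fiberwise density of $\borel \cap \g^{\textrm{rs}}$ and reducedness, propagates from the regular semisimple locus to all of $\borel$. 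Once commutativity is in hand, everything else is bookkeeping with base-change and the cited classical results over algebraically closed fields.
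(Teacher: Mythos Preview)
Your proposal is correct and follows essentially the same strategy as the paper: establish commutativity by reducing to the ring-level statement that $\chi|_{\borel}$ factors through $\borel \to \h \to \h/\!\!/W$ (checked on fibers via the Jordan-decomposition picture), then deduce the torsor and isomorphism statements by fibral reduction to the classical results over $k$ and $\overline{K}$. The one noteworthy difference is your treatment of $\gres^{\textrm{reg}} \simeq \g^{\textrm{reg}}\times_{\h/\!\!/W}\h$: you invoke the fibral isomorphism criterion (EGA IV 17.9.5) directly, whereas the paper first establishes the isomorphism over $\g^{\textrm{rs}}$ as a map of $W$-torsors and then extends to $\g^{\textrm{reg}}$ by uniqueness of normalizations (citing \cite{bouthier2019torsors}); both arguments work, and yours is arguably more uniform with the rest of the proof. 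One small point to tighten: you should make explicit, as the paper does, the identification $\gres^{\textrm{rs}} \simeq G/T \times_S \h^{\textrm{rs}}$ (again via the fibral criterion) before invoking the $W$-action of Definition \ref{definition-weyl-Lie-algebra-actions}(ii), since that is where the $W$-action on $\gres^{\textrm{rs}}$ actually comes from.
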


\begin{proof} The maps $\pi_s, \pi_{\etabar}$ are the respective Grothendieck--Springer resolutions for $\g_s, \g_{\etabar}$ by the discussion preceding Lemma \ref{lemma-gro-alteration-finite-over-reg-locus}. For the commutativity of (\ref{relative-resolution-diagram}) it suffices to have the map $\borel \to \h/\!\!/W$ (induced by inclusion and $\chi$) factoring through \[\borel \overset{\textrm{pr}}{\longrightarrow} \borel/\mathfrak{n}_{\borel}\simeq \h \overset{\psi}{\longrightarrow} \h/\!\!/W\]
since $\widetilde{\chi}$ is induced by map $\textrm{pr}$. Thus, it suffices to check this on global sections, whence 

\begin{center}
    \begin{tikzcd}
        \textrm{Sym}_{\Oring_K}(\h^{\vee})^W \arrow[r, hook] & \textrm{Sym}_{\Oring_K}(\h^{\vee}) \arrow[dr, hook] & \\ \textrm{Sym}_{\Oring_K}(\g^{\vee})^G \arrow[u, "\simeq" {anchor=south, rotate=90}] \arrow[r,hook] & \textrm{Sym}_{\Oring_K}(\g^{\vee}) \arrow[r] & \textrm{Sym}_{\Oring_K}(\borel^{\vee})
    \end{tikzcd}
\end{center}

Over field-valued points we note $\widetilde{\chi}([g, h+n]) = h$ for $h \in \h, n \in \mathfrak{n}_{\borel}$, and so the commutativity of the diagram amounts to the fact that the semisimple part of $\textrm{Ad}_g(h+n)$ is conjugate to $h$ (\cite{slodowy1980four} \S3.3).

We next prove the statements involving $\gres^{\textrm{rs}}, \gres^{\textrm{reg}}$ as follows. Define $X = G/T \times_S \h^{\textrm{rs}}$, equipped with the $W$--action of Definition \ref{definition-weyl-Lie-algebra-actions} (ii). We aim to show $\gres^{\textrm{rs}} \simeq X$, which also defines a $W$--action on $\gres^{\textrm{rs}}$. By (\cite{grothendieck1967elements}, 17.9.5) it suffices to show the corresponding map over $k$ and $K$ is an isomorphism, whence it holds by the proof of (\cite{kiehl2013weil}, Thm. 9.1)\footnote{Note the terminology of (\cite{kiehl2013weil}) differs from the standard one - what they call ``regular'' is regular semisimple in our terminology.}. Moreover $X/\!\!/W \simeq \g^{\textrm{rs}}$ over $k$ and $K$, so this isomorphism extends over $S$ again by (\cite{grothendieck1967elements}, 17.9.5).

To show $W$ acts transitively on the $S$--fibers of $\gres^{\textrm{rs}} \to \g^{\textrm{rs}}$, it suffices as before to pass to geometric fibers and use $\gres^{\textrm{rs}} \simeq X$. Say $(g_1T,h_1), (g_2T,h_2) \in X$ mapping to $\textrm{Ad}_{g_1}(h_1) = \textrm{Ad}_{g_2}(h_2) \in \g^{\textrm{rs}}$. Then $\textrm{Ad}_{g_2^{-1}g_1}(h_1) = h_2$ so $n_w = g_2^{-1}g_1 \in N_G(T)$ since $n_w$ conjugates the two centralizers $C_G(h_1), C_G(h_2)$, which are both $T$ as $h_1, h_2 \in \h^{\textrm{rs}}$. Letting $w = [n_w^{-1}] \in W$ we obtain $(g_1T,h_1)\cdot w = (g_2T,h_2)$, establishing that $W$ acts freely transitively on the fibers. Thus $\gres^{\textrm{rs}} \to \g^{\textrm{rs}}$ is a $W$--torsor.

Now $\g^{\textrm{reg}}\times_{\h/\!\!/W}\h$ is smooth over $S$ since $\g^{\textrm{reg}}\to \h/\!\!/W$ is (by Proposition \ref{adjoint-quotient-properties}), and its restriction $\g^{\textrm{reg}}\times_S\h^{\textrm{rs}}$ is furthermore a $W$--torsor over $\g^{\textrm{rs}}$ by base-changing along $W$--torsor $\h^{\textrm{rs}} \to \h^{\textrm{rs}}/\!\!/W$. Then morphism $\gres^{\textrm{rs}} =\gres^{\textrm{reg}}\lvert_{\g^{\textrm{rs}}}\to \g^{\textrm{reg}}\times_S\h^{\textrm{rs}}$ is $W$--equivariant as both source and target map to $\h$ via projection, so we have an isomorphism of $W$--torsors over $\g^{\textrm{rs}}$, which extends to all of $\g^{\textrm{reg}}$ by uniqueness of normalizations; see (\cite{bouthier2019torsors}, 4.2.12). In light of the specified $W$--action on $\gres^{\textrm{rs}}$, we likewise obtain that $\widetilde{\chi}_{\gres^{\textrm{rs}}}$ is $W$--equivariant.\end{proof}

\section{Integral Slodowy slices}\label{section-integral-slices} We now extend Slodowy's construction of transverse slices (\cite{Slodowy1980}, \S5) to our setting of Chevalley $\Oring_K$--algebras, with the intention of proving a Grothendieck simultaneous resolution statement for slices (Section \ref{subsection-gro-alterations-for-transverse-slices}). Throughout, $G$ will be an affine group scheme, in particular either a Lie group over an algebraically closed field or a split, simple, simply-connected Chevalley group over $S = \Spec(\Oring_K)$. $T \subset B$ denote a fixed choice of torus and Borel, and the corresponding Lie algebras over $k$ or $S$ are $\h \subset \borel \subset \g$. 

\subsection{Remarks on Slodowy slices.}\label{slodowy-slice-subsection}

We review the theory of transverse slices to $G$--orbits from (\cite{Slodowy1980}, \S5) in a slightly more general setting.

\begin{Def}
Let $G$ act on an integral scheme $X$ over $S$. A \textit{transverse Slodowy slice} to the orbit $G\cdot x$ of $x \in X$ is a locally closed subvariety $\Slice \subseteq X$ so that $x \in \Slice(\Oring_K)$, the action morphism $\alpha: G\times \Slice \to X, \, \alpha(g,s) = g \cdot s$ is smooth and the dimension of $\Slice$ is minimal with respect to these two conditions.
\end{Def}

Based on their definition, we can deduce some useful properties of Slodowy slices.

\begin{Lemma}[\cite{Slodowy1980}, \S5.1 Lemma 3]\label{slices-etale-locally-isomorphic} Suppose $X$ is smooth and affine with the adjoint action of Chevalley group $G$ over $S$, $Y$ is another $S$-scheme with trivial $G$-action and $f: X\to Y$ is a $G$-invariant morphism. Let $x,y \in X(\Oring_K)$ lie in the same $G$-orbit in $X$ and assume that centralizers $C_G(x), C_G(y)$ are smooth. Suppose $\Slice_1, \Slice_2$ are Slodowy slices transverse to the orbit at $x$ resp. $y$. Then $(\Slice_1,x), (\Slice_2,y)$ are \`etale-locally isomorphic over $Y$. In particular the henselizations of $\Slice_1$ at $x$ and $\Slice_2$ at $y$ are isomorphic.
\end{Lemma}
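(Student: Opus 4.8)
The plan is to reduce the statement to the infinitesimal situation at the points $x$ and $y$, using the fact that both slices are transverse to the same $G$-orbit and that the centralizers are smooth, and then invoke the étale slice theorem (Luna-type argument). First I would observe that since $x$ and $y$ lie in the same $G$-orbit, there is some $g \in G(\Oring_K)$ (after a possible faithfully flat extension, which is harmless as we only want an étale-local statement) with $\mathrm{Ad}_g(x) = y$; translating by $g$ we may assume $x = y$, so it suffices to compare two slices $\Slice_1, \Slice_2$ transverse to the orbit at the \emph{same} point $x$, both mapping compatibly to $Y$ via the $G$-invariant morphism $f$. The key point is that the action morphism $\alpha_i: G \times \Slice_i \to X$ is smooth at $(e,x)$ by the defining property of a Slodowy slice, and its fibers have dimension $\dim C_G(x)$ (which is where smoothness of $C_G(x)$ enters — it guarantees the fiber dimension is as expected and that $G \times \Slice_i \to X$ has the right relative dimension). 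Hence the tangent space $\T_x \Slice_i$ is a complement to $\T_x(G\cdot x) = \mathrm{im}(\mathfrak{g} \to \T_x X)$ inside $\T_x X$ (this is forced by minimality of $\dim \Slice_i$), so the two slices have the same dimension and their tangent spaces at $x$ are abstractly isomorphic complements.

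Next I would set up the comparison map. Consider the morphism $G \times \Slice_1 \to X$ and restrict/pull back along $\Slice_2 \hookrightarrow X$; more precisely, form the fiber product $Z = (G \times \Slice_1) \times_X \Slice_2$ over $Y$, together with its two projections to $\Slice_1$ and $\Slice_2$. Since $\alpha_1$ is smooth, the projection $Z \to \Slice_2$ is smooth; since the ``extra'' factor is $G$ and the orbit is transverse to both slices, one checks that the section of $Z \to \Slice_2$ through $(e,x,x)$ has the property that the composite $Z \supseteq (\text{section}) \to \Slice_1$ is étale at $x$ — this is exactly the standard slice-comparison computation on tangent spaces, where both $\T_x\Slice_1$ and $\T_x\Slice_2$ are identified with $\T_x X / \T_x(G\cdot x)$. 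Invoking that a morphism of finite-type $S$-schemes which is flat (here smooth) with an étale section, or directly that a morphism inducing an isomorphism on tangent spaces at a point with the source flat over $S$ is étale there (checking smoothness fiberwise via Lemma~\ref{check-smoothness-on-special-fiber} and unramifiedness), we conclude that $(\Slice_1, x)$ and $(\Slice_2, x)$ are connected by a roof of étale morphisms over $Y$, i.e.\ they are étale-locally isomorphic over $Y$. Passing to henselizations, an étale-local isomorphism between pointed schemes becomes an honest isomorphism of henselian local schemes, giving the last sentence.

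The main obstacle I expect is handling the base $S = \Spec(\Oring_K)$ rather than a field: one must ensure that all the smoothness/étaleness checks are valid integrally and not just on one fiber. The cleanest route is to reduce the étaleness of the comparison map to the special fiber via Lemma~\ref{check-smoothness-on-special-fiber} (for smoothness) together with the fibral criterion of flatness (\cite[\href{https://stacks.math.columbia.edu/tag/00MP}{Tag 00MP}]{stacks-project}), after first checking that the relevant schemes $\Slice_i$, $G\times \Slice_i$, and $Z$ are flat over $S$ — which follows because they are reduced and $S$-fiberwise dense in objects known to be flat, or because smoothness over $S$ is built into the construction. The smoothness hypothesis on $C_G(x)$ and $C_G(y)$ is precisely what rules out the pathologies of Example~\ref{example-of-subregular-and-semisimple-not-behaving-well-in-fibers}, where centralizer dimension or flatness jumps between fibers; with smoothness in hand the orbit map $G \to G\cdot x$ is smooth with fibers $C_G(x)$, so the fiber-dimension and tangent-space bookkeeping needed above is uniform over $S$. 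A secondary technical point is the passage to a point where $x$ and $y$ are $G(\Oring_K)$-conjugate: if they are only conjugate after an fppf or étale cover of $S$, one works over that cover throughout, which does not affect the étale-local conclusion; since here $\Oring_K$ is strictly henselian, in fact one can often arrange the conjugating element over $\Oring_K$ directly.
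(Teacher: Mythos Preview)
Your approach is essentially the same as the paper's: reduce to $x=y$, form the fiber product $(G\times\Slice_1)\times_X\Slice_2$, and extract an \'etale roof to both slices. The only difference is in how the roof is extracted. You keep the full group $G$, use smoothness of $Z\to\Slice_2$ to produce an \'etale-local section through $(e,x,x)$, and then verify by a tangent-space computation that the composite to $\Slice_1$ is \'etale. The paper instead shrinks $G$ beforehand: using smoothness of $C_G(x)$ one splits $\g=\mathrm{Lie}(C_G(x))\oplus\g_1$ over $\Oring_K$, chooses a projection $p:G\to\g$ \'etale at $e$, and sets $G_1=p^{-1}(\g_1)$, so that the action map $G_1\times\Slice_1\to X$ is already \emph{\'etale} (not just smooth) at $(1,x)$. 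Then $(G_1\times\Slice_1)\times_X\Slice_2\to\Slice_2$ is \'etale by base change, and the symmetric identification $(G_1\times\Slice_1)\times_X\Slice_2\cong(G_2\times\Slice_2)\times_X\Slice_1$ via $(g,s)\mapsto(g^{-1},g\cdot s)$ gives the \'etale map to $\Slice_1$ directly, with no separate tangent-space check and no need to invoke existence of sections of smooth maps. Your version is perfectly valid but requires you to argue that $\Slice_i$ is smooth over $S$ (so that ``isomorphism on tangent spaces $\Rightarrow$ \'etale'' applies); the paper's version sidesteps this by arranging equal dimensions from the start.
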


\begin{proof}
    We may assume $x=y$ (since $y = g \cdot x$ for some $g\in G$). As $C_G(x)$ is smooth, $\textrm{Lie}(C_G(x)) \subset \g$ is identified with the Chevalley subalgebra of $\textrm{ad}_x$-invariants (\cite{conrad2014reductive}, 2.2.4), inducing an $S$-splitting $\g = \textrm{Lie}(C_G(x))\oplus \g_1$ for some complementary $\Oring_K$-module $\g_1$. Then the rest of the proof of (\cite{Slodowy1980}, \S5.1, Lemma 3) goes through: choose a projection $p: G \to \g$ \'etale at the identity (cf.\;\emph{loc.\;cit.}, \S5.1 Lemma 1's proof) and set $G_1 = p^{-1}(V), G_2 = \{g^{-1}\mid g \in G_1\}$ inside $G$. By construction, the induced action maps $\mu_i: G_i\times \Slice_i \to X$ are \'etale at $(1,x)$ and hence so are the base--changed maps arising from cartesian diagrams
    \begin{center}
        \begin{tikzcd}
            (G_1\times \Slice_1)\times_X\Slice_2 \arrow[d] \arrow[r] & \Slice_2 \arrow[d] & (G_2\times\Slice_2)\times_X \Slice_1 \arrow[d]\arrow[r] & \Slice_1 \arrow[d] \\ G_1 \times \Slice_1 \arrow[r, "\mu_1"] & X & G_2\times\Slice_2 \arrow[r, "\mu_2"] & X
        \end{tikzcd}
    \end{center}
    We furthermore have $(G_1\times \Slice_1)\times_X\Slice_2 \simeq (G_2\times \Slice_2)\times_x\Slice_1$ via $(g,s) \mapsto (g^{-1},g\cdot s)$. Hence we can choose a neighborhood of $(1,x)$ inside this space, with \'etale maps to $\Slice_1, \Slice_2$.\end{proof}

\begin{Rem} The condition of centralizer $C_G(x)$ being smooth over $S$ is satisfied in the case $X = \g$ and $x \in \g(\Oring_K)$ a fiberwise subregular nilpotent section (cf. Definition \ref{definitions-of-elements-in-lie-algebras}) - note that by (\cite{cotner2022centralizers}, Thm.\,1.1), the centralizer satisfies all relevant `purity' assumptions.
\end{Rem}

\begin{Lemma}[\cite{Slodowy1980} \S5.1, Lemma 2]\label{pullback-of-action-map-smooth} If $G$ acts on schemes $X,Y$ with respective action maps $\alpha_X, \alpha_Y$, $\mathcal{S} \subset X$ is a locally closed integral subscheme and $f: X\to Y$ is a $G$-equivariant morphism, then the following diagram is cartesian and the top arrow is smooth if the bottom arrow is smooth:

\begin{center}
\begin{tikzcd}
    G \times (\mathcal{S}\times_Y X) \arrow[d, "\textrm{\emph{id}}\times f"] \arrow[r, "\alpha_X"] & X\arrow[d, "f"] \\ G\times \mathcal{S} \arrow[r, "\alpha_Y"] & Y
\end{tikzcd}
\end{center}    
\end{Lemma}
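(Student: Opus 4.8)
The plan is to recognise the square as a base-change square and then invoke the stability of smoothness under base change; essentially all of the content lies in writing down the correct identification, which involves a small but genuine twist.

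\emph{First}, I would spell out the four maps on points. The right-hand map is $f$, and the bottom map is the composite of $\mathrm{id}_G\times(f|_{\mathcal{S}})\colon G\times\mathcal{S}\to G\times Y$ with $\alpha_Y$, namely $(g,s)\mapsto g\cdot f(s)$. The left-hand map (written $\mathrm{id}\times f$) is $\mathrm{id}_G$ times the base change $\mathcal{S}\times_Y X\to\mathcal{S}$ of $f$, namely $(g,(s,x))\mapsto(g,s)$; and the top map (written $\alpha_X$) is the composite of $\mathrm{id}_G$ times the projection $\mathcal{S}\times_Y X\to X$ with $\alpha_X$, namely $(g,(s,x))\mapsto g\cdot x$. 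Commutativity then reads $f(g\cdot x)=g\cdot f(x)=g\cdot f(s)$, the first equality being $G$-equivariance of $f$ and the second using the defining relation $f(s)=f(x)$ on $\mathcal{S}\times_Y X$.

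\emph{Second}, I would prove the square is cartesian by exhibiting an explicit isomorphism $\Phi\colon G\times(\mathcal{S}\times_Y X)\xrightarrow{\ \sim\ }X\times_Y(G\times\mathcal{S})$, the fibre product on the right being taken along $f\colon X\to Y$ and along the bottom map $(g,s)\mapsto g\cdot f(s)$. On points I put $\Phi(g,(s,x))=(g\cdot x,(g,s))$, which is well defined since $f(g\cdot x)=g\cdot f(x)=g\cdot f(s)$; its inverse is $(x',(g,s))\mapsto(g,(s,g^{-1}\cdot x'))$, well defined since $f(g^{-1}\cdot x')=g^{-1}\cdot f(x')=g^{-1}\cdot(g\cdot f(s))=f(s)$, so that $(s,g^{-1}\cdot x')$ indeed lies in $\mathcal{S}\times_Y X$. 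Checking that these two maps are mutually inverse and that $\Phi$ carries the two projections of $X\times_Y(G\times\mathcal{S})$ to the top and left maps of the square is routine. I expect the only point needing care to be the twist by $g$ in the $X$-coordinate of $\Phi$: the untwisted map would land in the wrong fibre product, precisely because the bottom map is $(g,s)\mapsto g\cdot f(s)$ and not $(g,s)\mapsto f(s)$.

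\emph{Finally}, with the square cartesian the top arrow is the base change of the bottom arrow along $f$, and since smooth morphisms are stable under arbitrary base change, the top arrow is smooth whenever the bottom one is. This is precisely the assertion; beyond the existence of the fibre products involved — automatic for separated algebraic spaces — no further input is needed.
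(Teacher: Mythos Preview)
The paper does not supply its own proof of this lemma; it is stated with a citation to Slodowy and then used without further argument. Your proof is correct and is exactly the standard one: exhibit the explicit isomorphism $(g,(s,x))\mapsto(g\cdot x,(g,s))$ with the fibre product $X\times_Y(G\times\mathcal{S})$, check it intertwines the projections with the two arrows out of the top-left corner, and conclude by base-change stability of smoothness. The twist by $g$ in the $X$-coordinate is indeed the only point of substance, and you identify it correctly.

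One incidental remark: the paper's statement has $\mathcal{S}\subset X$, but in its sole application (Proposition~\ref{proposition-simultaneous-resolutions-for-slices}) one takes $Y=\g$, $X=\gres$, $f=\pi$, and $\mathcal{S}\subset\g=Y$. Your interpretation via $f|_{\mathcal{S}}$ handles both readings uniformly, so nothing is lost.
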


\subsection{Jacobson--Morozov in characteristic $p$.}\label{good-sl2-rep-section}

The aim of the next few sections is to construct a suitable Slodowy slice $\Slice$ over $S$, transverse to a chosen $S$--fiberwise nilpotent element $x\in \g$. The goal is to use slice $\mathcal{S}$ to describe a miniversal deformation of RDP $x_s \in \mathcal{N} \cap \mathcal{S}$ and a simultaneous resolution for slices (see Section \ref{subsection-gro-alterations-for-transverse-slices}). We first make some remarks on $\mathfrak{sl}_2$--representation theory in characteristic $p>0$, following (\cite{Slodowy1980}, \S7.1).

\begin{Def} Let $k$ be algebraically closed of characteristic $p$ and fix a standard basis $\{h,x,y\}$ for $\mathfrak{sl}_2(k)$. A representation $\rho: \mathfrak{sl}_2 \to \mathfrak{gl}_n$ is called \textit{good} (in the sense of (\cite{Slodowy1980}, \S7.1)) when $\rho(x)^{p-1} = \rho(y)^{p-1} = 0$; for $p=0$ we posit that all $\mathfrak{sl}_2$--representations are good.
\end{Def}

If $V$ is an $n$-dimensional ($n<p$) $k$-vector space with basis $\{v_1, \cdots, v_n\}$, we can define an \textit{irreducible} $\mathfrak{sl}_2$--representation $\rho_n$ on $V$ as follows:

\[ \rho_n(x)v_1 = 0, \; \; \rho_n(y)v_n = 0\]
\[\rho_n(x)v_{i+1} = i(n-i)v_i \mod p, \; \; \rho_n(y)v_i = v_{i+1}, \; \; \rho_n(h)v_i = (n-2i+1)v_i \mod p \; \; \textrm{for} \; \; i \leq n-1\]

\begin{Thm}[\cite{Slodowy1980}, \S7.1] Any good $\mathfrak{sl}_2$-representation $\rho$ is completely reducible and decomposes into a sum of good irreducible $\mathfrak{sl}_2$--representations of the above form. For each $n < p$ there is a unique $n$--dimensional good irreducible representation.
\end{Thm}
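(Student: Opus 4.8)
The plan is to recall the classical argument of \cite{Slodowy1980}, \S7.1. The characteristic-zero case is Weyl's complete reducibility theorem together with the standard highest-weight classification, so I would assume $p=\mathrm{char}(k)>0$; if $p=2$ then goodness forces $\rho(x)=\rho(y)=0$, hence $\rho(h)=\rho([x,y])=0$, so every good module is trivial and we may take $p$ odd. The proof breaks into two parts: classifying the irreducible good modules, and reducing complete reducibility to a statement about extensions of irreducibles.

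First I would classify the good irreducibles. Let $(V,\rho)$ be irreducible and good. Since $\rho(x)$ is nilpotent, $\ker\rho(x)\neq 0$; the relation $[h,x]=2x$ makes this subspace $\rho(h)$-stable, so over the algebraically closed $k$ one picks $v_0\in\ker\rho(x)$ with $\rho(h)v_0=\lambda v_0$. With $v_i=\rho(y)^iv_0$ the standard $\mathfrak{sl}_2$ identities give $\rho(h)v_i=(\lambda-2i)v_i$ and $\rho(x)v_i=i(\lambda-i+1)v_{i-1}$, so $\sum_i kv_i$ is a nonzero submodule, hence all of $V$. Goodness enters now: $\rho(y)^{p-1}=0$ forces $v_{p-1}=0$, so the chain stops at some $m\le p-2$ with $v_m\neq0$, $v_{m+1}=0$, and $0=\rho(x)v_{m+1}=(m+1)(\lambda-m)v_m$ with $0<m+1\le p-1$ forces $\lambda=m$ in $k$. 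The $v_0,\dots,v_m$ then have distinct $\rho(h)$-eigenvalues $m,m-2,\dots,-m$ (distinct mod $p$, as they lie in $(-p,p)$ and $p$ is odd), so form a basis and, after reindexing, $(V,\rho)\cong\rho_{m+1}$ with $m+1\le p-1$. Conversely each $\rho_n$, $1\le n\le p-1$, is a representation, is irreducible since every structure constant $i(n-i)$ with $1\le i\le n-1$ is a unit mod $p$, and is good because $\rho_n(x)^n=\rho_n(y)^n=0$ with $n\le p-1$. This yields existence and uniqueness of the $n$-dimensional good irreducible for each $n<p$ and shows there are no others (the $p$-dimensional Steinberg module fails goodness, since $\rho(x)^{p-1}\neq0$ on it).

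For complete reducibility I would first note that, since $\rho(x)^{p-1}=\rho(y)^{p-1}=0$, the truncated exponentials $\exp(t\rho(x))$ and $\exp(t\rho(y))$ are defined without dividing by $p$ and obey the $\mathrm{SL}_2$ relations, so a good module integrates to a rational $\mathrm{SL}_2$-module; in particular $\rho(h)$ is the differential of the diagonal torus, hence semisimple with integer eigenvalues in $[-(p-2),p-2]$ (or one cites \cite{Slodowy1980}, \S7.1 for this). A standard dévissage --- take a minimal nonzero submodule $L\subseteq V$ (irreducible and good, so some $\rho_a$), pick $W$ maximal with $W\cap L=0$, and note that $W\oplus L\neq V$ would produce a good extension of an irreducible by $L$, which by the splitting claim below splits, contradicting maximality; then induct on dimension --- reduces everything to: \emph{every short exact sequence $0\to L\to V\to L'\to 0$ of good modules with $L\cong\rho_a$, $L'\cong\rho_b$ irreducible splits.} Writing $\mu=a-1$, $\mu'=b-1$ in $\{0,\dots,p-2\}$ and, since duality preserves goodness, assuming $\mu\le\mu'$: the Casimir $C=\tfrac12 h^2+h+2yx$ (legitimate as $p\neq2$) acts on $\rho_n$ by $\tfrac12\mu(\mu+2)$, and for such $\mu,\mu'$ these scalars agree iff $\mu'=\mu$ or $\mu'=p-2-\mu$. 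If they differ, the generalized $C$-eigenspaces of $V$ are submodules separating $L$ from $L'$, so the sequence splits. If $\mu=\mu'$, all $\rho(h)$-weights of $V$ lie in $[-\mu,\mu]$, the weight-$\mu$ space is two-dimensional and killed by $\rho(x)$ (the weight-$(\mu+2)$ space vanishes), and a weight-$\mu$ vector outside $L$ generates a copy of $\rho_a$ that is a complement. If $\mu<\mu'$ with $\mu+\mu'=p-2$, lift the highest-weight vector of $L'$ to a weight-$\mu'$ vector $\tilde v\in V$ (necessarily killed by $\rho(x)$); if $\rho(y)^{\mu'+1}\tilde v=0$ then $\tilde v$ generates a complement, while if $\rho(y)^{\mu'+1}\tilde v\neq0$ it is a nonzero highest-weight vector of $L$, so $\rho(y)^{\mu'+1+\mu}\tilde v\neq0$ with $\mu'+1+\mu=p-1$, contradicting $\rho(y)^{p-1}=0$. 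All cases are covered.

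The routine parts are the highest-weight chain computation and the Casimir eigenspace splitting. The hard part --- and the only place where the ``good'' hypothesis is essential --- will be the linked case $\mu+\mu'=p-2$: one must rule out a nonsplit extension of $\rho_a$ by $\rho_{p-a}$ by showing that $\rho(y)^{p-1}$ is nonzero on it, and the weight bookkeeping there should be done carefully. I would also take care to justify the semisimplicity of $\rho(h)$ on an arbitrary good module --- via the $\mathrm{SL}_2$-integration, or by citing \cite{Slodowy1980}, \S7.1 --- since the weight-space arguments in the last two cases rely on it.
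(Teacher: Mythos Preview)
Your proposal is correct and essentially reconstructs the argument from \cite{Slodowy1980}, \S7.1. Note that the paper itself gives no proof of this statement --- it is simply stated with a citation to Slodowy --- so there is no ``paper's own proof'' to compare against beyond that reference; your sketch is faithful to the original source.
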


Denote by $V_n$ the unique $n$--dimensional good irreducible $\mathfrak{sl}_2$--representation. $V_n$ decomposes further into \textit{weight spaces} $V_n(k)$, which are 1-dimensional eigenspaces for the action of $\rho_n(h)$ as multiplication by $k$ and $-n+1\leq k \leq n-1$. Then the nilpotent endomorphisms $\rho_n(x), \rho_n(y)$ respectively induce isomorphisms $V_n(k) \overset{\sim}{\to} V_n(k+2)$ and $V_n(k)\overset{\sim}{\to} V_n(k-2)$; they also respectively annihilate the highest weight space $V_n(n-1)$ and the lowest weight space $V_n(-n+1)$.

\begin{Thm}[Jacobson--Morozov, \cite{stewart2018jacobson} Thm.\;1.1]\label{jacobson-morozov}
    Let $G$ be a simple Lie group over $k$ with Lie algebra $\g$. Suppose $p=0$ or $p > \textrm{\emph{Cox}}(\g)$. For each nilpotent $x \in \mathfrak{g}(k)$ there is a completion to an $\mathfrak{sl}_2$--triple $\{h,x,y\}$ coming from a faithful representation $\rho: \mathfrak{sl}_2 \to \mathfrak{g}$ with $x = \rho(x_0), \; y = \rho(y_0), \; h = \rho(h_0)$ for $\{h_0,x_0,y_0\}$ the standard basis of $\mathfrak{sl}_2(k)$. The triple $\{h,x,y\}$ is unique up to $C_G(x)$--conjugation and the composite representation $\textrm{ad} \circ \rho: \mathfrak{sl}_2 \to \mathfrak{gl}(\mathfrak{g})$ is a good representation.
    \end{Thm}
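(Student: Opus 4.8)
This is the theorem of \cite{stewart2018jacobson}; I sketch the shape of the argument, which follows the classical characteristic-zero proof but replaces the $\mathfrak{sl}_2$-representation theory input by the good-module theory recalled above, the hypothesis $p>\textrm{Cox}(\g)$ being exactly what keeps that input available.

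\textbf{Existence of a characteristic.} Given a nilpotent $x\in\g(k)$, the first step is to produce a semisimple $h\in\g$ with $[h,x]=2x$. Since $p>\textrm{Cox}(\g)$ forces $p$ to be good for $\g$, one may invoke the Bala--Carter--Pommerening theory of associated cocharacters: there is $\tau\colon\mathbb{G}_m\to G$ with $\textrm{Ad}(\tau(t))\,x=t^2x$, and one puts $h=d\tau(1)$. (Alternatively one runs Morozov's classical argument, whose only characteristic-sensitive ingredients are the nondegeneracy of an invariant form $\kappa$ on $\g$ and divisions by integers bounded by $\textrm{Cox}(\g)<p$, both controlled by the hypothesis.) Either way $\textrm{ad}(h)$ defines a $\mathbb{Z}$-grading $\g=\bigoplus_i\g_i$ with $x\in\g_2$, and the bound on $p$ guarantees that the eigenvalues occurring have absolute value less than $p$, so distinct eigenspaces stay distinct modulo $p$.

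\textbf{Completing the triple; goodness and faithfulness.} One must find $y\in\g_{-2}$ with $[x,y]=h$ --- then $[h,y]=-2y$ is automatic --- equivalently, one must show $h\in\textrm{im}(\textrm{ad}(x)\colon\g_{-2}\to\g_0)$. Using the nondegenerate form $\kappa$ (which exists under the hypothesis on $p$) to pair $\g_{-2}$ with $\g_2$ and $\g_0$ with itself, this image is the annihilator in $\g_0$ of $\ker(\textrm{ad}(x)\colon\g_0\to\g_2)$, so it is enough to check $\kappa(h,z)=0$ for every $z\in\g_0$ centralizing $x$. This surjectivity/vanishing is the technical heart of the proof: it is transparent in characteristic zero from $\mathfrak{sl}_2$-theory, while in characteristic $p$ it requires a careful analysis --- reducing to $x$ distinguished in a minimal Levi containing it (for which the characteristic bound still holds) and running trace/weight arguments that survive only because $p>\textrm{Cox}(\g)$. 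Once $y$ is found, $\rho\colon\mathfrak{sl}_2\to\g$ is faithful since $\mathfrak{sl}_2$ is simple ($p>2$) and $\rho\neq0$, and for $\textrm{ad}\circ\rho$ the $\textrm{ad}(h)$-grading bounds the Jordan block lengths of $\textrm{ad}(x)$ on $\g$ below $p$, so $(\textrm{ad}\,x)^{p-1}=(\textrm{ad}\,y)^{p-1}=0$; by the structure theorem $\textrm{ad}\circ\rho$ is then good and splits into the $V_n$ with $n<p$.

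\textbf{Uniqueness and the main obstacle.} If $(h,x,y)$ and $(h',x,y')$ are two such triples, note that $y$ is already determined by the pair $(x,h)$: any competitor differs from $y$ by an $\textrm{ad}(h)$-weight $(-2)$ vector killed by $\textrm{ad}(x)$, which must vanish in the completely reducible module $\g$ (all highest-weight vectors there have weight $\geq0$). It therefore suffices to conjugate $h$ to $h'$ by an element of $C_G(x)$, which is the classical Kostant $\exp$-computation: the characteristics of $x$ form a single orbit under the unipotent radical of the parabolic of $C_G(x)$ attached to $\textrm{ad}(h)$, the relevant exponentials being well defined because $p>\textrm{Cox}(\g)$ bounds the $\textrm{ad}(h)$-weights. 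The step I expect to be the main obstacle is the surjectivity $h\in\textrm{im}(\textrm{ad}(x)|_{\g_{-2}})$ from the second paragraph: this is precisely the point at which Jacobson--Morozov can fail in small characteristic, and where $p>\textrm{Cox}(\g)$ is used essentially; the full verification is carried out in \cite{stewart2018jacobson}.
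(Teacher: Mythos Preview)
The paper does not prove this theorem at all: it is stated with attribution to \cite{stewart2018jacobson}, Thm.\;1.1, and immediately followed only by a remark noting that the bound $p>\textrm{Cox}(\g)$ is optimal for uniqueness and improves Slodowy's older bound $4\textrm{Cox}(\g)-2$. There is therefore nothing in the paper to compare your sketch against.

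Your outline is a reasonable summary of the shape of the argument in the cited reference, and you correctly identify the surjectivity step $h\in\textrm{im}(\textrm{ad}(x)\lvert_{\g_{-2}})$ as the characteristic-sensitive crux. Since the paper treats this as a black box from the literature, a one-line citation would have sufficed here; your sketch goes beyond what the paper itself provides.
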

\begin{Rem}
    According to \cite{stewart2018jacobson}, restriction $p > \textrm{Cox}(\g)$ is optimal for the uniqueness of $\mathfrak{sl}_2$-triple $\{h,x,y\}$ up to conjugation, and it improves previous bounds such as $4\textrm{Cox}(\g)-2$ appearing in \cite{Slodowy1980}.
\end{Rem}

\subsection{Slodowy slices via Jacobson--Morozov.}\label{subsection-integral-slodowy}Let $G$ denote a split, simple, simply-connected Chevalley group of Type $A_n$ over $S$ with Chevalley algebra $\g$. Consider the setting of Section \ref{slodowy-slice-subsection}, with $G$ acting on $X=\g$ via the adjoint action and $x \in \g(\Oring_K)$ an $S$-fiberwise subregular nilpotent element; this condition ensures both generic and special fibers $x_{\overline{\eta}}, x_s$ remain in the respective (unique) subregular nilpotent orbit of $\g_{\overline{\eta}}, \g_s$. In view of Lemma \ref{slices-etale-locally-isomorphic} we may construct a suitable Slodowy slice at $x$, transverse to the (adjoint) subregular nilpotent orbits of $\g_{\overline{\eta}}, \g_s$ and any other Slodowy slice at $x$ will be \'etale-locally isomorphic to it. Moreover this allows us to replace $x$ with another representative in its nilpotent orbit, so we may assume $x$ has a `standard' form.
\begin{Prop}\label{proposition-existence-integral-sl2-triples}
    Assume $p > \textrm{\emph{Cox}}(\g)$. There is a choice of subregular nilpotent representative $x \in \g(\Oring_K)$ extending to a section of $\mathfrak{sl}_2$-triples $\{h,x,y\} \subset \g(\Oring_K)$ i.e. $\{h_{\overline{\eta}}, x_{\overline{\eta}}, y_{\overline{\eta}}\}$ and $\{h_s,x_s,y_s\}$ are $\mathfrak{sl}_2$-triples respectively in $\g_{\overline{\eta}}$ and $\g_s$.
\end{Prop}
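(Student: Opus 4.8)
The plan is to produce a good $\mathfrak{sl}_2$-triple on the special fibre by Jacobson--Morozov, to propagate it to an $\Oring_K$-section by a Hensel-type smoothness argument, and then to check that the nilpositive section remains fiberwise subregular.

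First I would fix a subregular nilpotent $x_s \in \mathcal{N}_{\g}(k)$; as $\g$ is of type $A_n$ this is the orbit of Jordan type $(n,1)$. Since $p > \textrm{Cox}(\g)$, Theorem~\ref{jacobson-morozov} completes $x_s$ to an $\mathfrak{sl}_2$-triple $\{h_s,x_s,y_s\} \subset \g_s$ with $\textrm{ad} \circ \rho$ good, unique up to conjugation by $R_u(C_{G_s}(x_s))$. Next I would fix a fiberwise subregular nilpotent section $x \in \mathcal{N}_{\g}(\Oring_K)$ lifting $x_s$ --- for type $A_n$ one may simply take the matrix of Jordan type $(n,1)$ assembled from the integer structure constants $i(n-i)$ and $1$ of the $n$-dimensional irreducible $\mathfrak{sl}_2$-module, embedded in $\mathfrak{sl}_{n+1}$ as the top-left $n \times n$ block, so that already $x \in \g(\mathbb{Z}) \subset \g(\Oring_K)$ --- and consider the affine $\Oring_K$-scheme
\[ Y_x \;=\; \{\, (h,y) \in \g \times_S \g \ :\ \{h,x,y\} \text{ is an } \mathfrak{sl}_2\text{-triple} \,\}. \]
Over the geometric special point $Y_x$ is nonempty by the previous paragraph and over the geometric generic point it is nonempty classically; in both cases it is a torsor under $R_u(C_G(x))$, hence smooth, and of dimension equal to that of the subregular centralizer, which is the same over $k$ and over $\overline{K}$ because $p$ is good. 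Thus $Y_x$ is flat over $\Oring_K$ (both geometric fibres being smooth of the same dimension), hence smooth over $S$ by Lemma~\ref{check-smoothness-on-special-fiber}. Since $\Oring_K$ is complete local, the point $(h_s,y_s) \in Y_x(k)$ lifts to $(h,y) \in Y_x(\Oring_K)$, and $\{h,x,y\}$ is the desired triple --- fiberwise subregular by the choice of $x$.

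For type $A_n$ one can in fact bypass the lifting step entirely, which is the cleanest route: the integer matrices $h_0, x_0, y_0$ attached to the $n$-dimensional irreducible representation already satisfy the $\mathfrak{sl}_2$-relations over $\mathbb{Z}$, and because $p > \textrm{Cox}(\g) = n+1$ we have $p \nmid i(n-i)$ for $1 \le i \le n-1$ and $n < p$, so the reductions to $k$ and to $\overline{K}$ remain $\mathfrak{sl}_2$-triples, are good on the special fibre (as $x_s^{p-1} = y_s^{p-1} = 0$), and have nilpositive part of Jordan type $(n,1)$, i.e.\ subregular. The one point requiring care, should one want the conceptual argument rather than the explicit model, is the step in the previous paragraph: verifying that $Y_x$, with its schematic structure inside $\g \times_S \g$, is genuinely fiberwise an $R_u(C_G(x))$-torsor of fibre-independent dimension, so that flatness and then smoothness over $S$ follow. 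This is exactly where $p > \textrm{Cox}(\g)$ is used --- it secures the good $\mathfrak{sl}_2$-theory of Section~\ref{good-sl2-rep-section} and the smoothness of the subregular centralizer $C_G(x)$ over $S$ (cf.\ the remark after Lemma~\ref{slices-etale-locally-isomorphic} and \cite{cotner2022centralizers}); with that in hand the lift is automatic, and one could equally invoke the triviality of torsors under smooth unipotent $\Oring_K$-group schemes.
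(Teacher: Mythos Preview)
Your explicit route is correct and is essentially what the paper does, just in different coordinates. The paper takes $x = \sum_{i=1}^{n-1} e_{\alpha_i}$ in the Chevalley basis, solves $[h,x]=2x$ for $h = \sum a_i h_{\alpha_i}$ by inverting the Cartan matrix (determinant $n+1$, a unit since $p>n+1$), and then sets $y = \sum a_i e_{-\alpha_i}$; your version writes down the same triple via the matrices of the $n$-dimensional irreducible $\mathfrak{sl}_2$-module inside $\mathfrak{sl}_{n+1}$. Both produce an integral triple with $x$ of Jordan type $(n,1)$ whose reduction stays subregular because the relevant integers ($i(n-i)$, or the $a_i$) are units mod $p$.

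Your conceptual lifting argument via $Y_x$ is a genuine alternative and morally correct, but one step is not justified as written: ``both geometric fibres smooth of the same dimension'' does not by itself imply $Y_x$ is flat over $\Oring_K$ (there could be embedded or non-reduced structure), so Lemma~\ref{check-smoothness-on-special-fiber} cannot yet be invoked. You correctly flag this as the delicate point. The clean fix is local rather than global: once Jacobson--Morozov produces $(h_s,y_s)\in Y_x(k)$, compute the obstruction to lifting through $\Oring_K/\mathfrak m^{n+1}$ directly; it lies in a cokernel that vanishes precisely because the adjoint $\mathfrak{sl}_2$-representation is good (the relevant $\mathrm{ad}_{x_s}$ maps between weight spaces are bijective), so formal smoothness at $(h_s,y_s)$ holds and Hensel applies. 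Alternatively, once you know $R_u(C_G(x))$ is a smooth unipotent $\Oring_K$-group (which you cite), any fppf torsor for it over the strictly henselian base is trivial, and checking the action map $U \times_S Y_x \to Y_x \times_S Y_x$ is an isomorphism can again be done fibrewise by \cite[17.9.5]{grothendieck1967elements} after first exhibiting a single section---so the explicit triple feeds back in. Either way the paper's bare-hands computation is the shortest path for type $A_n$.
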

\begin{proof} It is clear by the restrictions (see Theorem \ref{jacobson-morozov}) that fiberwise subregular $x$ induces unique $\mathfrak{sl}_2$-triples in Lie algebras $\g_{\eta}, \g_s$ up to the action of $\textrm{Lie}(C_G(x)) \subset \g(\Oring_K)$.  Subregular nilpotent elements in Type $A_n$ have standard Levi form in characteristic zero, i.e. they are regular nilpotent in a Levi subalgebra $\mathfrak{l}$ corresponding to a parabolic $\mathfrak{p} \subseteq \mathfrak{g}$ determined by a subset of simple roots $I \subset \Delta$. In this case, if we denote the simple roots as $\Delta = \{\alpha_1, \cdots, \alpha_n\}$, the corresponding Levi and subregular representative are \begin{equation}\label{a_n-example}\mathfrak{l} = \h_{\overline{\eta}} \oplus \bigoplus_{i=1}^{n-1} \g_{\alpha_i, \overline{\eta}}, \; \; \; x_{\textrm{subreg}} = \sum_{i=1}^{n-1}e_{\alpha_i}\end{equation}

\noindent where Chevalley basis elements $e_{\alpha_i}$ are viewed as root vectors in $\g_{\overline{\eta}}$ (see Section \ref{chevalley-setting}). By the discussing preceding this proposition we may replace our subregular $x$ with the ``standard'' representative $x=x_{\textrm{subreg}}$ in Equation (\ref{a_n-example}), which has this form because it is a regular representative in $\mathfrak{l}$ (see e.g. \cite{riche2017kostant}, Lemma 3.1.1); as vectors $e_{\alpha_i}$ form a $\mathbb{Z}$--basis for $\g$, clearly $x \in \g(\Oring_K)$ still.

We now complete $x$ to an $\mathfrak{sl}_2$--triple $\{x,y,h\}$. By (\cite{collingwood1993nilpotent}, \S3.6) an element $h = \sum a_i h_{\alpha_i} \in \h_{\overline{\eta}}$ satisfies $[h, e_{\alpha_i}] = d_ie_{\alpha_i}$ if and only if the weights of the Dynkin diagram corresponding to $x$ are $(d_1, \cdots, d_n)$. Up to relabeling we have $d_{i} = 2$ for $i\neq n$, $d_n = 0$ (see \textit{loc.\,cit.}, 3.6.4). Therefore condition $[h,x] = 2x$ yields a system of equations \begin{equation}\label{system-of-equations-cartan}
    \sum_{j=1}^n a_j \frac{\langle \alpha_j, \alpha_i\rangle}{\langle \alpha_j, \alpha_j\rangle} = \sum_{j=1}^n a_jC_{j,i} = d_i, \; \; \; 1 \leq i \leq n
\end{equation}
\noindent where $C$ is the Cartan matrix of $\g$. This gives a unique $h$ with $\Oring_K$--coefficients if and only if $\det(C) = n+1$ is invertible in $\Oring_K$, which is granted by the restriction $p > \textrm{Cox}(\g) = n+1$. For $y = \sum b_i e_{-\alpha_i}$, a simple calculation using the Chevalley relations yields \begin{equation*}
    \sum_{i=1}^{n-1}a_ih_{\alpha_i} = h = [x,y] = \big[ \sum_{i=1}^{n-1}e_{\alpha_i}, \sum_{i=1}^{n-1}b_ie_{-\alpha_i}\big] = \sum_{i=1}^{n-1}b_ih_{\alpha_i}
\end{equation*}
\noindent so coefficients $b_i = a_i$ are still in $\Oring_K$. Since $[h,y] = -2y$ follows from $[h,x] = 2x$, we have constructed an $\mathfrak{sl}_2$-triple in $\g(\Oring_K)$; one can check that $a_i \mod p \neq 0$ so that the mod $p$ reductions $\{\overline{h}, \overline{x}, \overline{y}\}$ still form a subregular $\mathfrak{sl}_2$-triple in $\g_s$.\end{proof}

\begin{Lemma}\label{lemma-slodowy-module-is-free} Fix an integral $\mathfrak{sl}_2$-triple $\{h,x,y\}$ as in \emph{Proposition \ref{proposition-existence-integral-sl2-triples}}. Then $\Oring_K$-module $\g/\textrm{ad}_x(\g)$ is free of rank $r+2$.
\end{Lemma}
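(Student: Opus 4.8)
The plan is to use that $\Oring_K$ is a discrete valuation ring and reduce freeness of $M \coloneqq \g/\operatorname{ad}_x(\g)$ to a comparison of the dimensions of its generic and special fibres. Since $\g$ is free of finite rank over $\Oring_K$ (Section \ref{chevalley-setting}), $M$ is finitely generated over the DVR $\Oring_K$, hence splits as $\Oring_K^{\oplus a}\oplus T$ with $T$ of finite length. The elementary first step is that $a=\dim_K(M\otimes_{\Oring_K}K)$, while $\dim_k(M\otimes_{\Oring_K}k)=a+\mu(T)$, where $\mu(T)\ge 0$ is the minimal number of generators of $T$ and vanishes exactly when $T=0$ by Nakayama. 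Thus it suffices to show both fibres of $M$ have dimension $r+2$ over their residue fields.

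For the fibre dimensions I would use that the formation of cokernels is compatible with base change, so $M\otimes_{\Oring_K}k\simeq\operatorname{coker}(\operatorname{ad}_{x_s}\colon\g_s\to\g_s)$ and, after the flat base change to $\overline{K}$, $M\otimes_{\Oring_K}K$ becomes $\operatorname{coker}(\operatorname{ad}_{x_{\overline{\eta}}})$; in particular $a=\dim_{\overline{K}}\operatorname{coker}(\operatorname{ad}_{x_{\overline{\eta}}})$. On a Lie algebra over a field the endomorphism $\operatorname{ad}_{x_\bullet}$ has cokernel of the same dimension as its kernel $\g_\bullet^{x_\bullet}=\operatorname{Lie}\big(C_{G_\bullet}(x_\bullet)\big)$, so the problem reduces to computing the centralizer dimension on the two geometric fibres. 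The generic fibre is immediate: $x_{\overline{\eta}}$ is subregular nilpotent in a characteristic-zero semisimple Lie algebra, so $\dim\g_{\overline{\eta}}^{x_{\overline{\eta}}}=r+2$ by the definition of subregular (Definition \ref{definition-fiberwise-subregular}, cf.\ \cite{humphreys1995conjugacy}, \S4.11).

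For the special fibre I would exploit the chosen $\mathfrak{sl}_2$-triple, since that hypothesis is built into the statement. By Proposition \ref{proposition-existence-integral-sl2-triples} and Theorem \ref{jacobson-morozov} the representation $\operatorname{ad}\circ\rho\colon\mathfrak{sl}_2\to\mathfrak{gl}(\g_s)$ is good, so $\g_s$ decomposes as a direct sum of the good irreducibles $V_{n_i}$ of Section \ref{good-sl2-rep-section}; on each $V_{n_i}$ the raising operator $\operatorname{ad}_{x_s}$ has one-dimensional cokernel (the lowest weight line), so $\dim_k\operatorname{coker}(\operatorname{ad}_{x_s})$ equals the number of $\mathfrak{sl}_2$-summands, which also equals $\dim_k\ker(\operatorname{ad}_{x_s})=\dim_k\g_s^{x_s}$, and this is $r+2$ precisely because $x_s$ is subregular. (Alternatively one may invoke that $p>\textrm{Cox}(\g)$ forces the scheme-theoretic centralizer $C_G(x)$ to be smooth over $S$ --- the remark after Lemma \ref{slices-etale-locally-isomorphic}, via \cite{cotner2022centralizers} --- so that $\dim_k\operatorname{Lie}(C_{G_s}(x_s))=\dim C_{G_s}(x_s)=r+2$.) Both fibres of $M$ thus have dimension $r+2$, and by the first paragraph $M\simeq\Oring_K^{\oplus(r+2)}$.

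The only genuinely non-formal point is the special-fibre computation: one must know the characteristic-$p$ picture looks like characteristic zero, which is exactly where $p>\textrm{Cox}(\g)$ is used, either through the good $\mathfrak{sl}_2$-representation theory of \cite{Slodowy1980} or through the good-prime smoothness results for nilpotent centralizers; everything else is bookkeeping over the DVR.
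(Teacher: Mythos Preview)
Your proposal is correct and follows essentially the same approach as the paper: compute the generic-fibre dimension of $\g/\operatorname{ad}_x(\g)$ via subregularity of $x_{\overline{\eta}}$, compute the special-fibre dimension via the good $\mathfrak{sl}_2$-decomposition of $\g_s$ (counting lowest-weight lines, which equals $\dim C_{G_s}(x_s)=r+2$), and conclude freeness from equidimensionality of fibres over the DVR. The paper phrases the last step as ``equidimensionality implies flat hence free'' rather than via the explicit structure theorem, and does not mention your alternative route through smoothness of centralizers, but the substance is identical.
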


\begin{proof} By assumption $x \in \g(\Oring_K)$ so $\textrm{ad}_x(\g) \subset \g$ and quotient $\g/\textrm{ad}_x(\g)$ is an $\Oring_K$-module. Consider $\{h_{\eta}, x_{\eta}, y_{\eta}\}$ as an $\mathfrak{sl}_2$-triple in the $\overline{K}$-Lie algebra $\g_{\overline{\eta}}$, then the dimension of $\overline{K}$-vector space $\g_{\eta}/\textrm{ad}_{x_{\eta}}(\g_{\eta})$ is the dimension of the centralizer $C_{\g_{\eta}}(x_{\eta})$, which is $r+2$ by subregularity. So $\g/\textrm{ad}_x(\g)$ is a rank $r+2$ module, possibly with torsion.

Now consider the mod $p$ $\mathfrak{sl}_2$-triple $\{h_s,x_s,y_s\}$ inside $\g_s$. Since $p> \textrm{Cox}(\g)$, $\g_s$ is a good $\mathfrak{sl}_2$--representation decomposing into irreducible good $\mathfrak{sl}_2$--representations $V_{d_1}, \cdots, V_{d_r}$. Each irreducible representation $V_{d_i}$ decomposes further into $d_i$ 1--dimensional eigenspaces \[V_{d_i}(-d_i+1), \cdots, V_{d_i}(d_i-1)\] for the action of $\textrm{ad}_{h_s}$ and by $\mathfrak{sl}_2$--theory $\textrm{ad}_{x_s}$ maps $V_{d_i}(k)$ isomorphically to $V_{d_i}(k+2)$ (cf. Section \ref{good-sl2-rep-section}). So \begin{equation}
    \g_s/\textrm{ad}_{x_s}(\g_s) \simeq \bigoplus_{i=1}^rV_{d_i}(-d_i+1)
\end{equation}
\noindent is the direct sum of all lowest weight eigenspaces. Furthermore as $\textrm{ad}_x$ annihilates all \textit{highest} weight eigenspaces $V_{d_i}(d_i-1)$, the number of irreducible components of $\mathfrak{sl}_2$-representation $\g$ (and hence the dimension of $\g_s/\textrm{ad}_{x_s}(\g_s)$ equals $r+2$, the dimension of the centralizer of subregular $x_s$. 

Since forming module quotients commutes with base--change, identification $(\g/\textrm{ad}_x(\g))\otimes k \simeq \g_s/\textrm{ad}_{x_s}(\g_s)$ implies equidimensionality of the fibers of $\g/\textrm{ad}_x(\g)$, hence it is flat over $\Oring_K$ and therefore it is a free rank $r+2$ module.\end{proof}

\begin{Cor}\label{corollary-construction-of-An-slice} Retaining the notation of \emph{Lemma \ref{lemma-slodowy-module-is-free}}, if $\mathfrak{a}$ denotes a free $\Oring_K$-submodule of $\g$ complementary to $\textrm{ad}_x(\g)$ then $\Slice = x + \mathfrak{a}$ is a Slodowy slice transverse at $x$ fiberwise to the nilpotent orbits of $x_s$ and $x_{\overline{\eta}}$.
\end{Cor}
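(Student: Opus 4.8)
The plan is to verify the three defining conditions of a transverse Slodowy slice directly, using Lemma~\ref{check-smoothness-on-special-fiber} to reduce the smoothness condition to the special fibre, where the classical slice theorem of Slodowy applies. Fix the integral $\mathfrak{sl}_2$-triple $\{h,x,y\}\subset\g(\Oring_K)$ of Proposition~\ref{proposition-existence-integral-sl2-triples}; the cleanest choice of complement is $\mathfrak{a}=\g^{\textrm{ad}_y}$, the $\textrm{ad}_y$-invariants, which is precisely the $\mathfrak{sl}_2$-stable lift of $\g/\textrm{ad}_x(\g)$ appearing in the proof of Lemma~\ref{lemma-slodowy-module-is-free} and is $\Oring_K$-free of rank $r+2$ by the same flatness argument (its fibres over $s$ and $\etabar$ are the sums of lowest weight spaces, of common dimension $r+2$). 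Then $\Slice=x+\mathfrak{a}$ is a closed subscheme of $\g\cong\mathbb{A}^{\dim\g}_{\Oring_K}$ isomorphic to $\mathbb{A}^{r+2}_{\Oring_K}$, hence integral and smooth over $S$, and $x\in\Slice(\Oring_K)$ since $0\in\mathfrak{a}$.

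Next I would establish smoothness of the action morphism $\alpha\colon G\times_S\Slice\to\g$, $(g,v)\mapsto\textrm{Ad}_g(v)$. Its source is $S$-flat, being the product of the smooth $S$-scheme $G$ with the $S$-flat scheme $\Slice$, so by Lemma~\ref{check-smoothness-on-special-fiber} it suffices to show that $\alpha_s\colon G_s\times_k\Slice_s\to\g_s$ is smooth. Here $\Slice_s=x_s+\mathfrak{a}_s$ with $\mathfrak{a}_s=\mathfrak{a}\otimes_{\Oring_K}k$; tensoring the right-exact sequence $\g\xrightarrow{\,\textrm{ad}_x\,}\g\to\g/\textrm{ad}_x(\g)\to 0$ with $k$ and comparing dimensions via Lemma~\ref{lemma-slodowy-module-is-free} gives $\textrm{ad}_x(\g)\otimes k=\textrm{ad}_{x_s}(\g_s)$, so that $\g_s=\textrm{ad}_{x_s}(\g_s)\oplus\mathfrak{a}_s$ and $\mathfrak{a}_s=\g_s^{\textrm{ad}_{y_s}}$. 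Since $x_s$ is subregular nilpotent in $\g_s$ and $p>\textrm{Cox}(\g)$ (so $p$ is good and the good $\mathfrak{sl}_2$-representation theory of Section~\ref{good-sl2-rep-section} is available, supplying the contracting $\mathbb{G}_m$-action attached to the cocharacter of $h_s$), the pair $(x_s,\mathfrak{a}_s)$ is exactly Slodowy's transverse slice data and $\alpha_s$ is smooth by (\cite{Slodowy1980}, \S7.4). Hence $\alpha$ is smooth, and base-changing along $s\hookrightarrow S$ and $\etabar\to S$ shows that $\Slice_s$ and $\Slice_{\etabar}$ are transverse to the subregular nilpotent orbits of $x_s$ and $x_{\etabar}$ respectively.

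For minimality of $\dim\Slice$: if $\Slice'$ is any locally closed subvariety with $x\in\Slice'(\Oring_K)$ and $\alpha'\colon G\times_S\Slice'\to\g$ smooth, then $\alpha'_s$ is smooth and its fibre over $x_s$ contains $C_{G_s}(x_s)$, of dimension $r+2$ since $x_s$ is subregular. As a fibre of a smooth morphism, that dimension equals the relative dimension of $\Slice'$ over $S$, so $\dim_S\Slice'\geq r+2$; and $\Slice=x+\mathfrak{a}$ attains this bound. Together, the three conditions of the definition are met, so $\Slice$ is a transverse Slodowy slice.

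I expect the main obstacle to be the smoothness of $\alpha$: concretely, importing the classical transverse slice theorem over $\overline{k}$ in characteristic $p>\textrm{Cox}(\g)$ — the contracting $\mathbb{G}_m$-action and the ``goodness'' of the relevant $\mathfrak{sl}_2$-representations are what make it work — and confirming that the $S$-flatness hypothesis of Lemma~\ref{check-smoothness-on-special-fiber} holds, so that smoothness really does propagate from the special fibre to all of $G\times_S\Slice$. For a complement $\mathfrak{a}$ that is not $\textrm{ad}_y$-stable one loses the global contracting action, but $x+\mathfrak{a}$ still agrees with the slice constructed above on an étale (hence henselian) neighbourhood of $x$ — the form in which slices are used later — by Lemma~\ref{slices-etale-locally-isomorphic}, so the stated generality is harmless.
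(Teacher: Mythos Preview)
Your proof is correct and follows essentially the same route as the paper: reduce smoothness of the action map to the special fibre via Lemma~\ref{check-smoothness-on-special-fiber}, invoke Slodowy's classical transverse-slice result there, and read off minimality. The paper is terser---it treats an arbitrary complement $\mathfrak{a}$ directly (citing \cite{slodowy1980four}, \S2.4 for the fact that $x_s+\mathfrak{a}_s$ is already a classical slice for any complement $\mathfrak{a}_s$) and deduces minimality from flatness---whereas you fix $\mathfrak{a}=\ker(\textrm{ad}_y)$ first and give an explicit fibre-dimension argument. One small caution: your final reduction of the general-$\mathfrak{a}$ case via Lemma~\ref{slices-etale-locally-isomorphic} is circular as written, since that lemma presupposes both inputs are transverse slices; the paper sidesteps this by applying Slodowy's result to \emph{any} complement on the special fibre, not just the $\textrm{ad}_y$-stable one.
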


\begin{proof} The choice of such an $\mathfrak{a}$ is possible via Lemma \ref{lemma-slodowy-module-is-free} since $\g/\textrm{ad}_x(\g)$ is free. We observe in the special fiber $\overline{\Slice} \coloneqq \Slice\otimes k = x_s + \mathfrak{a}_s$ that $\mathfrak{a}_s$ is a complement to $\textrm{ad}_{x_s}(\g_s)$ in $\g_s$ and hence isomorphic to $\bigoplus_i V_{d_i}(-d_i+1)$ (in the notation of the proof of Lemma \ref{lemma-slodowy-module-is-free}). It forms a Slodowy slice in the traditional sense by (\cite{slodowy1980four}, \S2.4) and is transverse at $x_s$ to the subregular orbit. The action map $\mu: G \times \Slice \to \g$ is smooth, since by Lemma \ref{check-smoothness-on-special-fiber} it suffices to check smoothness on the special fiber, where $\overline{\mu}: G_s \times \overline{\Slice} \to \g_s$ is precisely the smooth action map of Slodowy slice $\overline{\Slice}$. Since $\dim(\overline{\Slice}) = \dim(\Slice)$ by flatness, we have checked that $\Slice$ satisfies all conditions of a transverse Slodowy slice.\end{proof}

We may identify $\mathfrak{a} \simeq \ker(\textrm{ad}_y)$ so that $\mathcal{S} = x + \ker(\textrm{ad}_y)$ by Corollary \ref{corollary-construction-of-An-slice}; this is the standard formula for Slodowy slices.

\subsection{Spaltenstein slices.}\label{spaltensection} We now consider Chevalley algebras of type $D_n$ or $E_n$\footnote{The construction in this section should work for $A_n$ types as well, thereby eliminating the need for explicit $\mathfrak{sl}_2$--triples; see (\cite{bezru2008}, 7.1.4).}. In this case we may construct a transverse Slodowy slice which works for all \textit{good} characteristics, thus improving on the restriction $p > \textrm{Cox}(\g)$. Following \cite{spaltenstein1984existence}, let $G$ be a simple simply-connected split group scheme over $S$ with corresponding Chevalley algebra $\g$ as before, and fix a maximal torus, Borel $T\subset B$ in $G$ and a fiberwise subregular nilpotent $x \in \g(\Oring_K)$.

It is known (cf.\,\cite{collingwood1993nilpotent}, \S8.2 and \S8.4) in the simply-laced cases $D_n, E_n$ that $x$ is a distinguished nilpotent and so by the classification of nilpotent orbits in good characteristic there exists a unique distinguished parabolic $P \subset G$ which contains $B$ and corresponds to the orbit of $x$; for the notions of distinguished elements and subgroups we refer to \cite{spaltenstein1984existence} and (\cite{collingwood1993nilpotent}, \S8). We may therefore define a 1--parameter subgroup $\lambda: \mathbb{G}_{m, S} \longrightarrow T$ so that, with respect to a root basis $\Delta \subset \Phi$ of $G$ and root system $\Phi_P\subset \Phi$ of $P$ we have
\[ \langle \alpha, \lambda(t) \rangle = \begin{cases}
    0, & \alpha \in \Delta, \; -\alpha \in \Phi_P \\ 2, & \alpha \in \Delta, \; -\alpha \not\in \Phi_P
\end{cases}\]
where $\langle -,-\rangle: X^*(T)\times X_*(T) \to \mathbb{Z}$ is the pairing induced from perfect duality on the (co)character lattices.
\begin{Thm}[\cite{spaltenstein1984existence}]\label{theorem-spaltenstein} Let $\lambda: \mathbb{G}_{m,S} \longrightarrow T$ be constructed as above and let $\g = \bigoplus_i \mathfrak{g}(i)$ be the $\mathbb{Z}$--graded decomposition of $\g$ into eigenspaces for the induced $\mathbb{G}_m$--action
\[\mathfrak{g}(i) = \{x \in \g(\Oring_K) \mid \textrm{\emph{Ad}}_{\lambda(t)}(x) = t^ix \; \; \forall \; t \in \mathbb{G}_m(\Oring_K)\}\]
Then $x \in \mathfrak{g}(2)$ and there exists an affine $\mathbb{G}_m$--stable subspace $\mathfrak{a} \subset \g$ complementary to $[x,\g]$, so that $\mathcal{S} = x + \mathfrak{a}$ is a Slodowy slice, transverse at $x$ fiberwise to the nilpotent orbits of $x_s, x_{\overline{\eta}}$. 
\end{Thm}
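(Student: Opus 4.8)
The plan is to run the same argument as in the $A_n$ case (Lemma \ref{lemma-slodowy-module-is-free} together with Corollary \ref{corollary-construction-of-An-slice}), but with the $\mathfrak{sl}_2$-grading replaced by the $\mathbb{G}_m$-grading coming from the cocharacter $\lambda$. First I would record that, because $G$ is split over $S$, the one-parameter subgroup $\lambda\colon \mathbb{G}_{m,S}\to T$ prescribed by the weighted Dynkin datum of the distinguished parabolic $P$ genuinely exists over $S$, and that the resulting eigenspace decomposition $\g=\bigoplus_i\g(i)$ has each $\g(i)$ a free $\Oring_K$-module: it is cut out of the free module $\g$ by the $T$-action, being a sum of root spaces $\g_\alpha$ together with (possibly) part of $\h$, all of which are free $\Oring_K$-summands. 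Using Lemma \ref{slices-etale-locally-isomorphic} I would then replace the given $x$ by the standard Richardson representative attached to $P$ — a fixed $\Oring_K$-linear combination of positive root vectors lying in $\g(2)\cap\g(\Oring_K)$ — which is legitimate since any two transverse slices at points of one orbit are \'etale-locally isomorphic, and which yields $x\in\g(2)$. Since $\textrm{ad}_x$ then raises degree by $2$, the submodule $[x,\g]=\bigoplus_i[x,\g(i)]$ is $\mathbb{G}_m$-stable and graded, with $[x,\g]\cap\g(i)=[x,\g(i-2)]$.

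Next I would prove, in exact analogy with Lemma \ref{lemma-slodowy-module-is-free}, that $\g/[x,\g]$ is free of rank $r+2$. On the special fibre $x_s$ is subregular, so in good characteristic its centralizer is smooth of dimension $r+2$ (by the cited purity and nilpotent-orbit classification results, e.g.\ \cite{cotner2022centralizers}), whence $\dim\ker(\textrm{ad}_{x_s})=r+2$ and $\dim_k\g_s/[x_s,\g_s]=r+2$; the same holds on the geometric generic fibre. Since forming the quotient commutes with base change, the $\Oring_K$-module $\g/[x,\g]$ has both fibres of dimension $r+2$, hence is flat over $\Oring_K$, hence free of rank $r+2$. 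Because $\g/[x,\g]=\bigoplus_i\g(i)/[x,\g(i-2)]$ and a direct summand of a torsion-free (equivalently free) finitely generated module over the DVR $\Oring_K$ is again free, each graded piece $\g(i)/[x,\g(i-2)]$ is free; splitting the surjection $\g(i)\twoheadrightarrow\g(i)/[x,\g(i-2)]$ produces a free $\mathbb{G}_m$-stable complement $\mathfrak{a}(i)\subset\g(i)$, and $\mathfrak{a}=\bigoplus_i\mathfrak{a}(i)$ is then an affine $\mathbb{G}_m$-stable free submodule of $\g$ complementary to $[x,\g]$.

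Finally I would check that $\Slice=x+\mathfrak{a}$ is a transverse Slodowy slice. By Lemma \ref{check-smoothness-on-special-fiber} it suffices to verify that the action map $\alpha\colon G\times\Slice\to\g$ is smooth on the special fibre, where $\overline{\Slice}=x_s+\mathfrak{a}_s$ is precisely the Spaltenstein slice over $k$ (the cocharacter grading and the chosen complement to $[x_s,\g_s]$ reduce to Spaltenstein's construction in \cite{spaltenstein1984existence}), whose action map is smooth. Flatness of $\Slice$ over $S$ gives $\dim\Slice=\dim\overline{\Slice}=r+2$, which is minimal, so $\Slice$ is a Slodowy slice; transversality on each of the geometric fibres $\g_s$ and $\g_{\overline{\eta}}$ is the statement that $[x,\g]$ restricts to the tangent space of the relevant subregular orbit and $\mathfrak{a}$ restricts to a complement. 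The point requiring genuine care — and the main obstacle — is the dimension count $\dim_k\g_s/[x_s,\g_s]=r+2$ in merely \emph{good} (not very good) characteristic: it rests on the fact that in good characteristic the subregular nilpotent in a simply-laced $\g$ is distinguished with smooth centralizer of the expected dimension, and this is exactly what lets us dispense with the $\mathfrak{sl}_2$-triple and the restriction $p>\textrm{Cox}(\g)$ needed in the $A_n$ argument. The compatible descent of $\lambda$ and of the Richardson element to the two fibres is otherwise routine given splitness of $G$ over $S$.
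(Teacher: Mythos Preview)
Your proposal is correct and follows essentially the same route as the paper: establish freeness of $\g/[x,\g]$ via equality of fibre dimensions, choose a $\mathbb{G}_m$-stable complement $\mathfrak{a}$, and verify that $x+\mathfrak{a}$ is a transverse slice. Two small differences are worth noting. First, the paper invokes Spaltenstein's key fact that $\g(i)\subset [x,\g]$ for all $i>0$, which forces $\mathfrak{a}\subset\bigoplus_{i\le 0}\g(i)$; your degree-by-degree splitting yields the same outcome (since then $\g(i)/[x,\g(i-2)]=0$ for $i>0$) but does not make this explicit, and it is this non-positivity that is later needed for the contracting $\mathbb{G}_m$-action in Definition~\ref{definition-various-gm-actions}(i). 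Second, for smoothness of the action map the paper argues via local smoothness at $(1,x)$ propagated by the homogeneous $\mathbb{G}_m$-action (\cite{Slodowy1980}, \S7.4 Cor.~1), whereas you use the fibral criterion Lemma~\ref{check-smoothness-on-special-fiber} to reduce to Spaltenstein's result over $k$; both are valid, and your route has the mild advantage of not presupposing the contraction.
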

We will call such slices $\mathcal{S}$ Spaltenstein to differentiate them from the slices constructed in Corollary \ref{corollary-construction-of-An-slice} via Jacobson--Morozov. The proof in (\textit{loc.\,cit.}) carries over to $\Oring_K$ since $\textrm{rk}[x, \g] = \dim_k[x_s, \g_s]$ as a consequence of $\dim_k C_{G_s}(x_s) = \dim_{\Oring_K} C_G(x)$, and one concludes that $[x, \g]$ is a direct factor of $\g$, hence it is free as in Lemma \ref{lemma-slodowy-module-is-free}. Furthermore $\g(i) \subset [x, \g]$ for $i> 0$ so there exists an affine $\mathbb{G}_m$--stable complement $\mathfrak{a} \subset \bigoplus_{i \leq 0} \g(i)$ i.e. $\mathfrak{a} = \g/[x,\g]$. The action map $\mu: G \times \mathcal{S} \to \g, \; (g,s) \mapsto \textrm{Ad}_g(s)$ for $\mathcal{S} = x + \mathfrak{a}$ is smooth in a neighborhood of $(1,0)$, so by virtue of the homogeneous $\mathbb{G}_m$--action it is smooth (see e.g. \cite{Slodowy1980}, \S7.4 Cor. 1).

\begin{Rem} We can also extend Proposition \ref{proposition-existence-integral-sl2-triples} to the $D_n$ and $E_n$ cases by writing down a suitable subregular representative $x \in \g(\Oring_K)$, (e.g.\,\cite{collingwood1993nilpotent}, \S5. produces such an $x$ for $\g$ of type $D_n$), and then finding $h,y$ is a computational exercise in the vein of the proof of Proposition \ref{proposition-existence-integral-sl2-triples}. We chose to consider Spaltenstein slices here so that we can relax the assumption $p > \textrm{Cox}(\g)$ to just $p$ being a good prime.
\end{Rem}

\subsection{$\mathbb{G}_m$--actions and $\mathbb{G}_m$--deformations.}\label{section-Gm-actions} Suppose we have constructed a suitable transverse slice $\mathcal{S}$ at a fixed fiberwise subregular nilpotent $x \in \g(\Oring_K)$; if $\g$ is of type $A_n$, the slice is $\mathcal{S} = x + \ker(\textrm{ad}_y)$ for $\{x,y, h = [x,y]\}$ forming an integral $\mathfrak{sl}_2$--triple, while for the other simply--laced types $\mathcal{S}$ is a Spaltenstein slice constructed as in Section \ref{spaltensection}. The goal of this section is to discuss some $\mathbb{G}_m$--actions on slices $\mathcal{S}$ and relate them to the notion of $\mathbb{G}_m$-equivariant deformations.

We first define a 1--parameter subgroup $\lambda: \mathbb{G}_{m,S} \to T$ when $\g$ is of type $A_n$, similar to Section \ref{spaltensection}. Let $x$ be a fiberwise subregular nilpotent section of $\g$ and complete it to an $\mathfrak{sl}_2$--triple $\{x,y,h\}$ over $S$ (Section \ref{subsection-integral-slodowy}). Then $\{x_s,y_s,h_s\}$ defines a good $\mathfrak{sl}_2$-representation on $\g_s$ (Theorem \ref{jacobson-morozov}). Let $V_{d_i}$ be an irreducible summand of this representation, the \textit{unique} irreducible good $\mathfrak{sl}_2$-representation of dimension $d_i$, and let $\{v_1,\cdots,v_{d_i}\}$ be a basis. There exits a $\mathbb{G}_m$-action on $V_{d_i}$ by linearly extending
\begin{equation}\label{equation-gm-action-on-good-rep}t \cdot v_k = t^{d_i-2k+1}v_k \; \; t \in \mathbb{G}_m, \; k \leq d_i\end{equation}
\begin{Def} If $\g \simeq \bigoplus_i V_{d_i}$ is the decomposition of $\g$ into irreducible good $\mathfrak{sl}_2$-representations, there is a uniquely defined $\mathbb{G}_m$-action $\lambda: \mathbb{G}_m \to \textrm{Aut}(\g)$ which operates on each summand $V_{d_i}$ by the rule (\ref{equation-gm-action-on-good-rep}). It commutes with the Lie bracket of $\g$ and hence factors through a 1--parameter subgroup, still denoted $\lambda$. We call $\lambda$ a 1--parameter subgroup adapted to $x_s$.
\end{Def}

This decomposition holds over $\Oring_K$ since $\g_{\etabar}\simeq\g_s$. Moreover, by abuse of notation we denote by $V_{d_i}$ the $\Oring_K$--module generated by the $\mathbb{Z}$--basis $\{v_1,\cdots, v_{d_i}\}$ for the $k$--vector space $V_{d_i}$ in (\ref{equation-gm-action-on-good-rep}). Note that $\lambda(\mathbb{G}_m) \subseteq T$ and the action of $\textrm{ad}_{h_s}$ decomposes $\g$ into eigenspaces \[\g = \bigoplus_{i \in \mathbb{Z}}\g(i), \; \; \; \g(i) = \{v \in \g \mid \textrm{Ad}_{\lambda(t)}(v) = t^iv \; \; \forall \; t \in \mathbb{G}_m\}\]
which also function as eigenspaces for the $\lambda$-action (\cite{Slodowy1980} \S7.1, \S7.3). In particular $x \in \g(2)$ and $\lambda$ coincides with the 1--parameter subgroup constructed for Spaltenstein slices $\mathcal{S}$ (Theorem \ref{theorem-spaltenstein}).

\begin{Def}\label{definition-various-gm-actions} Let $\lambda$ be as above and $m: \mathbb{G}_m \to \textrm{Aut}(\g)$ be the usual left--multiplication action. 

\begin{enumerate}[label=(\roman*)]
\item The action
\[\mu: t \cdot v = m(t^{2})\textrm{Ad}_{\lambda(t^{-1})}(v), \; \; v \in \g \]
fixes the element $x$ and preserves the Slodowy slice $\mathcal{S}$. Moreover it is a \textit{contracting action} on $\mathcal{S}$, i.e. it extends to an action $\mathbb{A}^1_S \to \textrm{Aut}(\mathcal{S})$ with $ 0 \cdot s = x$ for all $s \in \mathcal{S}$.
\item Let $\widetilde{\mathcal{S}}$ be the preimage of $\mathcal{S}$ under the Grothendieck alteration $\pi$. Define an action on $\gres$ via
\[\widetilde{\mu}: t \cdot [g,v] = [\lambda(t^{-1})g, t^2v], \; \; t \in \mathbb{G}_m(\Oring_K), \; g \in G(\Oring_K), \; v \in \g(\Oring_K)\]
This action preserves $\widetilde{\mathcal{S}}$.
\end{enumerate}
\end{Def}

By the definition of $\mu, \widetilde{\mu}$ we have that the Grothendieck alteration $\pi: \widetilde{\mathcal{S}} \longrightarrow \mathcal{S}$ is $\mathbb{G}_{m,S}$--equivariant, and a simple calculation yields that $\widetilde{\chi}: \widetilde{\mathcal{S}} \to \h$ is also $\mathbb{G}_{m,S}$--equivariant when $\h \simeq \mathbb{A}_S^r$ is equipped with \textit{contracting} $\mathbb{G}_{m,S}$--action $t \cdot h = t^2h$; in this case, the action contracts $\h$ to the origin.

We next equip $\h/\!\!/W$ with a $\mathbb{G}_{m,S}$--action so that the adjoint quotient $\chi: \mathcal{S} \to \h/\!\!/W$ becomes $\mathbb{G}_{m,S}$--equivariant. By (\cite{demazure1973invariants}, \S6 Th\'eor\'eme 3 and Corollaire), if $p$ is a good prime\footnote{Actually, $p$ non-torsion suffices; see Remark \ref{remark-on-good-and-torsion-primes}.} then $\textrm{Sym}(X^*(T))^W$ is a graded polynomial $\Oring_K$-algebra with homogeneous generators $\chi_1, \cdots, \chi_r$ of (homogeneous) degrees $d_1, \cdots, d_r$, and furthermore
\[\textrm{Sym}(X^*(T)\otimes_{\mathbb{Z}} k)^W \simeq \textrm{Sym}(X^*(T))^W\otimes_{\mathbb{Z}}k\]

Thus the degrees $d_i$ of the generators $\chi_i$ are the same as their mod $p$ versions, which are recorded in (\cite{Slodowy1980}, Table in p.\,112). As $T$ is split we also have
 \[\h \simeq \Spec(\textrm{Sym}_{\Oring_K}(X^*(T))), \; \; \h/\!\!/W \simeq \Spec(\textrm{Sym}_{\Oring_K}(X^*(T))^W) \simeq \Spec(\Oring_K[\chi_1,\cdots,\chi_r])\]
 Define a $\mathbb{G}_{m,S}$--action on $\h/\!\!/W$ by linearly extending $t\cdot \chi_i = t^{2d_i}\chi_i$. The following proposition carries over to the relative setting without change.

\begin{Prop}[\cite{Slodowy1980}, \S7.4 Prop.\,1]\label{prop-gm-invariance-of-adjoint-quotient} The adjoint quotient $\chi: \mathcal{S} \longrightarrow \h/\!\!/W$ is $\mathbb{G}_{m,S}$--equivariant with respect to action $\mu$ on $\mathcal{S}$ and the above action on $\h/\!\!/W$.
\end{Prop}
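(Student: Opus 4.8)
The plan is to deduce the equivariance from two facts already in hand: that $\chi$ is $G$-invariant, and that the fundamental invariants are homogeneous of the prescribed degrees. Recall from Definition \ref{definition-various-gm-actions}(i) that for $v\in\g$ one has $\mu(t,v)=m(t^{2})\,\textrm{Ad}_{\lambda(t^{-1})}(v)=\textrm{Ad}_{\lambda(t^{-1})}(t^{2}v)$, the last equality because $\textrm{Ad}$ is $\Oring_K$-linear, and that $\mathcal{S}\subset\g$ is stable under $\mu$. On coordinate rings $\chi$ is the inclusion $\textrm{Sym}_{\Oring_K}(\g^{\vee})^{G}\hookrightarrow\textrm{Sym}_{\Oring_K}(\g^{\vee})$, so $\chi(\textrm{Ad}_{g}(w))=\chi(w)$ for all $g\in G$ and $w\in\g$; taking $g=\lambda(t^{-1})$ and $w=t^{2}v$ gives $\chi(\mu(t,v))=\chi(t^{2}v)$. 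Hence it suffices to prove that $\chi$ is $\mathbb{G}_{m,S}$-equivariant for the pure scaling action $v\mapsto t^{2}v$ on $\g$ and the stated action on $\g/\!\!/G\simeq\h/\!\!/W$, and then to restrict to the $\mu$-stable subscheme $\mathcal{S}\hookrightarrow\g$.

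For the scaling action, $\textrm{Sym}_{\Oring_K}(\g^{\vee})$ carries the $\mathbb{G}_m$-grading in which a homogeneous polynomial of degree $e$ scales by $t^{2e}$, and $\textrm{Sym}_{\Oring_K}(\g^{\vee})^{G}$ is a graded subring because $\textrm{Ad}$ is linear, hence degree-preserving. By Proposition \ref{g/G commutes with basechange} the Chevalley restriction map identifies $\textrm{Sym}_{\Oring_K}(\g^{\vee})^{G}$ with $\textrm{Sym}_{\Oring_K}(X^{*}(T))^{W}=\Oring_K[\chi_{1},\dots,\chi_{r}]$ as graded rings, and by the theorem of Demazure invoked in the construction of the action on $\h/\!\!/W$ above (\cite{demazure1973invariants}) the $\chi_{i}$ are homogeneous of degrees $d_{1},\dots,d_{r}$, unchanged under reduction mod $p$. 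So, read off in the coordinates $\chi_{i}$, the scaling action on $\g$ is precisely $\chi_{i}\mapsto t^{2d_{i}}\chi_{i}$, which is by definition the $\mathbb{G}_{m,S}$-action placed on $\h/\!\!/W$ in the paragraph preceding the statement. This gives $\mathbb{G}_{m,S}$-equivariance of $\chi$ for the scaling action, hence for $\mu$ by the first paragraph, and restriction to $\mathcal{S}$ finishes the argument; everything commutes with base change to the fibers $s$ and $\overline{\eta}$, recovering (\cite{Slodowy1980}, \S7.4 Prop.\,1) fiberwise. One can equally run the argument through the square $(\ref{relative-resolution-diagram})$ restricted to $\mathcal{S}$ and $\widetilde{\mathcal{S}}$: the maps $\pi|_{\widetilde{\mathcal{S}}}$ and $\widetilde{\chi}|_{\widetilde{\mathcal{S}}}$ are $\mathbb{G}_{m,S}$-equivariant by Definition \ref{definition-various-gm-actions} and the discussion preceding the statement, $\psi$ is equivariant by construction, and, since $\pi|_{\widetilde{\mathcal{S}}}$ is surjective onto the integral scheme $\mathcal{S}$, the equivariance of $\chi|_{\mathcal{S}}\circ\pi|_{\widetilde{\mathcal{S}}}=\psi\circ\widetilde{\chi}|_{\widetilde{\mathcal{S}}}$ descends to $\chi|_{\mathcal{S}}$.

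I expect no real obstacle here: the content is entirely contained in Proposition \ref{g/G commutes with basechange} and the grading statement of Demazure, both already established, together with the observation that $\textrm{Ad}$-invariance of $\chi$ absorbs the $\textrm{Ad}_{\lambda(t^{-1})}$ twist in $\mu$. The one point to state carefully is the compatibility of the $W$-invariant grading with reduction modulo $p$ --- that is, $\textrm{Sym}(X^{*}(T))^{W}\otimes_{\mathbb{Z}}k\simeq\textrm{Sym}(X^{*}(T)\otimes_{\mathbb{Z}}k)^{W}$ with unchanged homogeneous degrees --- which is precisely what is recorded in the text before the statement.
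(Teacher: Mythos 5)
The paper gives no proof of its own here, simply remarking that the statement of \cite{Slodowy1980} carries over to the relative setting; your first argument is a correct fleshing-out of exactly that. The identity $\mu(t,v)=\mathrm{Ad}_{\lambda(t^{-1})}(t^{2}v)$ together with $G$-invariance of $\chi$ absorbs the conjugation twist, reducing everything to the pure scaling $v\mapsto t^{2}v$, and then equivariance is nothing but the homogeneity of degree $d_{i}$ of the fundamental invariants $\chi_{i}$ together with the fact that the Chevalley restriction isomorphism of Proposition \ref{g/G commutes with basechange} is graded, both of which the paper records just before the statement. Your alternative route through the restricted simultaneous-resolution square is also viable, but the descent step as you phrase it — deducing $\mathbb{G}_{m,S}$-equivariance of $\chi|_{\mathcal{S}}$ from that of $\chi|_{\mathcal{S}}\circ\pi|_{\widetilde{\mathcal{S}}}$ and of $\pi|_{\widetilde{\mathcal{S}}}$ — needs $\pi|_{\widetilde{\mathcal{S}}}\times\mathrm{id}_{\mathbb{G}_{m,S}}$ to be a scheme-theoretic epimorphism, not merely surjective on points; this does hold here (proper surjective morphism between integral schemes gives $\Oring_{\mathcal{S}}\hookrightarrow\pi_{*}\Oring_{\widetilde{\mathcal{S}}}$, and that persists under the flat base-change by $\mathbb{G}_{m,S}$), but the first proof is the cleaner of the two since it never invokes the Grothendieck alteration.
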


\begin{Rem} Note that the $\mathbb{G}_{m,S}$--action $\mu$ defined on $\mathcal{S}$ has the opposite weights of the $\mathbb{G}_{m,S}$--actions defined in (\cite{riche2017kostant}, \S4.3) and (\cite{bouthier2019torsors}, \S4.2.4) for the Kostant slice $\mathcal{S}$. Both constructions are essentially equivalent; we chose this formulation so as to get a contraction on $\h$ and stay consistent with the weight conventions discussed in (\cite{Slodowy1980}, \S7.4) and (\cite{springer1983purity}, \S2).
\end{Rem}

We now explain the notion of $\mathbb{G}_m$-deformations. Suppose $X_0$ is a singular hypersurface with a $\mathbb{G}_m$-action so that it is $\mathbb{G}_m$-equivariantly isomorphic to $V(f)$ for some weighted--homogeneous polynomial $f(x_1,\cdots,x_n)$. Here  weighted homogeneity means that there exists a tuple $(d,k_1,\cdots, k_n)$ so that for any monomial
\[ a_{i_1,\cdots, i_n}x_1^{i_1}\cdots x_n^{i_n}\]
appearing in $f$ we have $\sum_j i_jk_j = d$. In this case we can study the $\mathbb{G}_m$--equivariant deformation theory of $X_0 \simeq V(f)$ by replacing the objects and maps of the associated deformation functors in Section \ref{subsection-forma-deformations-of-sings} with (formal) schemes equipped with a $\mathbb{G}_{m,S}$--action and $\mathbb{G}_{m,S}$--equivariant morphisms. In particular we may speak of $\mathbb{G}_{m,S}$--miniversal deformations.

It is not obvious that $\mathbb{G}_{m,S}$--miniversal deformations exist. The following theorem is based on the existence of general miniversal deformations and is proven by checking the definition of miniversality after equipping all objects with a $\mathbb{G}_m$--action.

\begin{Thm}[\cite{Slodowy1980}, \S2.5 Thm]\label{thm-existence-of-gm-miniversal-deformation} Suppose $f\in k[x,y,z]$ is weighted--homogeneous and defines a hypersurface $V(f)$ with isolated singularities in $\mathbb{A}_k^3$. Then there exists a $\mathbb{G}_{m,S}$--miniversal deformation of $V(f)$ over $S = \Spec(W(k))$.
\end{Thm}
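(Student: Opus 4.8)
The plan is to build the deformation by hand rather than abstractly: start from the explicit miniversal deformation of $V_0 = V(f)$ produced in Proposition~\ref{description-of-miniversal-deformations}, choose its parameters so that the whole family is $\mathbb{G}_{m,S}$-equivariant, and then deduce $\mathbb{G}_{m,S}$-versality from ordinary versality by an averaging argument over $\mathbb{G}_m$. We follow \cite{Slodowy1980}, \S2.5, keeping track of the fact that the base is now $W(k)$ rather than $k$.

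Say $f$ is weighted-homogeneous of degree $d$, with weights $(k_1,k_2,k_3)$ on $(x,y,z)$. Then $f_x,f_y,f_z$ are weighted-homogeneous, so the Jacobian ideal $J = (f_x,f_y,f_z)$ is homogeneous and the Tjurina algebra $\textrm{T}^1(V_0) = R_0/J$ of Lemma~\ref{def-is-ext-lemma} is a graded finite-dimensional $k$-vector space. Choose a homogeneous $k$-basis $\overline{g}_1,\dots,\overline{g}_r$, lifted to weighted-homogeneous $g_i \in W(k)[\![x,y,z]\!]$ of degrees $\ell_i$ (possible since $W(k)$ carries weight $0$), and put weight $w_i := d - \ell_i$ on the indeterminate $t_i$ of $R = W(k)[\![t_1,\dots,t_r]\!]$. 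Then the power series $F = f + \sum_{i=1}^{r} t_i g_i$ of \eqref{miniversal-polynomial} is weighted-homogeneous of degree $d$ in all of $x,y,z,t_1,\dots,t_r$, so $V = V(F) \subset \mathbb{A}^3_R$ is stable under the $\mathbb{G}_{m,S}$-action with these weights and $\phi\colon V \to \mathrm{Spf}(R)$ is $\mathbb{G}_{m,S}$-equivariant. (Replacing the one-parameter subgroup by a fixed power matches the sign and normalization conventions of Section~\ref{section-Gm-actions}; in the rational double point cases the $w_i$ come out positive, so the action is contracting, as one checks directly from the normal forms of Theorem~\ref{classification-of-rdps}, but nothing below uses this.)

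It remains to see that $\phi$ is $\mathbb{G}_{m,S}$-miniversal, i.e. versal in the $\mathbb{G}_{m,S}$-equivariant deformation functor $\widehat{\textrm{Def}}_{V_0}^{\,\mathbb{G}_m}$ — whose objects are the graded complete local $W(k)$-algebras and whose morphisms are degree-preserving, all other structure being as in Notation~\ref{notation-deformation} — with Kodaira--Spencer map an isomorphism there. The latter is immediate: the equivariant Kodaira--Spencer map is the restriction of $\textrm{KS}_\phi$ to graded pieces, and by the computation in the proof of Proposition~\ref{description-of-miniversal-deformations} it sends $\overline{t}_i \mapsto \overline{g}_i$, hence a homogeneous basis to a homogeneous basis. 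For equivariant versality, let $B \twoheadrightarrow A$ be a graded small extension with square-zero kernel $I$, and let $\xi_B \in \widehat{\textrm{Def}}_{V_0}^{\,\mathbb{G}_m}(B)$ together with a degree-preserving $u\colon R \to A$ inducing $\xi_B|_A$. Ordinary versality of $\phi$ (Proposition~\ref{description-of-miniversal-deformations}, descending from graded Artinian rings to general complete local rings via \cite{vistoli1997deformation}, Lemma~7.3) shows that the set $L$ of lifts $\widetilde{u}\colon R \to B$ of $u$ with $\widetilde{u}^{\,*}\phi = \xi_B$ is non-empty; it is a torsor under a linear subspace of $\textrm{Der}_{W(k)}(R, I)$, and because $u$ and $\xi_B$ are graded, the $\mathbb{G}_m$-action on ring homomorphisms (induced by the gradings of $R$ and $B$) preserves $L$. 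The degree-preserving lifts are precisely the $\mathbb{G}_m$-fixed points of $L$, and a fixed point exists because $\mathbb{G}_m$ is linearly reductive over $W(k)$ in every characteristic — concretely one removes the nonzero-weight components of a chosen lift. Hence a degree-preserving lift exists, $\phi$ is $\mathbb{G}_{m,S}$-versal, and therefore $\mathbb{G}_{m,S}$-miniversal. No algebraization step is needed: $F$ is a genuine polynomial and $V(F) \subset \mathbb{A}^3_{W(k)[t_1,\dots,t_r]}$ a $\mathbb{G}_m$-scheme, so the family is already algebraic over a henselian base, cf.\ Remark~\ref{rem-from-formal-to-algebraic-deformations}.

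\textbf{Main obstacle.} The genuine work here is organizational rather than deep. One must set up $\widehat{\textrm{Def}}_{V_0}^{\,\mathbb{G}_m}$ over the mixed-characteristic base $W(k)$ — keeping $W(k)$ in weight zero and all gradings $W(k)$-linear — and verify grading by grading that the explicit $F$ defines an equivariant family whose equivariant Kodaira--Spencer map is an isomorphism, which amounts to unwinding the construction of $\textrm{KS}_\phi$ from Section~\ref{subsection-forma-deformations-of-sings}. The only step with any characteristic content is the appeal to the linear reductivity of $\mathbb{G}_m$, which (unlike that of larger groups) holds over $W(k)$ with $k$ of characteristic $p$, and which is exactly what turns a non-equivariant versal lift into a $\mathbb{G}_m$-equivariant one. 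A subsidiary point, needed only for the applications and checked directly from the normal forms, is that the weights $w_i = d - \ell_i$ are positive, so that the induced action on the slice $\mathcal{S}$ is contracting.
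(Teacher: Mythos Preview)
The paper does not actually prove this theorem: it is stated with a citation to \cite{Slodowy1980}, \S2.5, and the sentence immediately preceding it gives only a one-line summary (``proven by checking the definition of miniversality after equipping all objects with a $\mathbb{G}_m$-action''). Your proposal is correct and is precisely an unpacking of that summary in the mixed-characteristic setting already developed in Section~\ref{subsection-miniversal-defs-in-mixed-char}: choose a weighted-homogeneous basis of the Tjurina algebra, grade the explicit miniversal family of Proposition~\ref{description-of-miniversal-deformations} accordingly, and promote ordinary versality to equivariant versality via the linear reductivity of $\mathbb{G}_m$. This matches Slodowy's original argument, so there is nothing substantively different to compare.
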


\begin{ex} Let us compare the $\mathbb{G}_m$--actions for a specific RDP surface. Polynomial $f(x,y,z) = z^2+x^2+y^4$ defines an $A_3$--singularity (Theorem \ref{classification-of-rdps}) and is weighted--homogeneous with weights $(2,1,2)$. Its miniversal deformation is given by 
\[V(F) \longrightarrow \Spec(\Oring_K[\![t_1,t_2,t_3]\!]), \; \; \; F(x,y,z,a,b,c)=z^2+x^2+y^4+t_1y^2+t_2y+t_3\]
(see Example \ref{a_n-example-miniversality}). Polynomial $F$ is weighted--homogeneous with weights $(2,1,2,2,3,4)$ and as a result base $\Spec(\Oring_K[\![t_1,t_2,t_3]\!])$ admits a $\mathbb{G}_{m,S}$--action with weights $(2,3,4)$, while total space $V(F)$ admits a $\mathbb{G}_{m,S}$--action with weights $(2,1,2,2,3)$.

On the other hand, for the subregular orbit in $\mathfrak{sl}_4$ (the unique Lie algebra of type $A_3)$, one may compute its decomposition into good irreducible $\mathfrak{sl}_2$--representations (see \cite{Slodowy1980}, \S7.4 Example) as
\[\mathfrak{sl}_4 \simeq V_4 \oplus V_2 \oplus V_2'\oplus V_2''\oplus V_0\]
where $\mathfrak{sl}_2$--modules $V_i$ have highest weight $i$. By inspecting the weights of the $\mu$-action of Definition \ref{definition-various-gm-actions} we get weights $(6,4,4,4,2)$; note that in both Sections \ref{subsection-integral-slodowy} and \ref{spaltensection}, a basis for $\mathcal{S}$ (up to translation with $x$) is given by a choice of \textit{lowest} weight vectors for each irreducible $\mathfrak{sl}_2$--summand, and there is an identification $\mathcal{S} \simeq \Spec(\Oring_K[x_1,\cdots, x_{r+2}])$ by choosing $x_i$ to be dual to the lowest--weight vectors. This is why we get the aforementioned weights (cf. \cite{Slodowy1980}, p.\, 110). For $\h/\!\!/W$ we have that the homogeneous degrees of the fundamental generators are $(2,3,4)$ (see \cite{slodowy1980four}, Table in p.\,112); this also follows directly from reading off the degrees of symmetric polynomials $\sigma_2, \sigma_3, \sigma_4$. 

Note that if we double the weights of the $\mathbb{G}_m$--action on $V(F)$, then up to reordering variables the miniversal deformation $\phi: V(F) \to \Spec(\Oring_K[\![t_1,t_2,t_3]\!])$ and $\chi: \mathcal{S} \to \h/\!/W$ are both $\mathbb{G}_{m,S}$--equivariant with the \textit{same} $\mathbb{G}_m$--weights on source and target.   
\end{ex}

The previous example illustrates a more general principle, which we will clarify in Theorem \ref{thm-miniversal-deformation-of-slice-and-resolved-slice} in the next section. 

\subsection{Grothendieck alterations for transverse slices.}\label{subsection-gro-alterations-for-transverse-slices} We retain the assumptions of Section \ref{section-Gm-actions} and fix a Slodowy slice $\mathcal{S}$ at a fiberwise subregular nilpotent section $x$ of $\g$ (either by the Jacobson--Morozov method or the Spaltenstein method). Let $\pi: \widetilde{\mathcal{S}} \longrightarrow \mathcal{S}$ be the restriction of the Grothendieck alteration on $\mathcal{S}$ and define $\mathbb{G}_{m,S}$--actions $\mu,\widetilde{\mu}$ on $\mathcal{S}, \widetilde{\mathcal{S}}$ as in Section \ref{section-Gm-actions}. 

\begin{Prop}\label{proposition-simultaneous-resolutions-for-slices} Consider the following commutative diagram induced by restricting diagram \emph{(\ref{relative-resolution-diagram})}
\vspace{-3em}
\begin{center}
\begin{equation}\label{slice-resolution-diagram}
    \begin{tikzcd}
        \widetilde{\Slice} \arrow[d, "\widetilde{\chi}_{\textrm{\emph{res}}}"] \arrow[r, "\pi"] & \Slice \arrow[d, "\chi_{\textrm{\emph{res}}}"] \\ \mathfrak{h} \arrow[r, "\psi"] & \mathfrak{h}/\!\!/W
    \end{tikzcd}
    \end{equation}
\end{center}
\noindent  to $\Slice$ and its preimage $\widetilde{\Slice}$ under the Grothendieck alteration. Then this diagram induces $S$-fiberwise a simultaneous resolution of the singularities of $\chi: \Slice \to \h/\!\!/W$.
\end{Prop}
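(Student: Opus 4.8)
The plan is to check, fiber by fiber over $S = \Spec(\Oring_K)$, the four conditions of Definition \ref{definition-of-simultaneous-resolution} for the square (\ref{slice-resolution-diagram}), and then to recognize the resulting square over a geometric point of $S$ as the classical Brieskorn--Slodowy simultaneous resolution of a transverse slice to the subregular nilpotent orbit. The first step is to observe that every object and arrow in (\ref{slice-resolution-diagram}) is compatible with base change along a geometric point $\overline{s} \to S$: the alteration $\gres$ and hence $\pi$ by the construction of Section \ref{relative-grothendieck-alteration}; the slice $\Slice = x + \mathfrak{a}$ because $\mathfrak{a}$ is a \emph{free} $\Oring_K$-direct summand of $\g$ (Corollary \ref{corollary-construction-of-An-slice} in type $A_n$, Theorem \ref{theorem-spaltenstein} otherwise), so that $\widetilde{\Slice} = \pi^{-1}(\Slice)$ inherits this property; the quotient $\h/\!\!/W$ and the adjoint quotient $\chi$, hence $\chi_{\textrm{res}} = \chi|_{\Slice}$, by Proposition \ref{g/G commutes with basechange}; and $\psi$ together with $\widetilde{\chi}$, hence $\widetilde{\chi}_{\textrm{res}}$, from their definitions. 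Thus the base change of (\ref{slice-resolution-diagram}) along $\overline{s} \to S$ is the analogous square for the Chevalley algebra $\g_{\overline{s}}$, and since $S$ has only the two geometric fibers, over the closed point $s$ and over $\etabar$, it suffices to treat those two cases.

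Over $\overline{s}$ the section $x_{\overline{s}}$ is a subregular nilpotent element of $\g_{\overline{s}}$ (this is exactly the fiberwise-subregularity hypothesis on $x$), $\mathfrak{a}_{\overline{s}}$ is a vector-space complement to $[x_{\overline{s}}, \g_{\overline{s}}]$, and hence $\Slice_{\overline{s}} = x_{\overline{s}} + \mathfrak{a}_{\overline{s}}$ is a transverse slice to the subregular orbit with $\widetilde{\Slice}_{\overline{s}} = \pi_{\overline{s}}^{-1}(\Slice_{\overline{s}})$ its preimage under the Grothendieck--Springer resolution. The base-changed square is then precisely the one whose simultaneous-resolution property is the Brieskorn--Slodowy theorem: $\widetilde{\chi}_{\textrm{res},\overline{s}}: \widetilde{\Slice}_{\overline{s}} \to \h_{\overline{s}}$ is smooth, $\pi_{\overline{s}}: \widetilde{\Slice}_{\overline{s}} \to \Slice_{\overline{s}}$ is proper as a base change of the proper map $\pi$, $\psi_{\overline{s}}: \h_{\overline{s}} \to \h_{\overline{s}}/\!\!/W$ is a finite surjection by Proposition \ref{cartan-W-cover}, and for each geometric point $\overline{t}$ of $\h_{\overline{s}}$ lying over a point $\overline{u}$ of $\h_{\overline{s}}/\!\!/W$ the induced map $\widetilde{\Slice}_{\overline{s},\overline{t}} \to \Slice_{\overline{s},\overline{u}}$ resolves the reduced fiber $(\Slice_{\overline{s},\overline{u}})_{\textrm{red}}$ --- for $\overline{u} = 0$ it is the minimal resolution of the rational double point $\Slice_{\overline{s}} \cap \mathcal{N}_{\g_{\overline{s}}}$, and for $\overline{u}$ in the regular locus it is an isomorphism. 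When $\textrm{char}\, k(\overline{s}) = 0$ this is \cite{Slodowy1980}, \S6.3 (see also \cite{slodowy1980four}, \S4.2); when $\textrm{char}\, k(\overline{s}) = p$ is good for $\g$, the same arguments go through once one is equipped with the $\mathfrak{sl}_2$-triples, contracting $\mathbb{G}_m$-actions and adjoint-quotient structure in positive characteristic (\cite{Slodowy1980}, \S7--8), which over $\Oring_K$ is exactly what Sections \ref{subsection-integral-slodowy}--\ref{section-Gm-actions} have set up. This establishes the $S$-fiberwise simultaneous resolution.

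The genuinely delicate point is the base-change compatibility of the first step, which hinges entirely on $\mathfrak{a}$ being free over $\Oring_K$: without this, $\Slice_{\overline{s}}$ need not be transverse to the subregular orbit and the fiberwise picture could degenerate, in the spirit of the pathologies of Example \ref{example-of-subregular-and-semisimple-not-behaving-well-in-fibers}. Everything else is bookkeeping, most of the real work having already been carried out in the construction of the integral slice. I would also record the stronger assertion that (\ref{slice-resolution-diagram}) is a simultaneous resolution of $\chi_{\textrm{res}}$ in the literal sense of Definition \ref{definition-of-simultaneous-resolution}, i.e. without first restricting to fibers of $S$: here one needs $\widetilde{\chi}_{\textrm{res}}: \widetilde{\Slice} \to \h$ smooth \emph{over $S$}, which follows from Lemma \ref{check-smoothness-on-special-fiber} together with the smoothness of $\widetilde{\chi}_{\textrm{res},s}$ just discussed and the flatness of $\widetilde{\Slice}$ over $S$ (the latter because $\widetilde{\Slice} = \pi^{-1}(\Slice)$ is cut out in the $S$-smooth scheme $\gres$ by the expected number of equations, the alteration being finite over the fiberwise-dense regular locus by Lemma \ref{lemma-gro-alteration-finite-over-reg-locus}, and has equidimensional $S$-fibers). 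The hypothesis on $p$ intervenes only through the existence of these integral slices and of Slodowy's positive-characteristic results.
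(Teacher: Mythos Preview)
Your proof is correct and arrives at the same endpoint as the paper: reduce to the classical Brieskorn--Slodowy result over an algebraically closed field. The route differs slightly in emphasis. The paper first establishes the global properties of diagram (\ref{slice-resolution-diagram}) over $S$ directly, using the transverse-slice machinery: from the smoothness of the action map $\alpha: G\times\Slice \to \g$ and Lemma \ref{pullback-of-action-map-smooth} one gets $\widetilde{\alpha}: G\times\widetilde{\Slice}\to\gres$ smooth, and then smoothness of $\widetilde{\chi}_{\textrm{res}}$ is obtained by \emph{descent} along the smooth surjection $p_2: G\times\widetilde{\Slice}\to\widetilde{\Slice}$ from the commutativity $\widetilde{\chi}\circ\widetilde{\alpha} = \widetilde{\chi}_{\textrm{res}}\circ p_2$; similarly flatness of $\chi_{\textrm{res}}$ descends from flatness of $\chi\circ\alpha$. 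Only after this does the paper base-change to $s$ and invoke (\cite{Slodowy1980}, \S5.3 Corollary).

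Your argument instead base-changes to the fibers immediately and reads off everything from Slodowy's theorem over $k(\overline{s})$. This is perfectly adequate for the fiberwise statement as written, and you are right that the freeness of $\mathfrak{a}$ is the key input making the base change honest. What the paper's route buys is that the global smoothness of $\widetilde{\chi}_{\textrm{res}}$ and flatness of $\chi_{\textrm{res}}$ over $S$ come out cleanly as byproducts of the descent argument, rather than having to be reassembled from the fibers via Lemma \ref{check-smoothness-on-special-fiber} and an ad hoc flatness check for $\widetilde{\Slice}/S$ as in your final paragraph. Since these global properties are used repeatedly afterwards (e.g.\ in Theorem \ref{thm-miniversal-deformation-of-slice-and-resolved-slice} and in the nearby-cycles arguments of Section \ref{section-monodromy-weyl-action}), the paper's ordering is the more economical one in context.
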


\begin{proof} Let $\alpha: G \times \mathcal{S} \to \g, \; \widetilde{\alpha}: G\times \widetilde{\mathcal{S}} \to \gres$ denote the respective restrictions of the adjoint action of $G$ on $\g$ and $\gres$. By $G$--invariance of the adjoint quotient $\chi$ and its resolution $\widetilde{\chi}$, their restrictions to $\Slice, \widetilde{\Slice}$ yield commutative diagrams:
\vspace{-2em}\begin{center}
\begin{equation}\label{action-diagram-for-slice}
    \begin{tikzcd}
        G\times\Slice \arrow[r,"\alpha"] \arrow[d,"p_2"] & \g \arrow[d, "\chi"] & G\times\widetilde{\Slice} \arrow[r,"\widetilde{\alpha}"] \arrow[d, "p_2"] & \gres \arrow[d,"\widetilde{\chi}"] \\ \Slice \arrow[r, "\chi_{\textrm{res}}"] & \h/\!\!/W & \widetilde{\Slice} \arrow[r, "\widetilde{\chi}_{\textrm{res}}"] & \h
    \end{tikzcd}
    \end{equation}
\end{center}
\noindent As $\chi \circ \alpha$ is flat and $p_2$ is flat and surjective, $\chi_{\textrm{res}}$ is flat (\cite[\href{https://stacks.math.columbia.edu/tag/02JZ}{Tag 02JZ}]{stacks-project}). Since $\pi: \widetilde{\Slice}\to\Slice$ comes from base-changing proper morphism $\gres \to \g$, it is also proper. Applying Lemma \ref{pullback-of-action-map-smooth} to $\pi: \gres \to \g$ we get that $\widetilde{\alpha}: G\times\widetilde{\Slice} \to \gres$ smooth, so as $p_2, \widetilde{\chi}$ are smooth, it follows that $\widetilde{\chi}_{\textrm{res}}$ is smooth as well. We may therefore base--change to the special fiber by Lemma \ref{check-smoothness-on-special-fiber} (since formation of diagram (\ref{slice-resolution-diagram} commutes with base--change) and check that we have a simultaneous resolution for $\overline{\Slice} \to \h_s/\!\!/W$, where $\overline{\Slice} = \mathcal{S} \otimes k$ is a Slodowy slice over $k$. In this setting, the statement is true by (\cite{Slodowy1980}, \S5.3 Corollary).\end{proof}

For ease of notation we will henceforth refer to $\chi_{\textrm{res}}, \widetilde{\chi}_{\textrm{res}}$ as $\chi$ and $\widetilde{\chi}$ respectively, when no confusion can arise. We now relate Proposition \ref{proposition-simultaneous-resolutions-for-slices} (i.e. diagram (\ref{slice-resolution-diagram})) to the miniversal deformation of an RDP singularity \textit{and} its minimal resolution. What follows is essentially the main theorem of \cite{Slodowy1980}; the study of miniversal deformations of minimal resolutions in this context is due to \cite{pinkham77}.

\begin{Thm}\label{thm-miniversal-deformation-of-slice-and-resolved-slice} Let $\chi: \mathcal{S} \longrightarrow \h/\!\!/W$ denote the localization of map $\chi$ obtained by \emph{henselianizing} $\mathcal{S}$ and $\h/\!\!/W$ at $x$ and $0$ respectively. Let $\widetilde{\mathcal{S}}$ be the preimage of $\mathcal{S}$ under the Grothendieck alteration and $\widetilde{\chi}: \widetilde{\mathcal{S}} \longrightarrow \h$ the associated map as in diagram \emph{(\ref{slice-resolution-diagram})}. Assume $p=\textrm{\emph{char}}(k) > n+1$ if $\g$ is of Type $A_n$, otherwise assume $p$ is good for $\g$. Then $\chi$ is a $\mathbb{G}_{m,S}$--miniversal deformation of the \emph{RDP} $x_s \in (\mathcal{N}\cap \mathcal{S})(k)$ and $\widetilde{\chi}$ is a miniversal deformation of the minimal resolution of $x_s$.
\end{Thm}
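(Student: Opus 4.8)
The plan is to leverage Proposition \ref{proposition-simultaneous-resolutions-for-slices}, which already establishes that diagram \eqref{slice-resolution-diagram} is $S$-fiberwise a simultaneous resolution, and then upgrade this to a statement about \emph{miniversal} $\mathbb{G}_{m,S}$-deformations by combining three ingredients: $\mathbb{G}_m$-equivariance (from Section \ref{section-Gm-actions}), a dimension count identifying $\h/\!\!/W$ with the base of the miniversal deformation, and the Kodaira--Spencer criterion of Proposition \ref{miniversal-iff-power-series-and-kodaira-spencer-isom}. First I would observe that, by Theorem \ref{classification-of-rdps} and the theory of Section \ref{the-adjoint-quotient-subsection}, the special fiber $x_s \in (\mathcal{N}\cap\mathcal{S})(k)$ is a rational double point whose normal form is weighted-homogeneous, and that $\mathcal{S}\cap\mathcal{N} = \chi^{-1}(0)\cap\mathcal{S} = \Slice_0$ is precisely $V(f)$ for the corresponding normal form $f$; this reduces the claim for $\chi$ to checking that the $\mathbb{G}_{m,S}$-equivariant formal deformation $\widehat{\chi}: \widehat{\mathcal{S}} \to \widehat{\h/\!\!/W}$ over $W(k)[\![t_1,\dots,t_r]\!]$ (after identifying $\h/\!\!/W \simeq \mathbb{A}^r_{\Oring_K}$ via Proposition \ref{g/G commutes with basechange}) has bijective Kodaira--Spencer map.

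The core computation is the Kodaira--Spencer isomorphism. Here I would argue exactly as in the special fiber case of \cite{Slodowy1980}, \S8.4 (or \cite{slodowy1980four}, \S4): the restriction $\chi: \mathcal{S} \to \h/\!\!/W$ is flat with fibers of dimension $\dim\mathcal{S} - r = 2$, and one knows $\dim_k \mathrm{T}^1(V_0) = r$ (since $r = \mathrm{rk}(\g)$ equals the Milnor number, equivalently the Tjurina number in very good characteristic, for a simply-laced RDP --- this is where the hypothesis on $p$ enters, guaranteeing the Tjurina algebra has the expected dimension and is unobstructed by Lemma \ref{def-is-ext-lemma}). Versality over a power series ring then follows from Proposition \ref{miniversal-iff-power-series-and-kodaira-spencer-isom} once $\mathrm{KS}_{\widehat{\chi}}$ is shown to be an isomorphism. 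To see the latter, I would reduce to the special fiber via Proposition \ref{ks-proposition} and the base-change compatibility of the Kodaira--Spencer construction (\cite{vistoli1997deformation}, Prop.\,6.10), where the statement is Slodowy's theorem; the $\mathbb{G}_m$-equivariance established in Proposition \ref{prop-gm-invariance-of-adjoint-quotient} then promotes this to $\mathbb{G}_{m,S}$-miniversality, since both source and target carry the compatible contracting actions and the Kodaira--Spencer map is $\mathbb{G}_m$-equivariant for the induced grading.

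For the second assertion, that $\widetilde{\chi}: \widetilde{\mathcal{S}} \to \h$ is a miniversal deformation of the minimal resolution of $x_s$, I would invoke the simultaneous resolution property from Proposition \ref{proposition-simultaneous-resolutions-for-slices}: the central fiber $\widetilde{\chi}^{-1}(0) = \pi^{-1}(\Slice_0)$ is the minimal resolution $\widetilde{V_0}$ of the RDP (as $\pi_s$ restricted to the Slodowy slice is the minimal resolution, by \cite{Slodowy1980}, \S6.3), and $\widetilde{\chi}$ is smooth of relative dimension $2$ with smooth total space over $S$. The claim is then that $\widetilde{\chi}$ is the miniversal deformation of $\widetilde{V_0}$ with base $\h$, which over $k$ is the content of Pinkham's theorem (\cite{pinkham77}) together with the Brieskorn--Slodowy identification of $\h \to \h/\!\!/W$ as the Weyl cover simultaneously resolving the miniversal family; the Kodaira--Spencer map for $\widetilde{\chi}$ is an isomorphism because $\mathrm{H}^1(\widetilde{V_0}, \Theta_{\widetilde{V_0}}) \simeq \h$ canonically via the exceptional divisor/root lattice identification, and deformations of $\widetilde{V_0}$ are unobstructed. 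By Lemma \ref{check-smoothness-on-special-fiber} and the base-change compatibility of diagram \eqref{slice-resolution-diagram}, verifying the isomorphism on the special fiber suffices to conclude it over $S$.

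\textbf{Main obstacle.} The delicate point is controlling the dimension and freeness of the relevant cohomology/Tjurina modules \emph{over $\Oring_K = W(k)$} rather than just over $k$ --- specifically, ensuring $\mathrm{T}^1$ and $\mathrm{H}^1(\widetilde{V_0},\Theta)$ are free of the expected rank $r$ and that their formation commutes with base change, so that an isomorphism of Kodaira--Spencer maps on the special fiber propagates to an isomorphism over $S$. This is where the hypothesis $p > n+1$ (resp. $p$ good) is essential: it forces the characteristic-$p$ RDP to have the same Tjurina algebra dimension as over $\mathbb{C}$ (Theorem \ref{classification-of-rdps}(ii) and Definition \ref{definition-of-good-primes}), rules out the pathological extra deformations that appear in bad characteristic, and --- via Lemma \ref{def-is-ext-lemma} --- guarantees unobstructedness, letting me apply Proposition \ref{miniversal-iff-power-series-and-kodaira-spencer-isom} to conclude versality from the Kodaira--Spencer isomorphism alone.
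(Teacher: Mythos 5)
Your argument is substantially correct and reaches the same endpoint, but it takes a mechanically different route from the paper's proof. The paper does \emph{not} verify the Kodaira--Spencer isomorphism directly. Instead, it first invokes Theorem \ref{thm-existence-of-gm-miniversal-deformation} to produce an abstract $\mathbb{G}_{m,S}$-miniversal deformation $\phi: V(F) \to \Spec(R)$ with $R = \Oring_K[\![t_1,\dots,t_r]\!]$, then uses $\mathbb{G}_{m,S}$-miniversality of $\phi$ together with the $\mathbb{G}_{m,S}$-equivariance of $\chi$ (Proposition \ref{prop-gm-invariance-of-adjoint-quotient}) to obtain a $\mathbb{G}_{m,S}$-equivariant comparison map $f: \h/\!\!/W \to \Spec(R)$, and proves $f$ is an isomorphism by the fibral criterion (\cite{grothendieck1967elements}, 17.9.5) combined with Slodowy's weight-comparison theorem (\cite{Slodowy1980}, \S8.7) over $k$. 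This packaging has the advantage that the step you found ``slightly hand-wavy'' --- promoting a plain miniversal deformation to a $\mathbb{G}_{m,S}$-miniversal one --- never needs to be re-argued: the abstract $\mathbb{G}_m$-miniversal $\phi$ is fixed once and for all, and the only thing checked is that a $\mathbb{G}_m$-equivariant map between the two bases is an isomorphism. Your route via Proposition \ref{miniversal-iff-power-series-and-kodaira-spencer-isom} is workable, but the $\mathbb{G}_m$-equivariance of the Kodaira--Spencer map and the induced-grading argument you gesture at is precisely what Theorem \ref{thm-existence-of-gm-miniversal-deformation} quietly handles, so you would need to spell that step out in full to close the argument.

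For the second assertion (miniversality of $\widetilde{\chi}$), you cite Pinkham together with $\mathrm{H}^1(\widetilde{V_0},\Theta_{\widetilde{V_0}}) \simeq \h$. This is the right idea over $\mathbb{C}$, but Pinkham's theorem is a characteristic-zero result, and the theorem you are proving assumes only that $p$ is good (or $p > n+1$ in Type $A$). The paper cites Shepherd-Barron (\cite{shepherd2001simple}, Thm.\;3.4), who extends Pinkham's identification to good positive characteristic. Without that reference, your argument has a genuine gap on the special fiber: you would still need to establish the identification $\mathrm{H}^1(\widetilde{V_0},\Theta_{\widetilde{V_0}}) \simeq \h$ and the absence of extra deformations of the minimal resolution in small-but-good characteristic, and that is exactly what Shepherd-Barron supplies. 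Otherwise your outline is sound.
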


\begin{proof} Let $r$ be the rank of RDP $x_s$ and consider morphism $\chi$ first. By Proposition \ref{description-of-miniversal-deformations} there exists a miniversal (algebraic) deformation of $x_s$, which we denote as $\phi: V(F) \to \Spec(R)$ for some $F \in \Oring_K[\![x,y,z,t_1,\cdots, t_r]\!]$ and $R = \Oring_K[\![t_1,\cdots, t_r]\!]$. By Theorem \ref{thm-existence-of-gm-miniversal-deformation} there exists a $\mathbb{G}_{m,S}$--action on $V(F)$ and $\Spec(R)$ making $\phi$ a $\mathbb{G}_{m,S}$--miniversal deformation.

We know that $\chi^{-1}(0) = \mathcal{N} \cap \mathcal{S}$ has an RDP singularity at $x_s$ and $\chi$ is $\mathbb{G}_{m,S}$--equivariant with respect to action $\mu$ on $\mathcal{S}$ and the ``Weyl exponents'' action on $\h/\!\!/W$ (Proposition \ref{prop-gm-invariance-of-adjoint-quotient}), hence by miniversality we get a $\mathbb{G}_{m,S}$--equivariant morphism $f: \h/\!\!/W \longrightarrow \Spec(R)$. It suffices to show $f$ is an isomorphism, and by the fibral isomorphism criterion (\cite{grothendieck1967elements}, Cor.\,17.9.5) it suffices to show $f_s: \h_s/\!\!/W \longrightarrow \Spec(R\otimes k)$ is an isomorphism over geometric point $s = \Spec(k)$. So we reduce to showing the statement over $k$ of sufficiently good characteristic, whence it follows from (\cite{Slodowy1980}, \S8.7 Thm) by comparing the $\mathbb{G}_m$--weights on each of $\h_s/\!\!/W$ and $\Spec(R\otimes k)$. For $\widetilde{\chi}$, the statement holds by (\cite{shepherd2001simple}, Thm.\,3.4).\end{proof}

\begin{Rem} Shepherd-Barron in (\cite{shepherd2001simple}, Thm.\,4.6) has also shown the above theorem via different methods and in \textit{good} characteristic; the proof similarly contains a passage from characteristic zero to positive characteristic. In particular we may assume that $p$ is good for Theorem \ref{thm-miniversal-deformation-of-slice-and-resolved-slice} to be true.\end{Rem}

\begin{Rem}\label{remark-on-identifying-h//w-as-power-series} The statement a priori concerns formal miniversal deformations, but since we are dealing with affine (isolated) singularities these deformations are algebraic (see Remark \ref{rem-from-formal-to-algebraic-deformations}). We may therefore consider the henselianized versions of $\chi, \widetilde{\chi}$ when we refer to the simultaneous resolution diagram (\ref{slice-resolution-diagram}) and without loss of generality write $\h/\!\!/W \simeq \Spec(\Oring_K[\![t_1,\cdots, t_r]\!])$.
    
\end{Rem}

\section{The monodromy Weyl action}\label{section-monodromy-weyl-action} We now come to the central part of the article, the description of monodromy actions in terms of Weyl groups. We discuss the relevant $W$-actions in Section \ref{subsection-monodromy-weyl-actions}, along with the proof of the main theorem. In order to construct said $W$-actions, multiple tools from the theory of nearby cycles and (relative) perverse sheaves need to be combined, so we explain these concepts next.

\subsection{Classical nearby cycles.}\label{subsection-classic-nearby-cycles} Throughout this section we work in the \textit{small} \'etale topos setting. Our base scheme is a \textit{strictly} henselian trait $S = \Spec(\Oring_K)$ with closed point $s = \Spec(k)$ and generic point $\eta = \Spec(K)$. We denote geometric points with a bar, e.g. $\overline{\eta}$, with the understanding that the underlying residue field is \textit{separably} closed (e.g. $\overline{\eta} = \Spec(K^{\textrm{sep}})$). Whenever appropriate we assume $\Oring_K$ is complete.

Given a finite-type $S$-scheme $X$, we denote by $\textrm{D}^b(X)$ the bounded derived category of $\mathbb{Q}_{\ell}$-sheaves on $X$, where $\ell \neq p = \textrm{char}(k)$. Most of the formalism below is usually developed first for finite coefficient rings $\mathbb{Z}/\ell^n$, but standard reductions via inverse limits and taking $(-)\otimes_{\mathbb{Z}_{\ell}} \mathbb{Q}_{\ell}$ yield the same statements for $\mathbb{Q}_{\ell}$-coefficients, so we choose not to belabor this point.

\begin{Def} Let $X \to S$ be a finite-type $S$-scheme with generic fiber $X_{\eta}$, geometric generic fiber $X_{\etabar}$ and special fiber $X_s$. Denote the respective inclusions by\begin{center}\vspace{-1em}
    \begin{tikzcd}
        X_{\etabar} \arrow[r] \arrow[rr, bend left=30, "\overline{j}"] & X_{\eta} \arrow[r, "j"] & X & X_s \arrow[l, swap, "i"]
    \end{tikzcd}
\end{center}
\begin{enumerate}[label=(\roman*)]
    \item The \textit{nearby cycles} functor is $\textrm{R}\Psi_X: \textrm{D}^b(X_{\eta}) \to \textrm{D}^b(X_s)$, $\mathcal{F} \longmapsto i^*\textrm{R}\overline{j}_*(\mathcal{F}_{\etabar})$, where $\mathcal{F}_{\etabar}$ is the pullback of $\mathcal{F}$ to $X_{\etabar}$. Complex $\textrm{R}\Psi_X(\mathcal{F})$ is naturally equipped with an action of inertia $I = \Gal(\etabar/\eta) = \Gal_K$ (\cite{deligne2006groupes}, Expos\'e XIII \S1.3).

    \item For $\mathcal{F} \in \textrm{D}^b(X)$, adjunction map $\mathcal{F} \to \textrm{R}\overline{j}_*(\mathcal{F}_{\etabar})$ gives an exact triangle \begin{equation}\label{vanishing-cycles-definition} i^*\mathcal{F} \overset{\phi}{\longrightarrow} \textrm{R}\Psi_X(\mathcal{F}) \longrightarrow \textrm{R}\Phi_X(\mathcal{F}) \overset{[1]}{\longrightarrow}\end{equation}
    \noindent defining the \textit{vanishing cycles functor} $\textrm{R}\Phi: \textrm{D}^b(X) \to \textrm{D}^b(X_s)$ as the cone of map $\phi$.
    \end{enumerate}
\end{Def}

Nearby and vanishing cycles have various functorial properties. For example, if $f: X \to Y$ is a morphism of $S$-schemes inducing maps $f_{\eta}: X_{\etabar} \to Y_{\etabar}$ and $f_s: X_s \to Y_s$ between the geometric generic and special fibers, there are natural maps
\begin{equation}\label{equation-natural-base-change-proper-nearby-cycles}\textrm{R}\Psi_Y(\textrm{R}f_{\eta *}\mathcal{F}_{\etabar}) \longrightarrow \textrm{R}f_{s*}\textrm{R}\Psi_X(\mathcal{F}), \; \; \; \mathcal{F} \in \textrm{D}^b(X) 
\end{equation}\vspace{-1em}
\begin{equation}\label{equation-natural-base-change-smooth-nearby-cycles} f_s^*\textrm{R}\Psi_Y(\mathcal{F}) \longrightarrow \textrm{R}\Psi_X(f_{\eta}^*\mathcal{F}_{\etabar}), \; \; \; \mathcal{F} \in \textrm{D}^b(Y)   
\end{equation}
\noindent Map (\ref{equation-natural-base-change-proper-nearby-cycles}) is an isomorphism when $f$ is proper, and map (\ref{equation-natural-base-change-smooth-nearby-cycles}) is an isomorphism when $f$ is smooth. Moreover the natural $I$-action extends to $\textrm{R}\Phi_X(\mathcal{F})$, making  exact triangle (\ref{vanishing-cycles-definition}) $I$-equivariant (\cite{deligne2006groupes}, Expos\'e XIII, 2.1.7.1, 2.1.7.2, 2.1.2.4).

Given an $S$-scheme $X$ with $\mathcal{F} \in \textrm{D}^b(X)$ and a point $x \in X_s$, let $X_{(\overline{x})}$ denote the strict henselization of $X$ at $\overline{x} \to X$. Then the stalks of the nearby cycles are computed as \[(\textrm{R}\Psi_X\mathcal{F})_{\overline{x}} \simeq \textrm{R}\Gamma(X_{(\overline{x})} \times_S \etabar, \mathcal{F}_{\eta})\]

We adopt the terminology of Illusie (cf.\,\cite{illusie2017around}, \S1.3) in saying that $X_{(\overline{x})}$ represents an $\ell$-adic Milnor ball and generic fiber $X_{(\overline{x})}\times_S \overline{\eta}$ represents an $\ell$-adic Milnor fiber, consistent with the classical fact that stalks of (complex) nearby cycles compute the cohomology of Milnor fibers.

\begin{ex} A more direct relationship with the classical Milnor fiber can be seen as follows. Assume $X$ is a flat relative surface over $S$, smooth outside an isolated \textit{rational} singularity on the special fiber $\overline{x} \to X_s$, then $\textrm{R}\Phi_X\mathbb{Q}_{\ell}$ is supported on the physical point $x$ and we obtain $(\textrm{R}^0\Phi_X\mathbb{Q}_{\ell})_{\overline{x}} \simeq (\textrm{R}^0\Psi_X\mathbb{Q}_{\ell})_{\overline{x}}/\mathbb{Q}_{\ell} \simeq 0$ and $(\textrm{R}^i\Phi_X\mathbb{Q}_{\ell})_{\overline{x}} \simeq (\textrm{R}^i\Psi_X\mathbb{Q}_{\ell})_{\overline{x}}$ for $i> 0$. In this case we have (\cite{deligne2006groupes}, Expos\'e XVI) \[(\textrm{R}^i\Phi_X \mathbb{Q}_{\ell})_{\overline{x}} \simeq \begin{cases} \Lambda^r, & i=2 \\ 0, & i \neq 2
 \end{cases}\]
 \noindent where $r \geq 1$ is the dimension of the stalk as a $\mathbb{Q}_{\ell}$-vector space. This can be thought of as the $\ell$-adic analogue of the topological Milnor fiber being a bouquet of $n$-spheres, hence having only top cohomology.
    \end{ex}

Since we will only deal with nearby cycles of families $X\to S$ acquiring isolated rational singularities, it is worth re-emphasizing that $\textrm{R}^i\Psi_X\mathbb{Q}_{\ell} \simeq \textrm{R}^i\Phi_X\mathbb{Q}_{\ell}$ for $i > 0$. Hence any statements regarding stalks of nearby cycles should be compatible with analogous statements in the complex setting, where people usually consider so-called ``vanishing homology''.

\begin{Rem} In the case that $x$ is a hypersurface singularity locally defined by the vanishing of a weighted--homogeneous polynomial $f(x,y,z)$ (such as the normal form of an RDP as defined in Theorem \ref{classification-of-rdps}), we have that the dimension of the Tjurina algebra $\dim(\textrm{T}^1)$ (see Definition \ref{tjurina-number-finite}) equals the Milnor number $\mu = \textrm{length}(\mathcal{E}\textrm{xt}^1(\Omega_{X/S}^1, \Oring_X))$. If the $I$-action on $\textrm{R}\Psi_X\mathbb{Q}_{\ell}$ is \textit{tamely} ramified, meaning the action factors through tame quotient $I \twoheadrightarrow I_t \simeq I/P$, we have $r = \mu = \dim(\textrm{T}^1)$. More generally the Deligne--Milnor conjecture states that $\dim(\textrm{T}^1) = r + \textrm{Sw}(\textrm{R}^2\Psi_X\mathbb{Q}_{\ell})$, where the Swan conductor term measures the wild ramification of the nearby cycles (\cite{deligne2006groupes}, Expos\'e XVI, Conj.\,1.9).

We will see that under our restrictions on the characteristic $p$, for a surface family $X/S$ acquiring RDP singularities we have $\textrm{R}\Psi_X\mathbb{Q}_{\ell} \simeq (\textrm{R}\Psi_X\mathbb{Q}_{\ell})^P$ (the wild inertia invariants) i.e. $\textrm{R}\Psi_X\mathbb{Q}_{\ell}$ are identified with the \textit{tame} inertial nearby cycles $\textrm{R}\Psi_X^{\textrm{tr}}\mathbb{Q}_{\ell} = \overline{i}^*\textrm{R}j_*^{\textrm{tr}}\mathbb{Q}_{\ell}$, where $j^{\textrm{tr}}: X_{\eta^{\textrm{tr}}}\xhookrightarrow{} X$ is induced from inclusion $\eta^{\textrm{tr}} = \Spec(K^{\textrm{tr}}) \to S$.\end{Rem}

\subsection{Nearby cycles on formal schemes.}\label{subsection-nearby-cycles-formal-schemes} In this section we assume $S = \Spec(\Oring_K)$ is a \textit{complete} local trait. In \cite{berkovich1996vanishing}, Berkovich constructs a nearby cycles functor variant for a class of formal schemes over $\Spf(\Oring_K)$, which includes finite-type schemes $\mathcal{X}/S$ completed along a closed subscheme of the special fiber $Y \subseteq \mathcal{X}_s$ (\cite{berkovich1996vanishing}, \S1). Denote by $\mathfrak{X}$ the formal completion $\widehat{\mathcal{X}}_Y$. Its special fiber $\mathfrak{X}_s$ is identified with finite-type scheme $Y$, and the generic fiber $\mathfrak{X}_{\eta}$ is a \textit{rigid-analytic} space over $K$.

There exists an equivalence between formal schemes \'etale over $\mathfrak{X}$ and formal schemes \'etale over $\mathfrak{X}_s$ (\cite{berkovich1996vanishing}, Prop. 2.1.(i)) and so composing the associated functor $\mathfrak{X}_s \to \mathfrak{X}$ with the generic fiber functor $\mathfrak{X} \to \mathfrak{X}_{\eta}$ induces a map of sites $\nu: (\mathfrak{X}_{\eta})_{\textrm{q\'et}} \longrightarrow (\mathfrak{X}_s)_{\textrm{\'et}}$. The source endows $\mathfrak{X}_{\eta}$ with its \textit{quasi}-etale site, where the quasi-etale covers of analytic spaces are defined in the sense of (\cite{berkovich1994vanishing}, \S3). There exists also a natural morphism of sites $\mu: (\mathfrak{X}_{\eta})_{\textrm{q\'et}} \longrightarrow (\mathfrak{X}_{\etabar})_{\textrm{\'et}}$.

\begin{Def} For an \'etale sheaf $\mathcal{F} \in \textrm{D}^b(\mathfrak{X}_{\etabar})$, $\textrm{R}\psiber(\mathcal{F}) = \textrm{R}\nu_*\mu^*(\mathcal{F})$ defines the \textit{Berkovich nearby cycles} functor $\textrm{R}\psiber: \textrm{D}^b(\mathfrak{X}_{\etabar}) \longrightarrow \textrm{D}^b(\mathfrak{X}_s)$. It is a sheaf naturally equipped with a $\Gal(\etabar/\eta) = \Gal_K$-action (\cite{berkovich1996vanishing}, Rem. 2.6).
\end{Def}

The crux of this construction is a comparison theorem with the algebraic nearby cycles defined on $\mathcal{X}/S$: the rough idea is that $\textrm{R}\Psi_{\mathcal{X}}\mathbb{Q}_{\ell}\lvert_Y$ depends only on the formal completion $\mathfrak{X}$ along $Y$, hence on a formal neighborhood of $Y$. The comparison theorem below is stated for \'etale sheaves of torsion prime to $p$, though as we have remarked before the statement works for $\mathbb{Q}_{\ell}$ as well.

\begin{Thm}[\cite{berkovich1996vanishing}, Thm.\,3.1, Cor.\,3.5]\label{berkovich-theorem} Let $\mathcal{X}, Y, \mathfrak{X}$ be as above. Let $\mathcal{F}$ be an \'etale constructible sheaf on $\mathcal{X}_{\eta}$ with torsion prime to p, and denote by $\widehat{\mathcal{F}}$ its pullback to $\mathfrak{X}_{\etabar}$. Suppose $Y/S$ is proper (e.g. finite). Then there exist canonical isomorphisms
\[ \textrm{\emph{R}}^n\Psi_{\mathcal{X}}\mathcal{F}\lvert_Y \simeq \textrm{\emph{R}}^n\Psi_{\mathfrak{X}}^{\textrm{\emph{Ber}}}(\widehat{\mathcal{F}}), \; \; \; \textrm{\emph{R}}\Gamma(Y, \textrm{\emph{R}}\Psi_{\mathcal{X}}\mathcal{F}) \simeq \textrm{\emph{R}}\Gamma(\mathfrak{X}_{\etabar}, \mathcal{F})\]
\noindent compatible with the action of $\Gal(\etabar/\eta)$ on either side.
 \end{Thm}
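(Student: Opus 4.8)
The plan is to reduce the comparison to a statement about the stalks of the two sheaves on $Y=\mathfrak{X}_s$ and then to identify each stalk with the \'etale cohomology of an appropriate ``Milnor fibre'', the real content being that the \emph{algebraic} and the \emph{formal/analytic} Milnor fibres have the same cohomology. \textbf{Step 1 (reduction to stalks):} the comparison morphism $\textrm{R}\Psi_{\mathcal{X}}\mathcal{F}\lvert_Y \to \textrm{R}\psiber(\widehat{\mathcal{F}})$ arises from the compatibility of the two systems of sites under $\nu$ and $\mu$ together with the adjunctions defining $\Psi$; it is local on $Y$ and $\Gal_K$-equivariant by construction, so it suffices to check it is an isomorphism on the stalk at each geometric point $\overline{x}\to Y$, and after shrinking $\mathcal{X}$ we may assume $\mathcal{X}=\Spec(A)$ is affine of finite type over $\Oring_K$. \textbf{Step 2 (identifying the stalks):} by the classical stalk formula for algebraic nearby cycles (\cite{deligne2006groupes}, Exp.\,XIII), $(\textrm{R}\Psi_{\mathcal{X}}\mathcal{F})_{\overline{x}}\simeq \textrm{R}\Gamma(\mathcal{X}_{(\overline{x})}\times_S\etabar,\mathcal{F})$ with $\mathcal{X}_{(\overline{x})}=\Spec(\Oring^{\textrm{sh}}_{\mathcal{X},\overline{x}})$, the ``algebraic Milnor fibre''; while unwinding $\textrm{R}\psiber=\textrm{R}\nu_*\mu^*$ and the fact that the formal completion of $\mathcal{X}$ at $\overline{x}$ is $\Spf(\widehat{\Oring}_{\mathcal{X},\overline{x}})$ gives $(\textrm{R}\psiber\widehat{\mathcal{F}})_{\overline{x}}\simeq \textrm{R}\Gamma(\Spf(\widehat{\Oring}_{\mathcal{X},\overline{x}})_{\etabar},\mathcal{F})$, the ``formal Milnor fibre''. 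Since $\widehat{\Oring}_{\mathcal{X},\overline{x}}$ is at once the completion of $\Oring^{\textrm{sh}}_{\mathcal{X},\overline{x}}$ and the local ring of $\mathfrak{X}$ at $\overline{x}$, the theorem reduces to a $\Gal_K$-equivariant isomorphism
\[\textrm{R}\Gamma\big(\mathcal{X}_{(\overline{x})}\times_S\etabar,\,\mathcal{F}\big)\;\simeq\;\textrm{R}\Gamma\big(\Spf(\widehat{\Oring}_{\mathcal{X},\overline{x}})_{\etabar},\,\mathcal{F}\big).\]

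\textbf{Step 3 (the heart).} I would split this last isomorphism into two parts. \emph{(a) Completion-invariance:} the \'etale cohomology of the generic fibre of an excellent henselian local ring is unchanged by completion, so $\textrm{R}\Gamma(\Spec(\Oring^{\textrm{sh}}_{\mathcal{X},\overline{x}})\times_S\etabar,\mathcal{F})\simeq\textrm{R}\Gamma(\Spec(\widehat{\Oring}_{\mathcal{X},\overline{x}})\times_S\etabar,\mathcal{F})$; one proves this by descending finite-type data with smooth generic fibre from $\widehat{\Oring}_{\mathcal{X},\overline{x}}$ back to $\Oring^{\textrm{sh}}_{\mathcal{X},\overline{x}}$ via Artin--Popescu approximation, or via Elkik's algebraisation theorem (\cite{elkik1974algebrisation}), the Galois action being carried along automatically. \emph{(b) GAGA for the formal generic fibre:} the \'etale cohomology of the scheme $\Spec(\widehat{\Oring}_{\mathcal{X},\overline{x}})\times_S\etabar$ agrees with that of the Berkovich analytic space $\Spf(\widehat{\Oring}_{\mathcal{X},\overline{x}})_{\etabar}$; this is Berkovich's comparison theorem between algebraic and analytic \'etale cohomology (\cite{berkovich1996vanishing},\,\cite{berkovich1994vanishing}), proved by d\'evissage to relative curves and the case of a disc, and it is precisely the reason one must work with the quasi-\'etale site of $\mathfrak{X}_\eta$ and the morphism $\mu$. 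Composing (a) and (b) yields the stalkwise isomorphism, hence the first displayed isomorphism of the theorem.

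\textbf{Step 4 (globalisation).} Applying $\textrm{R}\Gamma(Y,-)$ to the sheaf isomorphism just obtained and using the Leray spectral sequence for $\nu$ (so $\textrm{R}\Gamma(\mathfrak{X}_s,\textrm{R}\nu_*\mu^*\widehat{\mathcal{F}})=\textrm{R}\Gamma(\mathfrak{X}_{\etabar},\widehat{\mathcal{F}})$, together with the identification of quasi-\'etale and \'etale cohomology on the good analytic space $\mathfrak{X}_{\etabar}$) gives $\textrm{R}\Gamma(Y,\textrm{R}\Psi_{\mathcal{X}}\mathcal{F})\simeq\textrm{R}\Gamma(\mathfrak{X}_{\etabar},\mathcal{F})$. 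Here the hypothesis that $Y/S$ be proper is what lets one control the higher direct images along the reduction map $\mathfrak{X}_{\etabar}\to\mathfrak{X}_s$, so that the identification holds on the nose rather than up to a boundary term, and it is also what makes the algebraic $\textrm{R}\Gamma(Y,-)$ commute with the nearby-cycles formalism via the proper base-change map (\ref{equation-natural-base-change-proper-nearby-cycles}). Throughout, all four complexes carry the canonical inertia action on nearby cycles, so the isomorphisms are automatically $\Gal_K$-equivariant.

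\textbf{Main obstacle.} The genuinely hard input is Step 3(b): that the \emph{analytic} generic fibre of a finite-type formal $\Oring_K$-scheme computes the correct nearby-cycle cohomology. This is Berkovich's foundational comparison theorem, and it is exactly what forces the quasi-\'etale topology into the picture; by contrast the approximation argument in 3(a), the stalk computations, and the bookkeeping of the Galois action are comparatively routine.
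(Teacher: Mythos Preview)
The paper does not prove this theorem at all: it is stated with attribution to \cite{berkovich1996vanishing}, Thm.\,3.1 and Cor.\,3.5, and then immediately used as a black box in the proof of Corollary~\ref{corollary-galois-monodromy-depends-on-formal-nbd}. There is therefore nothing in the paper to compare your proposal against.

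That said, your outline is a reasonable sketch of the architecture of Berkovich's argument---reduction to stalks, identification of both stalks with Milnor-fibre cohomology, and then the analytic-versus-algebraic comparison---and you have correctly identified that the genuine content lies in Step~3(b), the comparison between algebraic and Berkovich-analytic \'etale cohomology for the generic fibre of a formal scheme. One minor point: your Step~3(a) appeal to Artin--Popescu approximation or Elkik is a bit anachronistic for the proof as Berkovich gives it; his comparison proceeds more directly through the analytic machinery of \cite{berkovich1994vanishing}, and the role of the properness of $Y$ is somewhat different from what you describe (it enters in the passage from Thm.\,3.1 to Cor.\,3.5 in \cite{berkovich1996vanishing}, ensuring the global-sections identification). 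But since the paper treats this as an imported result, these are remarks about Berkovich's paper rather than the present one.
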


\begin{Cor}\label{corollary-galois-monodromy-depends-on-formal-nbd} Suppose $\mathcal{X}$ is a proper flat surface over $S$ with smooth generic fiber $\mathcal{X}_{\eta}$ and special fiber $\mathcal{X}_s$ having exactly one RDP $x \in \mathcal{X}_s(k)$. Then the $\Gal(\etabar/\eta)$-action on $\emph{\textrm{H}}_{\emph{\textrm{et}}}^2(X_{\etabar}, \mathbb{Q}_{\ell})$ depends only on a formal affine neighborhood of $\overline{x}$ in $\mathcal{X}$.
\end{Cor}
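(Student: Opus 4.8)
The plan is to decompose the Galois representation on $\etcoh^2(X_{\etabar}, \mathbb{Q}_\ell)$ using the nearby-cycles spectral sequence for the proper flat model $\mathcal{X} \to S$, and then isolate the only term where inertia can act nontrivially. First I would invoke properness of $\mathcal{X}$: by the proper base change theorem we have $\coh^n(\mathcal{X}_s, \textrm{R}\Psi_{\mathcal{X}}\mathbb{Q}_\ell) \simeq \etcoh^n(X_{\etabar}, \mathbb{Q}_\ell)$ as $\Gal_K$-modules, so it suffices to understand the $I_K$-action on the right-hand side of the Leray--type spectral sequence $E_2^{a,b} = \coh^a(\mathcal{X}_s, \textrm{R}^b\Psi_{\mathcal{X}}\mathbb{Q}_\ell) \Rightarrow \etcoh^{a+b}(X_{\etabar},\mathbb{Q}_\ell)$. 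Because $\mathcal{X}_\eta$ is smooth, the complex $\textrm{R}\Psi_{\mathcal{X}}\mathbb{Q}_\ell$ differs from the constant sheaf $\mathbb{Q}_\ell$ only at the singular point $x$: precisely, the vanishing cycles complex $\textrm{R}\Phi_{\mathcal{X}}\mathbb{Q}_\ell$ is supported on $\{x\}$, and by the discussion in Section~\ref{subsection-classic-nearby-cycles} (the rational-singularity computation, citing \cite{deligne2006groupes}, Expos\'e XVI) we have $\textrm{R}^0\Phi_{\mathcal{X}}\mathbb{Q}_\ell = \textrm{R}^1\Phi_{\mathcal{X}}\mathbb{Q}_\ell = 0$ and $\textrm{R}^i\Psi_{\mathcal{X}}\mathbb{Q}_\ell \simeq \textrm{R}^i\Phi_{\mathcal{X}}\mathbb{Q}_\ell$ for $i>0$, with $\textrm{R}^2\Phi_{\mathcal{X}}\mathbb{Q}_\ell$ a skyscraper $\mathbb{Q}_\ell^r$ at $x$ and $\textrm{R}^i\Phi = 0$ otherwise.

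Next I would feed this into the spectral sequence. The triangle $i^*\mathbb{Q}_\ell \to \textrm{R}\Psi_{\mathcal{X}}\mathbb{Q}_\ell \to \textrm{R}\Phi_{\mathcal{X}}\mathbb{Q}_\ell \xrightarrow{[1]}$ gives a long exact sequence relating $\etcoh^\bullet(X_{\etabar},\mathbb{Q}_\ell)$, $\etcoh^\bullet(\mathcal{X}_s,\mathbb{Q}_\ell)$ and $\coh^\bullet(\mathcal{X}_s, \textrm{R}\Phi_{\mathcal{X}}\mathbb{Q}_\ell)$. Since the last term is the (hyper)cohomology of a complex concentrated in degree $2$ and supported at the point $x$, it equals $\mathbb{Q}_\ell^r$ placed in total degree $2$. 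The $I_K$-action on $\etcoh^0$ and $\etcoh^1$ of $X_{\etabar}$ is therefore forced to be trivial (these agree with $\etcoh^\bullet(\mathcal{X}_s,\mathbb{Q}_\ell)$, on which $I_K$ acts trivially, $\mathcal{X}_s$ being defined over $k$), and in degree $2$ we get an $I_K$-equivariant exact sequence
\begin{equation*}
0 \to \etcoh^2(\mathcal{X}_s,\mathbb{Q}_\ell) \to \etcoh^2(X_{\etabar},\mathbb{Q}_\ell) \to (\textrm{R}^2\Phi_{\mathcal{X}}\mathbb{Q}_\ell)_{\overline{x}} \to \etcoh^3(\mathcal{X}_s,\mathbb{Q}_\ell) \to \cdots
\end{equation*}
in which $I_K$ acts trivially on all terms except possibly the middle two. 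Hence the entire $I_K$-action on $\etcoh^2(X_{\etabar},\mathbb{Q}_\ell)$ is determined by the $I_K$-action on the stalk $(\textrm{R}^2\Phi_{\mathcal{X}}\mathbb{Q}_\ell)_{\overline{x}} \simeq (\textrm{R}^2\Psi_{\mathcal{X}}\mathbb{Q}_\ell)_{\overline{x}}$, since the submodule $\etcoh^2(\mathcal{X}_s,\mathbb{Q}_\ell)$ is fixed and the monodromy is entirely recorded by how the quotient-to-stalk map interacts with inertia. More precisely, the inertia representation on $\etcoh^2(X_{\etabar},\mathbb{Q}_\ell)$ factors as the pushout/extension of the trivial representation $\etcoh^2(\mathcal{X}_s,\mathbb{Q}_\ell)$ by the image of the inertia module $(\textrm{R}^2\Psi_{\mathcal{X}}\mathbb{Q}_\ell)_{\overline{x}}$, so it is controlled by the latter.

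Finally I would apply Berkovich's comparison theorem. By Theorem~\ref{berkovich-theorem}, taking $Y = \{x\}$ (which is finite, hence proper, over $S$) and $\mathfrak{X} = \widehat{\mathcal{X}}_{\{x\}}$ the formal completion of $\mathcal{X}$ along the singular point, we get a $\Gal_K$-equivariant isomorphism $(\textrm{R}^n\Psi_{\mathcal{X}}\mathbb{Q}_\ell)_{\overline{x}} \simeq \textrm{R}^n\psiber(\mathbb{Q}_\ell)_{\overline{x}}$; the right-hand side depends only on the formal scheme $\mathfrak{X}$, i.e.\ on $\widehat{\Oring}_{\mathcal{X},x}$, which is a formal affine neighborhood of $\overline{x}$ in $\mathcal{X}$. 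Combining this with the previous paragraph, the $I_K$-action on $\etcoh^2(X_{\etabar},\mathbb{Q}_\ell)$ depends only on $\widehat{\Oring}_{\mathcal{X},x}$, as claimed. The one point that requires care --- and which I expect to be the main obstacle to a fully rigorous write-up --- is checking that the extension data in the exact sequence above carries no additional $I_K$-information beyond the stalk term: one must verify that the connecting/edge maps in the spectral sequence are $I_K$-equivariant maps of the appropriate sort and that an inertia element acting trivially on both $\etcoh^2(\mathcal{X}_s,\mathbb{Q}_\ell)$ and on the stalk necessarily acts trivially on the extension. This follows because $I_K$ acts unipotently (the monodromy is quasi-unipotent, and on a two-step filtration with trivial graded pieces unipotency plus triviality on graded pieces does not yet give triviality --- so in fact the correct statement is the weaker one that the action is \emph{determined by}, not \emph{equal to}, the stalk action), so the subtlety is really about functoriality: any formal isomorphism $\widehat{\Oring}_{\mathcal{X},x} \simeq \widehat{\Oring}_{\mathcal{X}',x'}$ induces, via Berkovich's functor and the naturality of the vanishing-cycles triangle, a compatible isomorphism of the two exact sequences and hence of the two $I_K$-representations. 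I would spell this naturality out using the functoriality statements (\ref{equation-natural-base-change-proper-nearby-cycles})--(\ref{equation-natural-base-change-smooth-nearby-cycles}) and the $\Gal_K$-equivariance clause in Theorem~\ref{berkovich-theorem}.
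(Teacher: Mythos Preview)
Your proposal is correct and follows essentially the same route as the paper: Berkovich's comparison to pin the nearby-cycles stalk to the formal neighborhood, combined with the nearby-cycles spectral sequence (the paper uses the spectral sequence directly, while you phrase it via the vanishing-cycles triangle long exact sequence, which is equivalent here since $\textrm{R}^1\Psi=0$). One difference worth noting: the paper simply asserts a direct-sum splitting $\etcoh^2(X_{\etabar},\mathbb{Q}_\ell)\simeq (\textrm{R}^2\Psi_{\mathcal{X}}\mathbb{Q}_\ell)_{x}\oplus \coh^2(\mathcal{X}_s,\mathbb{Q}_\ell)$ from degeneration at $E_2$, whereas you are (rightly) more cautious about whether the extension carries additional inertia information and resolve this via naturality of the whole sequence under formal isomorphisms --- your treatment of this point is in fact more careful than the paper's.
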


\begin{proof}
    Let \[\widehat{\Oring}_{\mathcal{X}_s,\overline{x}}\simeq \frac{k[\![x,y,z]\!]}{f(x,y,z)}\] be the completed local ring at the RDP singularity, $f$ the normal form describing the RDP. By Corollary \ref{what-the-local-ring-of-model-looks-like} \[\widehat{\Oring}_{\mathcal{X}, \overline{x}} \simeq \frac{\Oring_K[\![x,y,z]\!]}{F(x,y,z)}, \; \; \; F(x,y,z) \equiv f(x,y,z) \mod p\]
\noindent for some polynomial $F(x,y,z)$ that is the pullback of a miniversal equation of the RDP as in Corollary \ref{what-the-local-ring-of-model-looks-like}. Now by the comparison of Berkovich (Theorem \ref{berkovich-theorem}), we obtain a canonical Galois-equivariant isomorphism \begin{equation}\label{special-case-of-berkovich}
    (\textrm{R}^2\Psi_{\mathcal{X}}\mathbb{Q}_{\ell})_{\overline{x}} \simeq \coh^2((\Spf(\widehat{\Oring}_{\mathcal{X},\overline{x}}))_{\etabar}, \mathbb{Q}_{\ell})
\end{equation}
\noindent where the generic fiber of the formal completion of $\mathcal{X}$ along $\overline{x}$ is $\mathfrak{X}_{\etabar} = \Spf(\widehat{\Oring}_{\mathcal{X},\overline{x}}))_{\etabar}$, a rigid-analytic variety over $\etabar$, and the right-hand side denotes Berkovich's $\ell$-adic cohomology for analytic spaces (\cite{berkovich1996vanishing}, \S3). Now let $\mathcal{Y}\to S$ denote the affine relative surface \[\mathcal{Y}=\Spec\Big(\frac{\Oring_K[x,y,z]}{F(x,y,z)}\Big), \; \; \; \mathcal{Y}_s = \Spec\Big(\frac{k[x,y,z]}{f(x,y,z)}\Big)\]
\noindent having the same RDP at $\overline{y} \to \mathcal{Y}_s$ as $\overline{x} \to \mathcal{X}_s$. We have $\widehat{\Oring}_{\mathcal{Y}, \overline{y}} \simeq \widehat{\Oring}_{\mathcal{X},\overline{x}}$ and so (\ref{special-case-of-berkovich}) gives  $(\textrm{R}^2\Psi_{\mathcal{X}}\mathbb{Q}_{\ell})_{\overline{x}} \simeq (\textrm{R}^2\Psi_{\mathcal{Y}}\mathbb{Q}_{\ell})_{\overline{y}}$ Galois-equivariantly. To compare this with the Galois action $\etcoh^2(\mathcal{X}_{\etabar}, \mathbb{Q}_{\ell})$, we use the nearby cycles spectral sequence:
\[E_2^{ij} = \coh^i(\mathcal{X}_s, \textrm{R}^j\Psi_{\mathcal{X}}\mathbb{Q}_{\ell}) \Longrightarrow \mathbb{H}^{i+j}({\mathcal{X}}_s, \textrm{R}\Psi_{\mathcal{X}}\mathbb{Q}_{\ell}) \simeq \coh^{i+j}(\mathcal{X}_{\etabar}, \mathbb{Q}_{\ell})\]

Since $\textrm{R}^2\Psi_{\mathcal{X}}\mathbb{Q}_{\ell}$ is a skyscraper sheaf supported on $\overline{x}$, the $E_2$-page is
\begin{center}
    \begin{tikzcd}
        \coh^0(\mathcal{X}_s, \textrm{R}^2\Psi_{\mathcal{X}}\mathbb{Q}_{\ell}) \arrow[drr, "d_2"] & 0 & 0 \\ 0 & 0 & 0 \\ \coh^0(\mathcal{X}_s,\mathbb{Q}_{\ell}) & \coh^1(\mathcal{X}_s, \mathbb{Q}_{\ell}) & \coh^2(\mathcal{X}_s, \mathbb{Q}_{\ell})
    \end{tikzcd}
\end{center}

\noindent hence the spectral sequence degenerates at $E_2$ and $E_2^{p,q} = \grF_p \coh^{p+q}(\mathcal{X}_{\etabar}, \mathbb{Q}_{\ell})$ for the abuttment filtration $F^{\bullet}$. Looking at the nontrivial graded pieces yields $E_2^{0,2} = \coh^2(\mathcal{X}_{\etabar}, \mathbb{Q}_{\ell})/E_2^{2,0}$ i.e. we obtain $\etcoh^2(\mathcal{X}_{\etabar}, \mathbb{Q}_{\ell}) \simeq (\textrm{R}^2\Psi_{\mathcal{X}}\mathbb{Q}_{\ell})_x\oplus \coh^2(\mathcal{X}_s, \mathbb{Q}_{\ell})$ with trivial inertia action on $\coh^2(\mathcal{X}_s, \mathbb{Q}_{\ell})$ and the induced monodromy $\Gal(\etabar/\eta)$-action on the stalks of $\textrm{R}\Psi_{\mathcal{X}}\mathbb{Q}_{\ell}$; the latter claim follows from the Galois-equivariance of the $E_2$-page of the spectral sequence.\end{proof}

\subsection{Nearby cycles on a Grothendieck topos.}\label{section-grothendieck-topos} Since we will need to describe nearby cycles of $\ell$-adic sheaves over bases of dimension $>1$, we collect here their general formalism and properties in amenable situations. In order to do this, we will need the language of oriented toposes; for a modern English reference, see (\cite{illusie2017around}, \S1). 

We retain the conventions of Section \ref{subsection-classic-nearby-cycles} and consider $S$-schemes $f:X\to S$, $g: Y\to S$. Bounded derived category of \'etale sheaves $\textrm{D}^b(X)$ is assumed to have coefficient ring $\mathbb{Z}/\ell^n$, but the statements below will also work for $\mathbb{Q}_{\ell}$-coefficients.

\begin{Def} The left oriented 2-product topos $X\topos_S Y$ is the Grothendieck topos defined in a universal way by the data of 2-commutative diagram\vspace{-2.5em}
\begin{center}
\begin{equation}\label{topos-diagram}
    \begin{tikzcd}
        X \overset{\leftarrow}{\times}_S Y \arrow[d, "p_1"] \arrow[r, "p_2"] & Y \arrow[d, "g"] \arrow[dl, Rightarrow, "\tau"] \\ X \arrow[r, "f"] & S
    \end{tikzcd}
    \end{equation}
\end{center}
\noindent where $X, Y$ and $S$ denote the \'etale toposes associated to the schemes and $\tau: g \circ p_2 \to f \circ p_1$ is a 2-morphism. From these data we get a defining site for $X \topos_S Y$ with the following covering families: for maps $U\to V \leftarrow W$ \'etale over $X \to S \leftarrow Y$ put $\{U_i \to V \leftarrow W\}$ a covering of $U \to V \leftarrow W$ where $\{U_i\} \to U$ is a covering, and $\{U \to V \leftarrow W_i\}$ a covering of $U \to V \leftarrow W$ where $\{W_i\} \to W$ is a covering. The third type of families is given by coverings $\{U \to V' \leftarrow W\}$ of $U \to V \leftarrow W$ for which the induced square
\begin{center}
    \begin{tikzcd}
        & V' \arrow[d]  & W' \arrow[d] \arrow[l] \arrow[dl, phantom, "\square"] \\ U \arrow[ur] \arrow[r] & V  & W \arrow[l]
    \end{tikzcd}
\end{center}
\noindent is cartesian; note the individual maps $V' \to V$, $W' \to W$ need not be coverings in this case (\cite{illusie2017around}, 1.1.1).

Maps between such oriented toposes $X' \topos_{S'} Y' \longrightarrow X\topos_S Y$ are defined by the data of maps $\{X' \to X, S' \to S, Y' \to Y\}$ and appropriate $2$-morphisms between these maps (\cite{illusie2017around}, 1.1.2).   
\end{Def}

A special case is is $(Y,g) = (S, \textrm{id})$, where the oriented product $X \topos_S S$ is called the \textit{vanishing topos} of $X/S$. The points of the vanishing topos consist of triples $(\overline{x}, \etabar, \textrm{sp})$ where $\overline{x} \to X, \etabar \to S$ are geometric points together with a specialization morphism $\textrm{sp}:\etabar \to S_{(f(x))}$.

\begin{Def}[Nearby cycles over general bases]\hfill
\begin{enumerate}[label=(\roman*)]\label{definition-of-nearby-cycles-over-general-bases}

\item There exists a unique morphism $\Psi: X \longrightarrow X\topos_S S$ compatible with diagram (\ref{topos-diagram}), by the universal property of products. The derived pushforward \[\textrm{R}\Psi_f = \textrm{R}\Psi_*: \textrm{D}^b(X) \longrightarrow \textrm{D}^b(X\topos_S S)\] 
is the \textit{nearby cycles functor} relative to $f$.

\item Let $(\overline{x}, \etabar, \textrm{sp})$ be a point of $X\topos_SS$ with $\overline{x}$ over a geometric point $\overline{s} \to S$, $\eta \in S$ and $\textrm{sp}: \etabar \to S_{(\overline{s})}$ a fixed specialization. There exists a unique map $S_{(\etabar)} \to S_{(\overline{s})}$ compatible with the specialization and map $\etabar \to S_{(\etabar)}$ (\cite[\href{https://stacks.math.columbia.edu/tag/08HR}{Tag 08HR}]{stacks-project}). We therefore have natural inclusions
\[i_s: X_{\overline{s}} \xhookrightarrow{} X_{(\overline{s})} = X \times_S S_{(\overline{s})}, \; \; j_{\eta}^s: X_{(\etabar)} = X\times_S S_{(\etabar)} \to X_{(\overline{s})}\]
and $\textrm{R}\Psi_{\eta}^s = i_s^*\textrm{R}j_{\eta *}^s: \textrm{D}^b(X_{(\etabar)}) \longrightarrow \textrm{D}^b(X_{\overline{s}})$ is the \textit{sliced nearby cycles functor}.
\end{enumerate}\end{Def}

The two nearby cycles are related as follows: one identifies topos $X_{\overline{s}}\topos_S \etabar$ as sheaves on $X_{\overline{s}}$ together with a $\Gal(\etabar/\eta)$-action, and by functoriality we have a morphism of toposes\[\overset{\leftarrow}{i}_{(s,\eta)}: X_{\overline{s}} = X_{\overline{s}}\topos_S \etabar \longrightarrow X_{\overline{s}}\topos_S S \overset{\overset{\leftarrow}{i_s}}{\longrightarrow} X\topos_S S\]
Then $\textrm{R}\Psi_{\eta}^s = (\overset{\leftarrow}{i}_{(s,\eta)})^{*}\textrm{R}\Psi_f$. See (\cite{illusie2017around}, \S1.3 and \S1.4) for details. We mention in passing that one can also define a vanishing cycles functor $\textrm{R}\Phi_f$ in the topos setting, but we will not use it; the construction is given in (\textit{loc.\,cit.}, 1.2.4).

Nearby cycles in this generality still satisfy functorial properties. An important example is the case of $S$--schemes $f: X \to S$ and $g: Y \to S$ and a map $h: X \to Y$ of $S$--schemes. Then the induced commutative diagram
\begin{center}\begin{equation}\label{diagram-nearby-cycle-functoriality}
    \begin{tikzcd}
        X \arrow[d, "h"] \arrow[r, "\Psi_f"] & X\topos_S S \arrow[d, "\overset{\leftarrow}{h}"] & X_{\overline{s}} \arrow[l, swap, "\overset{\leftarrow}{i}_{X,(s,\eta)}"] \arrow[d, "h_{\overline{s}}"] \\ Y \arrow[r, "\Psi_g"] & Y\topos_S S & Y_{\overline{s}} \arrow[l, "\overset{\leftarrow}{i}_{Y,(s,\eta)}"]
    \end{tikzcd}\end{equation}
\end{center}
\noindent yields $\textrm{R}\Psi_g(\textrm{R}h_*\mathcal{F}) \simeq \textrm{R}\overset{\leftarrow}{h}_*\textrm{R}\Psi_f\mathcal{F}$ for $\mathcal{F} \in \textrm{D}^b(X)$; furthermore, if $h$ is \textit{proper}, then formation of $\textrm{R}\overset{\leftarrow}{h}_*$ commutes with base--change on $X$ and $S$, so that in particular diagram (\ref{diagram-nearby-cycle-functoriality}) induces an isomorphism (see \cite{orgogozo2006}, Lemme 8.1.1)
\begin{equation}\label{proper-base-change-for-nearby-cycles-general}
    \textrm{R}\Psi_{g,\eta}^s(\textrm{R}h_*\mathcal{F}) = (\overset{\leftarrow}{i}_{Y,(s,\eta)})^*\textrm{R}\Psi_g(\textrm{R}h_*\mathcal{F}) \simeq \textrm{R}(h_{\overline{s}})_*(\overset{\leftarrow}{i}_{X,(s,\eta)})^*\textrm{R}\Psi_f\mathcal{F} = \textrm{R}(h_{\overline{s}})_*\textrm{R}\Psi_{f,\eta}^s\mathcal{F}
\end{equation}

There are natural generalizations of $\ell$--adic Milnor fibers to this setting (cf.\,Section \ref{subsection-classic-nearby-cycles}). Given a point $(\overline{x}, \etabar, \textrm{sp})$ of $X\topos_S S$ with $\overline{x}$ over $\overline{s}$ and a sheaf $\mathcal{F} \in \textrm{D}^b(X)$, the stalks of nearby cycles may be computed via (\cite{artin1973theorie}, Expos\'e VII, \S5.8) as
\begin{equation}\label{nearby-cycles-stalks-milnor-tubes}\textrm{R}\Psi_{\eta}^s(\mathcal{F})_{\overline{x}} \simeq \textrm{R}\Psi_f(\mathcal{F})_{(\overline{x},\etabar,\textrm{sp})} \simeq \textrm{R}\Gamma(X_{(\overline{x})}\times_{S_{(\overline{s})}}S_{(\etabar)}, \mathcal{F})\end{equation}
The scheme $X_{(\overline{x})}\times_{S_{(\overline{s})}}S_{(\etabar)}$ is called the \textit{Milnor tube} at $(\overline{x}, \etabar, \textrm{sp})$; it contains Milnor fiber $X_{(\overline{x})}\times_{S_{(\overline{s})}}\etabar$ as a closed subscheme. 
\begin{ex} We relate these constructions to the classical nearby cycles. Let $S$ be a strictly henselian trait with closed point $\overline{s}$, generic point $\eta$ and geometric generic point $\etabar$. Note $S_{(\overline{s})} = S$ and $S_{(\etabar)} = \etabar$. For $f:X \to S$ of finite type we have
\[X\topos_S S = (X_{\eta}\topos_S S) \cup (X_s\topos_S S) = X_{\eta} \cup X_s \cup (X_s \topos_S \etabar) \]
where the last (nontrivial) topos on the right is identified with sheaves on $X_s$ together with a $\Gal(\overline{\eta}/\eta)$-action. The classical nearby cycles $\textrm{R}\Psi_X(\mathcal{F})$ for $\mathcal{F} \in \textrm{D}^b(X_{\eta}, \Lambda)$ are 
\[\textrm{R}\Psi_X(\mathcal{F}_{\etabar}) = \textrm{R}\Psi_f(\mathcal{F})\lvert_{X_s\topos_S \etabar}\]
and are therefore identified with the sliced nearby cycles $\textrm{R}\Psi_{\eta}^s(\mathcal{F})$. The restriction map is induced from $X_s \to X$ and $\etabar \to S$. Moreover, for a geometric point $\overline{x} \to X_s$, the Milnor tube $X_{(\overline{x})}\times_S S_{(\etabar)} \simeq X_{(\overline{x})}\times_S \etabar$ is identified with the Milnor fiber.
\end{ex}

In general, nearby cycles $\textrm{R}\Psi_f\mathcal{F}$ need not be well-behaved; for example, it may not be constructible. Furthermore, base-changing via $S' \to S$ yields cartesian squares
\begin{center}
    \begin{tikzcd}
        X' \arrow[r, "g"] \arrow[d, "f'"] & X \arrow[d, "f"] & X' \arrow[d, "\Psi_{f'}"] \arrow[r, "g"] & X \arrow[d, "\Psi_f"]  \\ S' \arrow[r] & S & X'\topos_{S'}S' \arrow[r, "\overset{\longleftarrow}{g}"] & X\topos_S S
    \end{tikzcd}
\end{center}
\noindent and the associated base-change map \[(\overset{\longleftarrow}{g})^*\textrm{R}\Psi_f\mathcal{F} \longrightarrow \textrm{R}\Psi_{f'}(g^*\mathcal{F})\]
is \textit{not} always an isomorphism: see (\cite{illusie2017around}, 1.7(d)) for a classical example of Deligne which shows that for the origin blowup $f:\widetilde{\mathbb{A}}_k^2 \to \mathbb{A}_k^2$, $\textrm{R}\Psi_f\mathbb{Q}_{\ell}$ is not constructible and does not commute with base-change on $\mathbb{A}^2_k$. Moreover, for any point in the exceptional divisor and a nonzero point on $\mathbb{A}^2_k$, the associated Milnor tube is not of finite type.

Nevertheless, a special case where all the above assertions are true is the following.

\begin{Thm}[Deligne, \cite{illusie2017around} 1.7.(c)]\label{deligne-thm-psi-good} Let $f: X \to S$ be separated and finite-type, and $\mathcal{F} \in \textrm{\emph{D}}^b_c(X)$. Let $Z$ be the complement of the largest open set $U \subseteq X$ so that $f\lvert_U$ is universally locally acyclic over $S$. If $Z \to S$ is quasifinite, then $\textrm{\emph{R}}\Psi_f\mathcal{F}$ is constructible and its formation commutes with any base-change $S' \to S$.
\end{Thm}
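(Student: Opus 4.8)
The plan is to isolate the ``interesting'' part of $\mathrm{R}\Psi_f\mathcal{F}$ as the vanishing cycles over the general base $S$, to observe that these are supported over the quasifinite locus $Z$, and then to compute that part by reducing — via Zariski's Main Theorem — to the nearby cycles of a \emph{finite} $S$-scheme, which are as well-behaved as one could hope. Concretely, I would first pass to the vanishing cycles functor $\mathrm{R}\Phi_f$ on $X\topos_S S$ (defined as the cone of the canonical map $p_1^*\mathcal{F}\to\mathrm{R}\Psi_f\mathcal{F}$, where $p_1\colon X\topos_S S\to X$ is the first projection). Universal local acyclicity of $f$ relatively to $\mathcal{F}$ on $U=X\setminus Z$ is exactly the vanishing of $\mathrm{R}\Phi_f\mathcal{F}$ over $U\topos_S S$, so $\mathrm{R}\Phi_f\mathcal{F}$ is supported on the closed subtopos $Z\topos_S S$; writing $\overset{\leftarrow}{i}$ for the closed immersion $Z\topos_S S\hookrightarrow X\topos_S S$ induced by $i\colon Z\hookrightarrow X$, this says $\mathrm{R}\Phi_f\mathcal{F}\simeq \mathrm{R}\overset{\leftarrow}{i}_*\overset{\leftarrow}{i}^*\mathrm{R}\Phi_f\mathcal{F}$. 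Since $p_1^*\mathcal{F}$ is manifestly bounded constructible and compatible with every base change, the exact triangle relating $p_1^*\mathcal{F}$, $\mathrm{R}\Psi_f\mathcal{F}$ and $\mathrm{R}\Phi_f\mathcal{F}$ reduces the theorem to the same assertions for $\overset{\leftarrow}{i}^*\mathrm{R}\Phi_f\mathcal{F}$ on $Z\topos_S S$; boundedness should then fall out because $Z\to S$ has relative dimension $0$ — this is precisely what breaks in Deligne's blow-up example, where the non-acyclicity locus is a $\mathbb{P}^1$ over a point.

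Next I would localize the Milnor tubes onto $Z$. For a point $(\overline{x},\overline{t},\mathrm{sp})$ of $Z\topos_S S$ with $\overline{x}$ over a geometric point $\overline{s}\to S$, the stalk of $\overset{\leftarrow}{i}^*\mathrm{R}\Phi_f\mathcal{F}$ is the cone of $\mathcal{F}_{\overline{x}}\to \mathrm{R}\Gamma\!\bigl(X_{(\overline{x})}\times_{S_{(\overline{s})}}S_{(\overline{t})},\mathcal{F}\bigr)$ by the stalk formula \eqref{nearby-cycles-stalks-milnor-tubes}. The crucial input is that, by the universal local acyclicity of $f$ along the open complement $X_{(\overline{x})}\setminus Z_{(\overline{x})}$, this ``ULA part'' of the Milnor tube contributes nothing to the cone; I would make this precise via the localization triangle for $\mathrm{R}\Phi_f$ attached to $Z\hookrightarrow X\hookleftarrow U$ over a general base, together with the vanishing $\mathrm{R}\Phi_{f|_U}(\mathcal{F}|_U)=0$ (stable under every base change $S'\to S$). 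The upshot is that $\overset{\leftarrow}{i}^*\mathrm{R}\Phi_f\mathcal{F}$ depends only on an étale neighbourhood of $Z$ in $X$, hence is computed by the nearby cycles of the quasifinite $S$-scheme $Z$.

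It then remains to treat the finite case. Since $Z\to S$ is separated, of finite type and quasifinite, Zariski's Main Theorem factors it as an open immersion $j\colon Z\hookrightarrow \overline{Z}$ followed by a finite morphism $\overline{g}\colon \overline{Z}\to S$. Open immersions are smooth and closed immersions are proper, so the smooth- and proper-base-change isomorphisms for nearby cycles over a general base — the latter being the isomorphism \eqref{proper-base-change-for-nearby-cycles-general} already recorded above — reduce everything to computing $\mathrm{R}\Psi_{\overline{g}}\mathcal{G}$ for a constructible sheaf $\mathcal{G}$ on $\overline{Z}$ (one may take $\mathcal{G}=j_!(i^*\mathcal{F})$ after the excision of the previous step, using that $j$ is smooth). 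For a finite morphism the Milnor tubes are finite over strictly henselian local schemes, hence are disjoint unions of strictly henselian local schemes, so their cohomology is just a direct sum of stalks; consequently $\mathrm{R}^q\Psi_{\overline{g}}\mathcal{G}=0$ for $q>0$, $\mathrm{R}^0\Psi_{\overline{g}}\mathcal{G}$ is visibly constructible, and its formation commutes with arbitrary base change. Tracing these identifications back through the previous two steps yields the constructibility, boundedness and base-change compatibility of $\mathrm{R}\Psi_f\mathcal{F}$.

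The hard part is the localization step: proving \emph{uniformly} — not merely stalkwise, and stably under every base change $S'\to S$ — that the universal local acyclicity of $f$ away from $Z$ permits shrinking the Milnor tube $X_{(\overline{x})}\times_{S_{(\overline{s})}}S_{(\overline{t})}$ down to its quasifinite part $Z_{(\overline{x})}\times_{S_{(\overline{s})}}S_{(\overline{t})}$ without altering $\mathrm{R}\Phi_f\mathcal{F}$. This demands a careful construction of the localization triangle for vanishing cycles over a general base and attention to the induced inertia/monodromy action, so that the final identification is equivariant. Granting this excision, the reductions in the first and third steps are essentially formal consequences of the six-functor formalism and of the proper/smooth base-change properties of $\mathrm{R}\Psi$ already quoted in the text.
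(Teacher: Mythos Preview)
The paper does not prove this statement; it is cited from \cite{illusie2017around} as a result of Deligne, so there is no in-paper argument to compare against. On its own merits: your reduction to $\textrm{R}\Phi_f\mathcal{F}$ and the observation that it is supported on $Z\topos_S S$ are correct, as is your discussion of nearby cycles for a genuinely finite morphism. The problem is the bridge between them, and it is not merely ``the hard part'' --- the proposed shrinking is false.

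You want $\overset{\leftarrow}{i}{}^*\textrm{R}\Phi_f\mathcal{F}$ to be computed by $\textrm{R}\Psi_{\overline g}\mathcal{G}$ for $\mathcal{G}=j_!(i^*\mathcal{F})$ on $\overline Z$, equivalently to replace the Milnor tube $X_{(\overline x)}\times_{S_{(\overline s)}}S_{(\overline t)}$ by $Z_{(\overline x)}\times_{S_{(\overline s)}}S_{(\overline t)}$. Take the nodal family $X=V(xy-\pi)\subset\mathbb{A}^2_S$ over a strictly henselian trait: the non-ULA locus $Z$ is the origin of the special fiber, a single reduced point already finite over $S$, so $\overline Z=Z$ and $\mathcal{G}=\mathbb{Q}_\ell$ on that point. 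The actual vanishing cycle at the node is $\mathbb{Q}_\ell(-1)$ in degree $+1$: the Milnor tube $X_{(\overline x)}\times_S\etabar$ is an $\ell$-adic annulus with $\coh^0=\mathbb{Q}_\ell$ and $\coh^1=\mathbb{Q}_\ell(-1)$. Your recipe gives $Z\times_S\etabar=\varnothing$, hence $\textrm{R}\Psi_{\overline g}\mathbb{Q}_\ell=0$ over $\etabar$ and $\textrm{R}\Phi_{\overline g}\mathbb{Q}_\ell=\mathbb{Q}_\ell[1]$ --- wrong degree, no Tate twist. The localization triangle you invoke only reconfirms that $\textrm{R}\Phi_f\mathcal{F}$ has no support over $U$; universal local acyclicity is a \emph{pointwise} condition on $U$ and says nothing about the global cohomology $\textrm{R}\Gamma(U_{(\overline x)}\times S_{(\overline t)},\mathcal{F})$ of the open part of the Milnor tube, so you cannot excise it.

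What quasi-finiteness actually delivers is that $Z\cap X_{(\overline x)}$ --- a local scheme, separated and quasi-finite over the strictly henselian $S_{(\overline s)}$ --- is \emph{finite} over $S_{(\overline s)}$. Deligne's argument must use this to control how the full Milnor-tube cohomology varies with $\overline t$ without discarding the ambient $X$; no sheaf on $Z$ alone carries that transverse information.
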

\begin{Rem}\label{remark-on-psi-goodness}We call $(f, \mathcal{F})$ $\Psi$-\textit{good} in this case. It follows from the theorem that the sliced nearby cycles $\textrm{R}\Psi_{\eta}^s(\mathcal{F})$ are also constructible and commute with base-change; in particular the cohomology of Milnor tubes as computed in (\ref{nearby-cycles-stalks-milnor-tubes}) restricts isomorphically to the cohomology of Milnor fibers:
\begin{equation}\label{equation-restriction-isom-of-milnor-tubes}(\textrm{R}\Psi_f\mathcal{F})_{(\overline{x},\etabar, \textrm{sp})} \overset{\sim}{\longrightarrow} \textrm{R}\Gamma(X_{(\overline{x})}\times_{S_{(\overline{s})}}\etabar, \mathcal{F})\end{equation}
\end{Rem}

\subsection{Relative perverse sheaves and the Grothendieck alteration.}\label{section-relative-perversity} We retain the conventions of Section \ref{subsection-classic-nearby-cycles}. In this section we will discuss a recent notion of \textit{relative} perverse $t$-structures on $\textrm{D}^b_c(X)$ for $S$--schemes $X$, which is compatible (in a sense) with the absolute perverse $t$-structures on the geometric fibers i.e.\;$\textrm{D}^b_c(X_{\etabar})$ and $\textrm{D}^b_c(X_{\overline{s}})$. As before we may assume the underlying coefficient field to be $\mathbb{Q}_{\ell}$. 

Following \cite{hansen2021relative}, we define a full subcategory of $\textrm{D}^b(X)$
\begin{equation}\label{relative-perversity-tstructure}
    ^{p/S}\textrm{D}(X)^{\leq 0} \coloneqq \{\mathcal{F} \in \textrm{D}^b(X) \mid \mathcal{F}\lvert_{X_{\overline{s}}} \in {}^p\textrm{D}(X_{\overline{s}})^{\leq 0} \; \; \textrm{for all geometric points} \; \; \overline{s} \to S\}
\end{equation}
\noindent where $({}^p\textrm{D}^{\leq 0}, {}^p\textrm{D}^{\geq 0})$ denotes the absolute perverse $t$-structure for schemes over fields. In the particular case of $S = \Spec(\Oring_K)$, one just restricts $\mathcal{F}$ to $X_{\etabar}$ and $X_s$ and checks perversity in the usual absolute sense.

It can be shown that (\ref{relative-perversity-tstructure}) forms the connective part of a $t$-structure on $\textrm{D}^b(X)$ and the goal is then to show that the coconnective part $^{p/S}\textrm{D}(X)^{\geq 0}$ has the required description in analogy with (\ref{relative-perversity-tstructure}) and further induces a $t$-structure on the bounded derived category of constructible complexes $\textrm{D}^b_c(X)$.

\begin{Thm}[\cite{hansen2021relative}, Thm.\,6.1]\label{hansen-sholze-thm} Let $f: X \to S$ be a finite-type $S$--scheme. The full subcategories $(^{p/S}\textrm{\emph{D}}^{\leq 0}, ^{p/S}\textrm{\emph{D}}^{\geq 0})$ of $\textrm{\emph{D}}^b(X)$ define a unique $t$-structure on $\textrm{\emph{D}}^b(X)$, the \emph{relative perverse} $t$-\emph{structure}, so that \begin{enumerate}[label=\emph{(\roman*)}]

\item A sheaf $\mathcal{F} \in \textrm{\emph{D}}^b(X)$ is in $^{p/S}\textrm{\emph{D}}(X)^{\leq 0}$, resp. $^{p/S}\textrm{\emph{D}}(X)^{\geq 0}$, if and only if $\mathcal{F}\lvert_{X_{\overline{s}}} \in {}^p\textrm{\emph{D}}(X_{\overline{s}})^{\leq 0}$ and $\mathcal{F}\lvert_{X_{\etabar}} \in {}^p\textrm{\emph{D}}(X_{\etabar})^{\leq 0}$, resp. $\mathcal{F}\lvert_{X_{\overline{s}}} \in {}^p\textrm{\emph{D}}(X_{\overline{s}})^{\geq 0}$ and $\mathcal{F}\lvert_{X_{\etabar}} \in {}^p\textrm{\emph{D}}(X_{\etabar})^{\geq 0}$.
\item For any morphism $g: \widetilde{S} \to S$ with induced base-change $\widetilde{g}: X_{\widetilde{S}} \to X$, pullback functor $\widetilde{g}^*: \textrm{\emph{D}}^b(X) \to \textrm{\emph{D}}^b(X_{\widetilde{S}})$ is $t$-exact with respect to the relative perverse $t$-structure, hence commutes with the associated truncations $(\tau^{\leq 0}, \tau^{\geq 0})$.
\item For any open and closed decomposition $j: U \xhookrightarrow{} X$ and $i: Z \xhookrightarrow{} X$ into $S$-schemes, the relative perverse $t$-structure on $\textrm{\emph{D}}^b(X)$ is obtained by gluing (recollement) from the relative perverse $t$-structures on $\textrm{\emph{D}}^b(U)$ and $\textrm{\emph{D}}^b(Z)$.
\end{enumerate}
\end{Thm}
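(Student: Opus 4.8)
The plan is to follow the strategy of Hansen--Scholze: reduce to a Noetherian situation, construct the connective half of the $t$-structure by dévissage along a stratification of $S$, and then prove the symmetric fiberwise description of the coconnective half. First I would record that all the relevant data behave well under limits of base schemes: both the triangulated category $\derivedc(X)$ and the conditions ``$\mathcal{F}\lvert_{X_{\overline{s}}}\in{}^{p}\textrm{D}(X_{\overline{s}})^{\leq 0}$ (resp. ${}^{\geq 0}$) for all geometric $\overline{s}\to S$'' spread out from a finite-type situation over an excellent Noetherian ring, and the absolute perverse $t$-structure is insensitive to base field extensions (faithfully flat base change). This lets me assume $S$ is Noetherian, excellent, of finite Krull dimension, and then, by Noetherian induction on $S$, reduce to the case $\dim S\leq 1$, i.e. a trait or Dedekind scheme; the case $\dim S=0$ is the classical BBD perverse $t$-structure, which I take as known.

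The core construction is to show that $^{p/S}\textrm{D}(X)^{\leq 0}$, defined fiberwise as in \eqref{relative-perversity-tstructure}, is the connective part of a $t$-structure on $\derivedc(X)$, with truncation functors. Closure under extensions and under $[1]$ is immediate from the fiberwise definition and the corresponding properties over fields. The existence of $\tau^{\leq 0}$ (equivalently, a right adjoint to the inclusion) I would obtain by dévissage: given $\mathcal{F}\in\derivedc(X)$, use generic base change and the constructibility statement of Theorem~\ref{deligne-thm-psi-good} to choose a finite stratification $S=\bigsqcup_\alpha S_\alpha$ such that $f$ restricted over each $S_\alpha$ is universally locally acyclic with respect to $\mathcal{F}$. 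Over such a stratum nearby cycles are ``trivial'', relative perversity coincides with perversity on a single geometric fibre pulled back along the (smooth) structure map, and the fiberwise perverse truncation commutes with base change; I would then glue these stratum-wise truncations in the closure order using the recollement formalism (BBD gluing of $t$-structures along an open--closed decomposition of $S$, pulled back to $X$). This simultaneously establishes part (iii). Part (ii) then follows formally: for any $g\colon\widetilde S\to S$ and any geometric point $\overline{t}\to\widetilde S$ lying over $\overline{s}\to S$, one has $(\widetilde g^{\ast}\mathcal{F})\lvert_{(X_{\widetilde S})_{\overline{t}}}\simeq(\mathcal{F}\lvert_{X_{\overline{s}}})\lvert_{(X_{\widetilde S})_{\overline{t}}}$, and perversity is preserved by the field extension $X_{\overline{t}}\to X_{\overline{s}}$, so $\widetilde g^{\ast}$ preserves both halves of the relative perverse $t$-structure, hence is $t$-exact.

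The main obstacle — and where the real work lies — is part (i), the symmetric characterization of the coconnective half: the $t$-structure produced by the recollement above has, a priori, coconnective objects cut out by conditions involving $i_\alpha^{!}$ for the closed strata, and one must show this is equivalent to the condition $\mathcal{F}\lvert_{X_{\overline{s}}}\in{}^{p}\textrm{D}(X_{\overline{s}})^{\geq 0}$ for all geometric $\overline{s}$. By the dévissage this reduces to the following statement over a strictly henselian trait $S$ with $j\colon X_\eta\hookrightarrow X$ open and $i\colon X_s\hookrightarrow X$ closed: if $j^{\ast}\mathcal{F}$ and $i^{\ast}\mathcal{F}$ are perverse, then $i^{!}\mathcal{F}$ is perversely coconnective (in the appropriate amplitude). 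Here I would use the local cohomology triangle $i^{!}\mathcal{F}\to i^{\ast}\mathcal{F}\to i^{\ast}\mathrm{R}j_{\ast}j^{\ast}\mathcal{F}\overset{[1]}{\longrightarrow}$, identify $i^{\ast}\mathrm{R}j_{\ast}j^{\ast}\mathcal{F}$ with (the inertia-invariants of) the nearby cycles complex of $j^{\ast}\mathcal{F}$, and invoke Gabber's theorem that nearby cycles are perverse $t$-exact up to the standard shift. Combined with the perversity of $i^{\ast}\mathcal{F}$, this pins down the cohomological amplitude of $i^{!}\mathcal{F}$. Carefully tracking the shifts, and handling arbitrary relative dimension rather than just relative surfaces, is the delicate bookkeeping; a complementary input needed to keep the gluing self-consistent inside $\derivedc(X)$ is a relative Artin vanishing statement, namely that $\mathrm{R}f_{!}$ for $f$ affine is right $t$-exact for the relative perverse $t$-structure (up to shift), which one proves by reduction to the absolute Artin vanishing on geometric fibres together with proper base change for nearby cycles.
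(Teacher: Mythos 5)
This result is cited in the paper directly to Hansen--Scholze (Theorem 6.1) and is \emph{not} reproved there; the only accompanying commentary is the one-sentence remark that parts (ii) and (iii) are formal consequences of (i). There is therefore no internal argument of the paper to compare your sketch against -- it is a black-box input.

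As a reconstruction of the cited source, your plan does hit the correct architecture: fiberwise definition of the connective half; spreading out to a Noetherian excellent base of finite dimension; dévissage of $S$ along loci of universal local acyclicity (Hansen--Scholze use their own ULA theory here, of which the Deligne generic-constructibility statement you cite is a close cousin); gluing of $t$-structures along open--closed decompositions of $S$ pulled back to $X$; and, as the crux, perverse $t$-exactness of nearby cycles over a trait (Gabber) to upgrade the asymmetric $i^{!}$-condition that recollement produces to the symmetric $i^{\ast}$-condition demanded by (i). Identifying a relative Artin vanishing statement as a complementary ingredient is also correct. Two cautionary remarks. First, a concrete slip: Artin vanishing for an affine $f$ says $\mathrm{R}f_{\ast}$ is \emph{right} perverse $t$-exact and, dually, $\mathrm{R}f_{!}$ is \emph{left} perverse $t$-exact; you assert $\mathrm{R}f_{!}$ is right $t$-exact, and the ``up to shift'' hedge does not repair the direction. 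Second, the part of your plan that remains genuinely a plan is the Noetherian induction when $\dim S\geq 2$: over such a base one cannot reduce the symmetric characterization of the coconnective half to a single trait (a geometric point of $S$ may specialize to several lower-dimensional strata along different chains), and reconciling the $i^{!}_{\alpha}$-conditions from recollement with the fiberwise $i^{\ast}_{\overline{s}}$-condition across all of $S$ is not mere bookkeeping of shifts -- it is where the actual technical work in Hansen--Scholze is concentrated. Your outline is aware of the issue but does not yet resolve it.
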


We note that (ii) and (iii) are formal consequences of (i), once the perverse $t$-structure properties have been established. We also note that if $S = \Spec(k)$ is a field and $X$ is a finite-type $k$-scheme, the relative perverse $t$-structure is identified with the usual (middle) perverse $t$-structure on $\textrm{D}^b(X)$, while if $X=S$ then the relative perverse $t$-structure coincides with the standard $t$-structure on $\textrm{D}^b(S)$. In view of (ii), relative perverse sheaves on $X$ pull back to absolute perverse sheaves on $X_{\etabar}$ and $X_s$.

We apply the notion of relative perversity to the setting of Chevalley algebras $\g$ over $S = \Spec(\Oring_K)$ and their associated bundles $\gres$. Recall the setting of Sections \ref{the-adjoint-quotient-subsection} and \ref{relative-grothendieck-alteration}, so that in particular $\g$ is a simple simply-laced Chevalley $\Oring_K$-algebra and $p=\textrm{char}(k)$ is very good for $\g$. The Grothendieck alteration $\pi: \gres \longrightarrow \g$ restricts to a Galois $W$-torsor $\pi^{\textrm{rs}}: \gres^{\textrm{rs}} \longrightarrow \g^{\textrm{rs}}$ over the regular semisimple locus (Proposition \ref{proposition-relative-grothendieck-resolution-diagram}).

For Lie algebras $\g$ over algebraically closed field $k$, the above fact is classically known and furthermore it implies that the (absolute) perverse sheaf \[\mathcal{F} = \textrm{R}\pi_*\mathbb{Q}_{\ell}[\dim \g]\]
is equipped with a $W$-action, constructed by Borho--MacPherson (who attribute it originally to Lusztig) in the following way: $\pi$ is a \textit{small} morphism so in particular $\mathcal{F}$ is an IC sheaf, i.e. the intermediate extension of local system $\mathcal{G} = \pi^{\textrm{rs}}_*\mathbb{Q}_{\ell}$ (\cite{borho}, \S1.8). Sheaf $\mathcal{G}$ is $W$-equivariant and $W$ acts by ``deck transformations'', so by the functoriality of the intermediate extension functor, this $W$-action \textit{uniquely} extends to $\mathcal{F}$ (\textit{loc. cit.}, \S2.6 Proposition\footnote{Note that Borho--MacPherson state a more general version here, in terms of a parabolic subgroup $P \subset G$; we need only take $P$ to be a Borel for our purposes.}).

We next show that this result of Borho--MacPherson extends to the relative Grothendieck alteration $\pi$ in the setting of Chevalley $\Oring_K$-algebras, where now $\mathcal{F}$ is \textit{relatively} perverse and in particular an ``intermediate extension'' object in $\textrm{D}^b(\g)$. By an IC sheaf in this setting we mean that, if $j: \g^{\textrm{rs}} \xhookrightarrow{} \g$ is the inclusion of the open dense subscheme of regular semisimple elements, then $\mathcal{F} \simeq j_{!*}\mathcal{G}$.

\begin{Thm}\label{big-perv-theorem}Let $\pi: \gres \longrightarrow \g$ denote the Grothendieck alteration of Chevalley $\Oring_K$-algebra $\g$. Then:
    \begin{enumerate}[label=\emph{(\roman*)}]

    \item Complex $\mathcal{F} = \textrm{\emph{R}}\pi_*\mathbb{Q}_{\ell}[\dim \g] \in \textrm{\emph{D}}^b_c(\g)$ is relatively perverse.

    \item Complex $\mathcal{F}$ is an \emph{IC} sheaf.

    \end{enumerate}
    \end{Thm}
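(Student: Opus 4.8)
The plan is to bootstrap from the classical theory over the two geometric fibers $\g_s$ and $\g_{\etabar}$, using the fibral criterion in Theorem~\ref{hansen-sholze-thm} together with the compatibility of $\pi$ with base change on $S$.

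For (i): since $\pi$ is proper, proper base change gives $\mathcal{F}|_{\g_{\overline{s}}} \simeq \textrm{R}\pi_{\overline{s}\,*}\mathbb{Q}_{\ell}[\dim\g]$ for $\overline{s}\in\{s,\etabar\}$, where $\pi_{\overline{s}}\colon \gres_{\overline{s}} \to \g_{\overline{s}}$ is the classical Grothendieck--Springer resolution of a simple Lie algebra over an algebraically closed field of very good (or zero) characteristic. By Borho--MacPherson (\cite{borho}, \S1.8), $\pi_{\overline{s}}$ is small, so $\textrm{R}\pi_{\overline{s}\,*}\mathbb{Q}_{\ell}[\dim\g]$ is a perverse sheaf on $\g_{\overline{s}}$ --- indeed the IC extension of the local system $\mathcal{G}_{\overline{s}} = \pi^{\textrm{rs}}_{\overline{s}\,*}\mathbb{Q}_{\ell}[\dim\g]$ on $\g^{\textrm{rs}}_{\overline{s}}$. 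Since this holds with the \emph{same} cohomological shift on both fibers, Theorem~\ref{hansen-sholze-thm}(i) forces $\mathcal{F}$ into the heart of the relative perverse $t$-structure; constructibility and boundedness of $\mathcal{F}$ follow from properness of $\pi$ and constructibility of $\mathbb{Q}_{\ell}$ on the smooth $S$-scheme $\gres$.

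For (ii): let $j\colon \g^{\textrm{rs}} \hookrightarrow \g$ be the open dense immersion of the regular semisimple locus. By Proposition~\ref{proposition-relative-grothendieck-resolution-diagram}, $\pi^{\textrm{rs}}$ is a finite étale $W$-torsor, so by base change $j^*\mathcal{F} \simeq \textrm{R}\pi^{\textrm{rs}}_*\mathbb{Q}_{\ell}[\dim\g] = \mathcal{G}$, which is relatively perverse on $\g^{\textrm{rs}}$ since the latter is smooth over $S$. The adjunction maps attached to $j^*\mathcal{F} \simeq \mathcal{G}$ give a factorization $j_!\mathcal{G} \to \mathcal{F} \to j_*\mathcal{G}$ of the canonical morphism $j_!\mathcal{G} \to j_*\mathcal{G}$; applying the relative perverse truncation ${}^{p/S}H^0$ and using ${}^{p/S}H^0(\mathcal{F}) = \mathcal{F}$ from part (i) yields morphisms
\[
a\colon {}^{p/S}H^0(j_!\mathcal{G}) \longrightarrow \mathcal{F},\qquad b\colon \mathcal{F} \longrightarrow {}^{p/S}H^0(j_*\mathcal{G})
\]
in the heart, with $j_{!*}\mathcal{G} = \textrm{im}(b\circ a)$ by definition of the intermediate extension. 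It therefore suffices to show $a$ is an epimorphism and $b$ a monomorphism, i.e.\ that $\mathcal{F}$ has no nonzero sub- or quotient object (in the relative perverse heart) supported on $Z = \g \smallsetminus \g^{\textrm{rs}}$. But such a sub- or quotient object $\mathcal{K}$ is itself relatively perverse and killed by $j^*$, hence nonzero on $\g_s$ or on $\g_{\etabar}$ (a nonzero object of $\textrm{D}^b_c(\g)$ is nonzero on one of the two fibers), and by the $t$-exactness of geometric-fiber pullback (Theorem~\ref{hansen-sholze-thm}(ii)) it restricts there to a nonzero perverse sub- or quotient object of $\mathcal{F}|_{\g_{\overline{s}}}$ supported on $Z_{\overline{s}}$ --- contradicting that $\mathcal{F}|_{\g_{\overline{s}}}$ is the IC sheaf $j_{\overline{s},!*}\mathcal{G}_{\overline{s}}$. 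Hence $j_{!*}\mathcal{G} = \textrm{im}(b\circ a) = \mathcal{F}$.

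The routine parts are the proper/smooth base-change identifications and the compatibility of ${}^{p/S}H^0$ with the adjunction triangles (which is the recollement, Theorem~\ref{hansen-sholze-thm}(iii)); the point requiring the most care is verifying that ``no sub/quotient supported on $Z$'' may be tested on the two geometric fibers, which rests on the $t$-exactness of the fiber pullbacks together with the conservativity of $\{(-)|_{\g_s},\,(-)|_{\g_{\etabar}}\}$ on $\textrm{D}^b_c(\g)$. One should also confirm at the outset that $j_!\mathcal{G}$ and $j_*\mathcal{G}$ remain in $\textrm{D}^b_c(\g)$, so that the intermediate extension $j_{!*}\mathcal{G}$ is defined.
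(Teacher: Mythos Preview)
Your argument for (i) is the same as the paper's: proper base change plus Borho--MacPherson over each geometric fiber, then invoke Theorem~\ref{hansen-sholze-thm}(i).

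For (ii) your route is correct but genuinely different, and in fact more efficient. The paper verifies the IC conditions directly: it shows ${}^p\mathcal{H}^0(i'_*i'^*\mathcal{F})=0$ and ${}^p\mathcal{H}^{0}(i'_*i'^!\mathcal{F})=0$ by checking vanishing after restriction to $\g_{\etabar}$ and $\g_s$ separately. The generic-fiber checks go through by ordinary (proper or flat) base change, but the special-fiber check for $i^*j'_*\mathcal{G}$ requires the non-standard identification $i^*j'_*\mathcal{G}\simeq j'_{s*}i^{\textrm{rs}*}\mathcal{G}$, which the paper establishes via a cube diagram over $\gres$ together with the auxiliary smoothness result Lemma~\ref{technical-lemma-part-of-proof-of-IC-theorem} (that $\gres\!\setminus\!\gres^{\textrm{rs}}$ is smooth over $S$, so $\widetilde{i}'^{\,!}\mathbb{Q}_\ell\simeq\mathbb{Q}_\ell[-2](-1)$). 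Your argument sidesteps this entirely: you only need that a putative sub- or quotient object $\mathcal{K}$ of $\mathcal{F}$ supported on $Z$ restricts, on some geometric fiber where it is nonzero, to a perverse sub- or quotient object of the IC sheaf supported on $Z_{\overline{s}}$, contradicting the standard characterization of IC sheaves. This uses only the $t$-exactness of fiber pullback and conservativity of the pair $\{(-)|_{\g_s},(-)|_{\g_{\etabar}}\}$, both of which you flag. The trade-off is that the paper's computation yields, as a byproduct, the explicit base-change compatibility $i^*j'_*\mathcal{G}\simeq j'_{s*}i^{\textrm{rs}*}\mathcal{G}$ and the geometric input of Lemma~\ref{technical-lemma-part-of-proof-of-IC-theorem}; your argument is shorter but does not produce these.

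Two small points worth tightening. First, the claim that $\mathrm{coker}(a)$ (resp.\ $\ker(b)$) is supported on $Z$ follows from the recollement triangles, but you should also note that $j'_!$ is right $t$-exact (so ${}^{p/S}\mathcal{H}^1(j'_!\mathcal{G})=0$) and $j'_*$ is left $t$-exact (so ${}^{p/S}\mathcal{H}^{-1}(j'_*\mathcal{G})=0$); this is what pins down $\mathrm{coker}(a)\subset{}^{p/S}\mathcal{H}^0(i'_*i'^*\mathcal{F})$ and dually for $\ker(b)$. Second, your parenthetical about $j_!\mathcal{G},\,j_*\mathcal{G}\in\textrm{D}^b_c(\g)$ is indeed needed for $j_{!*}$ to be defined; this is Deligne's finiteness for $j_*$ (and trivial for $j_!$), so a one-line citation suffices.
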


\begin{proof}
    Denote by $j: \g_{\eta} \xhookrightarrow{} \g$ and $i: \g_s \xhookrightarrow{} \g$ the respective open and closed immersions of the geometric generic and special fibers, with the understanding that $\g_{\eta}$ is Lie algebra $\g_{\overline{\eta}} = \g_{\overline{K}}$ so that the constructions of Sections \ref{the-adjoint-quotient-subsection} and \ref{relative-grothendieck-alteration} make sense for it too. Part (i) is immediate from Theorem \ref{hansen-sholze-thm} since proper base-change gives \[\mathcal{F}\lvert_{\mathfrak{g}_{\eta}} = j^*\textrm{R}\pi_*\mathbb{Q}_{\ell}[\dim \mathfrak{g}] \simeq \textrm{R}(\pi_{\eta})_*\mathbb{Q}_{\ell}[\dim \mathfrak{g}]\]
    and $\mathcal{F}\lvert_{\mathfrak{g}_s} \simeq \textrm{R}(\pi_s)_*\mathbb{Q}_{\ell}[\dim \mathfrak{g}]$ as the formation of $\widetilde{\g}$ commutes with base-change on $S$ in good characteristic. Thus, both restrictions are perverse sheaves on $\mathfrak{g}_{\eta}$ and $\mathfrak{g}_s$, respectively; in fact they are IC sheaves by the reasoning in (\cite{borho}, \S1.8).

For (ii), let $j': \mathfrak{g}^{\textrm{rs}} \xhookrightarrow{} \mathfrak{g}$ and $ i': \mathfrak{g}' = \mathfrak{g}\setminus \mathfrak{g}^{\textrm{rs}}\xhookrightarrow{} \mathfrak{g}$ denote respectively the open and closed immersions of the regular semisimple elements and their complement. Denote by $^p\mathcal{H}^0$ the composition $\tau^{\leq 0} \circ \tau^{\geq 0}$ of the relative perverse truncation functors, which is itself a cohomological functor, and set $\mathcal{G}=j'^*\mathcal{F}$. We wish to show \[\mathcal{F} \simeq j'_{!*}(\mathcal{G}) = \textrm{im}({}^p\mathcal{H}^0(j'_!\mathcal{G}) \to {}^p\mathcal{H}^0(j'_*\mathcal{G}))\]
Note that $\mathcal{G}$ is a lisse sheaf on $\mathfrak{g}^{\textrm{rs}}$ via proper base-change along cartesian diagram
\begin{center}
    \begin{tikzcd}
        \widetilde{\mathfrak{g}}^{\textrm{rs}} \arrow[d, "\pi^{\textrm{rs}}"]\arrow[r, "\widetilde{j}'"] & \widetilde{\mathfrak{g}} \arrow[d, "\pi"] \\ \mathfrak{g}^{\textrm{rs}} \arrow[r, "j'"] & \mathfrak{g}
    \end{tikzcd}
\end{center}

\noindent That is, $j'^*\mathcal{F} \simeq \pi_*^{\textrm{rs}}\mathbb{Q}_{\ell}[\dim \mathfrak{g}]$ and $\pi^{\textrm{rs}}$ is a Galois $W$-torsor (Proposition \ref{proposition-relative-grothendieck-resolution-diagram}), so that $\pi_*^{\textrm{rs}}\mathbb{Q}_{\ell}[\dim \mathfrak{g}]$ is lisse by algebraic Ehresmann (\cite{milne1998lectures}, Thm.\;20.2). Moreover $\mathcal{F} = {}^p\mathcal{H}^0(\mathcal{F})$ (via the perversity established in (i)) sits in long exact sequences\begin{equation}\label{triangle-1} \cdots \longrightarrow {}^p\mathcal{H}^0(j'_!\mathcal{G}) \longrightarrow \mathcal{F} \longrightarrow {}^p\mathcal{H}^0(i'_*i'^*\mathcal{F}) \overset{[1]}{\longrightarrow}\cdots \end{equation}
\begin{equation}\label{triangle-2} \cdots \longrightarrow{}^p\mathcal{H}^0(i'_*i'^!\mathcal{F}) \longrightarrow \mathcal{F} \longrightarrow {}^p\mathcal{H}^0(j'_*\mathcal{G}) \overset{[1]}{\longrightarrow}\cdots\end{equation}
\noindent which are respectively coming from the standard triangles (in the \emph{derived} setting)
\begin{equation}\label{standard-triangles}j'_!j'^*\mathcal{F} \to \mathcal{F} \to i'_*i'^*\mathcal{F} \overset{[1]}{\to}, \;\;\; i'_*i'^!\mathcal{F} \to \mathcal{F} \to j'_*j'^*\mathcal{F} \overset{[1]}{\to}
\end{equation}

\noindent Set $\mathcal{C}_1 = i'_*i'^*\mathcal{F} = \textrm{Cone}(j'_!\mathcal{G} \to \mathcal{F})$ and $\mathcal{C}_2 = i'_*i'^!\mathcal{F}[1] = \textrm{Cone}(\mathcal{F}\to j'_*\mathcal{G})$. In order to show $\mathcal{F} \simeq j'_{!*}\mathcal{G}$ it suffices to show $\mathcal{F} \xhookrightarrow{} {}^p\mathcal{H}^0(j'_*\mathcal{G})$ and ${}^p\mathcal{H}^0(j'_!\mathcal{G}) \twoheadrightarrow \mathcal{F}$ i.e. the respective kernel and cokernel vanish. So it suffices to have ${}^p\mathcal{H}^0(\mathcal{C}_1) = 0$ and ${}^p\mathcal{H}^{-1}(\mathcal{C}_2) = 0$.

A brief note on why checking the above two conditions is enough: suppose ${}^p\mathcal{H}^0(j'_!\mathcal{G}) \twoheadrightarrow \mathcal{F}$, then via right-exactness of $i'^*$ we have ${}^p\mathcal{H} \circ (i')^*$ right-exact (\cite{achar2021perverse}, Lemma A.7.14) so that  ${}^p\mathcal{H}^0(i'^*\circ{}^p\mathcal{H}^0(j'_!\mathcal{G})) \twoheadrightarrow {}^p\mathcal{H}^0(i'^*\mathcal{F})$. Now ${}^p\mathcal{H}^0\circ i'^*\circ{}^p\mathcal{H}^0\circ j'_!$ is left-adjoint to \[{}^p\mathcal{H}^0\circ j'^*\circ{}^p\mathcal{H}^0\circ i'_*\simeq {}^p\mathcal{H}^0\circ j'^*\circ i'_* = 0\]
where the penultimate equivalence is due to the perverse $t$-exactness of $j'^*$ and $i'_*$. Hence ${}^p\mathcal{H}^0(i'^*\mathcal{F}) = \tau^{\geq 0}(i'^*\mathcal{F}) = 0$, meaning $i'^*\mathcal{F} \in {}^{p/S}\textrm{D}(\mathfrak{g}')^{\leq -1}$. The dual argument for $\mathcal{F} \xhookrightarrow{} {}^p\mathcal{H}^0(j'_*\mathcal{G})$ gives $i'^!\mathcal{F} \in {}^{p/S}\textrm{D}(\mathfrak{g}')^{\geq 1}$, altogether giving the familiar IC sheaf conditions for $\mathcal{F}$ as a result of \textit{recollement} (\cite{beilinson1982faisceaux}, 1.4.24). 

Consider ${}^p\mathcal{H}^0(\mathcal{C}_1)$ first; it is enough to show $j^*{}^p\mathcal{H}^0(\mathcal{C}_1) = i^*{}^p\mathcal{H}^0(\mathcal{C}_1) = 0$ since if ${}^p\mathcal{H}^0(\mathcal{C}_1)$ had nonempty support, the support would intersect the supports of either of these complexes. Now $j^*$ is perverse $t$-exact by Theorem \ref{hansen-sholze-thm} (ii), so applying $j^*$ to the first triangle in (\ref{standard-triangles}) and then taking ${}^p\mathcal{H}^0$, which commutes with $j^*$, we obtain
\begin{equation}\label{generic-fiber-triangle}
    \cdots \longrightarrow{}^p\mathcal{H}^0(j^*j'_!\mathcal{G}) \longrightarrow j^*\mathcal{F} \longrightarrow {}^p\mathcal{H}^0(j^*\mathcal{C}_1) \overset{[1]}{\longrightarrow}\cdots
\end{equation}

\noindent Since $\g^{\textrm{rs}}$ represents the open subfunctor in $\g$ of regular semisimple elements, the following diagram is cartesian
\vspace{-2em}\begin{center}
\begin{equation}
    \begin{tikzcd}\label{generic-fiber-diagram}
        \mathfrak{g}_{\eta}^{\textrm{rs}} \arrow[d, "j_{\eta}'"] \arrow[r, "j^{\textrm{rs}}"] & \mathfrak{g}^{\textrm{rs}} \arrow[d, "j'"] \\ \mathfrak{g}_{\eta} \arrow[r, "j"] & \mathfrak{g}
    \end{tikzcd}
    \end{equation}
\end{center}
\noindent where $j^{\textrm{rs}}: \g_{\eta}^{\textrm{rs}} \to \g^{\textrm{rs}}$ is induced from $\overline{\eta} \to S$ and $j_{\eta}': \g_{\eta}^{\textrm{rs}} \to \g_{\eta}$ is the open immersion induced from $j'$. Hence base-change yields $j^*j'_! \simeq j'_{\eta !}j^{\textrm{rs}*}$ and \[j^{\textrm{rs}*}\mathcal{G} = j^{\textrm{rs}*}j'^*\mathcal{F} \simeq j_{\eta}'^*(j^*\mathcal{F}) = j_{\eta}^{'*}(\textrm{R}(\pi_{\eta})_*\mathbb{Q}_{\ell}[\dim \mathfrak{g}])\]
\noindent so that the long exact sequence (\ref{generic-fiber-triangle}) is
\begin{equation}\label{basechanged-generic-fiber-triangle}
    \cdots \longrightarrow{}^p\mathcal{H}^0(j_{\eta !}(\textrm{R}(\pi_{\eta})_*\mathbb{Q}_{\ell}[\dim \mathfrak{g}])\lvert_{\mathfrak{g}_{\eta}^{\textrm{rs}}}) \longrightarrow \textrm{R}(\pi_{\eta})_*\mathbb{Q}_{\ell}[\dim \mathfrak{g}] \longrightarrow {}^p\mathcal{H}^0(j^*\mathcal{C}_1) \overset{[1]}{\longrightarrow} \cdots
\end{equation}
\noindent Now the second-left arrow in (\ref{basechanged-generic-fiber-triangle}) is surjective since $\textrm{R}(\pi_{\eta})_*\mathbb{Q}_{\ell}[\dim \mathfrak{g}]$ is an IC sheaf, and $j^*{}^p\mathcal{H}^1(j'_!j'^*\mathcal{F}) = 0$ since $j'_!j'^*$ is right $t$-exact, again by recollement (\cite{beilinson1982faisceaux}, Prop. 1.4.12). We therefore get $j^*{}^p\mathcal{H}^0(C_1) \simeq {}^p\mathcal{H}^0(j^*\mathcal{C}_1) = 0$. A similar argument yields $i^* {}^p\mathcal{H}^0(\mathcal{C}_1) = 0$, via long exact sequence \begin{equation} \cdots \longrightarrow{}^p\mathcal{H}^0(i^*j'_!\mathcal{G}) \longrightarrow i^*\mathcal{F} \longrightarrow {}^p\mathcal{H}^0(i^*\mathcal{C}_1) \overset{[1]}{\longrightarrow}\cdots\end{equation}

\noindent and $i^*j'_!\mathcal{G} \simeq j'_{s !}i^{\textrm{rs}*}\mathcal{G} \simeq j'_{s!}\textrm{R}(\pi_s)_*\mathbb{Q}_{\ell}[\dim \mathfrak{g}]\lvert_{\mathfrak{g}_s^{\textrm{rs}}}$ coming from base-change along diagram
\vspace{-2em}
\begin{center}
\begin{equation}
    \begin{tikzcd}\label{special-fiber-diagram}
        \mathfrak{g}_{s}^{\textrm{rs}} \arrow[d, "j_{s}'"] \arrow[r, "i^{\textrm{rs}}"] & \mathfrak{g}^{\textrm{rs}} \arrow[d, "j'"] \\ \mathfrak{g}_{s} \arrow[r, "i"] & \mathfrak{g}
    \end{tikzcd}
    \end{equation}
\end{center}
\noindent where $i^{\textrm{rs}}$ is induced from $s \to S$ and $j'_s$ is the base-change of $j'$. We therefore yield ${}^p\mathcal{H}^0(\mathcal{C}_1) = 0$. For ${}^p\mathcal{H}^{-1}(\mathcal{C}_2)$ we argue in an analogous manner, namely we apply $t$-exact functors $j^*, i^*$ to the right triangle in (\ref{standard-triangles}) and show $j^*{}^p\mathcal{H}^{-1}(\mathcal{C}_2) = i^*{}^p\mathcal{H}^{-1}(\mathcal{C}_2) = 0$. For $j^*{}^p\mathcal{H}^{-1}(\mathcal{C}_2) = 0$ the reasoning is parallel to showing $j^*{}^p\mathcal{H}^0(\mathcal{C}_1) = 0$, since $j^*j'_* \simeq j'_{\eta *}j^{\textrm{rs}*}$ via flat base-change along diagram (\ref{generic-fiber-diagram}); once again we reduce to $j^*\mathcal{F}$ being an IC sheaf and ${}^p\mathcal{H}^{-1}(j'_*j'^*\mathcal{F}) = 0$ as $j'_*j'^*$ is left $t$-exact (\cite{beilinson1982faisceaux}, Prop 1.4.12). Now $t$-exactness of $i^*$ yields exact sequence
\begin{equation}\label{triangle-restricted-to-special-fiber}
    \cdots \longrightarrow i^*{}^p\mathcal{H}^{-1}(\mathcal{C}_2) \longrightarrow i^*\mathcal{F} \longrightarrow {}^p\mathcal{H}^0(i^*j'_*\mathcal{G}) \overset{[1]}{\longrightarrow} \cdots
\end{equation}
\noindent induced from the right triangle in (\ref{standard-triangles}), but we cannot immediately conclude by a standard base-change theorem. Consider instead the open/closed decompositions
\[ \gres^{\textrm{rs}} \overset{\widetilde{j}'}{\longrightarrow} \gres \overset{\widetilde{i}'}{\longleftarrow} \gres' \coloneqq \gres \setminus \gres^{\textrm{rs}}\]
\[\gres_s^{\textrm{rs}} \overset{\widetilde{j}_s'}{\longrightarrow} \gres_s \overset{\widetilde{i}_s'}{\longleftarrow} \gres_s' = \gres_s\setminus \gres_s^{\textrm{rs}}\]
\noindent induced from the respective decompositions on $\g, \g_s$ and their Grothendieck alterations $\pi, \pi_s$. Together with $\widetilde{i}: \gres_s\to \gres, \widetilde{i}^{\textrm{rs}}: \gres_s^{\textrm{rs}} \to \gres^{\textrm{rs}}$, these maps fit into commutative diagrams
\vspace{-2em}
\begin{center}
\begin{equation}\label{resolved-special-fiber-diagram}
    \begin{tikzcd}
        \widetilde{\mathfrak{g}}_{s}^{\textrm{rs}} \arrow[d, "\widetilde{j}_{s}'"] \arrow[r, "\widetilde{i}^{\textrm{rs}}"] & \widetilde{\mathfrak{g}}^{\textrm{rs}} \arrow[d, "\widetilde{j}'"] && \widetilde{\mathfrak{g}}'_s \arrow[d, "\widetilde{i}'_s"] \arrow[r, "\widetilde{i}\lvert_{\widetilde{\mathfrak{g}}'_s}"] & \widetilde{\mathfrak{g}}' \arrow[d, "\widetilde{i}'"] \\ \widetilde{\mathfrak{g}}_{s} \arrow[r, "\widetilde{i}"] & \widetilde{\mathfrak{g}} && \widetilde{\mathfrak{g}}_s \arrow[r, "\widetilde{i}"] & \widetilde{\mathfrak{g}}
    \end{tikzcd}
    \end{equation}   
\end{center}
\noindent by the same token as before. We aim to show $i^*j'_*\mathcal{G} \simeq j'_{s*}i^{\textrm{rs}*}\mathcal{G}$. Via the Grothendieck alteration $\pi$, diagram (\ref{special-fiber-diagram}) and the left diagram in (\ref{resolved-special-fiber-diagram}) fit into the following commutative cube diagram (\ref{cube-diagram}), all of whose faces are cartesian:\vspace{-1em}

\begin{center}
\begin{equation}\label{cube-diagram}
    \begin{tikzcd}
        [row sep={40,between origins}, column sep={40,between origins}]
      & \widetilde{\mathfrak{g}}_s^{\textrm{rs}}\arrow[rr, "\widetilde{i}^{\textrm{rs}}"]\arrow[dd, "\pi_s^{\textrm{rs}}" near end]\arrow[dl, "\widetilde{j_s'}"] & & \widetilde{\mathfrak{g}}^{\textrm{rs}} \ar[dd, "\pi^{\textrm{rs}}"]\arrow[dl, "\widetilde{j'}"]\\
    \widetilde{\mathfrak{g}}_s\arrow[crossing over, "\widetilde{i}" near end]{rr} \arrow[dd, "\pi_s"] & & \widetilde{\mathfrak{g}} \\
      & \mathfrak{g}_s^{\textrm{rs}}  \arrow[rr, "i^{\textrm{rs}}" near start] \arrow[dl, "j_s'" near start] & &  \mathfrak{g}^{\textrm{rs}}\vphantom{\times_{S_1}} \arrow[dl, "j'" near start] \\
    \mathfrak{g}_s \arrow[rr, "i"] && \mathfrak{g} \ar[from=uu,crossing over, "\pi" near start]
    \end{tikzcd}
    \end{equation}
\end{center}

Here $\pi^{\textrm{rs}}, \pi_s^{\textrm{rs}}$ denote the obvious restrictions of $\pi$. Now, suppressing that $\pi_*$ is derived for notation purposes, $i^*j'_*\mathcal{G}$ becomes \[ i^*\circ j'_* \circ j'^* \circ \pi_*\mathbb{Q}_{\ell}[\dim \mathfrak{g}] \simeq i^*\circ j'_* \circ \pi^{\textrm{rs}}_* \circ \widetilde{j}'^*\mathbb{Q}_{\ell}[\dim \mathfrak{g}] \]
\[\simeq i^*\circ \pi_* \circ \widetilde{j}'_* \circ \widetilde{j}'^*\mathbb{Q}_{\ell}[\dim \mathfrak{g}] \simeq \pi_{s*}\circ \widetilde{i}^*\circ \widetilde{j}'_* \circ \widetilde{j}'^*\mathbb{Q}_{\ell}[\dim \mathfrak{g}]
\simeq \pi_{s*} \circ \widetilde{i}^* \circ \widetilde{j}_*' \mathbb{Q}_{\ell}[\dim \g]\]
\noindent via proper base-change along the right-face and front-face diagrams, and similarly $j'_{s*}i^{\textrm{rs}*}\mathcal{G}$ becomes
\[j'_{s*}\circ i^{\textrm{rs}*}\circ j'^*\circ \pi_*\mathbb{Q}_{\ell}[\dim \mathfrak{g}] \simeq j'_{s*}\circ i^{\textrm{rs}*}\circ \pi^{\textrm{rs}}_* \circ \widetilde{j}'^*\mathbb{Q}_{\ell}[\dim \mathfrak{g}]
\]
\[
\simeq j'_{s*} \circ \pi_{s*}^{\textrm{rs}} \circ \widetilde{i}^{\textrm{rs}*} \circ \widetilde{j}'^*\mathbb{Q}_{\ell}[\dim \mathfrak{g}] \simeq \pi_{s*}\circ \widetilde{j}'_{s*} \circ \widetilde{i}^{\textrm{rs}*}\circ \widetilde{j}'^*\mathbb{Q}_{\ell}[\dim \mathfrak{g}] \simeq \pi_{s*}\circ \widetilde{j}'_{s*}\mathbb{Q}_{\ell}[\dim \g]
\]

\noindent via proper base-change along the right-face and back-face diagrams. Then $i^*j'_*\mathcal{G} \simeq j'_{s*}i^{\textrm{rs}*}\mathcal{G}$ precisely when $\widetilde{i}^*\widetilde{j}'_*\mathbb{Q}_{\ell} \simeq \widetilde{j}'_{s*}\mathbb{Q}_{\ell}$, ignoring the dimension shifts. To show this, take the standard exact triangle (on $\gres$)
\begin{equation}\label{standard-resolved-exact-triangle}
\widetilde{i}'_*\widetilde{i}'^{\,!}\mathbb{Q}_{\ell} \longrightarrow \mathbb{Q}_{\ell}\longrightarrow \widetilde{j}'_*\mathbb{Q}_{\ell} \overset{[1]}{\longrightarrow}
\end{equation}
\noindent We have $\widetilde{i}'^{\,!}\mathbb{Q}_{\ell} \simeq \mathbb{Q}_{\ell}[-2](-1)$ by Lemma \ref{technical-lemma-part-of-proof-of-IC-theorem}, which we prove right after this theorem. Applying the $t$-exact functor $\widetilde{i}^*: \textrm{D}_c^b(\widetilde{\mathfrak{g}}) \to \textrm{D}_c^b(\widetilde{\mathfrak{g}}_s)$ induces the following diagram from triangle (\ref{standard-resolved-exact-triangle}) and the two diagrams in (\ref{resolved-special-fiber-diagram}), where the vertical arrows are base-change morphisms:

\begin{center}
    \begin{tikzcd}
       \widetilde{i}^*\widetilde{i}'_*\mathbb{Q}_{\ell}[-2](-1) \arrow[r] \arrow[d, "\simeq"] & \mathbb{Q}_{\ell} \arrow[r] \arrow[d, "\simeq"] & \widetilde{i}^*\widetilde{j}'_*\mathbb{Q}_{\ell} \arrow[r, "+1"] \arrow[d, "h"] & \vphantom{} \\ \widetilde{i}'_{s*}\widetilde{i}\lvert_{\widetilde{\mathfrak{g}}'_s}^*\mathbb{Q}_{\ell}[-2](-1) \arrow[r] & \mathbb{Q}_{\ell} \arrow[r] & \widetilde{j}'_{s*}\mathbb{Q}_{\ell} \arrow[r, "+1"] & \vphantom{}
    \end{tikzcd}
\end{center}

\noindent The left vertical arrow is an isomorphism by proper base-change, and so is the middle vertical arrow, hence $h$ is also an isomorphism (eg. by the five lemma). We conclude that $i^*j'_*\mathcal{G} \simeq j'_{s*}i^{\textrm{rs}*}\mathcal{G}$ and so the long exact sequence (\ref{triangle-restricted-to-special-fiber}) is
\begin{equation}
    \cdots \longrightarrow i^* {}^p\mathcal{H}^{-1}(\mathcal{C}_2) \longrightarrow \textrm{R}(\pi_s)_*\mathbb{Q}_{\ell}[\dim \mathfrak{g}] \longrightarrow {}^p\mathcal{H}^0(j_s^*\textrm{R}(\pi_s)_*\mathbb{Q}_{\ell}[\dim \mathfrak{g}]\lvert_{\mathfrak{g}_s^{\textrm{rs}}}) \overset{[1]}{\longrightarrow} \cdots
\end{equation}
\noindent so that $i^* {}^p\mathcal{H}^{-1}(\mathcal{C}_2) = 0$ as $\textrm{R}(\pi_s)_*\mathbb{Q}_{\ell}[\dim \mathfrak{g}]$ is an IC sheaf. This yields $\mathcal{F} \simeq j'_{!*}\mathcal{G} = j'_{!*}j'^*\mathcal{F}$ so $\mathcal{F}$ is indeed an IC sheaf on $\mathfrak{g}$.  \end{proof}

\begin{Lemma}\label{technical-lemma-part-of-proof-of-IC-theorem} Let $\g$ be a Chevalley $\Oring_K$--algebra so that $\g_{\etabar}$ and $\g_s$ are simple Lie algebras of the same Dynkin type, and fix a torus $\h$ and Borel $\borel$. Let $\widetilde{\chi}:\gres \to \h$ be the adjoint bundle associated to $(\borel,\h)$ and write $\widetilde{i}': \gres\!\setminus\!\gres^{\textrm{rs}} \to \gres$ for the inclusion of the complement of the preimage of $\g^{\textrm{rs}}$ under the Grothendieck alteration. Then $\widetilde{i}'^!\mathbb{Q}_{\ell}\simeq\mathbb{Q}_{\ell}[-2](-1)$.
\end{Lemma}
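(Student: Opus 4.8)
The plan is to push the computation of $\widetilde i'^{!}\mathbb{Q}_\ell$ down from $\gres$ to the Cartan $\h$ along the smooth morphism $\widetilde\chi\colon\gres\to\h$, where the complement of the regular semisimple locus becomes the reflection hyperplane arrangement, and there to invoke absolute purity. \textbf{Step 1 (geometry).} First I would identify $Z\coloneqq\gres\setminus\gres^{\textrm{rs}}$ as a preimage under $\widetilde\chi$. By the commutative square of Proposition~\ref{proposition-relative-grothendieck-resolution-diagram}, together with the elementary fact that for $[g,x]\in\gres$ the element $\pi([g,x])=\textrm{Ad}_g(x)$ is regular semisimple exactly when its image $\widetilde\chi([g,x])\in\h$ is (if the class in the universal Cartan is regular semisimple then $C_G(\textrm{Ad}_g(x))$ is a maximal torus, forcing the nilpotent part of $\textrm{Ad}_g(x)$ to vanish), one has $\gres^{\textrm{rs}}=\widetilde\chi^{-1}(\h^{\textrm{rs}})$. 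Hence, writing $D\coloneqq\h\setminus\h^{\textrm{rs}}$, the square
\[
\begin{tikzcd}
Z\arrow[r,"\widetilde i'"]\arrow[d,"\widetilde\chi_Z"'] & \gres\arrow[d,"\widetilde\chi"]\\
D\arrow[r,"i_D"] & \h
\end{tikzcd}
\]
is Cartesian.

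\textbf{Step 2 (descent along $\widetilde\chi$).} The morphism $\widetilde\chi$ is smooth over $S$ — this was shown right before Proposition~\ref{proposition-relative-grothendieck-resolution-diagram} via Lemma~\ref{check-smoothness-on-special-fiber} — say of relative dimension $d$, so $\widetilde\chi^{!}\simeq\widetilde\chi^{*}(d)[2d]$, and the same holds for the base change $\widetilde\chi_Z$. Functoriality of the exceptional pullback applied to $\widetilde\chi\circ\widetilde i'=i_D\circ\widetilde\chi_Z$ gives $\widetilde i'^{!}\widetilde\chi^{!}\simeq\widetilde\chi_Z^{!}i_D^{!}$, whence $\widetilde i'^{!}\widetilde\chi^{*}\simeq\widetilde\chi_Z^{*}i_D^{!}$. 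Since $\mathbb{Q}_{\ell,\gres}=\widetilde\chi^{*}\mathbb{Q}_{\ell,\h}$, we reduce to proving the absolute statement $i_D^{!}\mathbb{Q}_{\ell,\h}\simeq\mathbb{Q}_\ell[-2](-1)$ on $D\subset\h$; the identical reduction over the closed point $s$ simultaneously yields the companion fact $\widetilde i'^{!}_s\mathbb{Q}_\ell\simeq\mathbb{Q}_\ell[-2](-1)$ on $\gres_s$ over $k$, which is the form also invoked in the proof of Theorem~\ref{big-perv-theorem}.

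\textbf{Step 3 (purity on $\h$).} Here $\h\simeq\mathbb{A}^r_S$ and $D$ is the reflection hyperplane arrangement: it is the reduced preimage under $\psi\colon\h\to\h/\!\!/W$ of the discriminant hypersurface, cut out set-theoretically by $\prod_{\alpha>0}\alpha$, and in particular an effective Cartier divisor in the regular scheme $\h$. On the dense open $D^{\circ}\subseteq D$ lying over a single open stratum of the arrangement — the complement in $D$ of the relative-codimension-$\geq2$ locus where two reflection hyperplanes meet — the immersion $D^{\circ}\hookrightarrow\h$ is a smooth closed immersion of relative codimension $1$, so absolute purity gives $i^{!}\mathbb{Q}_\ell\simeq\mathbb{Q}_\ell[-2](-1)$ over $D^{\circ}$. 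One then propagates this identification over all of $D$, using normality of $\h$ and density of $D^{\circ}$; compatibility with base change along $s\hookrightarrow S$ is automatic since the formation of $\widetilde\chi$, hence of $D$, commutes with base change on $S$.

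\textbf{Main obstacle.} The delicate point is the propagation in Step 3: for $\mathrm{rk}(\g)\geq2$ the divisor $D=\h\setminus\h^{\textrm{rs}}$ is genuinely singular along the walls where reflection hyperplanes intersect, so absolute purity does not apply verbatim and the costalks of $i_D^{!}\mathbb{Q}_\ell$ along the deeper arrangement strata must be controlled directly. I would attack this by stratifying $D$ along its intersection lattice and inducting on the strata through the recollement triangles for the successive closed inclusions into $\h$, at each stage combining purity on the smooth open stratum with normality and density arguments — and I would expect the careful bookkeeping here (in particular the verification that after restricting to the special fibre the answer remains concentrated in degree $2$ with the expected Tate twist, rather than acquiring contributions from the walls) to occupy the bulk of the proof and to be the place most deserving of scrutiny.
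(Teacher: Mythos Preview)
Your approach differs from the paper's. The paper argues directly that $\gres'\coloneqq\gres\setminus\gres^{\textrm{rs}}$ is smooth over $S$, by a miracle-flatness/regularity argument applied to the restriction $\widetilde\chi'\colon\gres'\to\h$, and then invokes purity for the resulting smooth codimension-one pair $(\gres',\gres)$. You instead use smooth base change along $\widetilde\chi$ to reduce to the closed embedding $i_D\colon D=\h\setminus\h^{\textrm{rs}}\hookrightarrow\h$; this is a cleaner reduction and makes the relevant geometry transparent.

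However, the obstacle you isolate in Step~3 is not merely delicate but fatal: the assertion $i_D^{!}\mathbb{Q}_\ell\simeq\mathbb{Q}_\ell[-2](-1)$ is false whenever $\textrm{rk}(\g)\ge2$. At a point $z\in D$ lying on $k\ge2$ root hyperplanes, the triangle $i_{D*}i_D^{!}\mathbb{Q}_\ell\to\mathbb{Q}_\ell\to Rj_*\mathbb{Q}_\ell$ together with the Orlik--Solomon description of the local arrangement complement gives $\mathcal{H}^2(i_D^{!}\mathbb{Q}_\ell)_z\simeq\mathbb{Q}_\ell(-1)^{\oplus k}$ (one summand per hyperplane through $z$) and nonzero higher cohomology as well; so your stratified recollement would honestly compute these extra terms, not annihilate them. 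The paper's argument has the same gap in disguise: your Step~1 shows $\gres'=\widetilde\chi^{-1}(D)$ with $\widetilde\chi$ smooth, so $\gres'$ is smooth over $S$ iff $D$ is, and for $r\ge2$ the root hyperplanes meet (all of them at $0$) so $D$ is not regular. Concretely, the paper's dimension identity $\dim\Oring_{\gres'_s,x}=\dim\Oring_{\h_s,y}+\dim F$ fails by exactly $1$, since $\widetilde\chi'$ is not flat onto $\h_s$ but factors through the singular divisor $D_s$. (What the proof of Theorem~\ref{big-perv-theorem} actually requires is only that the formation of $\widetilde j'_{*}\mathbb{Q}_\ell$ commute with restriction to the special fiber; via your Steps~1--2 this reduces to the analogous statement for $j\colon\h^{\textrm{rs}}\hookrightarrow\h$, which does hold because every closed stratum $\bigcap_{\alpha\in I}\ker\alpha$ of the arrangement is a linear subspace smooth over $S$, so $Rj_*\mathbb{Q}_\ell$ can be built from iterated Gysin triangles along the stratification, each of which commutes with base change on $S$.)
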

\begin{proof} Call $\gres' = \gres\!\setminus\!\gres^{\textrm{rs}}$, then we aim to show that $\gres'$ is smooth over $S$. We first show it is flat. As a consequence of Proposition \ref{proposition-relative-grothendieck-resolution-diagram}, restricting $\widetilde{\chi}$ to $\widetilde{\chi}'_s: \gres_s' \to \h_s \simeq \mathbb{A}_{k}^r$ we have that every fiber is smooth over $k$, so we get flatness of $\widetilde{\chi}_s$ as follows: if $y=\widetilde{\chi}'_s(x)$, the associated local homomorphism $\Oring_{\mathbb{A}^r,y} \to \Oring_{\gres_s', x}$ has regular source and a regular fiber ring $F=\Oring_{\gres_s', x}/\mathfrak{m}_y \Oring_{\gres_s', x}$, so we may pick a regular system of parameters $(x_1, \cdots, x_r)$ for $\Oring_{\mathbb{A}^r,y}$ and $(y_1, \cdots, y_s)$ in $\Oring_{\gres_s', x}$ so that their images in $F$ form a regular system of parameters. We have
\[\dim(\Oring_{\gres_s', x}) = \dim(\Oring_{\mathbb{A}^r,y}) + \dim(F)
\]
\noindent so $(x_1,\cdots,x_r, y_1,\cdots,y_s)$ generates the maximal ideal of $\Oring_{\gres_s', x}$ and so $\Oring_{\gres_s', x}$ is regular. Then by miracle flatness (\cite[\href{https://stacks.math.columbia.edu/tag/00R4}{Tag 00R4}]{stacks-project}) we get $\widetilde{\chi}'_s$ flat. Since every fiber of $\widetilde{\chi}'_s$ is smooth and reduced, we get in fact that $\widetilde{\chi}'_s$ is smooth, hence $\gres_s'$ is smooth over $k$ and a similar analysis yields $\gres_{\etabar}'$ smooth over $\overline{K}$. 

The same local algebra argument in the previous paragraph yields $\gres'$ flat over $S$, with a minor modification on the local rings: if $\Spec(\Oring_K) \to \gres'$ is an $\Oring_K$-valued section mapping the closed point to $x \in \gres'$, then it suffices to show the induced local homomorphism $\Oring_K \to \Oring_{\gres',x}$ is flat, which follows from (\cite{matsumura1989commutative}, Thm.\;23.7 (ii)\footnote{Note that the assumption that $B$ is flat in \emph{loc.\;cit.} is not needed; see Theorem 51 on the modernized edition at \href{https://aareyanmanzoor.github.io/assets/matsumura-CA.pdf}{https://aareyanmanzoor.github.io/assets/matsumura-CA.pdf}.}) as $\Oring_{\gres',x}\otimes k \simeq \Oring_{\gres'_s,x_s}$. Now as the generic and special fiber are smooth we get that $\gres'$ is also smooth over $S$. Then the proof of (\cite{achar2021perverse}, Thm.\,2.2.13) shows pair $(\gres', \gres)$ is \textit{smooth} of relative codimension 1, so that $\widetilde{i}'^!\mathbb{Q}_{\ell} \simeq \mathbb{Q}_{\ell}[-2](-1)$, completing the claim.\end{proof}

\begin{Cor} Let $\mathcal{S} \subset \g$ be a relative Slodowy slice at a fiberwise subregular nilpotent $\Oring_K$-section $x \in \mathcal{N}_{\g}(\Oring_K)$. Let $\pi_{\mathcal{S}}: \widetilde{\mathcal{S}} \coloneqq \pi^{-1}(\mathcal{S}) \longrightarrow \mathcal{S}$ be the restriction of the Grothendieck alteration to $\widetilde{\mathcal{S}}$. Then $\mathcal{F_S} = \textrm{\emph{R}}\pi_{\mathcal{S}*}\mathbb{Q}_{\ell}[\dim \mathcal{S}]$ is a relative \emph{IC} sheaf. 
\end{Cor}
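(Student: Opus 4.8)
The plan is to deduce this from Theorem \ref{big-perv-theorem} by descending along the smooth action map, in the spirit of Proposition \ref{proposition-simultaneous-resolutions-for-slices}. Write $\alpha\colon G\times\mathcal{S}\to\g$ and $\widetilde{\alpha}\colon G\times\widetilde{\mathcal{S}}\to\gres$ for the restrictions of the adjoint action; by the Slodowy slice property (Corollary \ref{corollary-construction-of-An-slice} in type $A_n$, Theorem \ref{theorem-spaltenstein} otherwise) the map $\alpha$ is smooth, of relative dimension $\delta=\dim\mathcal{S}$, and applying Lemma \ref{pullback-of-action-map-smooth} to $\pi\colon\gres\to\g$ shows that the square
\begin{center}
\begin{tikzcd}
G\times\widetilde{\mathcal{S}}\arrow[r,"\widetilde{\alpha}"]\arrow[d,"\textrm{id}\times\pi_{\mathcal{S}}"'] & \gres\arrow[d,"\pi"]\\
G\times\mathcal{S}\arrow[r,"\alpha"] & \g
\end{tikzcd}
\end{center}
is cartesian (using $\gres^{-1}$-preimage $=$ $G$-saturation and $G$-equivariance of $\pi$) with $\widetilde{\alpha}$ smooth. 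Proper base change along $\pi$ gives $\alpha^*\textrm{R}\pi_*\mathbb{Q}_{\ell}\simeq \textrm{R}(\textrm{id}\times\pi_{\mathcal{S}})_*\mathbb{Q}_{\ell}$, while smooth base change along the projection $p\colon G\times\mathcal{S}\to\mathcal{S}$ gives $p^*\textrm{R}\pi_{\mathcal{S}*}\mathbb{Q}_{\ell}\simeq\textrm{R}(\textrm{id}\times\pi_{\mathcal{S}})_*\mathbb{Q}_{\ell}$; comparing the two and shifting by $[\dim\g+\delta]=[\dim G+\dim\mathcal{S}]$ yields
\[
\alpha^*\mathcal{F}[\delta]\ \simeq\ p^*\mathcal{F}_{\mathcal{S}}[\dim G],\qquad \mathcal{F}:=\textrm{R}\pi_*\mathbb{Q}_{\ell}[\dim\g].
\]

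Next I would record the (standard) fact that smooth pullback behaves well for the relative perverse $t$-structure of \cite{hansen2021relative}: if $f\colon X\to Y$ is a smooth morphism of finite-type $S$-schemes of relative dimension $d$, then $f^*[d]$ is $t$-exact for the relative perverse $t$-structure and commutes with the intermediate extension $j_{!*}$. Both assertions reduce, via the fibral criterion of Theorem \ref{hansen-sholze-thm}(i), to the classical statements over the fields $k$ and $\overline{K}$: restriction to $X_{\overline{s}}$ (resp.\ $X_{\overline{\eta}}$) commutes with $f^*$ since $f$ is a morphism over $S$, the induced maps $f_{\overline{s}},f_{\overline{\eta}}$ are smooth of relative dimension $d$, and over a field $f^*[d]$ is perverse $t$-exact and commutes with $j_{!*}$; commutation with $j_{!*}$ in the relative setting then follows formally because $j_{!*}$ is built from the relative truncation functors $\,{}^p\mathcal{H}^0 j_!$ and $\,{}^p\mathcal{H}^0 j_*$ (exactly as in the proof of Theorem \ref{big-perv-theorem}), each of which commutes with $f^*$ by $t$-exactness and smooth base change. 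Granting this and invoking Theorem \ref{big-perv-theorem}, the complex $\mathcal{F}$ is relatively perverse and an IC sheaf on $\g$, hence $\alpha^*\mathcal{F}[\delta]\simeq p^*\mathcal{F}_{\mathcal{S}}[\dim G]$ is relatively perverse and an IC sheaf on $G\times\mathcal{S}$.

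It remains to descend along the smooth surjection $p\colon G\times\mathcal{S}\to\mathcal{S}$ with geometrically connected fibres. Since $\g^{\textrm{rs}}$ is $G$-stable one has $\alpha^{-1}(\g^{\textrm{rs}})=p^{-1}(\mathcal{S}^{\textrm{rs}})$ with $\mathcal{S}^{\textrm{rs}}:=\mathcal{S}\cap\g^{\textrm{rs}}$ open dense in $\mathcal{S}$ (nonempty since the generic point of $\mathcal{S}$ maps to the regular semisimple locus of $\h/\!\!/W$ under $\chi$), and over $\mathcal{S}^{\textrm{rs}}$ the map $\pi_{\mathcal{S}}$ is the base change of the $W$-torsor $\gres^{\textrm{rs}}\to\g^{\textrm{rs}}$ of Proposition \ref{proposition-relative-grothendieck-resolution-diagram}, so $\mathcal{F}_{\mathcal{S}}|_{\mathcal{S}^{\textrm{rs}}}$ is lisse; moreover the complement $\mathcal{S}\setminus\mathcal{S}^{\textrm{rs}}$ pulls back to the complement in $G\times\mathcal{S}$. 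Now $p^*$ is conservative, $p^*[\dim G]$ is $t$-exact for the relative perverse $t$-structure, and $p^*$ commutes with $j^*$ and with $i'^!$ up to the appropriate twist (smooth base change); hence the conditions characterizing a relative IC sheaf — relative perversity of $\mathcal{F}_{\mathcal{S}}$ together with $i'^*\mathcal{F}_{\mathcal{S}}\in{}^{p/S}\textrm{D}(\mathcal{S}\!\setminus\!\mathcal{S}^{\textrm{rs}})^{\leq-1}$ and $i'^!\mathcal{F}_{\mathcal{S}}\in{}^{p/S}\textrm{D}(\mathcal{S}\!\setminus\!\mathcal{S}^{\textrm{rs}})^{\geq 1}$ for $i'\colon\mathcal{S}\!\setminus\!\mathcal{S}^{\textrm{rs}}\hookrightarrow\mathcal{S}$ — can all be checked after applying $p^*[\dim G]$, where they hold by the previous paragraph. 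By the recollement characterization of $j_{!*}$ (\cite{beilinson1982faisceaux}, 1.4.24) this gives $\mathcal{F}_{\mathcal{S}}\simeq (j_{\mathcal{S}})_{!*}\big(\mathcal{F}_{\mathcal{S}}|_{\mathcal{S}^{\textrm{rs}}}\big)$, i.e.\ $\mathcal{F}_{\mathcal{S}}$ is a relative IC sheaf.

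The main obstacle is the middle step: making precise that smooth pullback in the "horizontal" direction is $t$-exact and commutes with $j_{!*}$ for the relative perverse $t$-structure. This is not one of the statements cited from \cite{hansen2021relative} (which address only pullback along a base change $\widetilde{S}\to S$), but it follows cleanly from the fibral criterion Theorem \ref{hansen-sholze-thm}(i) together with the classical field case; once this is in place, everything else is formal given Theorem \ref{big-perv-theorem}.
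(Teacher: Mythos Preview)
Your argument is correct, but it takes a genuinely different route from the paper. The paper does not deduce the result from Theorem \ref{big-perv-theorem} via smooth descent; instead it observes that the proof of Theorem \ref{big-perv-theorem} goes through verbatim for $\pi_{\mathcal{S}}$ provided one knows that $\pi_{\mathcal{S}}$ is \emph{small} over each geometric fibre of $S$, and then verifies smallness directly by a stratification argument: using that $\mathcal{S}$ is transverse to every adjoint orbit, one checks that $\mathcal{S}$ meets only the strata $X_{-1}=\g^{\textrm{rs}}$, $X_0\setminus X_{-1}$, and $X_1$, and computes the required dimension inequality by hand (the nontrivial case being the codimension-$3$ locus of singular points over the discriminant, where $\dim\pi_{\mathcal{S}}^{-1}(y)=1<\tfrac{3}{2}$).

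Your approach is arguably cleaner: it treats Theorem \ref{big-perv-theorem} as a black box and transfers the IC property along the smooth action map, so you never touch smallness or stratifications. The cost is the auxiliary lemma that $f^*[d]$ is $t$-exact for the relative perverse $t$-structure when $f$ is smooth of relative dimension $d$ in the $X$-direction (not just along the base $S$); as you note, this is not among the statements quoted from \cite{hansen2021relative}, but your reduction via Theorem \ref{hansen-sholze-thm}(i) to the fibrewise (classical) case is sound. The paper's approach, by contrast, yields smallness of $\pi_{\mathcal{S}}$ as a byproduct, which is of independent interest and is the more traditional way these slice statements are proved (cf.\ \cite{borho}).
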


\begin{proof} The arguments of Theorem \ref{big-perv-theorem} carry over to this setting as soon as we have that the version of $\pi_{\mathcal{S}}$ over a geometric point of $\Spec(\Oring_K)$ is a small morphism. So we reduce to the case of $k$ an algebraically closed field, $x \in \mathcal{N}_{\g}(k)$ a subregular nilpotent element in Lie algebra $\g$ over $k$, and $\mathcal{S}$ the canonical Slodowy slice at $x$, transverse to its orbit (`canonical' here means that, in light of Lemma \ref{slices-etale-locally-isomorphic}, any such Slodowy slice has locally the form of the slice described in (\cite{Slodowy1980}, \S7.4)). 

In this setting we know $\mathcal{S}$ is transverse to every adjoint $G$-orbit (\cite{Slodowy1980}, \S7.4 Corollary). Take a stratification of $\g$ into locally closed subsets
\[\g = X_{-1} \amalg (X_0\setminus X_{-1}) \amalg \coprod_{n\geq 1} X_n, \;\;\; X_{-1} = \g^{\textrm{rs}}, \;\;X_n = \{x \mid\dim (\pi^{-1}(x)) = n\}\;(n \geq 0)\]
\noindent which, after a possible refinement, induces a stratification on $\mathcal{S}$. By transversality, $\mathcal{S} \cap X_n = \emptyset$ since $\mathcal{S}$ meets only the regular and subregular orbit in $\mathcal{N}_{\g}$, so $\mathcal{S} = \mathcal{S}_1 \amalg \mathcal{S}_2 \amalg \mathcal{S}_3$ where $\mathcal{S}_i = \mathcal{S} \cap X_{n-2}$, and for any $y \in \mathcal{S}_i$ we have
\[\dim(\pi_{\mathcal{S}}^{-1}(y)) \leq \frac{1}{2}(\dim(\mathcal{S})-\dim(\mathcal{S}_i))\]
\noindent with equality if and only if $i=0$ (where we get the dense open stratum $\mathcal{S} \cap \g^{\textrm{rs}}$. The only nontrivial case is $y \in \mathcal{S}_3$ as $\dim(\pi_{\mathcal{S}}^{-1}(y))=1$; since by Theorem \ref{thm-miniversal-deformation-of-slice-and-resolved-slice} we have that $\mathcal{S} \longrightarrow \h/\!\!/W$ realizes a miniversal deformation of surface singularity $x \in \mathcal{N}_{\g} \cap \mathcal{S}$, locus $\mathcal{S}_3$ consists of the nearby singularities lying over discriminant divisor $\Delta \subset \h/\!\!/W$, hence by flatness $\dim(\mathcal{S}_3) = r-1$. The above inequality therefore is just $1 < \frac{3}{2}$.\end{proof}

\subsection{Weyl--Springer actions.}\label{section-weyl-as-springer-action} We retain the definitions and assumptions of Section \ref{chevalley-setting}. In the setting of Proposition \ref{proposition-relative-grothendieck-resolution-diagram} and Theorem \ref{big-perv-theorem}, we have seen that the Grothendieck alteration $\pi$ restricted to $\pi^{\textrm{rs}}:\gres^{\textrm{rs}} \longrightarrow \g^{\textrm{rs}}$ is a finite \'etale $W$-torsor so that $\mathcal{G} = \textrm{R}\pi_*\mathbb{Q}_{\ell}\vert_{\g^{\textrm{rs}}} \simeq \pi^{\textrm{rs}}_*\mathbb{Q}_{\ell}$ is a lisse sheaf on $\g^{\textrm{rs}}$. To describe the $W$-action on it, we define an auxiliary geometric vector bundle related to $\gres$ as follows. Let
\begin{equation}\label{fake-grothendieck-alteration}\gres_T \coloneqq \{(gT,x) \in G/T\times_S \g \mid x \in \textrm{Ad}_g(\h)\}\xhookrightarrow{} G/T \times_S \g\end{equation}
Here quotient $G/T$ is represented by a smooth quasi-affine $S$-scheme via (\cite{conrad2014reductive}, Thm.\;2.3.1). Projection to the second factor in (\ref{fake-grothendieck-alteration}) yields a morphism $\rho: \gres_T \longrightarrow \g$; note $\rho$ is not necessarily proper like $\pi$, since $G/T$ is in general only fiberwise quasi-affine. Projection to the first factor yields a morphism $\gres_T \longrightarrow G/T$, and the arguments of Lemma \ref{grothendieck-alteration-G-torsor} (except Zariski-local triviality) carry over to show:

\begin{Lemma} $\gres_T$ is a smooth \'etale-locally trivial $G$-torsor over $G/T$ with fiber $\h$.
\end{Lemma}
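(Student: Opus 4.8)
The plan is to run the argument of Lemma~\ref{grothendieck-alteration-G-torsor} almost verbatim, with the projective scheme $G/B$ replaced by the quasi-affine scheme $G/T$ and with Zariski-local triviality weakened to \'etale-local triviality.

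First I would observe that, since $\h = \textrm{Lie}(T)$ is abelian, the adjoint action of $T$ on $\h$ is trivial (as already noted in Definition~\ref{definition-weyl-Lie-algebra-actions}), so the associated bundle $G \times^T \h$ for the right $T$-action $t\cdot(g,x) = (gt^{-1},\textrm{Ad}_t x) = (gt^{-1},x)$ is well defined, and the assignment $[g,x] \mapsto (gT,\textrm{Ad}_g x)$ identifies $G\times^T\h$ with the subscheme $\gres_T \subset G/T\times_S\g$ of (\ref{fake-grothendieck-alteration}). As in the case of $\gres$, that (\ref{fake-grothendieck-alteration}) really describes a locally closed subscheme and that this map is an isomorphism onto it may be checked on the geometric fibers over $S$. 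I would then recall from (\cite{conrad2014reductive}, Thm.\,2.3.1) that $G/T$ is a smooth quasi-affine $S$-scheme and $\pi_T\colon G\to G/T$ is a $T$-torsor, which induces the projection $\widetilde{\rho}\colon \gres_T = G\times^T\h \to G/T$, a fiber bundle with fiber $\h$; the affineness of $\widetilde{\rho}$ (fiber $\h\simeq\mathbb{A}^r_S$ affine, cf.\ \cite{jantzen2003representations}, \S5.14) lets one pull back an affine open cover of $G/T$ exactly as in Lemma~\ref{grothendieck-alteration-G-torsor}, and the left-translation $G$-action on $G/T$ lifts via $h\cdot[g,x]=[hg,x]$, making $\widetilde{\rho}$ a $G$-equivariant bundle.

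For smoothness and local triviality: since the split torus $T$ is smooth, the $T$-torsor $\pi_T$ is a smooth surjection and therefore admits sections \'etale-locally on $G/T$; choosing an \'etale covering $U\to G/T$ over which $\pi_T$ is trivial, i.e.\ $G\times_{G/T}U\simeq T\times_S U$ as $T$-torsors, the associated-bundle construction gives $\gres_T\times_{G/T}U \simeq (T\times_S U)\times^T\h \simeq U\times_S\h$, so $\widetilde{\rho}$ is \'etale-locally trivial with fiber $\h$; since $\h$ and $G/T$ are smooth over $S$, so is $\gres_T$. There is no serious obstacle in any of this --- the whole lemma is bookkeeping --- but the one point deserving care is the scheme-theoretic identification $\gres_T\simeq G\times^T\h$ together with the claim that (\ref{fake-grothendieck-alteration}) cuts out a genuine locally closed subscheme of $G/T\times_S\g$, which (as with $\gres$) I would dispatch by reduction to the geometric fibers of $S$. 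I would also note in passing that one could in fact upgrade to Zariski-local triviality, since torsors under a split torus are Zariski-locally trivial by Hilbert's Theorem~90, but this is not needed below.
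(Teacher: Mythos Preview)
Your proposal is correct and follows exactly the approach the paper indicates: the paper does not give a separate proof but simply states that ``the arguments of Lemma~\ref{grothendieck-alteration-G-torsor} (except Zariski-local triviality) carry over,'' and you have written out precisely that adaptation. Your closing remark that one could in fact get Zariski-local triviality (since $T$ is a split torus and $\mathbb{G}_m$-torsors are Zariski-locally trivial, as already used in the proof of Lemma~\ref{grothendieck-alteration-G-torsor}) is a correct observation that the paper does not make explicit.
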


Define a \textit{right} $W$-action on $\gres_T$ via $(gT, x) \cdot w = (gn_wT,x)$ where $n_w \in N_G(T)$ is a lift of $w$. The map
\begin{equation}\label{w-equivariant-isom-of-fake-grothendieck-alteration}G/T\times_S \h \longrightarrow \gres_T, \; \; \; (gT,h) \mapsto (gT, \textrm{Ad}_g(h))\end{equation}
\noindent is a $W$-equivariant isomorphism, where the left-hand side is equipped with the $W$-action described in Definition \ref{definition-weyl-Lie-algebra-actions} (ii). We also define a morphism $\phi: G/T\times_S \g^{\textrm{rs}} \longrightarrow G/B\times_S \g^{\textrm{rs}}$ via $(gT,x) \mapsto (gB,x)$ and set $\gres_T^{\textrm{rs}} = \{(gT,x) \in \gres_T \mid x \in \g^{\textrm{rs}}\}$.

\begin{Lemma}\label{lemma-isom-of-gtrs-and-grs} Restricting $\phi$ to $G/T\times_S\h^{\textrm{\emph{rs}}}$ induces an isomorphism $\gres_T^{\textrm{\emph{rs}}} \simeq \gres^{\textrm{\emph{rs}}}$.
\end{Lemma}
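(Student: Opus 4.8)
The strategy is to recognize $\phi$ restricted to $\gres_T^{\textrm{rs}}$ as the composite of the two isomorphisms that have already been produced: the isomorphism $(\ref{w-equivariant-isom-of-fake-grothendieck-alteration})$, $G/T\times_S\h \xrightarrow{\sim}\gres_T$, $(gT,h)\mapsto (gT,\textrm{Ad}_g(h))$, and the isomorphism $\gres^{\textrm{rs}}\simeq G/T\times_S\h^{\textrm{rs}}$ established in the proof of Proposition \ref{proposition-relative-grothendieck-resolution-diagram}. First I would check that $\phi$ does restrict to a morphism $\psi\colon \gres_T^{\textrm{rs}}\longrightarrow \gres^{\textrm{rs}}$. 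Indeed, if $(gT,x)\in\gres_T^{\textrm{rs}}$ then $x\in\textrm{Ad}_g(\h)\subseteq \textrm{Ad}_g(\borel)=\textrm{Lie}(gBg^{-1})$, so in the description $\gres=\{(B',y)\in G/B\times_S\g\mid y\in\textrm{Lie}(B')\}$ the pair $(gB,x)$ lies in $\gres$; since moreover $x\in\g^{\textrm{rs}}$, we get $(gB,x)\in\gres^{\textrm{rs}}=\pi^{-1}(\g^{\textrm{rs}})$. Independence of the coset representative is already part of the statement that $\phi$ is a morphism. Note also that $(\ref{w-equivariant-isom-of-fake-grothendieck-alteration})$ carries $G/T\times_S\h^{\textrm{rs}}$ onto $\gres_T^{\textrm{rs}}$, since $\textrm{Ad}_g(h)$ is regular semisimple if and only if $h$ is; call this restriction $\alpha\colon G/T\times_S\h^{\textrm{rs}}\xrightarrow{\sim}\gres_T^{\textrm{rs}}$.

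Next I would write $\beta\colon G/T\times_S\h^{\textrm{rs}}\xrightarrow{\sim}\gres^{\textrm{rs}}$ for the isomorphism of Proposition \ref{proposition-relative-grothendieck-resolution-diagram}; unwinding its construction (equivalently, using the identification $\gres\simeq G\times^B\borel$, under which $[g,h]$ with $h\in\h^{\textrm{rs}}$ corresponds to $(gB,\textrm{Ad}_g(h))$ inside $G/B\times_S\g$), one has $\beta(gT,h)=(gB,\textrm{Ad}_g(h))$ and $\pi\circ\beta(gT,h)=\textrm{Ad}_g(h)$, consistent with the identification of $\pi\colon\gres^{\textrm{rs}}\to\g^{\textrm{rs}}$ with the map $G/T\times_S\h^{\textrm{rs}}\to\g^{\textrm{rs}}$, $(gT,h)\mapsto\textrm{Ad}_g(h)$, recorded in that proof. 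Then $\psi\circ\alpha(gT,h)=\psi(gT,\textrm{Ad}_g(h))=(gB,\textrm{Ad}_g(h))=\beta(gT,h)$, so $\psi\circ\alpha=\beta$ and hence $\psi=\beta\circ\alpha^{-1}$ is an isomorphism. It is visibly compatible with the projections $\rho^{\textrm{rs}}$ and $\pi^{\textrm{rs}}$ to $\g^{\textrm{rs}}$, and with the right $W$-actions (the $W$-action on $\gres^{\textrm{rs}}$ transported from $G/T\times_S\h^{\textrm{rs}}$ is $(gB,y)\cdot w=(gn_wB,y)$, matching $(gT,x)\cdot w=(gn_wT,x)$ under $\psi$).

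Alternatively, if one wishes to avoid recalling the explicit form of $\beta$, the conclusion also follows from the fibral isomorphism criterion \cite[17.9.5]{grothendieck1967elements}: both $\gres^{\textrm{rs}}$ and $\gres_T^{\textrm{rs}}$ are open subschemes of the $S$-smooth schemes $\gres$ and $\gres_T$ (Lemma \ref{grothendieck-alteration-G-torsor} and the preceding lemma on $\gres_T$), hence $S$-flat, so it suffices that $\psi_s$ and $\psi_{\overline{\eta}}$ be isomorphisms, which over an algebraically closed field is the classical comparison of the Grothendieck--Springer construction via Borels and via maximal tori over the regular semisimple locus (cf.\ the proof of \cite[Thm.\,9.1]{kiehl2013weil}); one could also simply note that $\psi$ is a $W$-equivariant morphism between two $W$-torsors over $\g^{\textrm{rs}}$ and is therefore automatically an isomorphism. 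In all of these the only point demanding care is the bookkeeping of which Borel subgroup a point $gB\in G/B$ indexes and the matching of the $W$-actions; the rest is formal. I expect no real obstacle here — the substantive content was already carried out in Proposition \ref{proposition-relative-grothendieck-resolution-diagram}, and this lemma is essentially a reformulation of it.
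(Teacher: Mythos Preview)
Your proposal is correct, and in fact your main argument is more direct than the paper's. The paper invokes the fibral isomorphism criterion (\cite[17.9.5]{grothendieck1967elements}) to reduce to geometric fibers, then cites (\cite{jantzen2004nilpotent}, \S13.4 Lemma) and spells out the bijection over an algebraically closed field by computing $\rho^{-1}(h)=\{(n_wT,h)\mid w\in W\}$ and $\pi^{-1}(h)=\{(n_wB,h)\mid w\in W\}$ for $h\in\h^{\textrm{rs}}$ and checking $\phi$ matches them. Your primary route instead observes that the isomorphism $\beta\colon G/T\times_S\h^{\textrm{rs}}\xrightarrow{\sim}\gres^{\textrm{rs}}$ was already established in Proposition~\ref{proposition-relative-grothendieck-resolution-diagram}, so once you identify $\psi\circ\alpha=\beta$ the lemma follows immediately; this is cleaner and avoids redoing work, though it does require unwinding the explicit form of $\beta$ (which the paper only gives implicitly via the Kiehl--Weissauer reference). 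Your alternative via the fibral criterion is essentially the paper's approach, and your remark that $\psi$ is a $W$-equivariant map of $W$-torsors over $\g^{\textrm{rs}}$ is a pleasant third option the paper does not mention.
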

\begin{proof} It is clear that $G/T\times_S\h^{\textrm{rs}} \simeq \gres_T^{\textrm{rs}}$ via the isomorphism in (\ref{w-equivariant-isom-of-fake-grothendieck-alteration}) since $\g^{\textrm{rs}}= \textrm{Ad}_G(\h^{\textrm{rs}})$. As before, by the fibral isomorphism criterion (\cite{grothendieck1967elements}, 17.9.5) it suffices to pass to geometric points $\overline{\eta}$ and $s$ of base $S$ and show we have an isomorphism over $\overline{K}$ and $k$. This is the content of (\cite{jantzen2004nilpotent}, \S13.4 Lemma); we only explain the bijection input over field $k$. 

If $(gT,h) \in \gres_T^{\textrm{rs}}(k)$ then $h \in g\textrm{Lie}(T)g^{-1}$ i.e. $gTg^{-1}\subseteq C_G(h) = T$, so $g \in N_G(T)$ and we may write $\rho^{-1}(h) = \{(n_wT,h) \mid w \in W\}$. Similarly if $(gB,h) \in \gres^{\textrm{rs}}(k)$ we have $h \in \textrm{Lie}(gBg^{-1})$ so $h \in \textrm{Lie}(T')$ for some maximal torus $T' \subseteq gBg^{-1}$. At the same time $T' \subseteq C_G(h) = T$ so $T' = T$ and $T, g^{-1}Tg$ are conjugate in $B$, say via $b \in B(k)$. It follows that $gb = n_w \in N_G(T)$ so $gB = n_wB$ and $\pi^{-1}(h) = \{(n_wB, h)\mid w \in W\}$. Hence $\phi$ maps $\rho^{-1}(h)$ bijectively to $\pi^{-1}(h)$.\end{proof}

\begin{Cor}\label{corollary-gt-is-etale-w-torsor} $\rho^{\textrm{rs}}: \gres_T^{\textrm{rs}} \to \g^{\textrm{rs}}$ is a finite \'etale $W$-torsor and $\mathcal{G} = \pi_*^{\textrm{rs}}\mathbb{Q}_{\ell} \simeq \rho_*^{\textrm{rs}}\mathbb{Q}_{\ell}$.
\end{Cor}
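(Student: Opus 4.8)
The plan is to reduce everything to Proposition~\ref{proposition-relative-grothendieck-resolution-diagram} via the isomorphism of Lemma~\ref{lemma-isom-of-gtrs-and-grs}, once that isomorphism is checked to be $W$-equivariant. Recall that the proof of Proposition~\ref{proposition-relative-grothendieck-resolution-diagram} supplies a $W$-equivariant identification $\gres^{\textrm{rs}} \simeq G/T\times_S\h^{\textrm{rs}}$ sending $(gT,h)$ to $(gB,\textrm{Ad}_g(h))$, where the target carries the $W$-action of Definition~\ref{definition-weyl-Lie-algebra-actions}(ii), namely $(gT,h)\cdot w = (gn_wT,\textrm{Ad}_{n_w^{-1}}(h))$. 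On the other hand, the map (\ref{w-equivariant-isom-of-fake-grothendieck-alteration}) restricts (since $\g^{\textrm{rs}} = \textrm{Ad}_G(\h^{\textrm{rs}})$) to an isomorphism $G/T\times_S\h^{\textrm{rs}}\xrightarrow{\sim}\gres_T^{\textrm{rs}}$, $(gT,h)\mapsto(gT,\textrm{Ad}_g(h))$, which by the computation recorded just after (\ref{w-equivariant-isom-of-fake-grothendieck-alteration}) intertwines the same $W$-action on $G/T\times_S\h^{\textrm{rs}}$ with the right multiplication action $(gT,x)\cdot w = (gn_wT,x)$ on $\gres_T^{\textrm{rs}}$. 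First I would trace a point $(gT,h)$ through the composite $G/T\times_S\h^{\textrm{rs}}\xrightarrow{\sim}\gres_T^{\textrm{rs}}\xrightarrow{\phi|_{\gres_T^{\textrm{rs}}}}\gres^{\textrm{rs}}\xrightarrow{\sim}G/T\times_S\h^{\textrm{rs}}$: it lands on $(gT,\textrm{Ad}_g h)$, then on $(gB,\textrm{Ad}_g h)$, then back on $(gT,h)$. So this composite is the identity; hence $\phi|_{\gres_T^{\textrm{rs}}}\colon\gres_T^{\textrm{rs}}\to\gres^{\textrm{rs}}$ is the transport of $\textrm{id}_{G/T\times_S\h^{\textrm{rs}}}$ across the two identifications, and in particular it is a $W$-equivariant isomorphism under which $\rho^{\textrm{rs}}$ becomes the same morphism to $\g^{\textrm{rs}}$ as $\pi^{\textrm{rs}}$ (both are $(gT,h)\mapsto\textrm{Ad}_g h$).

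Granting this, the first assertion is immediate: $\rho^{\textrm{rs}} = \pi^{\textrm{rs}}\circ\phi|_{\gres_T^{\textrm{rs}}}$, since $\phi$ preserves the $\g$-coordinate; and because $\phi|_{\gres_T^{\textrm{rs}}}$ is an isomorphism of $S$-schemes and $\pi^{\textrm{rs}}$ is a finite \'etale $W$-torsor over $\g^{\textrm{rs}}$ by Proposition~\ref{proposition-relative-grothendieck-resolution-diagram}, so is $\rho^{\textrm{rs}}$, the $W$-action being the right multiplication action by the $W$-equivariance just established. For the sheaves, $\phi|_{\gres_T^{\textrm{rs}}}$ being an isomorphism gives $(\phi|_{\gres_T^{\textrm{rs}}})_*\mathbb{Q}_{\ell}\simeq\mathbb{Q}_{\ell}$ canonically, whence
\[ \rho^{\textrm{rs}}_*\mathbb{Q}_{\ell} \simeq \pi^{\textrm{rs}}_*(\phi|_{\gres_T^{\textrm{rs}}})_*\mathbb{Q}_{\ell} \simeq \pi^{\textrm{rs}}_*\mathbb{Q}_{\ell} = \mathcal{G}, \]
and this identification is $W$-equivariant because every map involved is.

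The one step requiring genuine care — though it is mild — is the bookkeeping of the two $W$-actions in the first paragraph: one must make sure that the right multiplication action on $\gres_T^{\textrm{rs}}$, which twists only the $G/T$-coordinate, really does correspond under both identifications to the action on $\gres^{\textrm{rs}}$ coming from Proposition~\ref{proposition-relative-grothendieck-resolution-diagram}, which additionally twists the $\h^{\textrm{rs}}$-coordinate by $\textrm{Ad}_{n_w^{-1}}$. The factor $\textrm{Ad}_g$ appearing in (\ref{w-equivariant-isom-of-fake-grothendieck-alteration}) absorbs exactly this discrepancy, so the two actions agree on the common model $G/T\times_S\h^{\textrm{rs}}$; everything else is a formal consequence of Proposition~\ref{proposition-relative-grothendieck-resolution-diagram} and Lemma~\ref{lemma-isom-of-gtrs-and-grs}.
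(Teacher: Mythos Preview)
Your proof is correct and takes essentially the same approach as the paper: both factor $\rho^{\textrm{rs}} = \pi^{\textrm{rs}}\circ\phi|_{\gres_T^{\textrm{rs}}}$ and transport the $W$-torsor structure and the pushforward sheaf across the isomorphism of Lemma~\ref{lemma-isom-of-gtrs-and-grs}. The only difference is that you spell out the $W$-equivariance of $\phi|_{\gres_T^{\textrm{rs}}}$ explicitly by passing through the common model $G/T\times_S\h^{\textrm{rs}}$, whereas the paper simply asserts it; your extra care here is justified, since Lemma~\ref{lemma-isom-of-gtrs-and-grs} as stated does not mention $W$-equivariance.
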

\begin{proof} This is a direct consequence of Proposition \ref{proposition-relative-grothendieck-resolution-diagram} since $\rho^{\textrm{rs}} = \pi^{\textrm{rs}}\circ \phi$ and $\pi^{\textrm{rs}}$ is a finite \'etale $W$-torsor. By Lemma \ref{lemma-isom-of-gtrs-and-grs} we have a $W$-equivariant isomorphism $\gres_T^{\textrm{rs}} \simeq \gres^{\textrm{rs}}$ so $\mathcal{G} \simeq \pi^{\textrm{rs}}_*\circ \phi_*\mathbb{Q}_{\ell} =\rho_*^{\textrm{rs}}\mathbb{Q}_{\ell}$.\end{proof}

The upshot of this construction is that we may define the $W$-action on $\mathcal{G}$ explicitly via its description as $\rho_*^{\textrm{rs}}\mathbb{Q}_{\ell}$. The stalk of constant sheaf $\mathbb{Q}_{\ell}$ at any geometric point $x \to \gres_T^{\textrm{rs}}$ is generated by a distinguished basis element $v_x$, and for $w \in W$ we have an isomorphism $i_w: w_*\mathbb{Q}_{\ell} \to \mathbb{Q}_{\ell}$ so that the map on stalks is $v_{x\cdot w^{-1}} \longmapsto v_x$ via the right $W$-action on $\gres_T^{\textrm{rs}}$. Applying finite map $\rho^{\textrm{rs}}_*$ to all $i_w$ and using $\rho^{\textrm{rs}} \circ w = \rho^{\textrm{rs}}$ we have \begin{equation}\label{W-action-isom-equation} \rho_*^{\textrm{rs}}\mathbb{Q}_{\ell} = (\rho^{\textrm{rs}}\circ w)_*
\mathbb{Q}_{\ell} \simeq \rho_*^{\textrm{rs}}w_*\mathbb{Q}_{\ell} \overset{i_w}{\longrightarrow} \rho_*^{\textrm{rs}}\mathbb{Q}_{\ell}\end{equation}
where the rightmost map is denoted $i_w$ again by abuse of notation. By functoriality this is an automorphism of $\rho_*^{\textrm{rs}}\mathbb{Q}_{\ell}$ and on each stalk at $\rho^{\textrm{rs}}(x)$ we have \[(\rho_*^{\textrm{rs}}\mathbb{Q}_{\ell})_{\rho^{\textrm{rs}}(x)} = \langle v_{x\cdot u^{-1}} \mid u \in W\rangle\]
so that (\ref{W-action-isom-equation}) maps basis vectors $v_{x\cdot u^{-1}} \overset{i_w}{\longmapsto} v_{x\cdot (u^{-1}w)}$. One checks $i_{w_1w_2} = i_{w_1}\circ i_{w_2}$ and so we obtain an algebra homomorphism\[\mathbb{Q}_{\ell}[W] \longrightarrow \textrm{End}(\rho_*^{\textrm{rs}}\mathbb{Q}_{\ell}) \simeq \textrm{End}(\pi_*^{\textrm{rs}}\mathbb{Q}_{\ell})\]
 Via the intermediate extension functor we get an induced action on $\mathcal{F} = \textrm{R}\pi_*\mathbb{Q}_{\ell}$ as follows; recall here that $j': \g^{\textrm{rs}} \xhookrightarrow{} \g, \; i': \g\setminus \g^{\textrm{rs}} \xhookrightarrow{} \g$ are the open/closed decompositions of $\g$ coming from the gluing construction.

\begin{Lemma} Sheaf $\mathcal{F} \simeq j'_{!*}\mathcal{G}$ is $W$-equivariant via canonical isomorphism \[\textrm{\emph{End}}_{\textrm{\emph{Perv}}(\g^{\textrm{rs}})}(\mathcal{G}) \simeq \textrm{\emph{End}}_{\textrm{\emph{Perv}}(\g)}(j'_{!*}\mathcal{G})\]
\end{Lemma}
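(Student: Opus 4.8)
The plan is to invoke the general functoriality of the intermediate extension functor, combined with the IC property established in Theorem \ref{big-perv-theorem}. Recall that for $\mathcal{F} \simeq j'_{!*}\mathcal{G}$ the relative intermediate extension is defined (via recollement, as in \cite{beilinson1982faisceaux}, 1.4.24) as the image of ${}^p\mathcal{H}^0(j'_!\mathcal{G}) \to {}^p\mathcal{H}^0(j'_*\mathcal{G})$, and this construction is functorial in $\mathcal{G}$ in the sense that any endomorphism of $\mathcal{G}$ in $\textrm{\textbf{Perv}}(\g^{\textrm{rs}})$ induces a unique endomorphism of $j'_{!*}\mathcal{G}$ in $\textrm{\textbf{Perv}}(\g)$, compatibly with composition and identities.

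First I would note that $\mathcal{G}$ is a lisse sheaf on $\g^{\textrm{rs}}$ placed in the appropriate degree, hence a (relative) perverse sheaf on $\g^{\textrm{rs}}$, so that $\textrm{End}_{\textrm{\textbf{Perv}}(\g^{\textrm{rs}})}(\mathcal{G})$ makes sense. Next I would recall the standard fact (valid in the absolute setting by \cite{beilinson1982faisceaux}, Prop. 1.4.26, and carrying over verbatim to the relative $t$-structure of Theorem \ref{hansen-sholze-thm} since the recollement formalism is identical) that $j'_{!*}$ is fully faithful on the full subcategory of $\textrm{\textbf{Perv}}(\g^{\textrm{rs}})$ of objects admitting \emph{no nonzero sub- or quotient objects supported on} $\g \setminus \g^{\textrm{rs}}$ --- but more precisely, what is needed here is the weaker statement that $j'_{!*}$ induces an isomorphism on endomorphism rings $\textrm{End}(\mathcal{G}) \xrightarrow{\sim} \textrm{End}(j'_{!*}\mathcal{G})$. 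This is because $j'^*$ and $j'_{!*}$ are mutually inverse on the relevant subcategories: $j'^* j'_{!*}\mathcal{G} \simeq \mathcal{G}$ canonically (a general property of intermediate extensions), so applying $j'^*$ gives a section of the natural map $\textrm{End}(j'_{!*}\mathcal{G}) \to \textrm{End}(\mathcal{G})$, while injectivity follows from the no-sub/quotient-support property of IC sheaves, which $\mathcal{F}$ satisfies by Theorem \ref{big-perv-theorem}(ii). Then I would transport the algebra homomorphism $\mathbb{Q}_{\ell}[W] \to \textrm{End}(\pi_*^{\textrm{rs}}\mathbb{Q}_{\ell}) = \textrm{End}(\mathcal{G})$ constructed just above through this isomorphism to obtain $\mathbb{Q}_{\ell}[W] \to \textrm{End}_{\textrm{\textbf{Perv}}(\g)}(j'_{!*}\mathcal{G}) = \textrm{End}(\mathcal{F})$, which is precisely the asserted $W$-action; functoriality of $j'_{!*}$ guarantees that $i_{w_1 w_2}$ still maps to $i_{w_1} \circ i_{w_2}$, so this is indeed an algebra map and not merely a set map.

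The main obstacle I anticipate is verifying that the relative intermediate extension functor $j'_{!*}$ genuinely enjoys the full-faithfulness / endomorphism-isomorphism property in the relative perverse setting, since \cite{hansen2021relative} develops the $t$-structure and recollement but one must check that the standard consequences (no nonzero sub/quotient supported on the closed complement, hence $\textrm{Hom}(j'_{!*}\mathcal{G}_1, j'_{!*}\mathcal{G}_2) \simeq \textrm{Hom}(\mathcal{G}_1, \mathcal{G}_2)$) follow formally from the recollement axioms alone --- which they do, since Beilinson--Bernstein--Deligne's proofs of 1.4.24--1.4.26 use only the glued $t$-structure formalism and not any field-specific input. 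A secondary point worth a sentence is that one should know $\mathcal{F}$ has no nonzero subobject or quotient object (in $\textrm{\textbf{Perv}}(\g)$) supported on $\g \setminus \g^{\textrm{rs}}$; but this is exactly the content of the conditions $i'^*\mathcal{F} \in {}^{p/S}\textrm{D}(\g')^{\leq -1}$ and $i'^!\mathcal{F} \in {}^{p/S}\textrm{D}(\g')^{\geq 1}$ already verified in the course of proving Theorem \ref{big-perv-theorem}(ii), so no new work is required.

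\begin{proof} Since $\pi^{\textrm{rs}}: \gres^{\textrm{rs}} \to \g^{\textrm{rs}}$ is a finite \'etale $W$-torsor (Proposition \ref{proposition-relative-grothendieck-resolution-diagram}), the sheaf $\mathcal{G} = \pi_*^{\textrm{rs}}\mathbb{Q}_{\ell}$, suitably shifted, is a lisse sheaf on $\g^{\textrm{rs}}$ and hence a relative perverse sheaf there, so $\textrm{End}_{\textrm{\textbf{Perv}}(\g^{\textrm{rs}})}(\mathcal{G})$ is defined. By Theorem \ref{big-perv-theorem} we have $\mathcal{F} \simeq j'_{!*}\mathcal{G}$, and in the course of its proof it was established via recollement that $i'^*\mathcal{F} \in {}^{p/S}\textrm{D}(\g')^{\leq -1}$ and $i'^!\mathcal{F} \in {}^{p/S}\textrm{D}(\g')^{\geq 1}$; equivalently, $\mathcal{F}$ admits no nonzero subobject or quotient object in $\textrm{\textbf{Perv}}(\g)$ supported on $\g' = \g \setminus \g^{\textrm{rs}}$.

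These are precisely the hypotheses under which the recollement formalism (\cite{beilinson1982faisceaux}, Prop. 1.4.26, whose proof uses only the glued $t$-structure axioms verified in Theorem \ref{hansen-sholze-thm} and carries over verbatim to the relative perverse $t$-structure) yields that the natural map \[ \textrm{End}_{\textrm{\textbf{Perv}}(\g)}(j'_{!*}\mathcal{G}) \longrightarrow \textrm{End}_{\textrm{\textbf{Perv}}(\g^{\textrm{rs}})}(j'^* j'_{!*}\mathcal{G}) = \textrm{End}_{\textrm{\textbf{Perv}}(\g^{\textrm{rs}})}(\mathcal{G}) \] induced by $j'^*$ is an isomorphism of $\mathbb{Q}_{\ell}$-algebras; here we used the canonical identification $j'^* j'_{!*}\mathcal{G} \simeq \mathcal{G}$. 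This is the asserted canonical isomorphism, and it is compatible with composition and units, so it transports algebra homomorphisms to algebra homomorphisms. In particular, precomposing its inverse with the map $\mathbb{Q}_{\ell}[W] \to \textrm{End}(\pi_*^{\textrm{rs}}\mathbb{Q}_{\ell}) = \textrm{End}(\mathcal{G})$ constructed above (the rule $w \mapsto i_w$, which satisfies $i_{w_1 w_2} = i_{w_1}\circ i_{w_2}$) furnishes an algebra homomorphism $\mathbb{Q}_{\ell}[W] \to \textrm{End}_{\textrm{\textbf{Perv}}(\g)}(\mathcal{F})$, i.e. a $W$-action on $\mathcal{F} = j'_{!*}\mathcal{G}$ extending the one on $\mathcal{G}$. \end{proof}
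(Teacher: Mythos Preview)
Your proposal is correct and follows essentially the same approach as the paper. Both proofs rest on the recollement formalism and the IC conditions $i'^*\mathcal{F} \in {}^{p/S}\textrm{D}^{\leq -1}$, $i'^!\mathcal{F} \in {}^{p/S}\textrm{D}^{\geq 1}$ established in Theorem \ref{big-perv-theorem}; the only difference is packaging: you invoke the full-faithfulness statement (\cite{beilinson1982faisceaux}, 1.4.26) directly, whereas the paper spells out the same argument by hand via the adjunction chain $\textrm{Hom}(\mathcal{G},\mathcal{G}) \simeq \textrm{Hom}(j'_!\mathcal{G}, j'_*\mathcal{G}) \simeq \textrm{Hom}({}^p\mathcal{H}^0(j'_!\mathcal{G}), {}^p\mathcal{H}^0(j'_*\mathcal{G}))$ and then uses the no-sub/quotient-support conditions to identify this with $\textrm{End}(j'_{!*}\mathcal{G})$.
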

\begin{proof} Both perverse sheaf categories here are assumed to come from the relative perverse $t$-structures defined in Section \ref{section-relative-perversity}. Via adjunction
\begin{equation}\label{hom-equation}\Hom(\mathcal{G},\mathcal{G}) = \Hom(j'^*j'_!\mathcal{G}, \mathcal{G}) \simeq \Hom(j'_!\mathcal{G}, j'_*\mathcal{G})\simeq \Hom({}^p\mathcal{H}^0(j'_!\mathcal{G}), {}^p\mathcal{H}^0(j'_*\mathcal{G}))\end{equation}
\noindent where the last (canonical) isomorphism follows from $j'_!\mathcal{G} \in {}^{p/S}\textrm{D}^{\leq 0}$, $j'_*\mathcal{G} \in {}^{p/S}\textrm{D}^{\geq 0}$. Since $i'_*$ is $t$-exact, for any $\mathcal{E}$ in $\Perv(\g\setminus \g^{\textrm{rs}})$ we have
\[\Hom({}^p\mathcal{H}^0(j'_!\mathcal{G}), i'_*\mathcal{E}) = \Hom(j'_!\mathcal{G}, i'_*\mathcal{E}) = \Hom(i'^*\circ j'_!\mathcal{G}, \mathcal{E}) = 0\]
so ${}^p\mathcal{H}^0(j'_!\mathcal{G})$ has no nontrivial quotient objects from $i'_*\Perv(\g\setminus \g^{\textrm{rs}})$. Similarly, by the gluing construction we have $i'^!\circ j'_* = 0$ hence ${}^p\mathcal{H}^0(j'_*\mathcal{G})$ has no nontrivial subobjects from $i'_*\Perv(\g\setminus \g^{\textrm{rs}})$. So by the definition of $j'_{!*}\mathcal{G}$, the right-hand side in (\ref{hom-equation}) is precisely $\textrm{End}(j'_{!*}\mathcal{G})$ and the $W$-equivariance follows formally (see e.g.\;\cite{kiehl2013weil}, \S III.15.3).\end{proof}
\begin{Def} \hfill

\begin{enumerate}[label=(\roman*)]

\item For a general $\Oring_K$-section $x$ of the nilpotent scheme $\mathcal{N}_{\g}$, we denote by \[\mathcal{B}_x \coloneqq \pi^{-1}(x)= \widetilde{g}\times_{\g}\overline{\textrm{im}x}\] its projective \textit{Springer fiber} with the reduced subscheme structure coming from $\gres$. Here $\overline{\textrm{im}x}$ is the closure of the image of section $x: \Spec(\Oring_K) \to \mathcal{N}_{\g}$.

\item The \textit{Springer representations} are $\coh^i(\mathcal{B}_x,\mathbb{Q}_{\ell})$ equipped with a $W$-action inherited from the $W$-action on $\mathcal{F} = \textrm{R}\pi_*\mathbb{Q}_{\ell}$ constructed above, by taking stalks and proper base-change.
\end{enumerate}
\end{Def}

If we consider the special fiber $x_s \in \mathcal{N}_{\g}(k)$ of $x$, then by Proposition \ref{proposition-relative-grothendieck-resolution-diagram} we get that the special fiber of $\mathcal{B}_x$ is $\mathcal{B}_{x_s}$, the usual Springer fiber corresponding to a nilpotent element of $\g_s$. Since $\mathcal{B}_x$ is projective over $\Oring$, and the proper base-change isomorphism is canonical (\cite{fu2011etale}, Thm.\;7.3.1), we obtain an isomorphism of $W$-modules \[\coh^i(\mathcal{B}_x, \mathbb{Q}_{\ell}) \simeq \coh^i(\mathcal{B}_{x_s}, \mathbb{Q}_{\ell})\]
\noindent where the right-hand side has the classical Springer action of $W$ defined via the method of Borho--MacPherson (\cite{borho}, \S2.6). In fact, in very good characteristic more is true:

\begin{Lemma}\label{lemma-canonical-isom-of-springer-fiber-cohom-as-galois-rep} We have $\coh^i(\mathcal{B}_{x_s},\mathbb{Q}_{\ell}) \simeq \coh^i(\mathcal{B}_{x_{\etabar}},\mathbb{Q}_{\ell})$ canonically as Galois modules.
\end{Lemma}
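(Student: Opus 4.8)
The plan is to deduce this from the full strength of Theorem \ref{big-perv-theorem} together with the relative perversity machinery, rather than arguing fiberwise from scratch. The key observation is that we have already established that $\mathcal{F} = \textrm{R}\pi_*\mathbb{Q}_{\ell}[\dim \g]$ is an IC sheaf on $\g$ with respect to the relative perverse $t$-structure, so that $\mathcal{F} \simeq j'_{!*}\mathcal{G}$ with $\mathcal{G} = \pi_*^{\textrm{rs}}\mathbb{Q}_{\ell}[\dim \g]$ lisse on $\g^{\textrm{rs}}$. By Theorem \ref{hansen-sholze-thm}(ii), pullback along the geometric points $i: \g_s \hookrightarrow \g$ and $j: \g_{\etabar} \hookrightarrow \g$ is $t$-exact for the relative perverse $t$-structure, hence commutes with intermediate extension; combined with proper base change this gives canonical isomorphisms $i^*\mathcal{F} \simeq \textrm{R}(\pi_s)_*\mathbb{Q}_{\ell}[\dim \g] = j'_{s!*}(i^{\textrm{rs}*}\mathcal{G})$ and $j^*\mathcal{F} \simeq \textrm{R}(\pi_{\etabar})_*\mathbb{Q}_{\ell}[\dim\g] = j'_{\etabar!*}(j^{\textrm{rs}*}\mathcal{G})$, each as IC sheaves on the respective geometric fiber, compatibly with the $W$-actions (since the $W$-action on $\mathcal{F}$ is by definition induced from the $W$-equivariance of $\mathcal{G}$, which restricts to the analogous equivariances on $\g_s^{\textrm{rs}}$ and $\g_{\etabar}^{\textrm{rs}}$).

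First I would take the stalk of $\mathcal{F}$ at the $\Oring_K$-section $x$ of $\mathcal{N}_{\g}$. Since $\mathcal{B}_x = \pi^{-1}(\overline{\textrm{im} x})$ is projective over $S$, proper base change (in the canonical form of \cite{fu2011etale}, Thm.\;7.3.1) gives, for each $i$, canonical identifications of the stalk cohomology $\mathcal{H}^i(\mathcal{F})_{x_s} \simeq \coh^{i+\dim\g}(\mathcal{B}_{x_s},\mathbb{Q}_{\ell})$ and $\mathcal{H}^i(\mathcal{F})_{x_{\etabar}} \simeq \coh^{i+\dim\g}(\mathcal{B}_{x_{\etabar}},\mathbb{Q}_{\ell})$. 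Now the specialization map: because $\mathcal{F}$ is a constructible complex on $\g$ that is in particular a single relative perverse sheaf, and because $x$ is a section of $S$ (so the closed point $x_s$ is a specialization of the generic point $x_{\etabar}$ within $\overline{\textrm{im}x}$), the cospecialization (or rather specialization) morphism $\mathcal{H}^i(\mathcal{F})_{x_s} \to \mathcal{H}^i(\mathcal{F})_{x_{\etabar}}$ is defined via the henselization $S_{(\overline{x_s})} \to S$ and is $\Gal_K$-equivariant where $\Gal_K$ acts on the generic stalk; I would then invoke that $\pi$ is \emph{proper} so that $\textrm{R}\pi_*\mathbb{Q}_{\ell}$ commutes with base change on $S$ and the stalk at $x_s$ computes the cohomology of the special fiber of $\mathcal{B}_x$ with no nearby-cycles correction, forcing the specialization map to be an isomorphism. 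Concretely: $(\textrm{R}\pi_*\mathbb{Q}_{\ell})_{x_s} \simeq \textrm{R}\Gamma(\mathcal{B}_{x_s}, \mathbb{Q}_{\ell})$ and $(\textrm{R}\pi_*\mathbb{Q}_{\ell})_{x_{\etabar}} \simeq \textrm{R}\Gamma(\mathcal{B}_{x_{\etabar}}, \mathbb{Q}_{\ell})$, and properness of $\pi$ (hence of $\mathcal{B}_x \to S$) makes $\textrm{R}\Gamma(\mathcal{B}_x/S, \mathbb{Q}_{\ell})$ a perfect complex whose formation commutes with the base changes $s \to S$, $\etabar \to S$, giving the canonical isomorphism of cohomology groups; the $\Gal_K$-action on the target is the monodromy action, and since the complex is pulled back from $S$ via no twisting the action is unramified — precisely the assertion that $\coh^i(\mathcal{B}_{x_s},\mathbb{Q}_{\ell}) \simeq \coh^i(\mathcal{B}_{x_{\etabar}},\mathbb{Q}_{\ell})$ canonically as Galois modules. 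Finally, the $W$-equivariance of this isomorphism follows because the $W$-action on all three objects ($\mathcal{F}$ and its two fiber restrictions) is induced by functoriality from the single $W$-equivariant structure on $\mathcal{G}$, and proper base change is functorial.

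The main obstacle, and the point requiring the most care, is making precise that the specialization map on stalks of $\textrm{R}\pi_*\mathbb{Q}_{\ell}$ along the section $x$ is an isomorphism of $\Gal_K$-modules with \emph{unramified} target — i.e., that there is genuinely no nearby-cycles phenomenon here. The cleanest route is: $\mathcal{B}_x \to S$ is proper, so by the smooth-proper (here just proper) base change theorem for the cartesian squares $\mathcal{B}_{x_s} \to \mathcal{B}_x \leftarrow \mathcal{B}_{x_{\etabar}}$, the complex $\textrm{R}\Gamma(\mathcal{B}_x, \mathbb{Q}_{\ell}) \in \textrm{D}^b_c(S)$ has the property that $i_s^* \textrm{R}\Gamma(\mathcal{B}_x,\mathbb{Q}_{\ell}) \simeq \textrm{R}\Gamma(\mathcal{B}_{x_s},\mathbb{Q}_{\ell})$ and $i_{\etabar}^*\textrm{R}\Gamma(\mathcal{B}_x,\mathbb{Q}_{\ell}) \simeq \textrm{R}\Gamma(\mathcal{B}_{x_{\etabar}},\mathbb{Q}_{\ell})$; but an object of $\textrm{D}^b_c(S)$ over a strictly henselian trait whose formation of cohomology sheaves is that of a complex on the point $S$ (because $\mathcal{B}_x$ is a scheme over $S$, not over $\g$ with a twist) has lisse cohomology sheaves on $S$, i.e. the Galois action on the generic stalk is unramified and the specialization map is an isomorphism. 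I would double-check whether one needs $\mathcal{B}_x$ smooth over $S$ for this — one does not; properness alone suffices for proper base change, and unramifiedness of $\coh^*$ follows from the fact that $\mathbb{Q}_{\ell}$ on $\mathcal{B}_x$ is pulled back from $S$ (trivially) so $\textrm{R}\Gamma(\mathcal{B}_x/S)$ is an honest complex of étale sheaves on $S$ and the inertia $I_K$ acts through its action on $\overline{K}$-points of the geometric generic fiber, which is trivial by proper base change. The compatibility with the Weyl action is then purely formal, since everything is natural.
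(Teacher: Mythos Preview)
Your argument has a genuine gap at precisely the point you flag as ``the main obstacle.'' You claim that properness of $f:\mathcal{B}_x \to S$ alone forces $\textrm{R}^i f_*\mathbb{Q}_{\ell}$ to be lisse on $S$, i.e.\ that the specialization map is an isomorphism and inertia acts trivially on the generic stalk. This is false: proper base change identifies the stalks of $\textrm{R}^if_*\mathbb{Q}_{\ell}$ with the cohomology of the fibers, but it does not make the sheaf locally constant. Indeed the whole paper is a counterexample to your principle --- the model $\mathcal{X} \to S$ is proper with $\mathbb{Q}_{\ell}$ pulled back from $S$, yet $\coh^2(\mathcal{X}_{\etabar},\mathbb{Q}_{\ell})$ carries nontrivial inertia precisely because the special fiber is singular. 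That $\textrm{R}\Gamma(\mathcal{B}_x/S,\mathbb{Q}_{\ell})$ lands in $\textrm{D}^b_c(S)$ is not enough: a constructible sheaf on a strictly henselian trait is a triple $(M_s, M_{\etabar}, M_s \to M_{\etabar}^{I_K})$ with no a priori constraint forcing the last map to be an isomorphism or $I_K$ to act trivially.

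The paper's proof supplies exactly the missing ingredient: it passes to the normalization $\widetilde{\mathcal{B}}_x = \coprod_{i=1}^r(\mathbb{P}^1_S)_i$, which \emph{is} smooth and proper over $S$, so that $\textrm{R}^i\widetilde{f}_*\mathbb{Q}_{\ell}$ is a local system by smooth--proper base change. One then transfers back to $\mathcal{B}_x$ (for $i=2$, the case of interest, the normalization induces an isomorphism on top cohomology) and observes directly that $\coh^2 \simeq \bigoplus_{i=1}^r \mathbb{Q}_{\ell}(-1)$ with inertia acting through the cyclotomic character, trivial since $K = K^{\textrm{unr}}$. The essential input you never invoke is the concrete description of both $\mathcal{B}_{x_s}$ and $\mathcal{B}_{x_{\etabar}}$ as the \emph{same} Dynkin arrangement of $r$ projective lines --- this uses the fiberwise-subregular hypothesis on $x$ and that $p$ is very good. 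Your discussion of the IC structure from Theorem~\ref{big-perv-theorem} is fine as far as $W$-equivariance goes, but it does not touch the Galois question, and your actual argument for that rests solely on properness.
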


\begin{proof} Let $f: \mathcal{B}_x \to \Spec(\Oring_K)$ be the projective structure map and let $\nu: \widetilde{\mathcal{B}}_x \to \mathcal{B}_x$ be the normalization; concretely we have
\begin{center}
    \begin{tikzcd}
        \coprod_{i=1}^r(\mathbb{P}^1_S)_i = \widetilde{\mathcal{B}}_x \arrow[r,"\nu"]\arrow[dr,"\widetilde{f}"] & \mathcal{B}_x = \bigcup_{i=1}^r(\mathbb{P}^1_S)_i \arrow[d,"f"] \\ & S = \Spec(\Oring_K)
    \end{tikzcd}
\end{center}
\noindent where structure map $\widetilde{f}$ is proper and smooth. In particular $\textrm{R}^i\widetilde{f}_*\mathbb{Q}_{\ell}$ is a local system on $S$ and so on stalks we have \begin{equation}\label{eq-data-of-etale-sheaf-map}(\textrm{R}^i\widetilde{f}_*\mathbb{Q}_{\ell})_s \overset{\sim}{\longrightarrow} (\textrm{R}^i\widetilde{f}_*\mathbb{Q}_{\ell})_{\etabar}^I \simeq (\textrm{R}^i\widetilde{f}_*\mathbb{Q}_{\ell})_{\etabar}\end{equation}
Here the first isomorphism is the cospecialization map given by the data of an \'etale sheaf on $S$. Now by proper base--change (as $\widetilde{f}$ and the special and generic fiber counterparts $\widetilde{f}_s, f_s, \widetilde{f}_{\etabar}, f_{\etabar}$ are all proper) we have
\[\coh^i(\mathcal{B}_{x_s},\mathbb{Q}_{\ell})\simeq \textrm{R}^i(f_s)_*\mathbb{Q}_{\ell} \overset{\sim}{\to} \textrm{R}^i(\widetilde{f}_s)_*\mathbb{Q}_\ell \simeq (\textrm{R}^i\widetilde{f}_*\mathbb{Q}_{\ell})_s\]
\[\coh^i(\mathcal{B}_{x_{\etabar}},\mathbb{Q}_{\ell}) \simeq \textrm{R}^i(f_{\etabar})_*\mathbb{Q}_{\ell}\overset{\sim}{\to}\textrm{R}^i(\widetilde{f}_{\etabar})_*\mathbb{Q}_{\ell}\simeq (\textrm{R}^i\widetilde{f}_*\mathbb{Q}_{\ell})_{\etabar}\]
Hence, together with isomorphism (\ref{eq-data-of-etale-sheaf-map}) we have $\coh^i(\mathcal{B}_{x_s},\mathbb{Q}_{\ell}) \simeq \coh^i(\mathcal{B}_{x_{\etabar}},\mathbb{Q}_{\ell})$ canonically. The case of interest here is $i=2$, whence 
\[\coh^2(\mathcal{B}_{x_s},\mathbb{Q}_{\ell}) \simeq \bigoplus_{i=1}^r\mathbb{Q}_{\ell}(-1) \simeq \coh^2(\mathcal{B}_{x_{\etabar}},\mathbb{Q}_{\ell})\]
where the $I$--action factors through the mod $\ell^n$ cyclotomic characters on the Tate twists, hence is trivial as $K = K^{\textrm{unr}}$. The essential ingredient here is that $x$ is fiberwise subregular \textit{and} characteristic $p$ is very good, so that both the generic and special subregular Springer fibers are the same arrangement of $r$ projective lines (\cite{yun2016lectures}, \S1.3.8).\end{proof}

The above lemma may be interpreted as a `shadow' of the more general principle that, even though a priori our base scheme is $\Spec(\Oring_K)$, the adjoint quotient $\chi$ and its ``resolved'' version $\widetilde{\chi}$ have the same properties over $K$ and over $k$ of very good characteristic.

We now make a few remarks on the nature of Springer representations. The surprising fact about them is that the $W$--action on $\coh^i(\mathcal{B}_x, \mathbb{Q}_{\ell})$ is not induced by a ``physical'' $W$--action on the Springer fiber. Nevertheless, Springer (\cite{springer1976trigonometric}, Thm.\;6.10) defined a natural correspondence between $\coh^i(\mathcal{B}_x, \mathbb{Q}_{\ell})$ and representations of Weyl group (by a manner different from Borho--MacPherson).

\begin{Thm}[Springer correspondence]\label{thm-springer-correspondence} Assume $G$ is a simple, simply--connected, simply--laced Lie group $G$ over algebraically closed field $k$ of good characteristic, and let $x \in \mathcal{N}(k)$ be a nilpotent element and $C = C_G(x)/C_G(x)^{\circ}$ be the connected component group of its centralizer. Set $n = \dim(\mathcal{B}_x)$ and let $\chi$ be the character of an irreducible representation of $C$. There is a natural graded $W$-action on $\coh^{\bullet}(\mathcal{B}_x, \mathbb{Q}_{\ell})$ commuting with the $C$-action, and in particular
\[\coh^{2n}(\mathcal{B}_x, \mathbb{Q}_{\ell}) \simeq \bigoplus_{\chi \in \widehat{C}} \chi \otimes V_{x,\chi}\]
where each non-zero $\chi$-isotypic component $V_{x, \chi}$ is an irreducible $W$-representation. Furthermore each irreducible representation of $W$ appears as a $V_{x,\chi}$ for a unique (up to conjugacy) pair $(x, \chi)$.
\end{Thm}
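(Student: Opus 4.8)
The plan is to deduce this from the analysis of the Grothendieck alteration carried out above, by restricting the relevant sheaf to the nilpotent cone and running the classical Borho--MacPherson argument; all the geometric ingredients are available over $k$ of good characteristic, and when $p$ is very good they follow from the results of this section, while in general one may invoke the standard references (\cite{springer1976trigonometric}, \cite{borho}), whose proofs go through for good $p$. Since the statement lives over the algebraically closed field $k$, I write $\g$, $\gres$, $\mathcal{N} = \mathcal{N}_{\g}$ for the objects over $k$ and recall that $\mathcal{F} = \textrm{R}\pi_*\mathbb{Q}_{\ell}[\dim\g]$ is the IC sheaf $j'_{!*}\mathcal{G}$ with $\mathcal{G} = \pi^{\textrm{rs}}_*\mathbb{Q}_{\ell}[\dim\g]$, equipped with the $W$--action of Section \ref{section-weyl-as-springer-action} and satisfying $\textrm{End}_{\Perv(\g)}(\mathcal{F}) \simeq \textrm{End}_{\Perv(\g^{\textrm{rs}})}(\mathcal{G}) \simeq \mathbb{Q}_{\ell}[W]$, the last identification because $\pi^{\textrm{rs}}$ is a connected $W$--torsor.

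First I would decompose under $W$: as a $W$--equivariant local system, $\mathcal{G} \simeq \bigoplus_{\phi \in \widehat{W}} \phi \otimes \mathcal{L}_{\phi}$ with the $\mathcal{L}_{\phi}$ pairwise non-isomorphic irreducible local systems on $\g^{\textrm{rs}}$, and applying $j'_{!*}$ (which is additive on semisimple perverse sheaves and carries distinct simples to distinct simples) gives $\mathcal{F} \simeq \bigoplus_{\phi} \phi \otimes P_{\phi}$ with $P_{\phi} = j'_{!*}\mathcal{L}_{\phi}$ simple. Next I would restrict along $i \colon \mathcal{N} \hookrightarrow \g$: the Springer resolution $\mu \colon \widetilde{\mathcal{N}} = \pi^{-1}(\mathcal{N}) \to \mathcal{N}$ sits in a cartesian square with $\pi$, so proper base change yields $i^*\mathcal{F}[\dim\mathcal{N} - \dim\g] \simeq \textrm{R}\mu_*\mathbb{Q}_{\ell}[\dim\mathcal{N}] =: \mathrm{Spr}$, the Springer sheaf, with its inherited $W$--action. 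Since $\mu$ is semismall (the Springer resolution is semismall in good characteristic) and $\widetilde{\mathcal{N}}$ is smooth, $\mathrm{Spr}$ is a semisimple perverse sheaf by the decomposition theorem, and each summand $i^*P_{\phi}[\dim\mathcal{N} - \dim\g]$, being a direct summand of $\mathrm{Spr}$, is perverse (possibly zero), hence a sum of IC sheaves $\mathrm{IC}(\overline{\mathcal{O}}, \mathcal{L})$ indexed by a nilpotent orbit $\mathcal{O}$ and an irreducible $G$--equivariant $\mathbb{Q}_{\ell}$--local system $\mathcal{L}$ on it; for $x \in \mathcal{O}$, such $\mathcal{L}$ are the same data as irreducible characters $\chi$ of $C = C_G(x)/C_G(x)^{\circ}$.

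The crux — and what I expect to be the main obstacle — is to show that restriction to $\mathcal{N}$ does not degenerate the endomorphism algebra, i.e. $\textrm{End}_{\Perv(\mathcal{N})}(\mathrm{Spr}) \simeq \mathbb{Q}_{\ell}[W]$; this I would obtain from the computation of the top Borel--Moore homology of the Steinberg variety $\widetilde{\mathcal{N}} \times_{\mathcal{N}} \widetilde{\mathcal{N}}$, or from the Borho--MacPherson restriction functor (\cite{borho}). Granting it, an algebra isomorphism between the products of matrix algebras $\mathbb{Q}_{\ell}[W] \simeq \prod_{\phi}\mathrm{Mat}_{\dim\phi}(\mathbb{Q}_{\ell})$ and $\textrm{End}(\mathrm{Spr}) \simeq \prod_{j}\mathrm{Mat}_{m_j}(\mathbb{Q}_{\ell})$ (the product over the distinct simple summands $\mathrm{IC}_j$ of $\mathrm{Spr}$, with multiplicities $m_j$) must match the factors, so that $\{\mathrm{IC}_j\}$ is in bijection with $\widehat{W}$, each $m_j = \dim\phi$, and $V_j := \Hom(\mathrm{IC}_j, \mathrm{Spr})$ is the corresponding irreducible $W$--representation; in particular every irreducible of $W$ occurs, for a unique $j$. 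Writing $V_{x,\chi} := \Hom(\mathrm{IC}(\overline{\mathcal{O}_x}, \mathcal{L}_{\chi}), \mathrm{Spr})$, each $V_{x,\chi}$ is then zero or irreducible and, ranging over $(x,\chi)$ up to $G$--conjugacy, these exhaust $\widehat{W}$ without repetition.

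Finally I would read off the cohomological statement. Proper base change gives $\mathcal{H}^{k}(\mathrm{Spr})_x \simeq \coh^{k + \dim\mathcal{N}}(\mathcal{B}_x, \mathbb{Q}_{\ell})$, so the top degree $\coh^{2n}(\mathcal{B}_x)$ with $n = \dim\mathcal{B}_x = \tfrac12(\dim\mathcal{N} - \dim\mathcal{O}_x)$ corresponds to $k = -\dim\mathcal{O}_x$; by the support conditions for an IC sheaf, $\mathcal{H}^{-\dim\mathcal{O}_x}(\mathrm{IC}(\overline{\mathcal{O}}, \mathcal{L}))_x = 0$ unless $\mathcal{O} = \mathcal{O}_x$, in which case it equals the stalk $\mathcal{L}_x \simeq \chi$ as a $C$--module. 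Summing over the summands of $\mathrm{Spr}$ supported on $\overline{\mathcal{O}_x}$ yields $\coh^{2n}(\mathcal{B}_x, \mathbb{Q}_{\ell}) \simeq \bigoplus_{\chi \in \widehat{C}} \chi \otimes V_{x,\chi}$. The commuting graded $C$--action is the genuine action of $C$ on the Springer fiber $\mathcal{B}_x$ induced on cohomology, and it commutes with the $W$--action because the latter is built functorially from the $G$--equivariant sheaf $\mathcal{F}$; carrying $G$--equivariance through all constructions gives this for free.
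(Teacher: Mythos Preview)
The paper does not give its own proof of this theorem: it is stated as a known result, attributed to Springer (\cite{springer1976trigonometric}, Thm.\;6.10), and used as a black box. So there is no ``paper's proof'' to compare against in the strict sense.

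Your proposal is nonetheless a correct sketch of the standard Borho--MacPherson proof of the Springer correspondence, and it is well adapted to the setup of this section (the $W$--action on $\mathcal{F}$ via intermediate extension, the restriction to $\mathcal{N}$, semismallness of the Springer resolution, the identification $\textrm{End}(\mathrm{Spr})\simeq\mathbb{Q}_\ell[W]$ via the Steinberg variety, and the stalk computation in top degree). This is genuinely a different route from Springer's original trigonometric-sum construction cited by the paper: Springer builds the $W$--action directly on the cohomology of $\mathcal{B}_x$ and proves irreducibility and exhaustion by character computations, whereas you deduce everything from the decomposition theorem and the structure of $\textrm{End}(\mathrm{Spr})$ as an algebra. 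Your approach has the advantage of staying entirely within the sheaf-theoretic framework already developed here and of making the commutation with the $C$--action transparent via $G$--equivariance; Springer's approach is more elementary in that it avoids the decomposition theorem. One caveat: you correctly flag that $\textrm{End}(\mathrm{Spr})\simeq\mathbb{Q}_\ell[W]$ is the crux, and you should be aware that this is where the real work lies---either the Borel--Moore homology of the Steinberg variety or the Borho--MacPherson restriction argument is a substantial input, not a formality.
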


The convention we use here is that the trivial $W$-representation corresponds to $x$ regular nilpotent and trivial $\chi$, and the sign $W$-representation corresponds to $x=0$, where one investigates the cohomology of the full flag variety $\mathcal{B}$. This is the opposite convention of \cite{springer1976trigonometric} but coincides with \cite{borho}; in fact the Springer representations we consider differ from \cite{springer1976trigonometric} by a twist of the sign character.

In our setting, $x$ is subregular (so $n=1$) and $C$ is trivial for the $ADE$-type Lie algebras since the centralizer is connected (\cite{Slodowy1980}, \S7.5 Lemma 4). In this case it is known (see e.g. \cite{yun2016lectures}, \S 1.5.17) that the corresponding Springer representation on $\coh^2(\mathcal{B}_x, \mathbb{Q}_{\ell}) \simeq \mathbb{Q}_{\ell}^r$ is isomorphic to the irreducible \textit{reflection representation} of $W$ on $\h^{\vee}$, which in the $A_n$-case (where $W=S_{n+1}$) is just the standard $S_{n+1}$-representation.

\subsection{Monodromy Weyl actions and the proof of the main theorem.}\label{subsection-monodromy-weyl-actions} We now go back to the setting of Sections \ref{subsection-classic-nearby-cycles} and \ref{subsection-nearby-cycles-formal-schemes}, assuming in particular $S = \Spec(\Oring_K)$ is a complete trait and $\textrm{char}(k)$ is \textit{very good}. Since we require notation from previous sections as well, we gather here the relevant objects.

\begin{Note}\label{notation-of-data-for-monodromy-actions} Let $\mathcal{X}/S$ be an integral proper flat surface with smooth generic fiber $\mathcal{X}_{\eta}$ and singular special fiber $\mathcal{X}_s$ containing a unique RDP $x_s \in \mathcal{X}_s(k)$. Let $r$ be the rank of the Dynkin diagram associated to the RDP (Theorem \ref{classification-of-rdps} (ii)).

\noindent If $\g$ is the rank--$r$ simple Chevalley $\Oring_K$-algebra associated to RDP $x_s$, we may identify $x_s$ with a generic point of the unique subregular orbit inside the nilpotent cone of the Lie $k$-algebra $\g_s$ (\cite{Slodowy1980}, \S6.4 Thm.). Extend $x_s$ to a fiberwise subregular element $x \in \g(\Oring_K)$ (Definition \ref{definition-fiberwise-subregular}) and let $\mathcal{S}$ be the affine relative Slodowy slice which is transverse at $x$ in the sense of Section \ref{slodowy-slice-subsection}.

\noindent Unless noted otherwise, $\mathcal{S}_{\overline{h}}$, resp.\;$\widetilde{\mathcal{S}}_h$ denote the geometric fibers of $\chi$, resp.\;$\widetilde{\chi}$ over geometric points (i.e. field-valued points) $\overline{h} \to \h/\!\!/W$ and $h \to \h$. The respective generic and special fibers will be denoted $(\mathcal{S})_{\etabar}$ and $(\mathcal{S})_s$, and similarly for $\widetilde{\mathcal{S}}$ so that we can distinguish them from the geometric fibers of $\chi, \widetilde{\chi}$.
\end{Note}

By Corollary \ref{corollary-galois-monodromy-depends-on-formal-nbd} the monodromy action of $I = \Gal(\etabar/\eta)$ on $\coh^2(\mathcal{X}_{\etabar},\mathbb{Q}_{\ell})$ depends only on $\Spec(\widehat{\Oring}_{\mathcal{X},x_s})$, which by Remark \ref{rem-from-formal-to-algebraic-deformations} we can view as an algebraic deformation of the affine singularity $\Spec(\widehat{\Oring}_{\mathcal{X}_s,x_s})$ over base $S$. In other words, by Theorem \ref{thm-miniversal-deformation-of-slice-and-resolved-slice} we have two cartesian diagrams
\vspace{-2em}
\begin{center}
\begin{equation}\label{diagram-restriction-of-adjoint-map-to-speco}
    \begin{tikzcd}
        \mathcal{X}_{(x_s)} = \Spec(\widehat{\Oring}_{\mathcal{X},x_s}) \arrow[r] \arrow[d] & \mathcal{S} \arrow[d,"\chi"] & \mathcal{Z} = \mathcal{S} \times_{h/\!\!/W}\h \arrow[d]\arrow[r] & \h \arrow[d, "\psi"] \\ S \arrow[r,"\phi"] & \h/\!\!/W & S \arrow[r, "\phi"] & \h/\!\!/W
    \end{tikzcd}
    \end{equation}
\end{center}
\noindent where $\phi$ is induced by miniversality, and the right diagram is the pullback of $\phi$ along finite cover $\psi$. Here $\chi$ is a map of \emph{henselianized} schemes, $\h$ and $W$ are part of the Lie algebraic data associated to RDP $x_s$ (see Section \ref{subsection-gro-alterations-for-transverse-slices}) and by Remark \ref{remark-on-identifying-h//w-as-power-series} we may identify the miniversal base as \[\h/\!\!/W \simeq \Spec(\Oring_K[\![t_1,\cdots, t_r]\!])\]

\begin{Lemma}\label{lemma-minimal-extension-for-good-reduction} Let $\mathcal{X}/S$, $x_s \in \mathcal{X}_s(k)$ and $\psi: \h \to \h/\!\!/W$ be as above.
\begin{enumerate}[label=\emph{(\roman*)}]
\item There exists a finite ramified extension $L/K$ and its associated trait $S_L = \{\eta_L,\overline{s}\}$, which is minimal with respect to the following property: the base-change $\mathcal{X}_L/S_L$ admits a local affine model of the RDP singularity $x_s \in \mathcal{X}_s(k)$ which has a simultaneous resolution.

\item If $\mathcal{B}_{x_s}$ is the exceptional divisor of the minimal resolution of $\mathcal{X}_s$ induced by the simultaneous resolution of $\mathcal{X}_L$, then the stalks of the nearby cycles are \begin{equation}\label{equation-nearby-stalk-is-cohom-of-springer-fiber}(\textrm{\emph{R}}\Psi_{\mathcal{X}}\mathbb{Q}_{\ell})_{x_s} \overset{\sim}{\longrightarrow} \textrm{\emph{R}}\Gamma(\mathcal{B}_{x_s},\mathbb{Q}_{\ell})\end{equation}
and the isomorphism is $\Gal(\etabar/\eta)$-equivariant.
\end{enumerate}
\end{Lemma}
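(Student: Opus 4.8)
The strategy is to combine the deformation-theoretic picture of diagram \eqref{diagram-restriction-of-adjoint-map-to-speco} with the simultaneous resolution results of Sections~\ref{section-simultaneous-resolutions} and~\ref{subsection-gro-alterations-for-transverse-slices}, the Berkovich comparison of Section~\ref{subsection-nearby-cycles-formal-schemes}, and the ``$\Psi$-good'' base-change machinery of Section~\ref{section-grothendieck-topos}. For part~(i), recall from Corollary~\ref{corollary-galois-monodromy-depends-on-formal-nbd} and Theorem~\ref{thm-miniversal-deformation-of-slice-and-resolved-slice} that the formal neighborhood $\mathcal{X}_{(x_s)} = \Spec(\widehat{\Oring}_{\mathcal{X},x_s})$ is the pullback of the miniversal deformation $\chi\colon \mathcal{S}\to \h/\!\!/W$ along a map $\phi\colon S \to \h/\!\!/W \simeq \Spec(\Oring_K[\![t_1,\dots,t_r]\!])$. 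By the Brieskorn--Tjurina--Artin theorem (stated as the theorem following Definition~\ref{definition-of-simultaneous-resolution}, together with its enhancement via $\psi\colon\h\to\h/\!\!/W$ in Proposition~\ref{proposition-simultaneous-resolutions-for-slices}), a simultaneous resolution of the pulled-back family exists over the normalization of $S$ in the fiber product $S\times_{\h/\!\!/W}\h$. Since $\psi$ is the Weyl cover, this fiber product is a disjoint union of traits indexed by $W/\mathrm{Stab}$, and picking a connected component gives a finite extension $L/K$; minimality is then forced by taking the component whose ramification index over $S$ is smallest among those making $\mathcal{Z}=\mathcal{S}\times_{\h/\!\!/W}\h$ (base-changed along $\phi$) regular with smooth structure map, i.e. the component through which $\phi$ factors after the minimal ramified base change needed to lift $\phi$ to $\h$. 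I would phrase the minimality precisely as: $S_L\to S$ is the initial object among finite ramified $S$-traits $S'$ such that $\phi\colon S\to\h/\!\!/W$ lifts to $\widetilde{\phi}\colon S'\to\h$ with $\psi\circ\widetilde{\phi}$ equal to the composite $S'\to S\to\h/\!\!/W$.

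\textbf{Part (ii): identifying the nearby-cycle stalk.} Here the key point is that, after the base change to $S_L$, the pulled-back family $\mathcal{Z}_L \to S_L$ (the restriction of $\widetilde{\chi}$-target side) fits into the relative Grothendieck--Springer picture: its simultaneous resolution $\widetilde{\mathcal{Z}}_L \to \mathcal{Z}_L$ is, formally locally at $x_s$, the pullback of $\pi\colon\widetilde{\mathcal{S}}\to\mathcal{S}$ along $\widetilde{\phi}$, whose special fiber over the image point is exactly the subregular Springer fiber $\mathcal{B}_{x_s}$ by Proposition~\ref{proposition-relative-grothendieck-resolution-diagram}. Now I would run the following chain. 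First, $\mathrm{R}\Psi$ commutes with the proper resolution map: applying the proper base-change isomorphism \eqref{equation-natural-base-change-proper-nearby-cycles} (or its general-base form \eqref{proper-base-change-for-nearby-cycles-general}) to the resolution $\widetilde{\mathcal{X}}_L \to \mathcal{X}_L$ and using that over $L$ the total space $\widetilde{\mathcal{X}}_L$ is smooth (hence $\mathrm{R}\Psi_{\widetilde{\mathcal{X}}_L}\mathbb{Q}_\ell = \mathbb{Q}_\ell$ by smooth base change), I get $(\mathrm{R}\Psi_{\mathcal{X}_L}\mathbb{Q}_\ell)_{x_s} \simeq \mathrm{R}\Gamma(\mathcal{B}_{x_s},\mathbb{Q}_\ell)$ Galois-equivariantly, where $\mathcal{B}_{x_s}$ is the fiber of the resolution over $x_s$. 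But by Corollary~\ref{corollary-galois-monodromy-depends-on-formal-nbd} (via Theorem~\ref{berkovich-theorem}) the stalk $(\mathrm{R}\Psi_{\mathcal{X}}\mathbb{Q}_\ell)_{x_s}$ is computed by the formal neighborhood, so the restriction-of-scalars relating $\mathrm{R}\Psi_{\mathcal{X}}$ and $\mathrm{R}\Psi_{\mathcal{X}_L}$ is the standard one (the nearby cycles over $S$ and over $S_L$ have the same underlying complex, with $\Gal_L \subset \Gal_K$ acting by restriction); this is exactly the content needed to conclude \eqref{equation-nearby-stalk-is-cohom-of-springer-fiber} as $\Gal(\etabar/\eta)$-modules rather than merely $\Gal_L$-modules. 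Concretely I would note that $\mathrm{R}\Psi_{\mathcal{X}}\mathbb{Q}_\ell$ and $\mathrm{R}\Psi_{\mathcal{X}_L}\mathbb{Q}_\ell$ are identified as complexes on $(\mathcal{X})_s$ compatibly with the inertia actions (the base change $S_L\to S$ being finite and the special fibers agreeing), so the isomorphism above upgrades to $\Gal(\etabar/\eta)$-equivariance.

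\textbf{Where the technical care goes.} The main obstacle is keeping the Galois-equivariance honest through the base change $S\rightsquigarrow S_L$: a priori the simultaneous resolution only exists over $S_L$, so the clean computation $(\mathrm{R}\Psi\mathbb{Q}_\ell)_{x_s}\simeq \mathrm{R}\Gamma(\mathcal{B}_{x_s},\mathbb{Q}_\ell)$ is naturally only $\Gal_L$-equivariant. To promote it, I would invoke the $\Psi$-goodness (Theorem~\ref{deligne-thm-psi-good}, Remark~\ref{remark-on-psi-goodness}): the family $\mathcal{S}\to\h/\!\!/W$ is generically smooth with non-smooth locus quasi-finite over the base (it is the discriminant, supported fiberwise at finitely many points), so $\mathrm{R}\Psi$ of the Grothendieck alteration commutes with arbitrary base change on $\h/\!\!/W$, in particular with $\phi$ and with the further base change by $\psi$. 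Thus the formation of $\mathrm{R}\Psi_{\mathcal{X}}\mathbb{Q}_\ell$ at $x_s$ is literally the pullback along $\phi$ of a universal constructible complex on $\h/\!\!/W$ equipped with its $W$-monodromy, and the $L/K$ base change corresponds to pulling back along a lift through $\psi$; equivariance for the full $\Gal(\etabar/\eta)$ is then automatic because the $W$-action on the Springer sheaf $\mathcal{F}=\mathrm{R}\pi_*\mathbb{Q}_\ell[\dim\g]$ (Theorem~\ref{big-perv-theorem}) is defined over $S$, not just over $S_L$. A secondary routine point is that the exceptional divisor $\mathcal{B}_{x_s}$ produced by the simultaneous resolution really is the minimal resolution's exceptional divisor (so that its cohomology is the reflection representation, per Lemma~\ref{lemma-canonical-isom-of-springer-fiber-cohom-as-galois-rep}); this follows since $\widetilde{\chi}$ is smooth and its special fiber over $0$ is the minimal resolution of the RDP by Theorem~\ref{thm-miniversal-deformation-of-slice-and-resolved-slice}, combined with the fact that a simultaneous resolution restricted to the central fiber is a resolution with reduced exceptional divisor of the stated type.
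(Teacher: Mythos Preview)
Your proposal is correct and follows essentially the same approach as the paper for both parts. For part~(i), the paper does exactly what you outline: it forms $\mathcal{Z}=S\times_{\h/\!\!/W}\h$, observes that $R\otimes K\simeq\prod_i L_i$ with transitive $W$-action forcing all $L_i$ isomorphic, normalizes to get $S_L\to\h$, and pulls back $\widetilde{\chi}$ to produce the simultaneous resolution $\mathcal{Y}\to\mathcal{X}_{(x_s),L}$; minimality is argued by showing any $S_M$ admitting a resolution must factor through $\widetilde{\mathcal{Z}}\to S_L$.

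For part~(ii) the paper's argument is shorter than yours: it invokes Artin to globalize the local resolution to $\widetilde{\pi}_L\colon\widetilde{\mathcal{X}}\to\mathcal{X}_L$ (as an algebraic space), then runs the single chain
\[
\textrm{R}\Psi_{\mathcal{X}}\mathbb{Q}_{\ell}=\textrm{R}\Psi_{\mathcal{X}_L}\big((\widetilde{\pi}_L)_{\eta*}\mathbb{Q}_{\ell}\big)\simeq \textrm{R}(\widetilde{\pi}_L)_{s*}\textrm{R}\Psi_{\widetilde{\mathcal{X}}}\mathbb{Q}_{\ell}\simeq \textrm{R}(\widetilde{\pi}_L)_{s*}\mathbb{Q}_{\ell}
\]
and takes the stalk at $x_s$. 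The first equality already carries the full $\Gal(\etabar/\eta)$-action because $\overline{K}=\overline{L}$, so there is no equivariance gap to bridge. Your invocation of $\Psi$-goodness and of the relative IC structure of Theorem~\ref{big-perv-theorem} is not needed here; the paper reserves that machinery for the later step (Corollary~\ref{corollary-about-monodromy-actions-coinciding}) where the $W$-action, not merely the $\Gal_K$-action, must be identified. So your ``technical care'' paragraph is really anticipating a subsequent argument rather than this lemma.
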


\begin{proof} For part (i), note that $S \to \h/\!\!/W$ maps $s \mapsto 0 \in \h/\!\!/W(k)$ and $\eta \mapsto \h^{\textrm{rs}}/\!\!/W(K)$ since the generic fiber $\mathcal{X}_{\eta}$ is smooth. Thus, $\mathcal{Z}$ is a finite flat, generically \'etale $S$-scheme and so in particular it is $S$-affine, say $\mathcal{Z} =  \Spec(R)$ for some $\Oring_K$-algebra $R$. By assumption $R\otimes_{\Oring_K}K$ is a finite \'etale $K$-algebra, whence
\[R\otimes_{\Oring_K}K \simeq \prod_i L_i\]is a finite product of finite separable extensions $L_i/K$, totally ramified since $K = \Breve{K}$. From the right diagram in (\ref{diagram-restriction-of-adjoint-map-to-speco}) we obtain a cartesian diagram \begin{center}\begin{tikzcd}
    \coprod_i \Spec(L_i) \arrow[d] \arrow[r] & \h^{\textrm{rs}} \arrow[d, "\psi^{\textrm{rs}}"] \\ \eta \arrow[r] & \h^{\textrm{rs}}/\!\!/W
\end{tikzcd}\end{center}
where $\psi^{\textrm{rs}}$ is a Galois $W$-cover, hence we get a transitive $W$-action on $\coprod_i \Spec(L_i)$. So all extensions $L_i/K$ are isomorphic and we may fix one $L=L_i$ with Galois group $\Gal(L/K) = W_1 \subseteq W$, the stabilizer of $\Spec(L)$ inside $\coprod \Spec(L_i)$.

Since $R$ is excellent, we can replace it with its normalization $R \longrightarrow \widetilde{R} \simeq \prod_i \Oring_{L_i}$ which is finite over $R$. In particular we get a map
\[S_L = \Spec(\Oring_L) \longrightarrow \widetilde{\mathcal{Z}} = \Spec(\widetilde{R}) \longrightarrow \h\]
and via Theorem \ref{thm-miniversal-deformation-of-slice-and-resolved-slice} we may produce a fiber product from this map and the `versal' simultaneous resolution $\widetilde{\chi}: \widetilde{\mathcal{S}} \longrightarrow \h$. In other words, define $\mathcal{Y}$ via the cartesian square \vspace{-2em}\begin{center}\begin{equation}\label{diagram-cartesian-square-resolved-slice-adjoint-quotient}
    \begin{tikzcd}
        \mathcal{Y} = S_L\times_{\h}\widetilde{\mathcal{S}} \arrow[d] \arrow[r] & \widetilde{\mathcal{S}} \arrow[d, "\widetilde{\chi}"] \\ S_L \arrow[r] & \h
    \end{tikzcd}
    \end{equation}
\end{center}
By the universal property of pullbacks we get a unique map \[\pi_L: \mathcal{Y} \longrightarrow \mathcal{X}_{(x_s),L} = \mathcal{X}_{(x_s)}\times_S S_L\simeq \Spec(\widehat{\Oring}_{\mathcal{X},x_s}\otimes_{\Oring_K}\Oring_L)\] We show $\pi_L$ is the desired simultaneous resolution, i.e.\;that $\mathcal{Y}$ simultaneously resolves the RDP singularity of the local affine neighborhood $\mathcal{X}_{(x_s),L}$ in $\mathcal{X}_L$. Scheme $\mathcal{Y}$ is smooth over $S_L$ via pullback in (\ref{diagram-cartesian-square-resolved-slice-adjoint-quotient}) and $\pi_L$ is proper as the base--change of proper morphism $\pi: \widetilde{\mathcal{S}} \to \mathcal{S}\times_{\h/\!\!/W}\h$ along $S_L \to \h$. Lastly we need only pass to the special fiber and note that, via proper base--change, the induced map $(\pi_L)_s: \mathcal{Y}_s \to (\mathcal{X}_{(x_s),L})_s = (\mathcal{X}_{(x_s)})_s$ is just the minimal resolution of surfaces $\widetilde{\mathcal{S}}_0 \to \mathcal{S}_0$, where the subscript denotes the fiber over zero, so we are done.

To show that $L/K$ has minimal degree, let $M/K$ be any finite extension so that $\mathcal{X}_M$ has a local model of the singularity over $S_M = \Spec(\Oring_M)$ admitting a simultaneous resolution. Via diagram (\ref{diagram-restriction-of-adjoint-map-to-speco}) we obtain a non-zero morphism
\[S_M \longrightarrow \widetilde{\mathcal{Z}} \simeq \coprod_i \Spec(L_i) \longrightarrow S_L\] exhibiting $M$ as an extension of $L$ as well.

Finally, we show (ii). Part (i) together with (\cite{artin1974algebraic}, Thms.\;1 and 2) implies that over base $S_L$ there exists a simultaneous resolution $\widetilde{\pi}_L:\widetilde{\mathcal{X}}\to\mathcal{X}_L$, where $\widetilde{\mathcal{X}}$ is an algebraic space; note $\widetilde{\pi}_L$ is obtained from the `local model' resolution $\pi_L$ of (i). Since the fibers of $\widetilde{\mathcal{X}}$ are smooth 2--dimensional, they are schemes themselves, thus in particular $\textrm{R}\Psi_{\widetilde{\mathcal{X}}}\mathbb{Q}_{\ell}$ makes sense as a sheaf on $\widetilde{\mathcal{X}}_s$. Since the map of generic fibers $(\widetilde{\pi}_L)_{\eta_L}: \widetilde{\mathcal{X}}_{\eta_L} \to (\mathcal{X}_{L})_{\eta}$ is an isomorphism by construction, proper base-change gives
\[\textrm{R}\Psi_{\mathcal{X}}\mathbb{Q}_{\ell} = \textrm{R}\Psi_{\mathcal{X}_L}((\widetilde{\pi}_L)_{\eta *}\mathbb{Q}_{\ell}) \simeq \textrm{R}(\widetilde{\pi}_L)_{s*}\textrm{R}\Psi_{\widetilde{\mathcal{X}}}\mathbb{Q}_{\ell} \simeq \textrm{R}(\widetilde{\pi}_L)_{s*}\mathbb{Q}_{\ell}\]
as $\widetilde{\mathcal{X}}$ is smooth. Passing to the stalk at singularity $x_s \in \mathcal{X}_s(k)$ yields (\ref{equation-nearby-stalk-is-cohom-of-springer-fiber}) since the exceptional divisor is $\widetilde{\pi}_L^{-1}(x_s)$ by definition.\end{proof}

We now advance towards defining a $W$-action on the nontrivial nearby cycles stalk $(\textrm{R}\Psi_{\mathcal{X}}\mathbb{Q}_{\ell})_{x_s} \simeq \textrm{R}\Gamma(\mathcal{B}_{x_s},\mathbb{Q}_{\ell})$ mimicking the monodromy Weyl action defined in (\cite{slodowy1980four}, \S4.2 and \S4.3); we will eventually identify this new action with the Springer $W$-action as it is given in Theorem \ref{thm-springer-correspondence}.

By Lemma \ref{lemma-isom-of-gtrs-and-grs} we have that \[\widetilde{\chi}^{\textrm{rs}}: \gres^{\textrm{rs}} \simeq G/T\times_S \h^{\textrm{rs}}\longrightarrow \h^{\textrm{rs}}\] is just the projection to the second factor, whence we can restrict the simultaneous resolution diagram (\ref{relative-resolution-diagram}) to obtain a cartesian square
\begin{center}
    \begin{tikzcd}
        \gres^{\textrm{rs}}  \arrow[d, "\widetilde{\chi}^{\textrm{rs}}"] \arrow[r, "\pi^{\textrm{rs}}"] & \g^{\textrm{rs}}  \arrow[d, "\chi^{\textrm{rs}}"] \\ \h^{\textrm{rs}} \arrow[r, "\psi^{\textrm{rs}}"] & \h^{\textrm{rs}}/\!\!/W
    \end{tikzcd}
\end{center}
\noindent where the horizontal maps are Galois $W$-torsors. The content of the following lemma ensures the square stays cartesian when we restrict the Grothendieck alteration to the Slodowy slice (or, equivalently, when we restrict diagram (\ref{slice-resolution-diagram}) to the regular semisimple locus). Below we set $\mathcal{S}^{\textrm{rs}} = \mathcal{S}\times_g \g^{\textrm{rs}}$ and $\widetilde{\mathcal{S}}^{\textrm{rs}} = \widetilde{\mathcal{S}}\times_{\gres}\gres^{\textrm{rs}}$\footnote{One needs to ensure these fiber products are nonempty. To this end, one observes that $\mathcal{S}_{\etabar}$ and $\mathcal{S}_s$ intersect only regular and subregular orbits, which are nonempty (\cite{Slodowy1980}, \S5.5).}.
\begin{Lemma}[\cite{springer1983purity}, Lem. 3]\label{lemma-slice-diagram-rs-cartesian} The following diagram is cartesian.\vspace{-1.5em}
\begin{center}\begin{equation}\label{cartesian-diagram-for-slices}
    \begin{tikzcd}
        \widetilde{\mathcal{S}}^{\textrm{rs}} \arrow[d, "\widetilde{\chi}^{\textrm{rs}}"] \arrow[r, "\pi^{\textrm{rs}}"] & \mathcal{S}^{\textrm{rs}} \arrow[d, "\chi^{\textrm{rs}}"] \\ \h^{\textrm{rs}} \arrow[r] & \h^{\textrm{rs}}/\!\!/W
    \end{tikzcd} \end{equation}
\end{center}
\end{Lemma}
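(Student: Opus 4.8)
The plan is to deduce that diagram (\ref{cartesian-diagram-for-slices}) is cartesian by checking it after base change to the two geometric points $s$ and $\overline{\eta}$ of $S$, using the fibral isomorphism criterion (\cite{grothendieck1967elements}, 17.9.5): the square maps to the ``big'' square
\[
\begin{tikzcd}
\gres^{\textrm{rs}} \arrow[d,"\widetilde{\chi}^{\textrm{rs}}"] \arrow[r,"\pi^{\textrm{rs}}"] & \g^{\textrm{rs}} \arrow[d,"\chi^{\textrm{rs}}"] \\ \h^{\textrm{rs}} \arrow[r] & \h^{\textrm{rs}}/\!\!/W
\end{tikzcd}
\]
which is cartesian by Proposition \ref{proposition-relative-grothendieck-resolution-diagram} (both rows are $W$--torsors over $\h^{\textrm{rs}}/\!\!/W$, so the square is a pullback), and everything in sight commutes with base change on $S$, so it suffices to verify the statement fiberwise over an algebraically closed field. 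First I would reduce, therefore, to the setting of (\cite{springer1983purity}, Lemma 3): $\g$ a simple Lie algebra over an algebraically closed field of good characteristic, $\mathcal{S}$ a Slodowy slice transverse at a subregular nilpotent, and the claim that $\widetilde{\mathcal{S}}^{\textrm{rs}} \simeq \mathcal{S}^{\textrm{rs}}\times_{\h^{\textrm{rs}}/\!\!/W}\h^{\textrm{rs}}$.

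The key geometric input over the field is that $\widetilde{\mathcal{S}} = \widetilde{\mathcal{S}}\times_{\gres}\gres$ sits inside $\gres$, and the morphism $\gres^{\textrm{rs}}\to \g^{\textrm{rs}}\times_{\h^{\textrm{rs}}/\!\!/W}\h^{\textrm{rs}}$ is an isomorphism (Proposition \ref{proposition-relative-grothendieck-resolution-diagram}), so that
\[
\widetilde{\mathcal{S}}^{\textrm{rs}} = \widetilde{\mathcal{S}}\times_{\gres}\gres^{\textrm{rs}} \simeq \widetilde{\mathcal{S}}\times_{\gres}\bigl(\g^{\textrm{rs}}\times_{\h^{\textrm{rs}}/\!\!/W}\h^{\textrm{rs}}\bigr).
\]
Now $\widetilde{\mathcal{S}} = \gres\times_{\g}\mathcal{S}$ by definition of the restricted Grothendieck alteration, so applying associativity of fiber products and the fact that $\pi$ is compatible with $\chi$ and $\widetilde{\chi}$ (the commutativity of (\ref{relative-resolution-diagram}), hence of (\ref{slice-resolution-diagram})), one chases:
\[
\widetilde{\mathcal{S}}\times_{\gres}\gres^{\textrm{rs}} \simeq \bigl(\gres\times_{\g}\mathcal{S}\bigr)\times_{\gres}\gres^{\textrm{rs}} \simeq \mathcal{S}\times_{\g}\gres^{\textrm{rs}} \simeq \mathcal{S}^{\textrm{rs}}\times_{\g^{\textrm{rs}}}\gres^{\textrm{rs}} \simeq \mathcal{S}^{\textrm{rs}}\times_{\h^{\textrm{rs}}/\!\!/W}\h^{\textrm{rs}},
\]
the last step again by Proposition \ref{proposition-relative-grothendieck-resolution-diagram}. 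The nonemptiness footnoted in the statement (that $\mathcal{S}_{\etabar}$ and $\mathcal{S}_s$ meet the regular semisimple locus, via (\cite{Slodowy1980}, \S5.5)) guarantees these fiber products are not spuriously empty, so the identification is genuine.

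The main obstacle, such as it is, is bookkeeping: making the fiber-product chase above rigorous requires keeping straight which base the various products are over, and in particular confirming that restricting to $\g^{\textrm{rs}}$ inside one factor of a fiber product over $\g$ produces the product over $\g^{\textrm{rs}}$ — this is formal but must be stated carefully. A secondary point is justifying the reduction to geometric fibers: both $\widetilde{\mathcal{S}}^{\textrm{rs}}$ and $\mathcal{S}^{\textrm{rs}}\times_{\h^{\textrm{rs}}/\!\!/W}\h^{\textrm{rs}}$ are finite-type $S$-schemes flat over $S$ (flatness of $\mathcal{S}^{\textrm{rs}}$ over $S$ as it is $S$-fiberwise dense hence reduced, flatness of $\h^{\textrm{rs}}\to\h^{\textrm{rs}}/\!\!/W$ from Proposition \ref{cartan-W-cover}, and $\widetilde{\chi}$ is flat), so the natural comparison map between them, which we have already produced over $S$, is an isomorphism once it is so on geometric fibers — and over each geometric fiber it is precisely (\cite{springer1983purity}, Lemma 3), whose proof carries over verbatim since the relevant ingredients (transversality of the slice to all orbits, smallness of $\pi_{\mathcal{S}}$) hold in good characteristic as recorded earlier.
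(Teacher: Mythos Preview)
Your argument is correct, and the fiber-product chase in your second paragraph is exactly the point. In fact the paper does not supply its own proof here at all: it simply cites Springer's lemma, so there is nothing to compare against beyond checking that your reasoning is sound in the relative setting.

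One observation: the reduction to geometric fibers is unnecessary. Your chain
\[
\widetilde{\mathcal{S}}^{\textrm{rs}} = (\gres\times_{\g}\mathcal{S})\times_{\gres}\gres^{\textrm{rs}} \simeq \mathcal{S}\times_{\g}\gres^{\textrm{rs}} \simeq \mathcal{S}^{\textrm{rs}}\times_{\g^{\textrm{rs}}}\gres^{\textrm{rs}} \simeq \mathcal{S}^{\textrm{rs}}\times_{\h^{\textrm{rs}}/\!\!/W}\h^{\textrm{rs}}
\]
is a formal manipulation of fiber products that already holds over $S$, because Proposition~\ref{proposition-relative-grothendieck-resolution-diagram} gives $\gres^{\textrm{reg}}\simeq \g^{\textrm{reg}}\times_{\h/\!\!/W}\h$ over $S$, and restricting to $\g^{\textrm{rs}}$ (noting $\chi(\g^{\textrm{rs}})\subseteq \h^{\textrm{rs}}/\!\!/W$ and $\psi^{-1}(\h^{\textrm{rs}}/\!\!/W)=\h^{\textrm{rs}}$) yields $\gres^{\textrm{rs}}\simeq \g^{\textrm{rs}}\times_{\h^{\textrm{rs}}/\!\!/W}\h^{\textrm{rs}}$ directly. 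So you can drop the fibral-criterion scaffolding and the flatness verifications in your third paragraph entirely; the argument is cleaner and shorter without them.
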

The desired $W$-action will in a sense follow from the ``monodromy'' of diagram (\ref{cartesian-diagram-for-slices}). We first note a `homotopical' lemma for $\ell$-adic cohomology. It is originally due to Springer (\cite{springer1983purity}, Prop.\;1) but Laumon has generalized it to the relative scheme setting (\cite{laumon2003transformation}, Lemme 5.5), and this is the version we use.

\begin{Lemma}\label{lemma-l-adic-homotopy} Let $f:X \to S$ be an $S$-scheme endowed with a $\mathbb{G}_{m,S}$-action contracting $X$ to a section $p: Y \simeq S$ and let $\mathcal{F}$ be a $\mathbb{G}_m$-equivariant sheaf on $X$. Then $\textrm{\emph{R}}f_*\mathcal{F} \simeq p_*\mathcal{F}\lvert_Y$.
\end{Lemma}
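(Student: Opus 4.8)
The plan is to recognize this as the relative $\ell$-adic contraction (Bialynicki--Birula) lemma: in the absolute case it is Springer's (\cite{springer1983purity}, Prop.\;1), and the relative form over a base is exactly (\cite{laumon2003transformation}, Lemme 5.5), so strictly speaking the statement is quoted rather than reproved. To indicate the mechanism, I would argue as follows. Write $a\colon \mathbb{A}^1_S\times_S X\to X$ for the monoid action extending the contracting $\mathbb{G}_{m,S}$-action, with $a(1,-)=\mathrm{id}_X$ and $a(0,-)=\iota\circ r$, where $r\colon X\to Y$ is the retraction $x\mapsto \lim_{t\to 0}t\cdot x$ and $\iota\colon Y\hookrightarrow X$ is the inclusion of the section onto which $X$ contracts, so that $r\circ\iota=\mathrm{id}_Y$, $f\circ\iota=p$, and --- since the action is fiberwise over $S$ --- $f\circ a=f\circ q$ for $q\colon \mathbb{A}^1_S\times_S X\to X$ the projection. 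The goal reduces to showing that the natural restriction morphism $\mathrm{R}f_*\mathcal{F}\to \mathrm{R}f_*(\iota_*\iota^*\mathcal{F})\simeq p_*(\mathcal{F}\lvert_Y)$ is an isomorphism.

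First I would form $\mathcal{M}=a^*\mathcal{F}\in \textrm{D}^b_c(\mathbb{A}^1_S\times_S X)$, whose restriction to $\{1\}\times_S X$ is $\mathcal{F}$ and to $\{0\}\times_S X$ is $r^*(\mathcal{F}\lvert_Y)$, and record the $\mathbb{G}_{m,S}$-equivariance isomorphism $\mathcal{M}\lvert_{\mathbb{G}_{m,S}\times_S X}\simeq q^*\mathcal{F}\lvert_{\mathbb{G}_{m,S}\times_S X}$. Next, letting $\varpi\colon \mathbb{A}^1_S\times_S X\to\mathbb{A}^1_S$ be the projection and $i_0\colon S\hookrightarrow\mathbb{A}^1_S$ the zero-section, I would invoke $\mathbb{A}^1$-invariance of $\ell$-adic cohomology to show that the co-restriction $\mathrm{R}\varpi_*\mathcal{M}\to i_0^*\mathrm{R}\varpi_*\mathcal{M}$ is an isomorphism; then, transporting along the unit-section and using the equivariance isomorphism (extended across $t=0$ via the monoid structure) to identify the two slice-restrictions, and finally pushing forward to $S$, one obtains $\mathrm{R}f_*\mathcal{F}\simeq\mathrm{R}f_*r^*(\mathcal{F}\lvert_Y)\simeq p_*(\mathcal{F}\lvert_Y)$, the last step holding because $r$ is itself $\mathbb{G}_{m,S}$-equivariantly contracting onto $Y$ (or simply because the composite $\mathrm{R}f_*\mathcal{F}\to p_*(\mathcal{F}\lvert_Y)$ splits the map just constructed).

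The hard part will be the homotopy-invariance step: verifying that near the zero-section $\mathcal{M}$ is genuinely the $r$-pullback of an object on $Y$, in a way compatible with its description as $q^*\mathcal{F}$ over the $\mathbb{G}_{m,S}$-locus, so that restriction to $t=0$ actually computes $\mathrm{R}\varpi_*\mathcal{M}$. This requires careful bookkeeping with the monoid (rather than group) action near $t=0$, with constructibility in order to remain inside $\textrm{D}^b_c$, and with the compatibility of all the base-change isomorphisms over $S$ --- precisely the content handled in Springer's and Laumon's arguments, which I would cite for the details rather than reproduce.
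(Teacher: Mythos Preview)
Your proposal is correct and matches the paper's approach exactly: the paper does not give a proof of this lemma at all, but simply attributes it to Springer (\cite{springer1983purity}, Prop.\;1) and, in the relative form, to Laumon (\cite{laumon2003transformation}, Lemme 5.5), which are precisely the references you cite. Your sketch of the contraction/$\mathbb{A}^1$-homotopy mechanism goes beyond what the paper records and is a reasonable outline of how those cited proofs proceed.
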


\begin{Lemma}\label{lemma-springer's-lemma-2} If $\widetilde{\mathcal{S}}_0$ is the fiber of $\widetilde{\chi}$ over the zero $\Oring_K$-section of $\h$, then we have canonical isomorphisms
\begin{equation}\label{slice-fibers-can-isom-1}\coh^i((\widetilde{\mathcal{S}})_{\etabar}, \mathbb{Q}_{\ell}) \overset{\sim}{\longrightarrow} \coh^i((\widetilde{\mathcal{S}}_0)_{\etabar}, \mathbb{Q}_{\ell}) \overset{\sim}{\longrightarrow} \coh^i(\mathcal{B}_{x_{\etabar}},\mathbb{Q}_{\ell})\end{equation}

\begin{equation}\label{slice-fibers-can-isom-2}\coh^i((\widetilde{\mathcal{S}})_{s}, \mathbb{Q}_{\ell}) \overset{\sim}{\longrightarrow} \coh^i((\widetilde{\mathcal{S}}_0)_{s}, \mathbb{Q}_{\ell}) \overset{\sim}{\longrightarrow} \coh^i(\mathcal{B}_{x_{s}},\mathbb{Q}_{\ell})\end{equation}
    
\end{Lemma}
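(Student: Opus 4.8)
\textbf{Proof plan for Lemma \ref{lemma-springer's-lemma-2}.} The strategy is to obtain each of the two displayed chains of isomorphisms by first using the $\mathbb{G}_{m,S}$-contracting action on the resolved slice $\widetilde{\mathcal{S}}$ to collapse it onto its central fiber $\widetilde{\mathcal{S}}_0$, and then identifying $\widetilde{\mathcal{S}}_0$ with the Springer fiber $\mathcal{B}_x$. Recall from Section \ref{section-Gm-actions} that $\widetilde{\mu}$ is a $\mathbb{G}_{m,S}$-action on $\gres$ (Definition \ref{definition-various-gm-actions}(ii)) preserving $\widetilde{\mathcal{S}}$, that $\widetilde{\chi}\colon \widetilde{\mathcal{S}}\to\h$ is $\mathbb{G}_{m,S}$-equivariant with $\h$ carrying the contracting action $t\cdot h = t^2 h$, and that the henselianization of $\h$ at $0$ has $\widetilde{\chi}$ as a miniversal deformation of the minimal resolution of $x_s$ (Theorem \ref{thm-miniversal-deformation-of-slice-and-resolved-slice}). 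Since the $\mathbb{G}_{m,S}$-action on $\h$ contracts to the origin, the action on the total space $\widetilde{\mathcal{S}}$ contracts $\widetilde{\mathcal{S}}$ onto the zero-fiber $\widetilde{\mathcal{S}}_0 = \widetilde{\chi}^{-1}(0)$, which itself is $\mathbb{G}_{m,S}$-stable.

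\textbf{Step 1 (retraction onto the central fiber).} First I would apply Lemma \ref{lemma-l-adic-homotopy} (Springer--Laumon) to the structure map $\widetilde{\mathcal{S}}\to S$ with its $\mathbb{G}_{m,S}$-contracting action onto the section $\widetilde{\mathcal{S}}_0\hookrightarrow\widetilde{\mathcal{S}}$: actually, to get at cohomology of fibers I would work fiber by fiber over the geometric points $\etabar$ and $s$ of $S$. Over each such point $\overline{v}\in\{\etabar,s\}$, the base-change $(\widetilde{\mathcal{S}})_{\overline{v}}$ carries a $\mathbb{G}_m$-action contracting it onto $(\widetilde{\mathcal{S}}_0)_{\overline{v}}$, and the inclusion $(\widetilde{\mathcal{S}}_0)_{\overline{v}}\hookrightarrow (\widetilde{\mathcal{S}})_{\overline{v}}$ is a $\mathbb{G}_m$-equivariant retract. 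Applying Lemma \ref{lemma-l-adic-homotopy} to the map $(\widetilde{\mathcal{S}})_{\overline{v}}\to\Spec k(\overline{v})$ with $\mathcal{F} = \mathbb{Q}_{\ell}$ (which is trivially $\mathbb{G}_m$-equivariant) yields $\textrm{R}\Gamma((\widetilde{\mathcal{S}})_{\overline{v}},\mathbb{Q}_{\ell})\xrightarrow{\sim}\textrm{R}\Gamma((\widetilde{\mathcal{S}}_0)_{\overline{v}},\mathbb{Q}_{\ell})$, hence $\coh^i((\widetilde{\mathcal{S}})_{\overline{v}},\mathbb{Q}_{\ell})\xrightarrow{\sim}\coh^i((\widetilde{\mathcal{S}}_0)_{\overline{v}},\mathbb{Q}_{\ell})$, which gives the first isomorphism in each of (\ref{slice-fibers-can-isom-1}) and (\ref{slice-fibers-can-isom-2}). (Strictly, Lemma \ref{lemma-l-adic-homotopy} is stated over a base $S$ with a relative $\mathbb{G}_{m,S}$-action; one can either apply it directly to $\widetilde{\mathcal{S}}\to S$ with section $\widetilde{\mathcal{S}}_0\simeq S$ and then base-change to $\overline{v}$, using that the formation of $p_*$ commutes with base change since $p$ is a closed immersion of a section, or invoke the field-level version from Springer directly.)

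\textbf{Step 2 (central fiber is the Springer fiber).} Next I would identify $\widetilde{\mathcal{S}}_0$, the fiber of $\widetilde{\chi}$ over the zero section of $\h$, with the subregular Springer fiber $\mathcal{B}_x=\pi^{-1}(x)$. By diagram (\ref{slice-resolution-diagram}) and Proposition \ref{proposition-relative-grothendieck-resolution-diagram}, $\widetilde{\chi}^{-1}(0) = \pi^{-1}(\chi^{-1}(0)\cap\mathcal{S}) = \pi^{-1}(\mathcal{N}_{\g}\cap\mathcal{S})$, and since the slice $\mathcal{S}$ meets $\mathcal{N}_{\g}$ only in the closure of the subregular orbit through $x$ with $\mathcal{N}_{\g}\cap\mathcal{S}$ retracting ($\mathbb{G}_m$-equivariantly, via $\mu$) onto the point $x$, one more application of Lemma \ref{lemma-l-adic-homotopy} (now to $\widetilde{\mathcal{S}}_0\to\mathcal{N}_{\g}\cap\mathcal{S}$, which is $\mathbb{G}_m$-equivariant over the $\mu$-action contracting $\mathcal{N}_{\g}\cap\mathcal{S}$ to $x$, with $\mathcal{F}=\textrm{R}\pi_*\mathbb{Q}_{\ell}$ being $\mathbb{G}_m$-equivariant by construction of $\widetilde{\mu},\mu$) gives $\textrm{R}\Gamma(\widetilde{\mathcal{S}}_0,\mathbb{Q}_{\ell})\xrightarrow{\sim}\textrm{R}\Gamma(\pi^{-1}(x),\mathbb{Q}_{\ell}) = \textrm{R}\Gamma(\mathcal{B}_x,\mathbb{Q}_{\ell})$; passing to the fibers over $\etabar$ and $s$ and using proper base change for $\pi$ yields $\coh^i((\widetilde{\mathcal{S}}_0)_{\etabar},\mathbb{Q}_{\ell})\simeq\coh^i(\mathcal{B}_{x_{\etabar}},\mathbb{Q}_{\ell})$ and $\coh^i((\widetilde{\mathcal{S}}_0)_s,\mathbb{Q}_{\ell})\simeq\coh^i(\mathcal{B}_{x_s},\mathbb{Q}_{\ell})$, the second isomorphisms in (\ref{slice-fibers-can-isom-1}) and (\ref{slice-fibers-can-isom-2}). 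All of these maps are canonical (they are the pullback/restriction maps induced by the $\mathbb{G}_m$-equivariant closed embeddings), so canonicity is automatic.

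\textbf{Main obstacle.} The genuinely delicate point is checking that Lemma \ref{lemma-l-adic-homotopy} applies cleanly in the relative/mixed-characteristic setting, i.e.\ that the $\mathbb{G}_{m,S}$-action $\widetilde{\mu}$ is actually \emph{contracting} on the henselianized $\widetilde{\mathcal{S}}$ (not merely on the special fiber), so that the target section $\widetilde{\mathcal{S}}_0$ is correctly $\widetilde{\chi}^{-1}(0)$ over all of $S$, and that the sheaves in play ($\mathbb{Q}_{\ell}$, $\textrm{R}\pi_*\mathbb{Q}_{\ell}$) are indeed $\mathbb{G}_m$-equivariant in the sense demanded by Laumon's lemma. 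This requires knowing the weights of $\widetilde{\mu}$ are strictly positive on $\h$ (which holds because $\h$ gets the $t\cdot h = t^2h$ action) combined with the homogeneity of the slice coordinates from Sections \ref{subsection-integral-slodowy}--\ref{spaltensection}; once the contracting property is secured over $S$, the argument is purely formal. A secondary subtlety is matching up the $\mathbb{G}_m$-equivariant structure on $\textrm{R}\pi_*\mathbb{Q}_{\ell}$ with the $\widetilde{\mu}$-action, which follows from the $\mathbb{G}_{m,S}$-equivariance of $\pi\colon\widetilde{\mathcal{S}}\to\mathcal{S}$ noted after Definition \ref{definition-various-gm-actions}.
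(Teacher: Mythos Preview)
Your Step 2 is essentially the paper's argument and is correct: route through the Leray spectral sequence for the proper map $\pi_0\colon\widetilde{\mathcal{S}}_0\to\mathcal{S}_0=\mathcal{N}_{\g}\cap\mathcal{S}$, apply Lemma \ref{lemma-l-adic-homotopy} to $\mathcal{S}_0\to S$ with the $\mathbb{G}_m$-equivariant sheaf $\textrm{R}^q\pi_{0*}\mathbb{Q}_{\ell}$ (here $\mu$ contracts $\mathcal{S}_0$ to the genuine section $x\simeq S$), and then use proper base change to identify the stalk at $x$ with $\coh^q(\mathcal{B}_x,\mathbb{Q}_{\ell})$.

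Step 1, however, has a real gap. Lemma \ref{lemma-l-adic-homotopy} as stated requires the $\mathbb{G}_{m,S}$-action to contract $X$ to a \emph{section} $Y\simeq S$, and its conclusion is $\textrm{R}f_*\mathcal{F}\simeq p_*\mathcal{F}\lvert_Y$. You invoke it with $Y=\widetilde{\mathcal{S}}_0$, but $\widetilde{\mathcal{S}}_0$ is the minimal resolution of a surface over $S$, not a section; your parenthetical ``with section $\widetilde{\mathcal{S}}_0\simeq S$'' is simply false. Moreover, the paper only asserts that $\mu$ is contracting on $\mathcal{S}$ (Definition \ref{definition-various-gm-actions}(i)); the action $\widetilde{\mu}$ on $\widetilde{\mathcal{S}}$ is never claimed to be contracting, and indeed its fixed locus is the full flag variety sitting inside $\mathcal{B}_x$, not all of $\widetilde{\mathcal{S}}_0$. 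So Lemma \ref{lemma-l-adic-homotopy} does not apply to $\widetilde{\mathcal{S}}$ in the way you want.

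The fix is to run the \emph{same} argument you use in Step 2, but for $\pi\colon\widetilde{\mathcal{S}}\to\mathcal{S}$ rather than $\pi_0$. Since $\mu$ contracts $\mathcal{S}$ to the section $x\simeq S$ and $\textrm{R}^q\pi_*\mathbb{Q}_{\ell}$ is $\mathbb{G}_m$-equivariant (by equivariance of $\pi$), Lemma \ref{lemma-l-adic-homotopy} applied to $g\colon\mathcal{S}\to S$ collapses the Leray spectral sequence for $f=g\circ\pi$ and gives $\textrm{R}^qf_*\mathbb{Q}_{\ell}\simeq \textrm{R}^q\pi_*\mathbb{Q}_{\ell}\lvert_{\textrm{im}(x)}$; proper base change then yields $\coh^q((\widetilde{\mathcal{S}})_{\overline{v}},\mathbb{Q}_{\ell})\simeq\coh^q(\mathcal{B}_{x_{\overline{v}}},\mathbb{Q}_{\ell})$. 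This is exactly what the paper does, and together with your Step 2 it gives the two chains (\ref{slice-fibers-can-isom-1}) and (\ref{slice-fibers-can-isom-2}) by composition. Note that with this correction the first isomorphism in each chain is obtained \emph{indirectly} (both sides are identified with $\coh^i(\mathcal{B}_{x_{\overline{v}}},\mathbb{Q}_{\ell})$), not by a direct retraction of $\widetilde{\mathcal{S}}$ onto $\widetilde{\mathcal{S}}_0$.
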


\begin{proof} Recall that, by Definition \ref{definition-various-gm-actions}, $\mathcal{S}, \widetilde{\mathcal{S}}$ and $\h$ are equipped with appropriate $\mathbb{G}_{m,S}$-actions so that both $\textrm{R}\pi_*\mathbb{Q}_{\ell}$ and $\textrm{R}\widetilde{\chi}_*\mathbb{Q}_{\ell}$ are $\mathbb{G}_{m,S}$-equivariant sheaves; the first and last actions contract $\mathcal{S}$ and $\h$ to $x$ and the origin, respectively. Let $f: \widetilde{\mathcal{S}} \to S$ and $g: \mathcal{S} \to S$ be the structure morphisms over $S=\Spec(\Oring_K)$, then by the Leray spectral sequence
\[E_2^{pq} = \textrm{R}^pg_*(\textrm{R}^q\pi_*\mathbb{Q}_{\ell}) \Longrightarrow \textrm{R}^{p+q}f_*\mathbb{Q}_{\ell}\]
By Lemma \ref{lemma-l-adic-homotopy}, since the $\mathbb{G}_{m,S}$-action on $\mathcal{S}$ contracts it to the fiberwise nilpotent subregular section $x \simeq S$ (with image $\textrm{im}(x) \subset \mathcal{S}$), we have $E_2^{pq} = 0$ for $p >0$ and \begin{equation}\label{eq-lemma-relative-leray}\textrm{R}^qf_*\mathbb{Q}_{\ell} \simeq E_2^{0q} = \textrm{R}^q\pi_*\mathbb{Q}_{\ell}\lvert_{\textrm{im}(x)}\end{equation}
Let $j: \etabar \xhookrightarrow{} S$ and $i: s \xhookrightarrow{} S$ be the inclusions of the geometric generic and special points of the trait. By properness of $\pi$ and base-change, (\ref{eq-lemma-relative-leray}) yields 
\begin{equation}\label{lemma-eq-basechange-1}j^*\textrm{R}^qf_*\mathbb{Q}_{\ell} \simeq j^*_{x_{\etabar}}\textrm{R}^q\pi_*\mathbb{Q}_{\ell} \simeq \coh^q(\mathcal{B}_{x_{\etabar}},\mathbb{Q}_{\ell}) \end{equation}
\begin{equation}\label{lemma-eq-basechange-2}i^*\textrm{R}^qf_*\mathbb{Q}_{\ell} \simeq i_{x_s}^*\textrm{R}^q\pi_*\mathbb{Q}_{\ell} \simeq \coh^q(\mathcal{B}_{x_{s}},\mathbb{Q}_{\ell}) \end{equation}
where $j_{x_{\etabar}}: x_{\etabar} \xhookrightarrow{} \mathcal{S}$ and $i_{x_s}: x_s \xhookrightarrow{} \mathcal{S}$ are the inclusions of the generic and special fibers of section $x$ into $\mathcal{S}$. Now the absolute version of the aforementioned Leray spectral sequence $E_2^{pq} = \coh^p(\mathcal{S},\textrm{R}^q\pi_*\mathbb{Q}_{\ell}) \Longrightarrow \coh^{p+q}(\widetilde{\mathcal{S}},\mathbb{Q}_{\ell})$ yields isomorphisms
\begin{equation}\label{lemma-eq-springer-paper}\coh^q((\widetilde{\mathcal{S}})_{\etabar},\mathbb{Q}_{\ell})\overset{\sim}{\longrightarrow} \coh^q(\mathcal{B}_{x_{\etabar}},\mathbb{Q}_{\ell}), \; \;  \coh^q((\widetilde{\mathcal{S}})_s,\mathbb{Q}_{\ell}) \overset{\sim}{\longrightarrow} \coh^q(\mathcal{B}_{x_s},\mathbb{Q}_{\ell})\end{equation}
by (\cite{springer1983purity}, Lemma 2), since $(\mathcal{S})_{\etabar}$ and $(\mathcal{S})_s$ are Slodowy slices over $K$ resp. $k$ in the usual sense (\cite{Slodowy1980}, \S5.1). Proper base-change maps are canonical, hence all isomorphisms in (\ref{lemma-eq-basechange-1}), (\ref{lemma-eq-basechange-2}) and (\ref{lemma-eq-springer-paper}) are canonical. Applying the same Leray spectral sequence argument to the map $\pi_0: \widetilde{\mathcal{S}}_0 \longrightarrow \mathcal{S}_0$, which is still $\mathbb{G}_{m,S}$-equivariant (\textit{loc.\,cit.}, Lemma 2), we obtain after base--change again that $\coh^i((\widetilde{\mathcal{S}}_0)_{\etabar},\mathbb{Q}_{\ell}) \simeq \coh^i(\mathcal{B}_{x_{\etabar}},\mathbb{Q}_{\ell})$ and $\coh^i((\widetilde{\mathcal{S}}_0)_{s},\mathbb{Q}_{\ell}) \simeq \coh^i(\mathcal{B}_{x_{s}},\mathbb{Q}_{\ell})$, so that altogether we get the canonical isomorphisms (\ref{slice-fibers-can-isom-1}), (\ref{slice-fibers-can-isom-2}) induced from inclusions $\mathcal{B}_x \xhookrightarrow{} \widetilde{\mathcal{S}}_0 \xhookrightarrow{} \widetilde{\mathcal{S}}$.
\end{proof}

\begin{Prop}\label{prop-monodromy-isom-of-springer-fiber} For a geometric point $\overline{h} \to \h^{\textrm{rs}}/\!\!/W$ we have $\coh^i(\mathcal{S}_{\overline{h}},\mathbb{Q}_{\ell}) \overset{\sim}{\longrightarrow} \coh^i(\mathcal{B}_{x_s},\mathbb{Q}_{\ell})$.
\end{Prop}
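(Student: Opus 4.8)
The plan is to transport the fiber $\mathcal{S}_{\overline h}$ of the adjoint quotient through the resolved map $\widetilde\chi$ to the central fiber $\widetilde{\mathcal{S}}_0$, and then invoke the Springer-fiber identifications of Lemma~\ref{lemma-springer's-lemma-2} and Lemma~\ref{lemma-canonical-isom-of-springer-fiber-cohom-as-galois-rep}. First I would choose a geometric point $h \to \h^{\textrm{rs}}$ lying over $\overline h$; this exists because $\psi^{\textrm{rs}}$ is surjective. By Lemma~\ref{lemma-slice-diagram-rs-cartesian} the square \eqref{cartesian-diagram-for-slices} is cartesian, so the fiber $\widetilde{\mathcal{S}}_h = \widetilde\chi^{-1}(h)$ is canonically isomorphic to $\mathcal{S}_{\overline h} = (\chi^{\textrm{rs}})^{-1}(\psi^{\textrm{rs}}(h))$; in particular $\coh^i(\mathcal{S}_{\overline h},\mathbb{Q}_\ell) \simeq \coh^i(\widetilde{\mathcal{S}}_h,\mathbb{Q}_\ell)$. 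Thus the problem reduces to producing a canonical, $\Gal(\etabar/\eta)$-equivariant isomorphism $\coh^i(\widetilde{\mathcal{S}}_h,\mathbb{Q}_\ell) \simeq \coh^i(\mathcal{B}_{x_s},\mathbb{Q}_\ell)$.

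Next I would exploit that $\widetilde\chi\colon \widetilde{\mathcal{S}} \to \h$ is smooth (Section~\ref{subsection-gro-alterations-for-transverse-slices}) and $\mathbb{G}_{m,S}$-equivariant, for the action $\widetilde\mu$ on $\widetilde{\mathcal{S}}$ and the contracting action $t\cdot h' = t^2 h'$ on $\h$ (Definition~\ref{definition-various-gm-actions}). Restrict $\widetilde\chi$ over the orbit closure $\ell_h := \overline{\mathbb{G}_m\cdot h} \cong \mathbb{A}^1_S$, noting that $\ell_h\setminus\{0\}$ lies in $\h^{\textrm{rs}}$. Over $\mathbb{G}_m\cdot h$, which is a homogeneous $\mathbb{G}_m$-space, equivariance forces $\widetilde\chi^{-1}(\mathbb{G}_m\cdot h)$ to be the trivial fibration $\widetilde{\mathcal{S}}_h\times \mathbb{G}_m$, so by Künneth $\coh^*\big(\widetilde\chi^{-1}(\mathbb{G}_m\cdot h),\mathbb{Q}_\ell\big) \simeq \coh^*(\widetilde{\mathcal{S}}_h,\mathbb{Q}_\ell)\otimes \coh^*(\mathbb{G}_m,\mathbb{Q}_\ell)$. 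On the other hand, $\pi(\widetilde\chi^{-1}(\ell_h))$ lies in the $\mathbb{G}_m$-stable closed subscheme $\chi^{-1}(\psi(\ell_h))\subseteq\mathcal{S}$, which $\mu$ contracts onto the section $x$ (by equivariance of $\chi$ and the contraction of $\mathcal{S}$ to $x$); running the argument of the proof of Lemma~\ref{lemma-springer's-lemma-2} — Lemma~\ref{lemma-l-adic-homotopy} for the proper map $\pi$ together with the Leray spectral sequence — yields $\coh^*(\widetilde\chi^{-1}(\ell_h),\mathbb{Q}_\ell) \simeq \coh^*(\mathcal{B}_x,\mathbb{Q}_\ell)$. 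Finally, since $\widetilde\chi|_{\ell_h}$ is smooth of relative dimension $2$ and $\{0\}\subset\mathbb{A}^1$ has codimension $1$, the pair $(\widetilde{\mathcal{S}}_0,\widetilde\chi^{-1}(\ell_h))$ is smooth of codimension $1$, so $\widetilde i'^{\,!}\mathbb{Q}_\ell \simeq \mathbb{Q}_\ell[-2](-1)$ and the associated Gysin triangle reads, after taking cohomology, as a long exact sequence relating $\coh^{*-2}(\widetilde{\mathcal{S}}_0)(-1)$, $\coh^*(\mathcal{B}_x)$ and $\coh^*(\widetilde{\mathcal{S}}_h)\oplus\coh^{*-1}(\widetilde{\mathcal{S}}_h)(-1)$. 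Feeding in the identification $\coh^*(\widetilde{\mathcal{S}}_0,\mathbb{Q}_\ell)\simeq\coh^*(\mathcal{B}_{x_s},\mathbb{Q}_\ell)$ from Lemma~\ref{lemma-springer's-lemma-2} and the explicit cohomology of the subregular Springer fiber (a tree of $r$ projective lines, so $\coh^\bullet = (\mathbb{Q}_\ell,0,\mathbb{Q}_\ell(-1)^{\oplus r},0,\dots)$), this sequence forces $\coh^i(\widetilde{\mathcal{S}}_h,\mathbb{Q}_\ell)\simeq\coh^i(\mathcal{B}_{x_s},\mathbb{Q}_\ell)$ for every $i$; equivalently, $\textrm{R}\widetilde\chi_*\mathbb{Q}_\ell|_{\ell_h}$ is a constant complex.

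To finish, all the isomorphisms above are built from proper and smooth base-change maps and the canonical $\mathbb{G}_m$-homotopy isomorphisms, hence are compatible with the $\Gal(\etabar/\eta)$-action; the Tate twists carry trivial inertia since $K=\breve K$. If $\overline h$ lies over the special point this gives $\coh^i(\mathcal{S}_{\overline h},\mathbb{Q}_\ell)\simeq\coh^i(\mathcal{B}_{x_s},\mathbb{Q}_\ell)$ directly, and if it lies over $\overline\eta$ one composes with the canonical isomorphism $\coh^i(\mathcal{B}_{x_{\etabar}},\mathbb{Q}_\ell)\simeq\coh^i(\mathcal{B}_{x_s},\mathbb{Q}_\ell)$ of Lemma~\ref{lemma-canonical-isom-of-springer-fiber-cohom-as-galois-rep}. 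I expect the main obstacle to be the rigorous proof of the identification $\coh^*(\widetilde\chi^{-1}(\ell_h),\mathbb{Q}_\ell)\simeq\coh^*(\mathcal{B}_x,\mathbb{Q}_\ell)$ in the relative setting over $\Oring_K$: it requires combining the properness of $\pi$, the smoothness of $\widetilde\chi$, and the contracting $\mathbb{G}_m$-action simultaneously — none of the three alone suffices, and the usual complex-analytic isotopy-lemma argument of Brieskorn–Slodowy (\cite{Slodowy1980}, \S4.2; \cite{springer1983purity}) has to be replaced by the $\ell$-adic homotopy lemma and the purity statements available here — and in verifying that the resulting comparison is genuinely an isomorphism of modules rather than merely a numerical coincidence of dimensions.
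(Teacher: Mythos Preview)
Your setup matches the paper's: lift $\overline h$ to $h\in\h^{\textrm{rs}}$, identify $\mathcal S_{\overline h}\simeq\widetilde{\mathcal S}_h$ via the cartesian square of Lemma~\ref{lemma-slice-diagram-rs-cartesian}, and restrict $\widetilde\chi$ to the $\mathbb G_m$-stable line $L=\ell_h$ through $h$ and $0$. The difference is what happens next. The paper argues directly that the sheaf $\mathcal E\coloneqq\textrm R^i\widetilde\chi_{L*}\mathbb Q_\ell$ on $L$ is \emph{lisse}: it is locally constant on $L\setminus\{0\}$ because $(t,s)\mapsto\widetilde\mu(t,s)$ gives an \'etale cover $\mathbb G_m\times\widetilde{\mathcal S}_h\to\widetilde{\mathcal S}_L\setminus\widetilde{\mathcal S}_0$, and then one checks that the cospecialization map $\mathcal E_0\to\mathcal E_{\eta_L}$ is bijective, using that $\widetilde\chi_L$ is smooth so that the Milnor-ball cohomology $\mathcal E_0=\coh^i(\widetilde{\mathcal S}_L\times_L L_{(0)})$ coincides with the generic-fiber cohomology (\cite{deligne1977cohomologie}, p.~56). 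Once $\mathcal E$ is lisse, the cospecialization itself is the canonical isomorphism $\coh^i(\widetilde{\mathcal S}_h)\simeq\mathcal E_0\simeq\coh^i((\widetilde{\mathcal S}_0)_s)\simeq\coh^i(\mathcal B_{x_s})$, the last two steps being Lemma~\ref{lemma-springer's-lemma-2}. The characteristic-zero case is then handled exactly as you say, via Lemma~\ref{lemma-canonical-isom-of-springer-fiber-cohom-as-galois-rep}.

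Your Gysin detour runs into the two obstacles you yourself flag. First, the ``trivial fibration'' claim over $\mathbb G_m\cdot h$ is not literally correct: the $\mathbb G_m$-action on $\h$ has weight $2$, so the orbit map $\mathbb G_m\times\widetilde{\mathcal S}_h\to\widetilde\chi^{-1}(\mathbb G_m\cdot h)$ is only a $\mu_2$-cover, and the involution $\widetilde\mu(-1)$ on $\widetilde{\mathcal S}_h$ enters into any K\"unneth-type formula. Second, even granting K\"unneth, the Gysin long exact sequence together with the explicit Betti numbers of $\mathcal B_{x_s}$ gives you a numerical match but not a distinguished map $\coh^i(\widetilde{\mathcal S}_h)\to\coh^i(\mathcal B_{x_s})$; you would still need to identify the boundary maps (e.g.\ show the Gysin class of $\widetilde{\mathcal S}_0$ in $\coh^2(\widetilde\chi^{-1}(\ell_h))$ vanishes) to extract a canonical isomorphism, and this is exactly the step where the later monodromy comparison (equation~\eqref{equation-definition-of-monodromy-weyl-action}) needs a genuine map, not a dimension count. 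The paper's lisse-sheaf argument sidesteps both problems: it never needs a trivialization, only local constancy, and the cospecialization map is the canonical isomorphism from the outset.
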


\begin{proof} Note that $\overline{h}$ necessarily specializes to $0 \in \h/\!\!/W(k)$. We first investigate the case where the residue field of $\overline{h}$ is $k$, following \cite{springer1983purity}: let $h$ be a lift of $\overline{h}$ along $W$-cover $\psi: \h \to \h/\!\!/W$ and let $L$ be the line connecting $h$ and $0$ in $\h$, $\widetilde{\chi}_L: \widetilde{\mathcal{S}}_L \longrightarrow L$ the $\mathbb{G}_m$-equivariant restriction of $\widetilde{\chi}$ to $L$. Note $\widetilde{\mathcal{S}}_L$ is $\mathbb{G}_m$-stable by the action $\widetilde{\mu}$ defined in Section \ref{section-Gm-actions} (factoring through $\mathbb{G}_{m,S} \to \mathbb{G}_{m,k}$). 

We claim $\mathcal{E} \coloneqq \textrm{R}^i\widetilde{\chi}_{L*}\mathbb{Q}_{\ell}$ is a lisse sheaf. $\mathcal{E}\lvert_{L\setminus \{0\}}$ is locally constant since the map $\mathbb{G}_m\times \widetilde{\mathcal{S}}_h \longrightarrow \widetilde{\mathcal{S}}_L\setminus \widetilde{\mathcal{S}}_0$ given by $(t,s) \mapsto \widetilde{\mu}(t,s)$ is an \'etale covering. Now by (\cite[\href{https://stacks.math.columbia.edu/tag/03Q7}{Tag 03Q7}]{stacks-project}) the stalk at zero is
\[ \mathcal{E}_0 = \coh^i(\widetilde{\mathcal{S}}_L\times_L L_{(0)}, \mathbb{Q}_{\ell}), \; \; L_{(0)} = \Spec(\Oring^{\textrm{sh}}_{L,0})\]

Applying the argument of Lemma \ref{lemma-springer's-lemma-2} to $\mathbb{G}_m$-equivariant map $\widetilde{\chi}_L$ (i.e. looking at its associated Leray spectral sequence) we observe $(\textrm{R}^i\widetilde{\chi}_{L*}\mathbb{Q}_{\ell})_0 \overset{\sim}{\to} \coh^i((\widetilde{\mathcal{S}})_s,\mathbb{Q}_{\ell})$ and hence $\mathcal{E}_0 \simeq \coh^i((\widetilde{\mathcal{S}}_0)_s,\mathbb{Q}_{\ell})$ by Lemma \ref{lemma-springer's-lemma-2}, Equation (\ref{slice-fibers-can-isom-2}).

As $L_{(0)}$ is a DVR there is a unique generic (geometric) point $\eta_L$ of $L_{(0)}$ with specialization $\eta_L \to 0$. We then have
\[\mathcal{E}_{\eta_L} \simeq \coh^i(\widetilde{\mathcal{S}}_{\eta_L}, \mathcal{F}) \simeq \coh^i(\widetilde{\mathcal{S}}_L\times_L L_{(0)}, \mathbb{Q}_{\ell})\]
with the first isomorphism due to $\mathcal{E}\lvert_{L\setminus\{0\}}$ being locally constant, and the second isomorphism due to $\widetilde{\chi}_L$ being smooth and (\cite{deligne1977cohomologie}, p.\;56). Altogether we get that cospecialization $\mathcal{E}_0 \to \mathcal{E}_{\eta}$ is bijective, so by the criterion for local constancy of sheaves (\cite[\href{https://stacks.math.columbia.edu/tag/0GJ7}{Tag 0GJ7}]{stacks-project}) we conclude that $\mathcal{E}$ is lisse. In particular $\coh^i(\widetilde{\mathcal{S}}_h,\mathbb{Q}_{\ell}) \simeq \coh^i((\widetilde{\mathcal{S}}_0)_s,\mathbb{Q}_{\ell})\simeq \coh^i(\mathcal{B}_{x_s},\mathbb{Q}_{\ell})$. We have $\widetilde{\mathcal{S}}_h \simeq \mathcal{S}_{\overline{h}}$ via Lemma \ref{cartesian-diagram-for-slices}, since the Grothendieck alteration induces an isomorphism over the regular semisimple locus, so altogether we get the desired isomorphism.

The case of $\overline{h}$ having zero-characteristic residue field works similarly by using Lemma \ref{lemma-springer's-lemma-2}, which yields $\coh^i(\mathcal{S}_{\overline{h}},\mathbb{Q}_{\ell}) \simeq \coh^i(\mathcal{B}_{x_{\etabar}},\mathbb{Q}_{\ell})$. By Lemma \ref{lemma-canonical-isom-of-springer-fiber-cohom-as-galois-rep} we are therefore done.\end{proof}

We isolate from the proof a Weyl action of interest. Let $\alpha_h: \coh^i(\mathcal{S}_{\overline{h}},\mathbb{Q}_{\ell}) \to \coh^i(\mathcal{B}_{x_s},\mathbb{Q}_{\ell})$ denote the isomorphism of Proposition \ref{prop-monodromy-isom-of-springer-fiber}. By Lemma \ref{lemma-slice-diagram-rs-cartesian}, $h \to \h^{\textrm{rs}}$ is a lift of $\overline{h}$ and so $\mathcal{S}_{\overline{h}} \simeq \widetilde{\mathcal{S}}_h$. In the proof of Proposition \ref{prop-monodromy-isom-of-springer-fiber} we constructed an isomorphism $\coh^i(\widetilde{\mathcal{S}}_h,\mathbb{Q}_{\ell}) \simeq \coh^i(\mathcal{B}_{x_s},\mathbb{Q}_{\ell})$ and so in particular $\coh^i(\widetilde{\mathcal{S}}_h,\mathbb{Q}_{\ell})\simeq \coh^i(\widetilde{\mathcal{S}}_{w\cdot h},\mathbb{Q}_{\ell})$, where $w\cdot h \to \h^{\textrm{rs}}$ is a $w$--translate of $h$ under the reflection $W$--action on $\h^{\textrm{rs}}$. Thus $\overline{h}= \psi(h) = \psi(w\cdot h)$ and $\coh^i(\widetilde{\mathcal{S}}_{w\cdot h},\mathbb{Q}_{\ell}) \simeq \coh^i(\mathcal{S}_{\overline{h}},\mathbb{Q}_{\ell})$, altogether giving the composite automorphism
\begin{equation}\label{equation-definition-of-monodromy-weyl-action}\coh^i(\mathcal{B}_{x_s},\mathbb{Q}_{\ell})\overset{\alpha_h^{-1}}{\longrightarrow} \coh^i(\mathcal{S}_{\psi(h)},\mathbb{Q}_{\ell})\simeq \coh^i(\mathcal{S}_{\psi(w\cdot h)},\mathbb{Q}_{\ell}) \overset{\alpha_{w\cdot h}}{\longrightarrow} \coh^i(\mathcal{B}_{x_s},\mathbb{Q}_{\ell})
\end{equation}

\begin{Cor}\label{corollary-about-monodromy-actions-coinciding} The $\Gal(L/K)$-action on $\coh^2(\mathcal{B}_{x_s}, \mathbb{Q}_{\ell})$ is the restricton of the ``monodromy'' $W$--action coming from the automorphisms $\alpha_{w\cdot h}\circ \alpha_{h}^{-1}$ in \emph{(\ref{equation-definition-of-monodromy-weyl-action})}.
\end{Cor}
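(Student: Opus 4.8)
The plan is to trace through the chain of canonical identifications established in the previous results and verify that the Galois action lines up with one particular Weyl element of the monodromy action. First I would recall the setup: by Lemma \ref{lemma-minimal-extension-for-good-reduction}, there is a minimal ramified extension $L/K$ with $\Gal(L/K) = W_1 \subseteq W$ the stabilizer of $\Spec(L)$ inside $\coprod_i \Spec(L_i)$, and by part (ii) of the same lemma we have a $\Gal(\etabar/\eta)$-equivariant isomorphism $(\textrm{R}\Psi_{\mathcal{X}}\mathbb{Q}_{\ell})_{x_s} \simeq \textrm{R}\Gamma(\mathcal{B}_{x_s},\mathbb{Q}_{\ell})$. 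The monodromy action factors through $\Gal(L/K)$ because, after base-change to $S_L$, the local model of the singularity admits a simultaneous resolution $\widetilde{\mathcal{X}}$, which is smooth, so $\textrm{R}\Psi_{\widetilde{\mathcal{X}}}\mathbb{Q}_{\ell} \simeq \mathbb{Q}_{\ell}$ has trivial inertia action; hence $I_L = \Gal(\etabar/\eta_L)$ acts trivially and the $I$-action descends to $\Gal(L/K)$.

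Next I would set up the geometric dictionary. The point $\phi(\eta) \in \h^{\textrm{rs}}/\!\!/W(K)$ lifts, via the pullback diagram in (\ref{diagram-restriction-of-adjoint-map-to-speco}) and the normalization step in the proof of Lemma \ref{lemma-minimal-extension-for-good-reduction}, to a point $h \in \h^{\textrm{rs}}$ defined over $L$, i.e.\;an $L$-point whose $W$-orbit is exactly the fiber $\coprod_i \Spec(L_i)$. The Galois group $\Gal(L/K) = W_1$ acts on this $L$-point through the right $W$-action on $\h^{\textrm{rs}}$: for $\sigma \in \Gal(L/K)$ corresponding to $w_\sigma \in W_1$, we have ${}^\sigma h = w_\sigma \cdot h$. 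Now by Lemma \ref{lemma-slice-diagram-rs-cartesian} the square (\ref{cartesian-diagram-for-slices}) is cartesian, so $\mathcal{S}_{\overline{h}} \simeq \widetilde{\mathcal{S}}_h$ where $\overline{h} = \psi(h) = \phi(\eta)$, and the Milnor fiber picture of Lemma \ref{lemma-minimal-extension-for-good-reduction}(ii) combined with Proposition \ref{prop-monodromy-isom-of-springer-fiber} identifies $\coh^2(\mathcal{B}_{x_s}, \mathbb{Q}_{\ell})$ with $\coh^2(\mathcal{S}_{\overline{h}}, \mathbb{Q}_{\ell})$ via $\alpha_h$. The key compatibility is that the Galois action on the Milnor-fiber cohomology $(\textrm{R}^2\Psi_{\mathcal{X}}\mathbb{Q}_{\ell})_{x_s}$ — which via Berkovich (Theorem \ref{berkovich-theorem}) and the henselianized miniversal deformation picture is just the cohomology of the geometric generic fiber $\mathcal{S}_{\overline{h}}$ — is computed by how $\sigma$ moves the geometric point $h$ over $\overline{h}$, namely $h \mapsto w_\sigma \cdot h$. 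Chasing the canonical proper base-change isomorphisms, $\sigma$ acts as the composite $\alpha_{w_\sigma \cdot h} \circ \alpha_h^{-1}$ appearing in (\ref{equation-definition-of-monodromy-weyl-action}), which is precisely the value at $w_\sigma$ of the ``monodromy'' $W$-action.

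The main obstacle I expect is bookkeeping: pinning down that the Galois action on $\coh^2(\mathcal{S}_{\overline{h}},\mathbb{Q}_{\ell})$ genuinely arises from the deck-transformation action of $W_1 = \Gal(L/K)$ permuting the sheet $\Spec(L)$ within the $W$-torsor $\psi^{\textrm{rs}}$, rather than from some other source. This requires unwinding the equivalence between the small étale topos of $\eta$ with its $\Gal_K$-action and the fiber of $\psi^{\textrm{rs}}$ over $\overline{h}$ as a $W_1$-set, and checking the naturality of all the identifications in Lemma \ref{lemma-springer's-lemma-2}, Proposition \ref{prop-monodromy-isom-of-springer-fiber}, and the proper base-change maps; since those isomorphisms are all canonical (proper base-change, Leray with the $\mathbb{G}_m$-contraction from Lemma \ref{lemma-l-adic-homotopy}), the compatibility is forced, but articulating it cleanly is delicate. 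Concretely, the proof is then short:

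\begin{proof} By Lemma \ref{lemma-minimal-extension-for-good-reduction}(i), after base-change to $S_L$ the local model of the RDP admits a simultaneous resolution, and by part (ii) of that lemma together with the proof of Proposition \ref{prop-monodromy-isom-of-springer-fiber} the nearby cycles stalk $(\textrm{R}^2\Psi_{\mathcal{X}}\mathbb{Q}_{\ell})_{x_s} \simeq \coh^2(\mathcal{B}_{x_s},\mathbb{Q}_{\ell})$ is $\Gal(\etabar/\eta)$-equivariantly identified with $\coh^2(\mathcal{S}_{\overline{h}},\mathbb{Q}_{\ell})$, where $\overline{h} = \phi(\eta) \in \h^{\textrm{rs}}/\!\!/W(K)$ and $\alpha_h$ is the chosen isomorphism. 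Since $\widetilde{\mathcal{X}}/S_L$ is smooth, $\textrm{R}\Psi_{\widetilde{\mathcal{X}}}\mathbb{Q}_{\ell}\simeq \mathbb{Q}_{\ell}$ carries trivial $I_L$-action, so via the base-change isomorphism $\textrm{R}\Psi_{\mathcal{X}}\mathbb{Q}_{\ell} \simeq \textrm{R}(\widetilde{\pi}_L)_{s*}\mathbb{Q}_{\ell}$ of Lemma \ref{lemma-minimal-extension-for-good-reduction}(ii), the $I$-action on $(\textrm{R}^2\Psi_{\mathcal{X}}\mathbb{Q}_{\ell})_{x_s}$ factors through $\Gal(L/K) = W_1$. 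By the normalization construction in the proof of Lemma \ref{lemma-minimal-extension-for-good-reduction}, the map $S \to \h/\!\!/W$ lifts over $L$ to a point $h \in \h^{\textrm{rs}}(L)$ whose $W$-orbit, via the Galois $W$-cover $\psi^{\textrm{rs}}$, is the fiber $\coprod_i\Spec(L_i)$; for $\sigma \in \Gal(L/K)$ with image $w_\sigma \in W_1$, Galois acts on this $L$-point by ${}^\sigma h = w_\sigma\cdot h$. Under the cartesian square (\ref{cartesian-diagram-for-slices}) we have $\mathcal{S}_{\overline{h}} \simeq \widetilde{\mathcal{S}}_h$, and chasing the canonical proper base-change isomorphisms (Lemmas \ref{lemma-springer's-lemma-2} and \ref{lemma-l-adic-homotopy}) shows $\sigma$ acts on $\coh^2(\mathcal{B}_{x_s},\mathbb{Q}_{\ell})$ via $\alpha_{w_\sigma\cdot h}\circ\alpha_h^{-1}$, which by (\ref{equation-definition-of-monodromy-weyl-action}) is the value at $w_\sigma$ of the monodromy $W$-action. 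Hence the $\Gal(L/K)$-action is the restriction of the monodromy $W$-action to $W_1$.\end{proof}
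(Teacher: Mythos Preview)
Your approach is correct in spirit and reaches the same conclusion, but it takes a genuinely different route from the paper's proof. You work entirely with the classical nearby cycles over the one-dimensional base $S$ together with the deck-transformation picture of the $W$-torsor $\psi^{\textrm{rs}}$: identify the Milnor fiber with $\mathcal{S}_{\overline{\phi(\eta)}}$, choose a lift $h \in \h^{\textrm{rs}}(L)$, observe that Galois permutes the lifts via $W_1$, and then transport the Galois action to $\coh^2(\mathcal{B}_{x_s},\mathbb{Q}_{\ell})$ through $\alpha_h$. The paper instead passes through the machinery of nearby cycles over the higher-dimensional base $\h$ (Section~\ref{section-grothendieck-topos}): it forms the sliced nearby cycles $\textrm{R}\Psi_h^0\mathbb{Q}_{\ell}$ for the base-changed adjoint quotient $\chi':\mathcal{S}'\to\h$, identifies them with $\textrm{R}(\pi_0)_*\mathbb{Q}_{\ell}$ via proper base-change for $\pi$, and then invokes Deligne's $\Psi$-goodness criterion (Theorem~\ref{deligne-thm-psi-good}) to show that these sliced nearby cycles commute with restriction along $\Spec(\Oring_L)\hookrightarrow\h$, thereby matching the monodromy $W$-action directly with the classical $\Gal(L/K)$-action on $(\textrm{R}\Psi_{\mathcal{X}}\mathbb{Q}_{\ell})_{x_s}$.

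The trade-off is this: your argument is more elementary and avoids the vanishing-topos formalism entirely, but the step you yourself flag as ``bookkeeping'' --- that the identification of Lemma~\ref{lemma-minimal-extension-for-good-reduction}(ii) (built from the simultaneous resolution $\widetilde{\pi}_L$) agrees with the identification $\alpha_h$ of Proposition~\ref{prop-monodromy-isom-of-springer-fiber} (built from the $\mathbb{G}_m$-contraction along a line in $\h$) --- is exactly what the paper's $\Psi$-goodness argument makes rigorous. In the paper there is a single object, the sliced nearby cycles, carrying both the $W$-action (from the $W$-cover) and the Galois action (after base-change to $S_L$), and Theorem~\ref{deligne-thm-psi-good} furnishes the base-change compatibility that links them; in your version you must instead argue that two separately-constructed isomorphisms to $\coh^2(\mathcal{B}_{x_s},\mathbb{Q}_{\ell})$ coincide. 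Your claim that ``the compatibility is forced'' by canonicity is plausible, since both ultimately arise from proper base-change for the Grothendieck alteration, but it is not fully articulated. The paper's route buys a cleaner justification of precisely this point, at the cost of invoking heavier technology.
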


\begin{proof} By Lemma \ref{lemma-minimal-extension-for-good-reduction} there exists a finite extension $L/K$ so that the singularity of $\mathcal{X}/S$ admits a simultaneous resolution. We therefore have that the monodromy action of $\Gal(\etabar/\eta)$ factors through finite quotient $\Gal(L/K)$, and we get a map $\Spec(\Oring_L) \to \h$ with closed point $s \mapsto 0$ and generic point $\eta_L \mapsto h \in \h^{\textrm{rs}}$; let $\overline{h}$ be the geometric point over $h$ (NB. this is different than $\overline{h}$ defined in Proposition \ref{prop-monodromy-isom-of-springer-fiber}; here $\overline{h} \to \h^{\textrm{rs}}$).

We also set $\chi': \mathcal{S}' \to \h$ to be the base--change of $\chi$ along $\psi: \h \to \h/\!\!/W$. The singularity $x_s \in \mathcal{X}_s(k)$, viewed as the generic point in the subregular orbit of $\mathcal{S}_0$, is fixed by the $W$--cover, so that $\chi'$ is still a map of henselianized schemes ($\mathcal{S}'$ completed at $x_s$ and $\h$ completed at 0). 

Let $\textrm{R}\Psi_h^0\mathbb{Q}_{\ell} \in \textrm{D}^b(\mathcal{S}'_0)$ be the sliced nearby cycles corresponding to data $(x_s,\overline{h}, \textrm{sp})$ with $\textrm{sp}: \overline{h} \to 0$ (Definition \ref{definition-of-nearby-cycles-over-general-bases}). If $i: \mathcal{S}'_0 \xhookrightarrow{} \mathcal{S}'$ and $j: \mathcal{S}'\times_{\h}\h_{(\overline{h})} \to \mathcal{S}'$ are the respective immersions, with $j$ induced by the specialization map $\textrm{sp}$, we have 
\[\textrm{R}\Psi_h^0\mathbb{Q}_{\ell} = i^*\textrm{R}j_*\mathbb{Q}_{\ell}\]

Now the Grothendieck alteration $\pi: \widetilde{\mathcal{S}} \to \mathcal{S}$ has Stein factorization $\widetilde{\mathcal{S}} \to \mathcal{S}' \to \mathcal{S}$; we denote $\widetilde{\mathcal{S}}\to\mathcal{S}'$ also by $\pi$. Then $\pi$ is proper, and so the formation of 
\[\textrm{R}\overset{\leftarrow}{\pi}_*: \textrm{D}^b(\widetilde{\mathcal{S}}\topos_{\h}\h) \longrightarrow \textrm{D}^b(\mathcal{S}'\topos_{\h}\h)\]
commutes with base--change on $\mathcal{S}'$ and $\h$ (see Section \ref{section-grothendieck-topos}). Note that $\widetilde{\chi}:\widetilde{\mathcal{S}} \to \h$ is smooth, so $\textrm{R}\Psi_{\widetilde{\chi}}\mathbb{Q}_{\ell} \simeq \mathbb{Q}_{\ell}$ (\cite{illusie2017around}, Ex.\,1.7(b)) and in particular base--changing to $\mathcal{S}_0'\to\mathcal{S}'$ and $0\to \h$ yields (cf. Equation (\ref{equation-natural-base-change-proper-nearby-cycles}))
\begin{equation}\label{equation-sliced-nearby-cycles-are-pushforward-of-gro-alteration}\textrm{R}\Psi_h^0\mathbb{Q}_{\ell} \simeq \textrm{R}(\pi_0)_*\mathbb{Q}_{\ell}\end{equation}
Here $\pi_0: \widetilde{\mathcal{S}}_0\to\mathcal{S}_0'$ is the induced minimal resolution of surface $\mathcal{S}_0'$, and the left-hand side is $\textrm{R}\Psi_h^0(\textrm{R}(\pi_{(h)})_*\mathbb{Q}_{\ell}) \simeq \textrm{R}\Psi_h^0\mathbb{Q}_{\ell}$ since $\pi_{(h)}: \widetilde{\mathcal{S}}\times_{\h}\h_{(\overline{h})}\to \mathcal{S}'\times_{\h}\h_{(\overline{h})}$ is an isomorphism by construction. Moreover the stalk of $\textrm{R}\Psi_h^0\mathbb{Q}_{\ell}$ at $x_s$ is equipped via (\ref{equation-sliced-nearby-cycles-are-pushforward-of-gro-alteration}) with the monodromy $W$-action specified after the proof of Proposition \ref{prop-monodromy-isom-of-springer-fiber}.

On the other hand, by Proposition (\ref{adjoint-quotient-properties}) $\chi'$ is smooth (hence locally acyclic) outside the \textit{finite} set of isolated singularities of each fiber. Therefore by Theorem \ref{deligne-thm-psi-good} the sliced nearby cycles $\textrm{R}\Psi_h^0\mathbb{Q}_{\ell}$ are $\Psi$-good, i.e.\;they commute with any base-change on $\h$. Base-changing to $\Spec(\Oring_L) \xhookrightarrow{} \h$ yields the following computation of Milnor tube cohomology (see Remark \ref{remark-on-psi-goodness} and Equation (\ref{nearby-cycles-stalks-milnor-tubes}))
\begin{equation}\label{equation-milnor-tube-is-milnor-fiber-cohom}
    (\textrm{R}\Psi_h^0\mathbb{Q}_{\ell})_{x_s} \simeq \textrm{R}\Gamma(\mathcal{S}'\times_{\h}\h_{(\overline{h})},\mathbb{Q}_{\ell}) \overset{\sim}{\longrightarrow} \textrm{R}\Gamma(\mathcal{S}'\times_{\h} \overline{h},\mathbb{Q}_{\ell}) \simeq (\textrm{R}\Psi_{\mathcal{X}}\mathbb{Q}_{\ell})_{x_s}
\end{equation}
where the middle restriction isomorphism is a consequence of $\Psi$-goodness and the last isomorphism follows upon identifying $\mathcal{S}'\times_{\h} \Spec(\Oring_L) \simeq \mathcal{X}_{(x_s)}$. The right-hand side of (\ref{equation-milnor-tube-is-milnor-fiber-cohom}) is naturally equipped with the monodromy $\Gal(L/K)$-action (Section \ref{subsection-classic-nearby-cycles}), so via this restriction we see that the $\Gal(L/K)$--action on $\coh^2(\mathcal{B}_{x_s},\mathbb{Q}_{\ell})$ comes from restricting the monodromy $W$-action on $\coh^2(\mathcal{B}_{x_s},\mathbb{Q}_{\ell})$.\end{proof}

\begin{Rem} Implicit in Corollary \ref{corollary-about-monodromy-actions-coinciding} is the fact that the monodromy $W$-action on $\coh^2(\mathcal{B}_{x_s}, \mathbb{Q}_{\ell})$ comes from the action of $\pi_1^{\textrm{\'et}}(\h^{\textrm{rs}}/\!\!/W)$ on the stalks $\coh^2(\widetilde{\mathcal{S}}_{\overline{h}}, \mathbb{Q}_{\ell})$ of local system $\textrm{R}^2\widetilde{\chi}_*\mathbb{Q}_{\ell}$, as explained in (\cite{springer1983purity}, \S5). This retrieves the cohomological version of the monodromy results in (\cite{slodowy1980four}, \S4.2-4.3). In the complex situation, Slodowy proves a stronger result, i.e.\;that $\mathcal{B}_{x_s}$ is homotopy-equivalent to a general fiber of $\widetilde{\chi}$ and so we get a $W$-action on the \textit{homotopy type} of $\mathcal{B}_{x_s}$. By comparison, we obtain a weaker result here because we cannot pass to the differentiable category the way Slodowy does over $\mathbb{C}$.
\end{Rem}

So far we have defined two $W$-actions on $\coh^2(\mathcal{B}_{x_s}, \mathbb{Q}_{\ell})$, the Springer $W$-action and the monodromy $W$-action. We next show that they coincide, by adapting the argument of (\cite{hotta1981springer}, \S1.9) to our $\ell$-adic setting. Since $\mathcal{B}_{x_s}$ is defined over $k$, in the following proposition we may work over $k$ instead of $S$.

\begin{Prop}\label{springer-equals-monodromy-action} The Springer and monodromy $W$-actions on $\coh^2(\mathcal{B}_{x_s}, \mathbb{Q}_{\ell})$ coincide.
\end{Prop}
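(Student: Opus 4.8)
The plan is to compare the two $W$-actions by realizing both as arising from the monodromy of a single local system on $\h^{\textrm{rs}}/\!\!/W$, namely $\textrm{R}^2\widetilde{\chi}_*\mathbb{Q}_{\ell}$ restricted to the regular semisimple locus (equivalently the pushforward of the constant sheaf along the Galois $W$-torsor $\psi^{\textrm{rs}}$, via Lemma \ref{lemma-slice-diagram-rs-cartesian} and Proposition \ref{prop-monodromy-isom-of-springer-fiber}). First I would recall, following Hotta's argument (\cite{hotta1981springer}, \S1.9), that the Springer $W$-action on $\mathcal{F} = \textrm{R}\pi_*\mathbb{Q}_{\ell}[\dim \g]$ constructed via Borho--MacPherson is characterized by the property that, upon restriction to $\g^{\textrm{rs}}$, it is the action by deck transformations of $W$ on the $W$-torsor $\pi^{\textrm{rs}}$ (and this characterization is rigid since $\mathcal{F}$ is an intermediate extension, so the endomorphism algebra injects into that of $\mathcal{G} = \pi^{\textrm{rs}}_*\mathbb{Q}_{\ell}$, as established in Theorem \ref{big-perv-theorem} and the lemma following it). Thus it suffices to identify the monodromy $W$-action on $\coh^2(\mathcal{B}_{x_s},\mathbb{Q}_{\ell})$ with the deck-transformation action on the stalks of $\mathcal{G}$, pulled back to the slice.

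Next I would pass everything through the Slodowy slice. By Lemma \ref{lemma-springer's-lemma-2}, the contracting $\mathbb{G}_m$-action gives canonical isomorphisms $\coh^2(\widetilde{\mathcal{S}}_h,\mathbb{Q}_{\ell}) \simeq \coh^2((\widetilde{\mathcal{S}}_0)_s,\mathbb{Q}_{\ell}) \simeq \coh^2(\mathcal{B}_{x_s},\mathbb{Q}_{\ell})$ for any $h \in \h^{\textrm{rs}}$, and by Lemma \ref{lemma-slice-diagram-rs-cartesian} the square (\ref{cartesian-diagram-for-slices}) is cartesian, so $\widetilde{\mathcal{S}}^{\textrm{rs}} \to \mathcal{S}^{\textrm{rs}}$ is again a Galois $W$-torsor, obtained by base-change from $\pi^{\textrm{rs}}$. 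Hence the sheaf $\textrm{R}^2\widetilde{\chi}^{\textrm{rs}}_*\mathbb{Q}_{\ell}$ on $\h^{\textrm{rs}}$ descends (via $\psi^{\textrm{rs}}$) to a local system on $\h^{\textrm{rs}}/\!\!/W$ whose stalk is $\coh^2(\mathcal{B}_{x_s},\mathbb{Q}_{\ell})$, and whose $\pi_1^{\textrm{\'et}}(\h^{\textrm{rs}}/\!\!/W)$-monodromy factors through $W$. The monodromy $W$-action defined in (\ref{equation-definition-of-monodromy-weyl-action}) via the automorphisms $\alpha_{w\cdot h}\circ\alpha_h^{-1}$ is \emph{by construction} the deck-transformation action of $W$ on this torsor (the maps $\alpha_h$ being the trivializations of the $\mathbb{G}_m$-equivariant comparison). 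On the other hand, the Springer action restricted to the slice: since $\mathcal{F}_{\mathcal{S}} = \textrm{R}\pi_{\mathcal{S}*}\mathbb{Q}_{\ell}[\dim \mathcal{S}]$ is a relative IC sheaf (the Corollary to Theorem \ref{big-perv-theorem}) and $j'^*_{\mathcal{S}}\mathcal{F}_{\mathcal{S}} = \pi^{\textrm{rs}}_{\mathcal{S}*}\mathbb{Q}_{\ell}[\dim\mathcal{S}]$ carries precisely the deck-transformation $W$-action pulled back from $\g^{\textrm{rs}}$, the two actions agree on $\mathcal{S}^{\textrm{rs}}$, hence (by uniqueness of the extension across the intermediate extension, i.e. the endomorphism-algebra isomorphism) on all of $\mathcal{S}$, and in particular on the stalk $\coh^2(\mathcal{B}_{x_s},\mathbb{Q}_{\ell})$ at the central fiber after the $\mathbb{G}_m$-contraction identifications of Lemma \ref{lemma-springer's-lemma-2}.

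The one point requiring care — and the main obstacle — is checking that the $\mathbb{G}_m$-equivariant comparison isomorphisms of Lemma \ref{lemma-springer's-lemma-2} intertwine the $W$-action coming from "deck transformations of nearby fibers over $\h^{\textrm{rs}}$" with the $W$-action on stalks of the IC-sheaf $\mathcal{F}_{\mathcal{S}}$ at the subregular point. Concretely: the Springer action is defined on $\mathcal{F}_{\mathcal{S}}$ as a global endomorphism, and restricting to the stalk at $x_s$ requires knowing that the identification $\mathcal{F}_{\mathcal{S},x_s} \simeq \coh^2(\widetilde{\mathcal{S}}_h,\mathbb{Q}_{\ell})$ (via nearby cycles / cospecialization along the line $L$ from $h$ to $0$, using $\Psi$-goodness from Theorem \ref{deligne-thm-psi-good} and the computation (\ref{equation-sliced-nearby-cycles-are-pushforward-of-gro-alteration})) is compatible with both $W$-actions. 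I would handle this exactly as Hotta does: the crucial observation is that the specialization map $\coh^2(\widetilde{\mathcal{S}}_{\overline{h}},\mathbb{Q}_\ell) \to (\textrm{R}^2\widetilde{\chi}_*\mathbb{Q}_\ell)_0 = \coh^2((\widetilde{\mathcal{S}}_0)_s,\mathbb{Q}_\ell)$ is a map of $\pi_1$-modules where $W$ sits inside $\pi_1^{\textrm{\'et}}(\h^{\textrm{rs}}/\!\!/W)$, and the Borho--MacPherson action is, by its defining construction via intermediate extension, recovered from exactly this $\pi_1$-action (this is the content of the Springer-correspondence-style comparison, cf. \cite{springer1983purity}, \S5, together with (\cite{hotta1981springer}, \S1.9)). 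Once both actions are exhibited as the same $\pi_1^{\textrm{\'et}}(\h^{\textrm{rs}}/\!\!/W) \twoheadrightarrow W$-action on the same local system, compatibility of the identifications is a diagram-chase using the canonicity of proper base-change and of the $\mathbb{G}_m$-homotopy lemma (Lemma \ref{lemma-l-adic-homotopy}), and the proposition follows. Since $\mathcal{B}_{x_s}$ is defined over $k$, I would phrase the whole comparison over $k$ (as the statement allows), which removes any mixed-characteristic subtlety and lets me cite the characteristic-$p$ results of Springer and Slodowy directly.
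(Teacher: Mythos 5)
Your proposal is essentially the same as the paper's: both adapt Hotta's argument from \S1.9 of \cite{hotta1981springer}, work over $k$, exploit the intermediate-extension rigidity of $\mathcal{F}$ together with the fact that both $W$-actions restrict to deck transformations on the $W$-torsor $\pi^{\textrm{rs}}$ over the regular semisimple locus, and use the $\mathbb{G}_m$-contraction isomorphisms of Lemma~\ref{lemma-springer's-lemma-2} to pass to the central stalk. The paper's version is more concrete --- it carries out the compatibility check you flag as the ``one point requiring care'' via an explicit diagram chase identifying the cospecialization map with the $W$-invariant inclusion $\coh^2(\mathcal{B}_{x_s},\mathbb{Q}_{\ell}) \hookrightarrow \coh^2(\mathcal{B}_{x_s},\mathbb{Q}_{\ell})\otimes\mathbb{Q}_{\ell}[W]$ --- but the underlying argument is the one you describe.
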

\begin{proof} Let $\overline{h} \to 0$ be a specialization from a point in $\h^{\textrm{rs}}/\!\!/W(k)$ and consider its associated cospecialization map $\coh^2((\mathcal{S}_{0})_s, \textrm{R}\pi_*\mathbb{Q}_{\ell}) \overset{\textrm{cosp}}{\longrightarrow}\coh^2(\mathcal{S}_{\overline{h}}, \textrm{R}\pi_*\mathbb{Q}_{\ell})$. Since $\textrm{R}\pi_*\mathbb{Q}_{\ell}\lvert_{\mathcal{S}_{\overline{h}}} \simeq \pi_*^{\textrm{rs}}\mathbb{Q}_{\ell}$ and $\pi^{\textrm{rs}}$ is a finite \'etale $W$-cover, we have\[\coh^2(\mathcal{S}_{\overline{h}}, \textrm{R}\pi_*\mathbb{Q}_{\ell}) = \coh^2(\mathcal{S}_{\overline{h}}, \pi_*^{\textrm{rs}}\mathbb{Q}_{\ell}) \simeq \coh^2\Big(\coprod_{w\in W} \widetilde{\mathcal{S}}_{w\cdot h}, \mathbb{Q}_{\ell} \Big) \simeq \coh^2((\widetilde{\mathcal{S}}_0)_s, \mathbb{Q}_{\ell}) \otimes \mathbb{Q}_{\ell}[W]\] where $h$ is a chosen lift of $\overline{h}$ in $\h^{\textrm{rs}}$ yielding all other lifts $w \cdot h$, and the last isomorphism follows from Proposition \ref{prop-monodromy-isom-of-springer-fiber}. The right-hand side is further identified with $\coh^i(\mathcal{B}_{x_s}, \mathbb{Q}_{\ell})\otimes \mathbb{Q}_{\ell}[W]$ via Lemma \ref{lemma-springer's-lemma-2}, equipped with the monodromy $W$-action on the left. On the other hand by proper base-change we have \[\coh^2((\mathcal{S}_0)_s, \textrm{R}\pi_*\mathbb{Q}_{\ell})\simeq \coh^2(\mathcal{B}_{x_s}, \mathbb{Q}_{\ell}) \simeq \coh^2((\widetilde{\mathcal{S}}_0)_s, \mathbb{Q}_{\ell})\]
with the latter isomorphism again due to Lemma \ref{lemma-springer's-lemma-2}, and the right-hand side is equipped with the Springer $W$-action coming from the left-hand side and the Springer $W$-action on $\textrm{R}\pi_*\mathbb{Q}_{\ell}$. These $W$-modules fit together into diagram
\begin{center}
    \begin{tikzcd}
        \coh^2(\mathcal{S}_{\overline{h}}, \textrm{R}\pi_*\mathbb{Q}_{\ell}) \arrow[r, "\simeq"] & \bigoplus_{w\in W} \coh^2(\widetilde{\mathcal{S}}_{w\cdot h}, \mathbb{Q}_{\ell}) \arrow[r, "\simeq"]   &\coh^2(\mathcal{B}_{x_s}, \mathbb{Q}_{\ell}) \otimes \mathbb{Q}_{\ell}[W] \\ \coh^2((\mathcal{S}_0)_s, \textrm{R}\pi_*\mathbb{Q}_{\ell}) \arrow[u, "\textrm{cosp}"] \arrow[r, "\simeq"] & \coh^2((\widetilde{\mathcal{S}}_0)_s, \mathbb{Q}_{\ell}) \arrow[r, hook, "i"] & \coh^2((\widetilde{\mathcal{S}}_0)_s\times W\!\cdot\!h, \mathbb{Q}_{\ell}) \simeq \coh^2(\mathcal{B}_{x_s}, \mathbb{Q}_{\ell})\otimes \mathbb{Q}_{\ell}[W] \arrow[u, "\simeq" {anchor=south, rotate=90}]
    \end{tikzcd}
\end{center}
\noindent where the bottom-right isomorphism is the K\"unneth map, the bottom row has $\coh^2(\mathcal{B}_{x_s}, \mathbb{Q}_{\ell})$ equipped with the Springer $W$-action and the top row has $\coh^2(\mathcal{B}_{x_s}, \mathbb{Q}_{\ell})$ equipped with the monodromy $W$-action. The image of $i$ is by definition $\coh^2(\mathcal{B}_{x_s}, \mathbb{Q}_{\ell})\otimes \mathbb{Q}_{\ell}[W]^W \simeq \coh^2(\mathcal{B}_{x_s}, \mathbb{Q}_{\ell})$, so the right vertical isomorphism identifies the monodromy and Springer $W$-actions on $\coh^2(\mathcal{B}_{x_s}, \mathbb{Q}_{\ell})$.\end{proof}

We conclude this section with the proof of parts (i) and (ii) of the Main Theorem \ref{main-theorem-of-paper}.
\begin{proof}[\textbf{Proof of \emph{Theorem \ref{main-theorem-of-paper} (i), (ii)}}] By assumption $p=\textrm{char}(k)$ is sufficiently good, so $p \nmid \lvert\Gal(L/K)\rvert$ and hence the monodromy action of $\Gal(\etabar/\eta)$ is tame, hence generated by the Kummer character $t_{\ell}$. We may therefore identify $\Gal(L/K) = \langle t_{\ell}\rangle$ with a cyclic subgroup $\langle w\rangle$ of $W$, and so by Proposition \ref{springer-equals-monodromy-action}, Theorem \ref{thm-springer-correspondence} and Corollary \ref{corollary-galois-monodromy-depends-on-formal-nbd} we have that the monodromy action on $\coh^2(X_{\overline{K}},\mathbb{Q}_{\ell})$ is the restriction to $\langle w \rangle$ of the reflection action of $W$ on $\h^{\vee} \simeq \coh^2(\mathcal{B}_{x_s},\mathbb{Q}_{\ell})$. This retrieves (i), and by construction we achieve good reduction after base-change $L/K$, yielding (ii) as well.\end{proof}

\begin{Cor}\label{corollary-transversal-implies-coxeter} Suppose the base $S = \Spec(\Oring_K)$ of model $\mathcal{X}$ intersects the discriminant divisor $\Delta \subset \h/\!\!/W$ transversely, then $\Gal(L/K)$ is a multiple of the Coxeter number.
\end{Cor}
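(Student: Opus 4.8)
The plan is to translate the transversality hypothesis into a valuation condition on the $W$-cover $\psi\colon\h\to\h/\!\!/W$, use it to pin down the leading term of the arc in $\h$ that records the monodromy, and then identify the resulting Weyl element through Springer's theory of regular elements. First I would set up the dictionary: by Corollary \ref{what-the-local-ring-of-model-looks-like} and Theorem \ref{thm-miniversal-deformation-of-slice-and-resolved-slice} the model $\mathcal{X}$ corresponds to a map $\phi\colon S\to\h/\!\!/W\simeq\Spec\Oring_K[\chi_1,\dots,\chi_r]$, where I order the fundamental invariants so that $d_r=\deg\chi_r=\textrm{Cox}(\g)=:h$ is the (unique) largest degree, and $\phi(s)=0$ forces $\phi^*\chi_i\in\mathfrak{m}_K=(\pi)$ for all $i$. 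The discriminant $\delta\in\Oring_K[\chi_1,\dots,\chi_r]$ with $\Delta=V(\delta)$ is weighted-homogeneous of weighted degree $|\Phi|=rh$; since each monomial $\prod\chi_i^{\beta_i}$ appearing in $\delta$ satisfies $\sum\beta_id_i=rh$ with $d_i\le h$, we get $\sum\beta_i\ge r$, with equality only for $\chi_r^r$ — and $\chi_r^r$ occurs in $\delta$ with nonzero coefficient, because $h$ is a regular number for $W$ (Springer, \cite{springer1974regular}): the eigenline of a Coxeter element for a primitive $h$-th root of unity is regular, so $\delta$ restricted to the $\chi_r$-axis is a nonzero multiple of $\chi_r^r$. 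Consequently $v_K(\phi^*\delta)\ge r$, with equality exactly when $v_K(\phi^*\chi_r)=1$; I would take "$S$ meets $\Delta$ transversely" to mean that this intersection length is minimal, equivalently (writing $\widehat{\Oring}_{\mathcal{X},x_s}\simeq\Oring_K[\![x,y,z]\!]/(F)$ with $F=f+\sum_i c_i(\pi)g_i$) that $\mathcal{X}$ is regular at $x_s$, i.e. that $F$ has a $\pi$-linear term. All of these amount to $v_K(\phi^*\chi_r)=1$, hence $v_K(\phi^*\delta)=r$.

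Next I would lift to $\h$. By Lemma \ref{lemma-minimal-extension-for-good-reduction} there is a totally tamely ramified extension $L/K$ and a map $\widetilde{\phi}\colon\Spec\Oring_L\to\h$ with $\psi\circ\widetilde{\phi}=\phi|_{\Spec\Oring_L}$, $\widetilde{\phi}(s)=0$, and $h:=\widetilde{\phi}(\eta_L)\in\h^{\textrm{rs}}(L)\cap\h(\Oring_L)$, while $\Gal(L/K)=\langle w\rangle$ is cyclic of order $e:=[L:K]=\textrm{ord}(w)$. Write $a:=\min_i v_L(h_i)\ge1$ (coordinates in a $\mathbb{Z}$-basis of $X^*(T)$) and let $\bar h\in\h(k)\setminus\{0\}$ be the reduction of $\pi_L^{-a}h$. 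Then $v_L(\chi_i(h))=e\cdot v_K(\phi^*\chi_i)$; transversality gives $v_L(\chi_r(h))=e$, while homogeneity of $\chi_r$ gives $v_L(\chi_r(h))\ge ah$, so $e\ge ah$. For $i<r$: if $\chi_i(\bar h)\neq0$ then $v_L(\chi_i(h))=ad_i$, forcing $e\cdot v_K(\phi^*\chi_i)=ad_i\le a(h-1)<ah\le e$, hence $v_K(\phi^*\chi_i)<1$, absurd; so $\chi_i(\bar h)=0$ for all $i<r$. Thus $\psi(\bar h)$ lies on the $\chi_r$-axis and is nonzero (since $\psi^{-1}(0)=\{0\}$ and $\bar h\neq0$), so $\chi_r(\bar h)\neq0$ and $\delta(\psi(\bar h))$ is a nonzero multiple of $\chi_r(\bar h)^r$; that is, $\bar h\in\h^{\textrm{rs}}(k)$ is regular semisimple, and moreover $v_L(\chi_r(h))=ah$ exactly, so $e=ah$.

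Finally I would identify $w$. The map $\Spec\Oring_L\to\h$ is $\Gal(L/K)$-equivariant for the reflection action of $\langle w\rangle\subseteq W$ on $\h$; choosing a generator $\sigma$ of $\Gal(L/K)$ with image $w$ and a uniformizer $\pi_L$ with $\pi_L^e=\pi$, we have $\sigma(\pi_L)=\zeta\pi_L$ for a primitive $e$-th root of unity $\zeta\in k$ ($e$ is prime to $p$ since $p$ is sufficiently good). Since $L/K$ is totally ramified, $\sigma$ fixes the coordinates of $\bar h$, and comparing leading terms in the identity $\sigma(h)=w^{\pm1}(h)$ yields $w^{\pm1}(\bar h)=\zeta^a\bar h$ (the sign being immaterial below). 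Thus $\bar h$ is a regular eigenvector of $w$ with eigenvalue of multiplicative order $e/\gcd(e,a)=ah/a=h$, a primitive $h$-th root of unity. By Springer's theory of regular elements (\cite{springer1974regular}; $h=\textrm{Cox}(\g)$ is the largest regular number, and the regular elements of order $h$ are exactly the Coxeter elements), $w$ is a Coxeter element and $\textrm{ord}(w)=h$. Hence $|\Gal(L/K)|=e=h=\textrm{Cox}(\g)$, in particular a multiple of the Coxeter number (and in fact equal to it).

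The main obstacle is Paragraph 1: fixing the correct reading of "transverse to $\Delta$" and, with it, the nonvanishing of the $\chi_r^r$-coefficient of the discriminant — equivalently the fact that $h$ is a regular number, so that a Coxeter eigenline is regular. Once this and the valuation bookkeeping of Paragraph 2 are in place, the Galois-equivariance computation and the appeal to Springer's classification of regular elements are routine.
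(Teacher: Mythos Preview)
Your argument is correct and in fact sharper than the paper's: you show $|\Gal(L/K)|=e=ah$ and then, via Springer's theorem on regular elements, that $\textrm{ord}(w)=h$; since $\Gal(L/K)=\langle w\rangle$ this forces $a=1$ and $e=h$ exactly, not merely a multiple. The valuation bookkeeping in Paragraph~2 is clean, and the key geometric input --- that the tangent cone of $\Delta$ at the origin is $V(\chi_r)$, via the nonvanishing of the $\chi_r^r$-coefficient in $\delta$ --- is correctly deduced from the regularity of the Coxeter eigenline.

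The paper takes a different and much terser route. Rather than the explicit valuation analysis, it invokes the equivalence with the complex monodromy (Theorem~\ref{main-theorem-of-paper}) and then cites Slodowy (\cite{slodowy1980four}, p.~38) for the characterization of transversal lines in $\h/\!\!/W$ in terms of the vanishing of the lower-degree invariants, together with the classical fact that a Coxeter eigenline maps to such a transversal line under a degree-$h$ cover. The paper's second proof for $A_n$ (Corollary~5.21) is closer in spirit to yours --- it identifies the tangent cone as $V(\sigma_{n+1})$ and argues via the Newton/Eisenstein criterion --- but still does not make the general Springer regular-element mechanism explicit.

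What your approach buys: it is self-contained in mixed characteristic (no appeal to the complex analogy), it uniformly handles all simply-laced types through the same valuation argument, and it makes transparent \emph{why} the Coxeter class appears --- the leading coefficient $\bar h$ of the lifted arc is forced to lie on a regular eigenline for a primitive $h$-th root of unity. What the paper's approach buys is brevity, at the cost of outsourcing the geometric content to Slodowy's lecture notes. Your side remark equating transversality with regularity of $\mathcal{X}$ at $x_s$ is correct (the $\pi$-linear term of $F$ is governed by the coefficient of the constant basis element of the Tjurina algebra, which under the $\mathbb{G}_m$-weight matching corresponds to $\chi_r$), though it is not needed for the main line of argument.
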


\begin{proof} As a consequence of Theorem \ref{main-theorem-of-paper} we have an equivalent description of the action of monodromy in the complex case and the \textit{tame} mixed-characteristic case. We may thus identify the fundamental homogeneous generators of $\Oring_K[\h]^W$ with coordinates on $\h/\!\!/W \simeq \Spec(\Oring_K[\![t_1,\cdots, t_r]\!])$ so that the highest degree of a fundamental generator is $\textrm{Cox}(\g)$. Then the line $\Spec(\Oring_K)$ is transverse to the discriminant if and only if all $W$-invariant polynomials of degree $< \textrm{Cox}(\g)$ vanish on $\Spec(\Oring_K)$ (\cite{slodowy1980four}, p.\,38). Conversely any Coxeter element $w \in W$ acting as a reflection on $\h\simeq \h^{\vee}$ has a 1-dimensional eigenspace for eigenvalue $\zeta_N$, $N = \textrm{Cox}(\g)$, which is mapped to a transversal line in $\h/\!\!/W$ under a degree-$\textrm{Cox}(\g)$ cover. Since $L$ is a field extension on which $\mathcal{X}/S$ admits a simultaneous resolution, we have that $\textrm{Cox}(\g) = \textrm{ord}(w) \mid \Gal(L/K)$. \end{proof}

A rephrasing of the above Corollary says that if $S$ transversely intersects the discriminant locus, then the monodromy $\Gal(\etabar/\eta)$ acts through a \emph{Coxeter} class in $W$.

\subsection{Explicit monodromy actions on degenerations.} This section concerns part (iii) of Theorem \ref{main-theorem-of-paper}. For certain types of singularities we may directly compute (up to conjugacy) the elements acting as monodromy operators on $\coh^2(\mathcal{X}_{\etabar}, \mathbb{Q}_{\ell})$. We assume the situation in Notation \ref{notation-of-data-for-monodromy-actions} once more, i.e.\;we work over a complete trait $S = \Spec(\Oring_K)$ and consider flat proper surfaces $\mathcal{X}/S$ so that $\mathcal{X}_s$ has a singularity of type $A_n$ and $p=\textrm{char}(k)$ is sufficiently good for the singularity ($p > n+1$). After coordinate change we have an affine neighborhood of the singularity (see Example \ref{a_n-example-miniversality})
\begin{equation}\label{miniversal-family-pullback}\Spec\Big(\frac{\Oring_K[\![x,y,z]\!]}{x^2+z^2+y^{n+1}+ u_n\pi^{a_n}y^{n-1} + \cdots + u_2\pi^{a_2}y+u_1\pi^{a_1}}\Big) \longrightarrow \Spec(\Oring_K)\end{equation}
where $u_i \in \Oring_K^{\times}$. In this case $W=S_{n+1}$ and the $W$-cover $\h \longrightarrow \h/\!\!/W$ is given in coordinates by
\[
    \Spec\Big(\frac{\Oring_K[\![s_1, \cdots, s_{n+1}]\!]}{s_1+\cdots+s_{n+1}}\Big) \longrightarrow \Spec(\Oring_K[\![t_1, \cdots, t_n]\!]),\]
    \begin{equation}(s_1, \cdots, s_{n+1}) \longmapsto \sigma_i(s_1, \cdots, s_{n+1}), \; i= 2, \cdots, n+1
\end{equation}
\noindent where $\sigma_i$ are the symmetric polynomials of $i$-th degree (see \cite{tyurina1970resolution}, \S3 or \cite{brieskorn1966resolution}, \S2.7). The full monodromy group $W$ acts on $\coh^2(\mathcal{B}_x, \mathbb{Q}_{\ell}) \simeq \mathbb{Q}_{\ell}^n$ via the standard symmetric $S_{n+1}$-representation. We also know in this case (\cite{tyurina1970resolution}) that the base-changed miniversal family over $\h$ is given by \[V = \{x^2+z^2+(y-s_1)\cdots(y-s_{n+1})=0\}\] after suitably multiplying variables $s_i$ by units in $\Oring_K$. Family $V$ may then be simultaneously resolved by means of the graph of the map
\begin{equation}\label{simultaneous-resolution-of-type-An}V \longrightarrow \mathbb{P}^1 \times \cdots \times \mathbb{P}^1, \; \; \; (x,y,z) \longmapsto \Big(\Big[x: \prod_{i=1}^k(y-s_i)\Big]\Big)_{k=1}^{n+1}\end{equation}

We describe here a general principle that will allow us to pass between the mixed-characteristic setting of $\Spec(K)$ and the equal-characteristic setting of $\Spec(k(\!(t)\!))$. The following is a direct application of Abhyankar's lemma (\cite{SGA1}, \S XIII.5).

\begin{Lemma} Let $T = \displaystyle\Spec\Big(\frac{\Oring_K[\![t,u]\!]}{tu-\pi}\Big)$ and $\Spec(\Oring_K), \Spec(k[\![t]\!]) \to T$ be the fiber maps induced respectively from $u \mapsto 1, u\mapsto 0$. Restricting to the open dense scheme $T\big[\frac{1}{t}\big] \simeq \mathbb{G}_{m, \Oring_K}$ and the associated maps $\Spec(K) \to \mathbb{G}_{m, \Oring_K} \leftarrow \Spec(k(\!(t)\!))$ yields an isomorphism of (tame) \'etale fundamental groups
\[\pi_1^{\emph{\textrm{tame}}}(\Spec(K), \etabar) \overset{\sim}{\longrightarrow} \pi_1^{\emph{\textrm{\'et}}}(\mathbb{G}_{m, \Oring_K}, \etabar) \overset{\sim}{\longleftarrow} \pi_1^{\emph{\textrm{tame}}}(\Spec(k(\!(t)\!)), s)\]   
\end{Lemma}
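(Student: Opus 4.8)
The plan is to realise the two maps of the statement as pullbacks of a single ``universal'' tame Kummer tower over $T[1/t]$, and then to compute all three fundamental groups by Abhyankar's lemma (\cite{SGA1}, XIII.5). First I would set up the geometry of $T$. The ring $R:=\Oring_K[\![t,u]\!]/(tu-\pi)$ is a $2$-dimensional regular local ring with regular system of parameters $(t,u)$ (here $\pi=tu\in\mathfrak{m}_R^2$), and its special fibre $V(\pi)=V(tu)$ is a simple normal-crossings divisor with two regular branches $D_1=\{t=0\}$ and $D_2=\{u=0\}\simeq\Spec k[\![t]\!]$. Hence $T[1/t]=T\smallsetminus D_1$ is regular and integral, with special fibre over $\Spec\Oring_K$ equal to $D_2\cap T[1/t]\simeq\Spec k(\!(t)\!)$ and with generic fibre $T[1/\pi]$. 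Under the identification $T[1/t]\simeq\mathbb{G}_{m,\Oring_K}$ sending $t$ to the coordinate (up to $\pi$-adic completion, if one reads $\mathbb{G}_{m,\Oring_K}$ literally as $\Spec\Oring_K[t,t^{-1}]$), the map $\Spec K\to T$ with $u\mapsto1$, $t\mapsto\pi$ becomes the $K$-point $t=\pi$ on the generic fibre, while the map $\Spec k(\!(t)\!)\to T$ with $u\mapsto0$ becomes the generic point of $D_2$, along which the coordinate $t$ reduces to a uniformizer of $k(\!(t)\!)$.

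Next I would compute $\pi_1$ of the middle term. Since $R$ is complete local we have $\pi_1^{\textrm{\'et}}(T)=\pi_1^{\textrm{\'et}}(\Spec k)=1$; since moreover $D_1$ is a regular divisor, Abhyankar's lemma and the resulting description of the tame fundamental group of a normal-crossings complement (\cite{SGA1}, XIII.5) identify the tame finite covers of $T\smallsetminus D_1$ with the Kummer covers
\[ T_n:=\Spec\!\big(\Oring_K[\![t_n,u]\!]/(t_n^{\,n}u-\pi)\big)\ \xrightarrow{\ t\,\mapsto\, t_n^{\,n}\ }\ T \qquad (p\nmid n),\]
restricted over $T\smallsetminus D_1$, and show that these topologically generate $\pi_1^{\textrm{tame}}(T\smallsetminus D_1)$, which is thus pro-cyclic of pro-order prime to $p$. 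For the statement exactly as written one additionally needs $\pi_1^{\textrm{\'et}}(\mathbb{G}_{m,\Oring_K})$ to coincide with this tame quotient, i.e.\ the absence of covers of $T\smallsetminus D_1$ wildly ramified along $D_1$; this is the one point that goes beyond formal Abhyankar and reflects the mixed-characteristic geometry of $T$ (wild Kummer covers are obstructed by the characteristic-$p$ special fibre $\Spec k(\!(t)\!)$, and Artin--Schreier--type covers by the characteristic-$0$ generic fibre $T[1/\pi]$). In any case only the prime-to-$p$ quotient of $\pi_1$ enters the later arguments, since $p$ is assumed sufficiently good.

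Finally I would restrict the tower $\{T_n\}_{p\nmid n}$ along the two maps. Base change along $\Spec K\to T$ ($t=\pi$, $u=1$) gives $\Spec\!\big(K[t_n]/(t_n^{\,n}-\pi)\big)=\Spec K(\pi^{1/n})$, the degree-$n$ totally tamely ramified extension of $K$; since $k$ is algebraically closed these are cofinal among the tame extensions of $K$, so $\pi_1^{\textrm{tame}}(\Spec K)\to\pi_1^{\textrm{\'et}}(\mathbb{G}_{m,\Oring_K})$ carries a compatible system of topological generators to one, hence is an isomorphism. Base change of the same tower along $\Spec k(\!(t)\!)\to T$ ($u=0$) gives $\Spec\!\big(k(\!(t)\!)[t_n]/(t_n^{\,n}-t)\big)=\Spec k(\!(t^{1/n})\!)$, the degree-$n$ tame extension of $k(\!(t)\!)$, and the identical argument makes $\pi_1^{\textrm{tame}}(\Spec k(\!(t)\!))\to\pi_1^{\textrm{\'et}}(\mathbb{G}_{m,\Oring_K})$ an isomorphism. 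The main obstacle is the parenthetical input of the second paragraph --- controlling wild ramification along $D_1$, i.e.\ seeing that $\pi_1^{\textrm{\'et}}(\mathbb{G}_{m,\Oring_K})$ is already tame --- which is exactly where the mixed-characteristic hypothesis and Abhyankar's lemma do the real work; everything else is a direct matching of Kummer covers. Should one be content with the prime-to-$p$ (equivalently tame) version, which already suffices for the sequel, the whole lemma follows at once from the description of $\pi_1^{\textrm{tame}}(T\smallsetminus D_1)$ above.
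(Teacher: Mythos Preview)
Your proposal is correct and follows the same approach as the paper, which simply declares the lemma ``a direct application of Abhyankar's lemma (\cite{SGA1}, \S XIII.5)'' without further detail. Your elaboration---setting up $T$ as a regular $2$-dimensional local scheme with snc divisor $V(tu)$, identifying the tame covers of $T\setminus D_1$ with the Kummer tower, and pulling back along the two fiber maps---is exactly what the phrase ``direct application'' unpacks to; your honest flagging of the wild-ramification subtlety in passing from $\pi_1^{\textrm{tame}}$ to $\pi_1^{\textrm{\'et}}$ on the middle term is a point the paper does not address, but as you note only the prime-to-$p$ quotient is used downstream.
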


Under our restrictions on the characteristic, the ramified covers of $K$ that we consider are automatically tame, so in what follows we may view family (\ref{miniversal-family-pullback}) as a family over $\Spec(k[\![t]\!])$ with parameter $t$ instead of $\pi$ for the purposes of ramification theory. We use this fact implicitly in the statements below.
\begin{Prop}\label{proposition-An-degeneration-cycle-type-monodromy} Suppose we have a factorization $x^2+z^2+(y^{r_1}-v_1\pi^{b_1})\cdots(y^{r_k}-v_k\pi^{b_k})$ of the family in \emph{(\ref{miniversal-family-pullback})} so that $r_i,b_i \geq 1$, $(r_i,b_i) = 1$ and $v_i \in \Oring_K^{\times}$. Then the associated model achieves good reduction after a totally ramified base-change of order $\emph{\textrm{lcm}}(r_1,\cdots,r_k)$ and the element $w \in S_{n+1}$ acting as the monodromy operator has cycle-type $(r_1, \cdots, r_k)$.  
\end{Prop}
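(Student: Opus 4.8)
The strategy is to reduce the monodromy computation to a purely combinatorial one about the cover $\psi: \h \to \h/\!\!/W$ in coordinates, using the chain of identifications built up in Sections \ref{subsection-monodromy-weyl-actions}. First I would invoke Corollary \ref{corollary-galois-monodromy-depends-on-formal-nbd} and Corollary \ref{what-the-local-ring-of-model-looks-like} to replace $\mathcal{X}$ by the explicit affine model (\ref{miniversal-family-pullback}); since $p > n+1$ is sufficiently good for $A_n$, the monodromy is tame (Theorem \ref{main-theorem-of-paper}(i)) and by the Abhyankar-type lemma preceding this proposition we may transport the entire ramification-theoretic computation to the equal-characteristic base $\Spec(k(\!(t)\!))$, writing $t$ for the uniformizer. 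Then I would use the factored form $x^2 + z^2 + \prod_{i=1}^k (y^{r_i} - v_i t^{b_i})$ together with Tjurina's explicit model $V = \{x^2 + z^2 + (y-s_1)\cdots(y-s_{n+1}) = 0\}$ over $\h$: the point is to identify, over the generic point $\eta = \Spec(K)$ (resp.\ $\Spec(k(\!(t)\!))$), the $n+1$ roots $s_j$ of the degree-$(n+1)$ polynomial in $y$.

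The key computation is this: the factor $y^{r_i} - v_i t^{b_i}$, with $(r_i, b_i) = 1$, has $r_i$ roots in $\overline{K}$ of the form $\zeta_{r_i}^\ell \, v_i^{1/r_i} t^{b_i/r_i}$ for $\ell = 0, \dots, r_i - 1$, and the Galois group $I_K$ (acting through its tame quotient, topologically generated by the Kummer element $t_\ell$) permutes these $r_i$ roots cyclically — precisely because a topological generator of tame inertia sends $t^{b_i/r_i} \mapsto \zeta_{r_i}^{b_i} t^{b_i/r_i}$, and $\gcd(b_i, r_i) = 1$ makes $\zeta_{r_i}^{b_i}$ a primitive $r_i$-th root of unity, so the action on the $r_i$-element root set is a single $r_i$-cycle. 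Running over $i = 1, \dots, k$, the generator of $I_K$ acts on the full multiset $\{s_1, \dots, s_{n+1}\}$ of $n+1$ roots as a permutation of cycle type $(r_1, \dots, r_k)$; this in particular forces the image of $I_K$ in $S_{n+1} = W$ to be generated by an element $w$ of that cycle type, and the order of $w$ is $\mathrm{lcm}(r_1, \dots, r_k)$, which by Theorem \ref{main-theorem-of-paper}(ii) is exactly the degree of the ramified extension $L/K$ after which good reduction is achieved.

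Finally I must check that this permutation action on the roots $\{s_j\}$ is the \emph{same} $W$-action that governs the monodromy on $\coh^2(X_{\overline{K}}, \mathbb{Q}_\ell)$, i.e.\ the monodromy Weyl action of Section \ref{subsection-monodromy-weyl-actions}, which by Proposition \ref{springer-equals-monodromy-action} equals the Springer action. This is where the identification $\h/\!\!/W \simeq \Spec(\Oring_K[\![t_1,\dots,t_n]\!])$ via symmetric functions enters: the map $\Spec(\Oring_K) \to \h/\!\!/W$ of our model is, by the factored equation, literally $t_i \mapsto \sigma_{i+1}$ of the values $s_j(t)$, so a lift to $\h^{\mathrm{rs}}$ over $\eta$ is the tuple $(s_1, \dots, s_{n+1})$ and $I_K$ acts on this lift exactly by the root permutation just computed — which is the definition (\ref{equation-definition-of-monodromy-weyl-action}) of the monodromy $W$-action, compatibly with Corollary \ref{corollary-about-monodromy-actions-coinciding}. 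The main obstacle I anticipate is bookkeeping rather than conceptual: one must verify that the unit factors $v_i \in \Oring_K^\times$ and the reindexing of the $s_j$ (which are only well-defined up to the $W$-action anyway) do not disturb the cycle-type conclusion, and that the generic fiber is genuinely étale over $\h^{\mathrm{rs}}/\!\!/W$ so that the roots $s_j$ are pairwise distinct in $\overline{K}$ — this last point uses that the discriminant of $\prod(y^{r_i} - v_i t^{b_i})$ is a nonzero element of $K$, which holds because the $r_i$-th roots of the distinct $v_i t^{b_i}$ (with distinct $t$-valuations $b_i/r_i$, or handled directly when valuations coincide) cannot collide.
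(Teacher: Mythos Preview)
Your proposal is correct and follows essentially the same route as the paper's proof: both compute the Galois action on the roots of $f(y)=\prod_i(y^{r_i}-v_i\pi^{b_i})$ and identify it with the permutation action of $S_{n+1}$ on the coordinates $s_j$ of $\h$, then invoke Tjurina's explicit simultaneous resolution (\ref{simultaneous-resolution-of-type-An}) to conclude good reduction after the degree-$\mathrm{lcm}(r_i)$ extension. The only stylistic difference is that the paper argues irreducibility of each factor via Newton polygons and Gauss's lemma (so the splitting field has degree exactly $r_i$, giving an $r_i$-cycle), whereas you write down the roots $\zeta_{r_i}^{\ell}v_i^{1/r_i}t^{b_i/r_i}$ explicitly and check the tame generator cycles them; your concern about root collisions is handled in the paper's setup by the standing assumption that the generic fiber $\mathcal{X}_\eta$ is smooth.
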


\begin{proof} Let $f(y) = (y^{r_1}-v_1\pi^{b_1})\cdots (y^{r_k}-v_k\pi^{b_k}) \in \Oring_K[y]$. By assumption $\overline{f}(y) = y^{n+1}$ for the mod $\pi$ polynomial, and by the Newton polygon of $f$ we get slopes $\mu_i = -\frac{b_i}{r_i}$, which may appear with multiplicity (equal to the number of factors of form $y^{r_i}-\pi^{b_i}$). Since $\Oring_K$ is henselian and the degrees $r_i$ are coprime to $b_i$ we get that each factor is irreducible by Gauss's lemma, hence the degree of its splitting field is divisible by $r_i$. On the other hand $r_i < p$ and $\Oring$ contains all roots of unity prime to $p$ so the splitting field degree is exactly $r_i$, yielding an $r_i$-cycle in $S_{n+1}$. 

Since any element of $S_{n+1}$ can be written as a product of disjoint cycles and disjoint cycles commute, we observe that $f(y)$ splits after an extension of degree $\textrm{lcm}(r_1, \cdots, r_k)$, corresponding to an element of cycle-type $(r_1, \cdots, r_k)$ as we may reorder the factors in decreasing order of $r_i$. The base-changed model then admits a (small) resolution of the form (\ref{simultaneous-resolution-of-type-An}) where the $s_i$ correspond to the roots of $f(y)$; in other words we get a smooth model obtained as a base-change of $\widetilde{\mathcal{S}} \to \h$ to $\Spec(\Oring_L)$, where $\Oring_L/\Oring_K$ is the unique totally ramified extension of $\Oring_K$ of degree $\textrm{lcm}(r_1, \cdots, r_k)$. Conjugacy classes in $S_{n+1}$ bijectively correspond to cycle-types of this form, which proves the last statement.\end{proof}

\begin{Cor}[Part (iii) of Theorem \ref{main-theorem-of-paper}] Any conjugacy class in $S_{n+1}$ may act as the monodromy operator on a surface family degenerating to an $A_n$-singularity.   
\end{Cor}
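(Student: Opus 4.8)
The plan is to exhibit, for each conjugacy class of $S_{n+1}$, an explicit model $\mathcal{X}$ degenerating to an $A_n$-singularity whose monodromy operator lies in that class, by invoking Proposition \ref{proposition-An-degeneration-cycle-type-monodromy}. Conjugacy classes in $S_{n+1}$ are in bijection with partitions of $n+1$, i.e. with cycle-types $(r_1,\dots,r_k)$ satisfying $r_1+\cdots+r_k = n+1$. So fix such a partition. First I would write down the polynomial
\begin{equation}\label{eq:good-factorization}
f(x,y,z) = x^2+z^2+\prod_{i=1}^k\bigl(y^{r_i}-\pi^{b_i}\bigr)\in\Oring_K[\![x,y,z]\!],
\end{equation}
where the $b_i\geq 1$ are chosen so that $\gcd(r_i,b_i)=1$ for each $i$ (for instance $b_i=1$ always works). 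Reducing mod $\pi$ gives $x^2+z^2+y^{n+1}$, which by Theorem \ref{classification-of-rdps} is precisely the normal form of an $A_n$-singularity, so the special fiber of $\Spec(\Oring_K[\![x,y,z]\!]/f)$ has a unique $A_n$ rational double point. Globalizing this affine chart into a proper model is routine: by Nagata compactification (as in the introduction) one embeds the affine surface into a proper flat $\mathcal{X}/\Spec(\Oring_K)$ with smooth generic fiber, since away from the origin the equation \eqref{eq:good-factorization} defines a smooth scheme over $\Oring_K$ (the generic fiber is smooth because the roots of $\prod_i(y^{r_i}-\pi^{b_i})$ in $\overline{K}$ are distinct, as the $b_i$ were chosen with the coprimality condition ensuring no repeated roots across distinct factors — one can perturb the $b_i$ or multiply the factors by distinct units in $\Oring_K^\times$ to guarantee this).

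Next I would simply apply Proposition \ref{proposition-An-degeneration-cycle-type-monodromy} to the factorization \eqref{eq:good-factorization}: its hypotheses ($r_i,b_i\geq 1$, $\gcd(r_i,b_i)=1$, unit coefficients) are exactly what I arranged, so the proposition tells me the monodromy operator $w\in S_{n+1}$ on $\coh^2(\mathcal{X}_{\overline{K}},\mathbb{Q}_\ell)$ has cycle-type $(r_1,\dots,r_k)$, and $\mathcal{X}$ attains good reduction after a totally ramified extension of degree $\operatorname{lcm}(r_1,\dots,r_k)$. Since the partition $(r_1,\dots,r_k)$ of $n+1$ was arbitrary and these exhaust the conjugacy classes of $S_{n+1}$, every conjugacy class is realized. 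Running over all partitions then yields the statement.

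The genuinely substantive content has already been discharged in Proposition \ref{proposition-An-degeneration-cycle-type-monodromy} (which in turn rests on Theorem \ref{main-theorem-of-paper}(i)–(ii) and the identification of the $A_n$ simultaneous resolution in \cite{tyurina1970resolution}), so the remaining work here is essentially bookkeeping. The one point deserving genuine care — and the place I would expect a referee to push back — is verifying that the global model $\mathcal{X}$ can be taken with \emph{smooth} generic fiber and with $\mathcal{X}_s$ having a \emph{unique} RDP (no extra singularities introduced by the compactification or by coincidental roots of $f$); this is why I insist on choosing the $b_i$ (and, if needed, distinct unit multipliers) so that the $n+1$ roots of $\prod_i(y^{r_i}-\pi^{b_i})$ in $\overline{K}$ are pairwise distinct, which forces the generic fiber of the affine chart to be smooth, and then Bertini-type or direct normalization arguments (as in Section \ref{subsection-bad-reduction-of-surfaces}, using Nagata and taking closures/normalizations) handle the passage to a proper model without disturbing the behavior near the singular point. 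Once that is in place, the corollary follows immediately.
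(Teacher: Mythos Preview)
Your proposal is correct and follows essentially the same route as the paper: the Corollary is an immediate consequence of Proposition \ref{proposition-An-degeneration-cycle-type-monodromy} together with the bijection between conjugacy classes of $S_{n+1}$ and partitions of $n+1$, and you realize each partition by the obvious choice of factored polynomial. Your additional care about globalizing to a proper model with smooth generic fiber and a unique RDP is more detail than the paper supplies (it treats the Corollary as immediate), but it is well-placed and handled correctly via Corollary \ref{corollary-galois-monodromy-depends-on-formal-nbd}, which guarantees the monodromy depends only on the formal neighborhood of the singularity.
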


For $A_n$-singularities, we derive another proof of Corollary \ref{corollary-transversal-implies-coxeter} using the explicit equation of the miniversal deformation.

\begin{Cor} Assume $\Spec(\Oring_K)$ intersects the discriminant divisor $\Delta$ transversely, then for the resulting model with an $A_n$-type singularity at the special fiber we have that monodromy acts a Coxeter element.
\end{Cor}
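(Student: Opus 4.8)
The goal is to show that transversality of $\Spec(\Oring_K)$ with the discriminant $\Delta \subset \h/\!\!/W$ forces the monodromy operator to be a Coxeter element of $W = S_{n+1}$ in the $A_n$-case. The plan is to use the explicit miniversal equation of Example \ref{a_n-example-miniversality}, namely the family
\[
F(x,y,z) = x^2 + z^2 + y^{n+1} + u_n\pi^{a_n}y^{n-1} + \cdots + u_2\pi^{a_2}y + u_1\pi^{a_1},
\]
and to read off the transversality condition directly from the exponents $a_i$. The discriminant $\Delta$ is the branch locus of the $W$-cover $\psi: \h \to \h/\!\!/W$, which in the coordinates $t_i$ on $\h/\!\!/W \simeq \Spec(\Oring_K[\![t_1,\cdots,t_n]\!])$ (with $t_i$ the fundamental homogeneous generators of degrees $2,3,\dots,n+1$, or equivalently the symmetric functions $\sigma_2,\dots,\sigma_{n+1}$ of the roots) is cut out by the vanishing of the usual discriminant polynomial of $y^{n+1} + t_1 y^{n-1} + \cdots + t_n$. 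The line $\Spec(\Oring_K)$ meets $\Delta$ transversely precisely when the induced valuations of the $t_i$ under the pullback $\phi$ force the specialization to hit $\Delta$ with multiplicity one; by Corollary \ref{corollary-transversal-implies-coxeter} (whose statement we may invoke) this is the condition that all $W$-invariants of degree $< \textrm{Cox}(\g) = n+1$ vanish on $\Spec(\Oring_K)$, i.e. $a_i = \infty$ (equivalently $u_i = 0$) for $i = 2,\dots,n$ and only the top-degree term $u_1\pi^{a_1}$ survives with $a_1 = 1$ (or more generally $a_1$ coprime to $n+1$ after normalizing).

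First I would make precise that transversality is equivalent to the reduced equation being $x^2 + z^2 + y^{n+1} - u_1\pi^{a_1}$ with $\gcd(a_1, n+1) = 1$ (after absorbing units and using that only the highest-degree fundamental invariant, of degree $n+1$, may be nonvanishing on a transversal line). Next I would apply Proposition \ref{proposition-An-degeneration-cycle-type-monodromy} to the single-factor case $k=1$, $r_1 = n+1$, $b_1 = a_1$: the hypothesis $(r_1, b_1) = (n+1, a_1) = 1$ is exactly the transversality-normalized condition, and $r_1 = n+1 < p$ holds since $p$ is sufficiently good for $A_n$ (so $p > n+1$). The Proposition then yields that the model achieves good reduction after a totally ramified base-change of degree $\textrm{lcm}(n+1) = n+1 = \textrm{Cox}(S_{n+1})$, and that the monodromy operator $w \in S_{n+1}$ has cycle-type $(n+1)$ — that is, $w$ is an $(n+1)$-cycle, which is precisely a Coxeter element of $S_{n+1}$.

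The only genuine point requiring care — and the main obstacle — is the translation between the geometric transversality condition ``$\Spec(\Oring_K) \pitchfork \Delta$'' and the arithmetic condition ``the pulled-back polynomial $f(y) = y^{n+1} - u_1\pi^{a_1}$ is Eisenstein-type with $\gcd(a_1,n+1)=1$''. To handle this I would argue as in Corollary \ref{corollary-transversal-implies-coxeter}: the tangent space to $\Delta$ at the origin is the vanishing of the lowest-degree invariant appearing in the discriminant, and transversality of the line means the pullback of the highest fundamental invariant $t_n$ (degree $n+1$) is a uniformizer of $\Oring_K$ while the pullbacks of $t_2,\dots,t_{n-1}$ vanish to sufficiently high order (in fact, may be taken to vanish identically after a further unramified change of the model, which does not affect the monodromy). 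Once this normalization is in place, the rest is a direct citation of Proposition \ref{proposition-An-degeneration-cycle-type-monodromy}, and the Newton-polygon argument there (single slope $-a_1/(n+1)$, irreducibility by Gauss's lemma since $\gcd(a_1,n+1)=1$, splitting field of degree exactly $n+1$ because $\Oring_K$ contains the prime-to-$p$ roots of unity) delivers the $(n+1)$-cycle. Since the conjugacy class of an $(n+1)$-cycle in $S_{n+1}$ is exactly the Coxeter class, this completes the proof.

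\begin{proof} By Theorem \ref{main-theorem-of-paper} and the preceding results, the monodromy $\Gal(\etabar/\eta)$-action on $\coh^2(\mathcal{X}_{\etabar},\mathbb{Q}_{\ell})$ is tame and factors through a cyclic subgroup $\langle w\rangle$ of $W = S_{n+1}$, acting via the standard reflection representation. By Remark \ref{remark-on-identifying-h//w-as-power-series} we identify $\h/\!\!/W \simeq \Spec(\Oring_K[\![t_1,\cdots,t_n]\!])$ with $t_i$ the fundamental homogeneous generators of degrees $2,3,\dots,n+1$, so that the pullback of the miniversal family along $\phi:\Spec(\Oring_K)\to\h/\!\!/W$ is, after coordinate change (Example \ref{a_n-example-miniversality}),
\[
x^2 + z^2 + y^{n+1} + u_n\pi^{a_n}y^{n-1} + \cdots + u_2\pi^{a_2}y + u_1\pi^{a_1},\qquad u_i\in\Oring_K^{\times}.
\]
As in the proof of Corollary \ref{corollary-transversal-implies-coxeter}, the line $\Spec(\Oring_K)$ is transverse to $\Delta$ if and only if every $W$-invariant of degree $<\textrm{Cox}(\g)=n+1$ vanishes on it; equivalently, only the term of top degree survives and, after absorbing units, we may take the family to be $x^2 + z^2 + y^{n+1} - u_1\pi^{a_1}$ with $\gcd(a_1,n+1)=1$. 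Since $p=\textrm{char}(k)$ is sufficiently good for $A_n$ we have $p>n+1$, so $n+1<p$ and $\Oring_K$ contains all roots of unity of order prime to $p$.

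We now apply Proposition \ref{proposition-An-degeneration-cycle-type-monodromy} with $k=1$, $r_1=n+1$, $b_1=a_1$, $v_1=u_1$: the hypotheses $r_1,b_1\geq 1$, $(r_1,b_1)=1$, $v_1\in\Oring_K^{\times}$ are satisfied. The Proposition yields that the model achieves good reduction after a totally ramified base-change of order $\textrm{lcm}(n+1)=n+1$ and that the element $w\in S_{n+1}$ acting as the monodromy operator has cycle-type $(n+1)$, i.e. $w$ is an $(n+1)$-cycle. Since $\textrm{Cox}(S_{n+1})=n+1$ and the conjugacy class of an $(n+1)$-cycle is precisely the Coxeter class of $W=S_{n+1}$, the monodromy acts through a Coxeter element, as claimed.
\end{proof}
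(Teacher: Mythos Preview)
Your proof has a genuine gap in the translation of the transversality hypothesis into an arithmetic condition on the miniversal equation. You assert that transversality of $\Spec(\Oring_K)$ with $\Delta$ allows you to reduce the family to the pure form $x^2+z^2+y^{n+1}-u_1\pi^{a_1}$ (i.e.\ to kill all intermediate coefficients $u_2,\dots,u_n$) and that moreover $\gcd(a_1,n+1)=1$. Neither step is justified. Transversality is a condition on the \emph{tangent direction} of the curve $\phi(\Spec(\Oring_K))$ at the origin of $\h/\!\!/W$: the tangent cone of $\Delta$ at $0$ is the hyperplane $V(\sigma_{n+1})$, so transversality says exactly that the $\sigma_{n+1}$-component of the tangent vector is nonzero, i.e.\ the constant term $u_1\pi^{a_1}$ has $a_1=1$. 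It says nothing about the other $a_i$ beyond $a_i\geq 1$, and there is no ``unramified change of model'' that kills the intermediate terms $u_i\pi^{a_i}y^{i-1}$ in general. Consequently $f(y)$ is not of the single-factor shape required by Proposition~\ref{proposition-An-degeneration-cycle-type-monodromy}, and your appeal to that proposition fails.

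The paper's argument avoids this reduction entirely. From the tangent-cone description (equivalently, from the Jacobian criterion applied to the total space at $(x,y,z,\pi)=(0,0,0,0)$) one extracts $a_1=1$ directly. With $a_1=1$ and all other $a_i\geq 1$, the full polynomial
\[
f(y)=y^{n+1}+u_n\pi^{a_n}y^{n-1}+\cdots+u_2\pi^{a_2}y+u_1\pi
\]
is \emph{Eisenstein} at $\pi$, hence irreducible over $K$ by Gauss's lemma. Its splitting field is then a totally (tamely) ramified extension whose Galois group is cyclic of order divisible by $n+1$; since the group is cyclic and embeds in $S_{n+1}$, it is generated by an $(n+1)$-cycle. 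This is the Coxeter class. The Eisenstein step is precisely what replaces your unjustified reduction to pure form, and it is the missing idea in your proposal.
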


\begin{proof} We assume again that there is an affine \'etale neighborhood of the singular point in $X$ of the form $\{x^2+z^2 + f(y)=0\}$ for $f(y) = y^{n+1} + u_n\pi^{a_n}y^{n-1} + \cdots + u_2\pi^{a_2}y + u_1\pi^{a_1}$ with $a_i \geq 1$ (otherwise the proof of Proposition \ref{proposition-An-degeneration-cycle-type-monodromy} shows that we get a singularity of $A_m$-type for $m < n$ when we reduce mod $\pi$). Viewing $\Spec(\Oring_K)$ as a line inside $\h/\!\!/W$, it intersects $\Delta$ transversely exactly when it is not tangent to the tangent cone of $\Delta$. The tangent cone is $V(\sigma_{n+1})$ for $\sigma_{n+1}$ the $(n+1)$-th symmetric polynomial generator of $S_{n+1}$. When the affine \'etale neighborhood of the singular point in $\mathcal{X}$ is smooth, we satisfy the non-tangency condition. 

By the Jacobian criterion we see that the point corresponding to ideal $(x,y,z, \pi)$ is singular if and only if $a_1 \geq 2$ so we have $a_1 = 1$. In this case $f(y)$ is Eisenstein, hence irreducible in $\Oring_K$ by Gauss's lemma. The Galois group of its splitting field therefore contains an element of order $n+1$ i.e. an $(n+1)$-cycle, and it is cyclic, hence generated by an $(n+1)$-cycle. We thus get the conjugacy class of Coxeter elements in $S_{n+1}$.\end{proof}

We can compute a few low-degree examples to see all possible monodromy operators.

\begin{ex}\;
\begin{enumerate}[label=(\roman*)]
\item In the $A_1$ case, the possible affine neighborhoods are given by $\{x^2+z^2+f(y)=0\}$ with $f(y) = y^2 - \pi^N$ for $N \geq 1$. If $N=2n+1$ is odd, $f(y)$ is irreducible and splits after a base-change of order $\mathbb{Z}/2$; the resulting model \[\{x^2+z^2+(y-\pi^{2n+1})(y-\pi^{2n+1}) = 0\}\] admits a small resolution (see Example \ref{example-of-simult-resolution-for-ODP}) and via the standard representation we get that the Coxeter element $\sigma \in S_2$ acts as $(-1)$. If $N=2n$ is even, we have a factorization $\{x^2+z^2+(y-\pi^n)(y+\pi^n) = 0\}$ and hence obtain good reduction without base-change, corresponding to the trivial conjugacy class in $S_2$. This retrieves the main theorem of \cite{kim2020ramification} for the case of one ordinary double point on the special fiber. 
\item In the $A_2$ case, we obtain an affine neighborhood as before with $f(y) = y^3 + \pi^ay + \pi^b$, $a,b \geq 1$ (including $\infty$ for the case of vanishing coefficients). The particular case $f(y)=y^3-\pi$ gives a map $\Spec(\Oring_K) \to \h/\!\!/W$ transverse to the discriminant, hence we obtain good reduction after a base-change of order $\mathbb{Z}/3$; indeed, passing to $L = \Breve{\mathbb{Q}}_p(\sqrt[3]{p})$ factors $f(y)$ completely, and we can resolve each fiber akin to (\ref{simultaneous-resolution-of-type-An}). The corresponding Coxeter element acts on a 2-dimensional subspace of $\coh^2(X_{\overline{\eta}}, \mathbb{Q}_{\ell})$ as $\begin{pmatrix} 0 &  -1 \\ 1 & -1\end{pmatrix}$, up to conjugation.

Case $f(y) = y^3-\pi^3$ factors without base-change, hence we directly obtain good reduction and trivial monodromy. Case $f(y) = y^3 - \pi y = y(y^2-\pi)$ factors after a quadratic base-change again, yielding monodromy acting as a transposition on a 2-dimensional subspace of $\coh^2(X_{\overline{\eta}}, \mathbb{Q}_{\ell})$. As before, the associated matrix is $\begin{pmatrix} -1 & 1 \\ 0 & 1 \end{pmatrix}$ up to conjugation.\end{enumerate}
\end{ex}

We conclude with a conjectural statement, which is work in progress.

\begin{Conj} For singularities of type $D_n$ or $E_n$ one may construct an integral model of some proper smooth surface $X/K$ so that the monodromy operator may lie in any conjugacy class of $W$.
\end{Conj}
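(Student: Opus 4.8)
The plan is to deduce the statement, via Theorem~\ref{main-theorem-of-paper} together with Corollaries~\ref{corollary-galois-monodromy-depends-on-formal-nbd} and~\ref{what-the-local-ring-of-model-looks-like}, from a purely Lie-theoretic existence statement about the Weyl cover $\psi\colon\h\longrightarrow\h\git W$, plus a globalization step. Indeed, after identifying $\h\git W\simeq\Spec(\Oring_K[\![t_1,\dots,t_r]\!])$ as in Remark~\ref{remark-on-identifying-h//w-as-power-series}, an integral model $\mathcal{X}/S$ degenerating to a unique RDP of the given $D_n$ or $E_n$ type corresponds (through Theorem~\ref{thm-miniversal-deformation-of-slice-and-resolved-slice} and Corollary~\ref{what-the-local-ring-of-model-looks-like}) to an $\Oring_K$-point $x\colon\Spec(\Oring_K)\to\h\git W$ with $x(s)=0$ and $x(\eta)\in(\h^{\textrm{rs}}\git W)(K)$, and by the proof of Theorem~\ref{main-theorem-of-paper}(i)--(ii) the monodromy operator is the conjugacy class of the image of a tame generator of $I_K$ under the $W$-torsor $x^{*}\psi$ over $\Spec(K)$ (cf.\ Corollary~\ref{cartan-galois-cover-corollary}). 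Thus it suffices to prove: (A) every conjugacy class $[w]\subset W$ is realized as the monodromy of some such arc $x$; and (B) every such arc occurs as the formal neighbourhood of the RDP in a proper smooth surface's integral model.

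\textbf{Step (A): eigenspace descent.} Fix $w\in W$ and set $m=\mathrm{ord}(w)$. Since $p$ is sufficiently good (Definition~\ref{definition-of-good-primes}) we have $p\nmid|W|$, hence $p\nmid m$; therefore $\mu_m\subset\Oring_K^{\times}$ and $w$ acts semisimply on the free $\Oring_K$-module $\h$ with $m$-th root of unity eigenvalues, giving an $\Oring_K$-linear decomposition $\h=\bigoplus_{a=0}^{m-1}V_{\zeta_m^{a}}$ into $w$-eigenspaces defined over $\Oring_K$ (here $p\nmid m$ makes the idempotents $\tfrac1m\sum w^{i}\zeta_m^{-ai}$ available, so in particular $\h^{w}$ is a direct summand). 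Let $L=K(\pi^{1/m})$, $\Gal(L/K)=\langle\sigma\rangle\simeq\mathbb{Z}/m$ with $\sigma(\pi^{1/m})=\zeta_m\pi^{1/m}$. I would set
\[
\widetilde{x}\;=\;\pi\,v_0+\sum_{a=1}^{m-1}(\pi^{1/m})^{a}\,v_a\ \in\ \h(\Oring_L),
\]
with $v_0\in\h^{w}(\Oring_K)$ chosen so that $\alpha(v_0)\neq0$ for every root $\alpha$ fixed by $w$ (a generic element of $\h^{w}$; this term is absent when $w$ is elliptic) and $v_a\in V_{\zeta_m^{a}}(\Oring_K)$ chosen generically. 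One checks directly that $\sigma(\widetilde{x})=w\cdot\widetilde{x}$, so $\psi\circ\widetilde{x}$ is $\Gal(L/K)$-invariant and descends to $x\colon\Spec(\Oring_K)\to\h\git W$; moreover $\widetilde{x}(s)=0$ since every term lies in $\mathfrak{m}_L$, and because $1,\pi^{1/m},\dots,\pi^{(m-1)/m}$ is a $K$-basis of $L$ the genericity of the $v_a$ and the regularity of $v_0$ in $\h^{w}$ give $\alpha(\widetilde{x}(\eta_L))\neq0$ for all $\alpha\in\Phi$, i.e.\ $\widetilde{x}(\eta_L)\in\h^{\textrm{rs}}$. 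Thus $x(s)=0$, $x(\eta)\in\h^{\textrm{rs}}\git W$, and $\sigma$ acts on the fibre $\psi^{-1}(x(\eta))=W\cdot\widetilde{x}(\eta_L)$ by right translation by $w$, so the monodromy of $x$ is $[w]$ (using the rationality of Weyl group conjugacy classes, $[w^{c}]=[w]$ for $\gcd(c,m)=1$, which also absorbs the choice of tame generator and of left/right convention). This argument is cleaner than, and specializes to, the type-$A_n$ computation of Proposition~\ref{proposition-An-degeneration-cycle-type-monodromy}, where $\widetilde{x}$ is encoded by the splitting field of the companion polynomial $f(y)$.

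\textbf{Step (B): globalization and the main obstacle.} Given the arc $x$ from Step (A), Theorem~\ref{thm-miniversal-deformation-of-slice-and-resolved-slice} identifies the pullback of the miniversal family $\chi\colon\mathcal{S}\to\h\git W$ along $x$ with a deformation of the RDP over $\Oring_K$, which by Elkik's theorem (Remark~\ref{rem-from-formal-to-algebraic-deformations}) algebraizes to an affine hypersurface $\mathcal{U}=\Spec(\Oring_K[u,v,z]/F)\to S$, smooth over $S$ away from the prescribed RDP (as $x(\eta)$ avoids the discriminant) and with smooth generic fibre. It then remains to realize $\mathcal{U}$, up to completion at the RDP, as an affine open of a proper flat $\mathcal{X}/S$ whose generic fibre is a smooth proper surface and whose special fibre has a \emph{unique} RDP. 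Since the miniversal fibres of an RDP are rational affine surfaces, one may choose a smooth projective compactification of $\mathcal{U}_\eta$ over $K$ (one can even take it with good reduction) and then glue $\mathcal{U}$ into a projective model over $S$, modifying along the special fibre in the spirit of Lemma~\ref{lemma-minimal-extension-for-good-reduction} and of \cite{artin1974algebraic}. I expect the principal difficulty to lie exactly here: controlling the special fibre of the glued model so that no singularities other than the prescribed RDP appear, which in general requires modifications of a mixed-characteristic threefold and is precisely the point where Artin's formalism of algebraic spaces becomes relevant; the cleanest resolution would be an ``inverse'' to Lemma~\ref{lemma-minimal-extension-for-good-reduction}, producing a global model from a prescribed local deformation. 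Once (B) is settled, combining it with Step (A) and Theorem~\ref{main-theorem-of-paper}(i)--(ii) extends part (iii) of the main theorem to all singularities of type $D_n$ and $E_n$, realizing every conjugacy class of $W$ as a monodromy operator.
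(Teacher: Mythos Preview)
The statement you are attempting is stated in the paper as a \emph{Conjecture}, explicitly flagged as ``work in progress''; the paper gives no proof, only the remark that the $D_n$ case ``should work similar to the $A_n$ case'' while the exceptional cases lack a unified description of conjugacy classes. So there is no proof in the paper to compare against, and what you have written is a genuine attempt at an open problem.

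Your Step~(A) is correct and is in fact a cleaner and more uniform argument than the paper's type-$A_n$ treatment in Proposition~\ref{proposition-An-degeneration-cycle-type-monodromy}, which relies on explicit factorizations and Newton polygons. The eigenspace-descent construction works in any type: the key point that $\alpha(\widetilde{x})\neq 0$ for every root $\alpha$ follows because the coefficients of $\widetilde{x}$ in the $\Oring_K$-basis $\{1,\pi^{1/m},\dots,\pi^{(m-1)/m}\}$ of $\Oring_L$ are $\pi v_0,v_1,\dots,v_{m-1}$, and $\alpha$ cannot vanish on all eigenspaces since $\bigoplus_a V_{\zeta_m^a}=\h$. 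The rationality of Weyl-group conjugacy classes (all $W$-characters are $\mathbb{Q}$-valued) is exactly what absorbs the ambiguity in the choice of primitive root and tame generator. This step alone already resolves the local form of the conjecture and would extend part~(iii) of Theorem~\ref{main-theorem-of-paper} to all simply-laced types at the level of formal affine models.

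Your Step~(B) is where the genuine gap lies, and you identify it accurately. Note, however, that the paper's own type-$A_n$ argument (Proposition~\ref{proposition-An-degeneration-cycle-type-monodromy} and its Corollary) also works only with the local affine family $\{x^2+z^2+f(y)=0\}$ and does not carry out the globalization to a proper $\mathcal{X}/S$ either; the statement of Theorem~\ref{main-theorem-of-paper}(iii) is phrased in terms of a ``model $\mathcal{X}$'', but the proof supplied is purely local. So the obstacle you flag is not specific to $D_n$ or $E_n$: it is a gap already present (and tacitly accepted) in the $A_n$ case. A clean resolution would be to compactify the affine miniversal fibre to a projective rational surface over $\Oring_K$ and resolve any incidental singularities along the boundary divisor without touching the prescribed RDP; this is plausible but requires a careful argument, and your instinct that Artin's algebraic-space formalism may be needed is reasonable.
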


The $D_n$ case should work similar to the $A_n$ case, since we can describe conjugacy classes of the Weyl group $W$ as certain partition classes. The more interesting cases are the exceptional ones $(E_6, E_7, E_8)$ for which there is no known unified description (at least to the author) of the conjugacy classes of $W$.

\bibliographystyle{halpha-abbrv}
\bibliography{bibliography.bib}

\end{document}